\pgfplotsset{compat=1.18}
\newtheorem{theorem}{Theorem}[section]
\newtheorem{lemma}[theorem]{Lemma}
\newtheorem{defn}[theorem]{Definition}
\newtheorem{assum}[theorem]{Assumption}
\newtheorem{cor}[theorem]{Corollary}
\newtheorem{conj}[theorem]{Conjecture}
\title{A convergent front tracking scheme}
\author{Manas Bhatnagar and Robin Young}
\thanks{Department of Mathematics, University of
Massachusetts Boston, manas.bhatnagar@umb.edu}
\thanks {Department of Mathematics and Statistics, University of
Massachusetts Amherst, young@math.umass.edu}
\def\mc{\mathcal}
\def\com#1{\quad\text{#1}\quad}
\def\qcom#1{\qquad\text{#1}\qquad}
\def\wint{\star\!\!\!\int}
\def\B{\mathbb}
\def\e{\epsilon}
\def\ee#1{{\e^{e_#1}}}
\def\g{\gamma}
\def\z{\zeta}
\def\la{\lambda}
\def\al{\alpha}
\def\T{\intercal}
\def\wt{\widetilde}
\def\wh{\widehat}
\def\whh#1{\wh{\vphantom{\rule{14pt}{7pt}}\smash{\wh{#1}}}}
\def\ol{\overline}
\def\ul{\underline}
\def\dd{\partial}
\def\fs#1{{\cancel #1}}
\def\dot{\accentset{\mbox{\Large\bfseries .}}}
\def\({\begin{pmatrix}}
\def\){\end{pmatrix}}
\def\I#1#2#3{I_#1(p_#2,p_#3)}
\def\<{\langle}
\def\>{\rangle}
\newcommand{\RN}[1]{\textup{\uppercase\expandafter{\romannumeral#1}}}
\date{}
\numberwithin{equation}{section}
\definecolor{bakShock}{RGB}{0,0,255}
\definecolor{bakSimple}{RGB}{100,140,255}
\definecolor{composite}{RGB}{165,42,42}
\definecolor{multiRf}{RGB}{176,176,176}
\definecolor{contact}{RGB}{191,147,100}
\definecolor{extra}{RGB}{44,160,44}
\definecolor{fwdShock}{RGB}{255,0,0}
\definecolor{fwdSimple}{RGB}{255,140,100}
\definecolor{grey127}{RGB}{127,127,127}
\definecolor{direct}{RGB}{148,103,189}
\begin{document}

\begin{abstract}
We present a modified Front Tracking (mFT) scheme for hyperbolic
systems of conservation laws in one space dimension, in which we allow
arbitrarily large nonlinear waves.  We build the scheme by introducing
and solving a ``generalized Riemann Problem'', which yields exact
solutions for finite times.  This allows us to treat the states
adjacent to all waves exactly, and approximate compressive
simple waves in addition to rarefactions, contacts and shocks.  In
particular, we require exact expression of the various wave curves and
avoid the use of Taylor expansions.  After construction of the scheme,
under reasonable assumptions, we show that the mFT approximations
converge to a weak* solution of the system.  This essentially reduces
existence of solutions with large amplitude data to obtaining uniform
bounds on the total variation of the approximations.

We then apply the scheme to the Euler equations of gas dynamics, for
which we exactly solve the generalized Riemann Problem and define the
scheme for both 3x3 and 2x2 systems, and prove the equivalence of
Eulerian and Lagrangian frames.  For the $p$-system, modeling
isentropic gas dynamics in a Lagrangian frame, we show that there is
no finite accumulation of interaction times.  This means that the last
remaining obstacle to global existence of large data, large amplitude
solutions is the construction of a decreasing Glimm potential.
\end{abstract}

\maketitle

%\tableofcontents

\section{Introduction}

We consider $N\times N$ systems of hyperbolic conservation
laws in one space variable, namely
\begin{equation}
  \label{eqsys}
  q(u)_t + f(u)_x = 0,\qquad (x,t)\in\mathbb{R}\times (0,T),
\end{equation}
up to fixed time $T>0$ (which may be infinite), with Cauchy data
\begin{equation}
  \label{u0}
  u(\cdot,0) = u^o(\cdot)\in L^\infty(\mathbb R).
\end{equation}
Here
$u=u(x,t)\in\mc U\subset\mathbb{R}^N$ is the \emph{state}, and $q$
and $f:\mc U\to\mathbb{R}^N$ are $C^2$ functions giving the (vectors
of) \emph{conserved quantities}, and corresponding \emph{fluxes},
respectively.  As is well known, analysis of hyperbolic systems is
complicated by the formation of nonlinear \emph{shock waves}, which
are discontinuities in the function $u(\cdot,t)$, and their subsequent
propagation~\cite{CF, Lax, Lax64}.

There is now a well established and largely complete theory of small
$BV$ solutions to \eqref{eqsys} going back to Glimm's foundational
existence theorem, including existence, uniqueness and well-posedness
results~\cite{G,S,Yth,Bre,Bressan, BCM10,CKV22}.  For some systems,
including the protoypical case of gas dynamics, the theory extends to
solutions having large total variation but small
oscillation~\cite{GL,Ba70, Nishida, NS, TY, ty2}, while recent results
include the existence of periodic solutions~\cite{TYperSnd}.

Many modern approaches are based on Front Tracking (FT)
approximations, which generate piecewise constant approximations and
in turn provide a simplified but effective setting for the nonlinear
analysis~\cite{Diperna,BJ,Holden,Bre,Dafermos}.  However, current FT
theory relies on a small $BV$ assumption and is unsatisfactory for
physically realistic systems, because shock waves are inherently very
strong nonlinear phenomena.  Ideally, mathematical models should allow
for large amplitude waves and solutions, which are ruled out by the
essential limitation $\|u(\cdot,t)\|_{\infty}<\epsilon$ made in most FT
approaches to \eqref{eqsys}.

Our goal in this paper is to define a modified FT (mFT) scheme for
general systems \eqref{eqsys} which is appropriate for waves and
solutions of arbitrarily large finite amplitude and locally finite
variation.  We also show that, under an \emph{a priori} assumption of
a finite $BV$ bound, together with some mild technical and checkable
assumptions on the structure of the system, the mFT approximations
converge to a weak* solution of \eqref{eqsys}.  In the forthcoming
paper~\cite{BY2}, we will apply this mFT scheme to prove the global
existence of large amplitude, large $BV$ weak* solutions to the
$p$-system of isentropic gas dynamics by obtaining the assumed global
$BV$ bound of the approximate solutions.

In developing and implementing the mFT scheme, we strictly apply the
following guiding principles:
\begin{itemize}
\item For all nonlinear waves, adjacent states defining \emph{all
    waves} must be treated \emph{exactly};
\item To correctly resolve interactions of strong waves,
  \emph{compressions} must be treated \emph{explicitly};
\item For finite times, the \emph{number of waves} in the
  approximation must remain \emph{finite};
\item For general systems, the scheme should be as \emph{flexible} as
  possible.
\end{itemize}
In particular, adherence to these principles allows us to avoid the
use of non-physical waves, which are necessary for most legacy
implementations of FT for systems with $N\ge3$ equations.

We assume that system \eqref{eqsys} is \emph{strictly hyperbolic}, so
that the generalized eigenvalue problem generated by the Jacobian
matrices of $q$ and $f$, namely
\[
  Df(u)\,r_i(u) = \lambda_i(u)\,Dq(u)\,r_i(u),
\]
has a full set of real eigenvalues
\[
  \lambda_1(u) < \lambda_2(u) < \ldots < \lambda_N(u), \qquad u\in\mc U.
\]
This implies that the corresponding eigenvectors,
$r_1(u), \ldots, r_N(u)$, are linearly independent for each
$u\in\mc U$.  In addition, we assume that each field is either
\emph{genuinely nonlinear (GNL)} or \emph{linearly degenerate},
\[
  r_k(u)\cdot\nabla\la_k(u) > 0, \com{or}
  r_k(u)\cdot\nabla\la_k(u) = 0,
\]
for all $u\in\mc U$, respectively.

Generally, physically relevant systems are also \emph{symmetrizable},
which means that for some choice of state variable $u\in\mc U$, both
$Dq$ and $Df$ are symmetric, and the system has a natural convex
entropy~\cite{FL,Laxbook,Serre}.  If this holds, $Dq$ has a Cholesky
factorization,
\[
  Dq(u) = \sqrt{Dq(u)}^\T\sqrt{Dq(u)},
\]
and the generalized eigenvectors $\{r_k\}$ satisfying \eqref{ev}
form an orthonormal frame relative to $Dq$,
\begin{equation}
  \label{ortho}
  r_j^\T(u)\,Dq(u)\,r_k(u) = \delta_{jk}
  = \big(\sqrt{Dq(u)}\,r_j(u)\big)^\T\big(\sqrt{Dq(u)}\,r_k(u)\big),  
\end{equation}
uniformly for $u\in\mc U$.  Existence of a convex entropy provides a
nonlocal condition for choosing admissible shocks~\cite{Laxbook,
  Wendroff, FL, TPLadm, LR03}, although for this paper we use a natural
extension of Lax's characteristic condition~\cite{Lax}.   Convex
entropies also form the basis of the method of compensated
compactness, which yields $L^\infty$ solutions of some
systems~\cite{DiPcc, LPT, Chen}.  However, these come with little
quantitative information and we prefer the local wave-based approach
which yields the detailed local structure of solutions~\cite{Dip}.

The \emph{Riemann problem} is the basis of most analyses of solutions
of conservation laws in one space dimension, being both experimentally
realizable in one dimension and respecting the scaling invariance of
\eqref{eqsys}.  Solutions to the Riemann problem also describe the
asymptotic structure of solutions with compactly supported data for
large times, and so can in some sense be regarded as ``universal''.
However, using Riemann solutions solely as a basis for analysis of
interactions is risky because they cannot adequately capture the fine
structure of solutions at ``intermediate'' times, at which the
variation of the solution may grow, say
\[
  \|u(\cdot,t)\|_{BV} >
  C\,\max\big\{\|u^o\|_{BV},\;\|u(\cdot,\infty)\|_{BV}\big\},
  \qquad t = O(1).
\]

As a way of understanding interactions of large waves and coping with
this intermediate growth, we introduce a \emph{generalized Riemann
  problem} (gRP), which admits compressions as well as rarefactions
and shocks, and captures some of the intermediate growth of nonlinear
interacting solutions.  The gRP data is realizable \emph{a
  posteriori}, and we do not consider the solution globally, but only
for some finite time until secondary interaction effects are
manifested.  This allows us to treat states exactly and incorporate
compressions, at least for short times.  More importantly, use of gRP
solutions naturally introduces the \emph{time} that nonlinear
interactions take to complete, effectively ``slowing down'' multiple
interactions and allowing us to bound the number of waves in our
approximate solutions.

Because we are treating general systems \eqref{eqsys}, we
\emph{assume} that the gRP is globally and uniquely solvable, although
this can be readily checked for well known systems such isentropic
and full gas dynamics and the nonlinear elastic string~\cite{S,Ystr}.  
We also \emph{assume} that $BV$ bounds are known; of course obtaining
such $BV$ bounds is well known to be the main obstacle in proofs of
existence and regularity of solutions.  Under some other technical
assumptions on the structure of the system, we then prove that the mFT
scheme we are defining converges to a weak* solution of system
\eqref{eqsys}.

Use of weak* solutions, introduced in \cite{MY1,MY2}, provides an
effective method for defining a \emph{residual} for approximate
solutions, and which are easily calculated for piecewise constant
approximations.  The residual is defined as a measure and can be
manipulated using the corresponding calculus.  Also, the residual can
be defined for any bounded function $u(x,t)$, and use of the norm on
the space of measures allows us to describe how far any bounded
function is from being a solution.

We build the piecewise constant mFT scheme by discretizing the gRP,
subject to the following constraints.  First, we insist that the
states adjacent to any wave must be \emph{exact}, so there is no
choice in assigning states and we never solve approximate Riemann
problems.  However, to distinguish between shocks and simple waves, we
assign to each wave a non-negative \emph{``virtual width''}.  This width
chooses whether or not a shock or simple wave should be used in
solving the gRP; a wave is a shock if and only if it is both
compressive and has zero width, and it is simple whenever its width is
positive.  Linearly degenerate waves are always given zero width.

Because states are assumed exact and gRPs are solved exactly, we have
no flexibility in assigning wave strengths or adjacent states, but we
can control the scheme by appropriately choosing the speed of waves
and their virtual widths.  We are working with general systems, so we
introduce several modifications of widths and speeds in order to keep
the residual small and the number of waves finite.

Strong rarefactions cannot be sufficiently well approximated by a
small number jump discontinuities, and so we introduce a method of
splitting strong rarefactions into several smaller waves, called
\emph{``multi-rarefactions''}, which keeps the residual under control.
However, we must also respect upper and lower bounds for the strengths
of split rarefactions, in order to avoid too many waves.  A more
immediate problem for general systems of size $N\ge 3$ is to control
the number of waves: there are several known examples in which waves
can accumulate so that infinitely many waves are generated in finite
time.  We control this problem by introducing \emph{``composite
  waves''}, in which weak waves, known as passengers, can
``piggyback'' on stronger waves, so that the total number of waves in
the mFT approximation remains controlled \emph{unless} some sort of
blowup occurs.

Definition of the scheme leads to a sequence of ``interaction times''
\[
  0 = t_0 < t_1 < t_2 < \dots < t_i < \dots,
\]
and between each pair of interaction times, waves are represented by
jump discontinuities with fixed wavespeed.  We represent these by a
\emph{``wave sequence''}
\[
  \Gamma^t = \big\{ \g_j\;:\; j = 1,\dots,n^t\big\}, \qquad
  t\in (t_i,t_{i+1}),
\]
where each wave $\g_j$ carries its own information, such as strength
$\z_j$, family, adjacent states, speed, width, etc.
Using this notation, the mFT scheme can be regarded as a natural
extension of the ``Method of Reorderings'', developed by the second
author in~\cite{Yth, TY}.

We present a method of initializing the scheme, which generates the
wave sequence $\Gamma^{0+}$ from $BV$ initial data \eqref{u0}, as well
as a reconstruction method, which extrapolates a given wave sequence
$\Gamma^t$ to generate a $BV$ approximation of the solution at time
$t$.  This reconstructed solution is not piecewise constant, but
provides profiles to the simple waves, consistent with the solution of
the gRP.

Our first theorem shows that the scheme can be defined inductively for
all interaction times $t_i$, and the corresponding approximation
remains in $BV$.  Moreover the piecewise constant approximation is in
the correct class for the residual to be well-defined.    

\begin{theorem}[Theorem \ref{thm:hatU}]
  For each $\e>0$ and $i\in\B N$, the wave sequence
  $\Gamma^{t_i+}$ has bounded variation, that is
  \[
    \mc V\big(\Gamma^{t_i+}\big) := \sum|\z_j|<\infty.
  \]
  Moreover, for each $i$, the corresponding piecewise constant
  approximation
  \[
    U^\e \in W^{1,\infty}_*\big(0,t_{i+1};BV_{loc},M_{loc}\big),
  \]
  lives in the correct function space $\mc Z$ given by \eqref{space},
  so that
  \[
    f(U^\e)\in L^\infty_*(0,t_{i+1};BV_{loc}) \qcom{and}
    q(U^\e)\in W^{1,\infty}_*(0,t_{i+1};BV_{loc},M_{loc}).
  \]
\end{theorem}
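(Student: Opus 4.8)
The plan is to prove the theorem by induction on the interaction index $i$, using the inductive construction of the scheme. The base case concerns the initialized wave sequence $\Gamma^{0+}$: since $u^0\in BV$ (we restrict to $BV$ data as announced), the initialization procedure splits each jump of $u^0$ into at most $N$ elementary waves (one per characteristic family), with the sum of their strengths controlled, up to a fixed constant depending on the wave-curve geometry over the compact range of states, by the total variation of $u^0$. Strong rarefactions are further split into multi-rarefactions, but the splitting bounds (upper and lower strength thresholds) guarantee this only multiplies the count by a locally finite factor and does not increase $\mathcal V$. Hence $\mathcal V(\Gamma^{0+})<\infty$. For the inductive step, I would assume $\mathcal V(\Gamma^{t_i+})<\infty$ and show the same at $t_{i+1}+$: at an interaction time only finitely many waves meet (this is part of how interaction times are defined), each pairwise or multi-wave interaction is resolved by solving the gRP exactly, and the outgoing wave sequence differs from the incoming one only locally near the interaction point. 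Since solving the gRP is continuous in the incoming data and the incoming strengths are bounded, the outgoing strengths are bounded; composite-wave bookkeeping (passengers piggybacking) does not create new waves, and any rarefaction splitting again only multiplies a finite count by a locally finite factor. Thus finitely many waves in, finitely many waves out, with $\mathcal V$ still finite — possibly larger, but that is fine since no $BV$ bound uniform in $i$ is claimed here.

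The second assertion, that $U^\e\in W^{1,\infty}_*(0,t_{i+1};BV_{loc},M_{loc})$ and lies in the space $\mathcal Z$ of \eqref{space}, is essentially a regularity-in-time statement for a piecewise constant function whose jump locations move along finitely many straight lines on $(t_i,t_{i+1})$ with fixed speeds. On each such time-strip the spatial profile $U^\e(\cdot,t)$ is piecewise constant with finitely many jumps in any compact $x$-interval, so $U^\e(\cdot,t)\in BV_{loc}$ uniformly, giving the $L^\infty_*$-in-time, $BV_{loc}$-in-space membership. For the time derivative: because each front moves at constant speed, $\partial_t U^\e$ (in the distributional sense) is, on each strip, a locally finite sum of Dirac masses along the fronts weighted by speed times jump, hence a locally finite measure, and it depends Lipschitz-ly — indeed piecewise constantly — on $t$ in the weak* sense. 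This yields $U^\e\in W^{1,\infty}_*(0,t_{i+1};BV_{loc},M_{loc})$. Applying the $C^2$ map $f$ (resp. $q$): $f(U^\e)(\cdot,t)$ is again piecewise constant in $x$ with the same jump structure, so $f(U^\e)\in L^\infty_*(0,t_{i+1};BV_{loc})$; and since $q$ is $C^2$ and $U^\e$ ranges in a compact set, $q(U^\e)$ inherits the same time regularity, $q(U^\e)\in W^{1,\infty}_*(0,t_{i+1};BV_{loc},M_{loc})$. One must check $U^\e$ meets the precise structural requirements encoded in $\mathcal Z$ via \eqref{space} — measurability, the compatibility between the $BV_{loc}$ spatial trace and the $M_{loc}$ temporal derivative, and the correct one-sided limits at $t_i,t_{i+1}$ — but these follow from the explicit piecewise-linear-in-$(x,t)$ geometry of the fronts.

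The main obstacle is the inductive step for finiteness of the wave count, specifically ruling out that a single interaction — or the rarefaction-splitting it triggers — produces infinitely many waves on $(t_i,t_{i+1})$. This is exactly where the structural devices of the scheme must be invoked: the lower strength threshold for split rarefactions bounds how finely a given rarefaction fan is subdivided on any compact set; the composite-wave mechanism prevents weak waves generated in an interaction from proliferating by attaching them as passengers to existing strong waves rather than as free fronts; and the definition of interaction times (as the discrete times at which two or more fronts coincide) together with the fixed wavespeeds on each strip ensures that between consecutive interaction times the wave sequence is genuinely frozen, so no accumulation can occur within a strip. I would therefore organize the inductive step around a careful accounting: (i) identify the finitely many incoming fronts at $t_{i+1}$; (ii) solve the finitely many gRPs they induce, each yielding at most $N$ outgoing elementary waves plus bounded-count multi-rarefaction splittings; (iii) re-run the composite-wave assignment so that weak outgoing waves are absorbed as passengers; and (iv) conclude $n^{t}<\infty$ for $t\in(t_{i+1},t_{i+2})$ and $\mathcal V(\Gamma^{t_{i+1}+})<\infty$. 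The $BV$ and function-space claims then follow routinely from the finiteness and the constant-speed front geometry as sketched above.
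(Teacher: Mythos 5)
Your proposal follows essentially the same approach as the paper's own proof: induction over interaction times for finiteness of $\mathcal V(\Gamma^{t_i+})$ (finitely many waves in, finitely many waves of finite strength out), and the piecewise-constant geometry of fronts moving at fixed speeds to place $U^\e$ in $W^{1,\infty}_*(0,t_{i+1};BV_{loc},M_{loc})$ and then, via the $C^2$ regularity of $f$ and $q$, in $\mathcal Z$. One small imprecision worth fixing: the claim that $\partial_t U^\e(\cdot,t)=\sum_j s_j\,[u]_j\,\delta_{x^p_j(t)}$ depends ``piecewise constantly'' on $t$ is not right as stated --- the Dirac masses translate with $t$, so the measure itself changes; the correct assertion is that $U^\e:[0,t_{i+1}]\to M_{loc}$ is \emph{Lipschitz}, with constant controlled by $\mathcal V(\Gamma^{t})$, and to get this globally on $[0,t_{i+1}]$ (rather than only strip-by-strip) one must observe, as the paper does explicitly, that $U^\e$ is continuous into $M_{loc}$ across each interaction time $t_i$ because the incident and emergent waves meet at a single point and states are exact; you gesture at this with ``the correct one-sided limits at $t_i$,'' but it is the pivot of the argument and deserves to be spelled out.
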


While we state the theorem for $BV$ solutions, our results also
hold in the wider class of $H^{-b}$ solutions, $b>1/2$.  We expect
that this wider class will extend to more space dimensions, for which
the class $BV$ is unstable to evolution by \eqref{eqsys}~\cite{Rauch}.

Although the scheme can be defined for all interaction times $t_i$,
these interaction times may accumulate at some finite time $t_\infty$,
possibly due to some form of blowup or simply because the number of
waves becomes infinite due to self-similarity.  If this happens, we
say we have an \emph{accumulation of interaction times}.  We introduce
a checkable condition, which we call the ``$\al$-ray condition'', which
is weaker than the requirement that the approximation $U^\e(x,t)$ be
$BV$ along rays, and which effectively characterizes the conditions
under which the scheme cannot be continued.

\begin{theorem}[Theorem \ref{thm:noaccum}]
  For fixed $\e>0$, suppose that the mFT approximation $U^\e$
  satisfies the $\al$-ray condition for all $(x,t)$, with
  $t\le t_\infty$.  Then there is no accumulation of interaction times
  and the scheme can be continued for all times, that is
  $t_\infty=\infty$.
\end{theorem}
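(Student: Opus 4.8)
The plan is to argue by contradiction: suppose $t_\infty < \infty$ is a (finite) accumulation point of interaction times. The strategy is to extract, from the infinitely many interactions occurring in $(0,t_\infty)$, a contradiction with the $\al$-ray condition. The key mechanism is that each interaction time $t_i$ arises because two (or more) waves in the sequence $\Gamma^{t_i-}$ collide; since the scheme only produces finitely many waves between consecutive interaction times (Theorem \ref{thm:hatU} gives the $BV$ bound and hence, together with the lower bounds on split-rarefaction strengths and the composite-wave bookkeeping, finiteness of $n^{t}$ on each interval $(t_i,t_{i+1})$), an infinite accumulation at $t_\infty$ must funnel through a spatial point, or a ray, where infinitely many wave fronts converge. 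First I would make this precise: using the finite-propagation-speed bound (wavespeeds lie in a compact set since $U^\e$ is uniformly bounded and the $\lambda_i$ are continuous), the set of interaction points $(x_i,t_i)$ with $t_i \uparrow t_\infty$ lies in a bounded region, so has an accumulation point $(x_*,t_*)$ with $t_* \le t_\infty$.

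Next I would use the structure of the gRP solution and the composite/multi-rarefaction construction to show that, near such an accumulation point, the approximation $U^\e$ must fail to be "finite along the $\al$-ray" through $(x_*,t_*)$. The idea is that infinitely many interactions clustering at $(x_*,t_*)$ force infinitely many distinct wavespeeds to converge there, which — because states adjacent to every wave are exact and the gRP is uniquely solvable — forces the total strength of waves crossing a backward ray from $(x_*,t_*)$ (or the relevant $\al$-ray) to be either infinite or to oscillate without settling, contradicting the $\al$-ray condition. Here I would lean on the precise statement of the $\al$-ray condition (weaker than $BV$-along-rays but still controlling the accumulated wave strength / count along rays) to rule out exactly this behavior. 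A clean way to organize this: show that the $\al$-ray condition implies a lower bound $\delta(\e) > 0$ on the "spacing" (in time, or in accumulated strength) between consecutive interactions along any ray, so that only finitely many $t_i$ can occur before any finite time.

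The main obstacle, I expect, is the bookkeeping for composite waves: a "passenger" wave piggybacking on a stronger wave does not itself generate an interaction time, so one must carefully distinguish interactions that genuinely consume wave strength or decrease the Glimm-type potential from those that are merely relabelings. The danger is an infinite cascade of passenger/carrier reshufflings that individually cost nothing. I would handle this by showing that each genuine interaction at $t_i$ either (i) strictly decreases a discrete quantity bounded below — such as the number of composite groups, or a weighted wave count — or (ii) transfers a definite amount of strength across the $\al$-ray in a way the $\al$-ray condition forbids from happening infinitely often; passenger reattachments are then shown to be finite in number between genuine interactions by the same finiteness-of-$n^t$ argument. Assembling these, no sequence $t_i \uparrow t_\infty < \infty$ can exist, so $t_\infty = \infty$ and the scheme continues for all time. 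The remaining routine verifications — that wavespeeds stay in a compact set, that the reconstruction respects the gRP profiles, and that the $\al$-ray condition is preserved under the scheme's updates — I would relegate to lemmas citing the earlier sections.
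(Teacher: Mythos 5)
Your proposal gets the overall shape right (argue by contradiction, use finite speed of propagation to isolate an accumulation point, worry about composite-wave bookkeeping), but the mechanism you propose for extracting the contradiction is not the one the paper uses, and as stated it does not close.

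The central gap is in how the $\al$-ray condition is used. Definition~\ref{def:aray} only says that $V_\al(t)$ has a limit as $t\to t_*^-$, uniformly for $\al$ in compact sets; it does \emph{not} directly bound any accumulated wave strength or wave count along a ray, so your claim that it ``forces the total strength of waves crossing a backward ray to be either infinite or to oscillate'' is not a consequence of the definition — one can converge while infinitely many ever-weaker waves cross the ray. Likewise there is no lower bound $\delta(\e)>0$ on temporal spacing between interactions; nothing in the scheme prevents arbitrarily closely spaced collisions of very weak waves. What the paper actually extracts from the $\al$-ray condition is a \emph{classification}: picking $K$ so that for $k\ge K$ the trace along every ray is within $\tfrac12\ee p$ of its limit $V_\al$, the triangle inequality forces any wave inside the backward cone $D_\Delta$ to be either weaker than $\ee p$ (hence, by construction of the scheme, a \emph{lonely weak wave}), or to sit on a ray aimed at $(x_\infty,t_\infty)$. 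This dichotomy is the step your proposal is missing.

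From that dichotomy the contradiction falls out quickly, but again by a mechanism different from the ``strictly decreasing discrete quantity'' you propose: waves lying on rays into $(x_\infty,t_\infty)$ cannot meet one another before $t_\infty$, so every interaction inside $D_\Delta$ must involve a lonely weak wave, and by the composite-wave rules an interaction whose weak outputs form a lonely weak wave does not increase the wave count. Since only finitely many waves enter $D_\Delta$ at time $t_K$, only finitely many interactions can occur there. Note this is a \emph{non-increase} plus ``strong waves don't interact'' argument, not a strict decrease; your version (i) would be too strong to verify, and your version (ii) does not follow from the $\al$-ray condition as defined. You correctly flag that composite waves are the crux, but the specific device the paper uses — lonely weak waves absorbing all weak outputs into a single composite — is what makes the count argument work, and it needs to be invoked explicitly rather than replaced by a generic monotone functional.
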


Finally, we turn to convergence of the scheme.  For this we
\emph{assume} uniform $BV$ bounds on the approximation for time
$t\le T$.  We also introduce a technical condition that the ``weight
of heavy passengers'' $W_H$ be small: this can be interpreted as an
assumption that the different wave families effectively separate so
that comparably sized waves of different families do not continuously
interact with each other.  Under these conditions, we show that the
mFT scheme for general systems converges to a weak* solution as
$\e\to0^+$.

\begin{theorem}[Theorem \ref{thm:conv}]
  Suppose the mFT scheme is defined for $t\le T$ with the uniform
  variation bound
  \[
    \mc V(\Gamma^t) < V, \com{for} 0\le t\le T.
  \]
  Suppose also that $W_H(\Gamma,\e) = o(1)$ as $\e\to 0^+$, uniformly
  for $t\in[0,T]$.  Then some subsequence of the corresponding
  approximations $U^\e$ converge as $\e\to0^+$, and the limit is a
  weak* solution of the system \eqref{eqsys}.
\end{theorem}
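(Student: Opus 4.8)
The plan is to run the classical two-step argument for convergence of approximation schemes: first extract a convergent subsequence by compactness, then show that the residual of the approximations vanishes in the limit, so that the limit is a weak* solution with the correct Cauchy data. By hypothesis $U^\e$ is well-defined on $\B R\times[0,T]$, piecewise constant up to the reconstructed simple-wave profiles, with finitely many fronts on each time-strip $(t_i,t_{i+1})$. The uniform bound $\mc V(\Gamma^t)<V$ translates into a uniform spatial total variation bound $\operatorname{TV}_x U^\e(\cdot,t)\le C(V)$ on $[0,T]$, while strict hyperbolicity bounds all wavespeeds, $|\dot x_j|\le\Lambda$; hence $t\mapsto q(U^\e(\cdot,t))$ is uniformly Lipschitz into $L^1_{loc}$, i.e.\ the $W^{1,\infty}_*$ estimate of Theorem~\ref{thm:hatU} is uniform in $\e$. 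A Helly selection argument along a countable dense set of times, combined with this uniform time modulus, produces a subsequence $\e_k\to0$ with $U^{\e_k}\to U$ in $L^1_{loc}(\B R\times(0,T))$ and a.e., where $U\in W^{1,\infty}_*(0,T;BV_{loc},M_{loc})$ and $U(\cdot,t)\in BV_{loc}$ uniformly; in particular $U$ lies in the space $\mc Z$ on which the residual is defined. Since the $U^\e$ are uniformly bounded and $f,q\in C^2$, we also get $f(U^{\e_k})\to f(U)$ and $q(U^{\e_k})\to q(U)$ in $L^1_{loc}$, and $q(U^{\e_k})_t\wto q(U)_t$ as measures. The initialization of $\Gamma^{0+}$ from $u^0$ together with the time-continuity gives $U(\cdot,0)=u^0$.

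The core is to show that the residual $\mc R[U^\e]$, which is a measure carried by the fronts and the interaction points, has $\|\mc R[U^\e]\|_{M(K)}\to0$ for every compact $K\subset\B R\times(0,T)$; we estimate it family by family. Shock fronts and linearly degenerate fronts carry no residual, since the scheme assigns their adjacent states exactly and the Rankine--Hugoniot relations hold identically. Rarefaction and compressive simple-wave fronts (including the pieces of multi-rarefactions) contribute a per-front residual bounded by the front strength times a factor tending to zero --- this is precisely what the choice of virtual widths, the upper and lower strength bounds on split rarefactions, and the reconstructed profiles are designed to guarantee --- so their total over $[0,T]$ is $o(1)$ despite there being possibly $O(1/\e)$ of them. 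The passengers riding on composite waves contribute the ``heavy passenger'' term, bounded by $W_H(\Gamma,\e)$, which is $o(1)$ uniformly on $[0,T]$ by hypothesis; this is exactly the role of that assumption. Finally the interaction points contribute only through the instantaneous change of wave configuration, controlled by the decrease of the Glimm-type interaction potential, bounded by $CV^2$ in total, which together with the per-interaction reconstruction mismatch is again $o(1)$.

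It then remains to pass to the limit: the residual pairs continuously with admissible test functions under the convergence above --- the only time derivative falls on $q$, for which we have weak* convergence of $q(U^{\e_k})_t$, and all remaining terms are continuous under $L^1_{loc}$ convergence of $U^{\e_k}$ and $f(U^{\e_k})$ --- so for every such test function $\phi$,
\[
  \langle \mc R[U],\phi\rangle = \lim_{k\to\infty}\langle \mc R[U^{\e_k}],\phi\rangle = 0 ,
\]
whence $\mc R[U]=0$ locally and $U$ is a weak* solution of \eqref{eqsys} with data $u^0$. (If Lax-admissibility is part of the required notion of solution, it is inherited from the fact that every front of $U^\e$ is admissible by construction, which is stable under the limit.) The substantial difficulty is concentrated entirely in the vanishing-residual step, and within it in two places: first, summing the many small per-front simple-wave errors, which cannot be done by a crude count and forces one to use the precise width/strength bookkeeping of the scheme; and second, disposing of the composite-wave passenger contribution, which the $BV$ bound alone does not control and which is exactly why the extra hypothesis $W_H=o(1)$ is imposed. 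A secondary technical point is that $U^\e$ carries reconstructed simple-wave profiles rather than being genuinely piecewise constant, so one must verify that these profiles are consistent with the residual calculus and do not themselves generate an $O(1)$ error.
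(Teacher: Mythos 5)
Your overall plan (compactness, then vanishing residual) is the paper's, and Helly selection would serve for the first step, though the paper deliberately uses Banach--Alaoglu in $L^\infty_*(0,T;BV_{loc})$ and $L^\infty_*(0,T;M_{loc})$ so that the same argument gives $H^{-b}$ weak* solutions for free (Corollary~\ref{cor:main}). There are, however, two concrete problems in your vanishing-residual step.

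First, a genuine gap: you never account for \emph{light} passengers. The paper's key estimate (Lemma~\ref{lem:RG}) is $\|\mc R(\Gamma)\|_M \le \mc V(\Gamma)\,\big(K_s\,\e^{2e_s}+K_p\,Q(\e)\big) + K_{hp}\,W_H$, and the middle term is precisely the light-passenger contribution: a light passenger is carried at its carrier's wavespeed, which differs from its own characteristic speed by an $O(1)$ amount, so by \eqref{Rs} each such jump contributes a residual proportional to its full strength $|\z_l|\le Q(\e)|\z_b|$ --- linear, not superlinear, in the strength. Your ``per-front residual bounded by strength times a factor tending to zero'' argument therefore fails on exactly this piece, and summing it over the wave sequence produces $K_p\,Q(\e)\,\mc V(\Gamma)$, which vanishes only because one also requires $Q(\e)=o(1)$. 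That hypothesis appears explicitly in the full statement of the theorem in Section~7; the version quoted in the introduction suppresses it since $Q$ is a scheme parameter rather than an assumption on the data, but the proof needs it and you cannot omit the term.

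Second, your control of ``interaction points'' via ``the decrease of the Glimm-type interaction potential, bounded by $CV^2$'' has no analogue in the paper and would not work as stated: the residual $\mc R(U^\e)(t)$ of Definition~\ref{def:resid} is a time-parameterized \emph{spatial} measure, evaluated at times between interactions where $U^\e(\cdot,t)$ is genuinely piecewise constant (formula \eqref{Uhat}), and interaction times form a null set in $t$ and carry no mass. No decreasing Glimm potential is assumed in this theorem --- that hypothesis is introduced only for the $p$-system in Section~9. Relatedly, your closing worry that ``$U^\e$ carries reconstructed simple-wave profiles rather than being genuinely piecewise constant'' misreads the construction: the reconstruction of Section~5 is a separate comparison object, not the approximation whose residual is computed.
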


Since our scheme is defined for large amplitude waves, these theorems
show that in order to prove a large amplitude $BV$ existence theorem
for a general nonlinear hyperbolic system, it suffices to obtain
uniform $BV$ bounds.  In particular, the authors expect that checking
the $\al$-ray and heavy passenger conditions will be essential steps
in deriving $BV$ bounds for such systems.

The most important and well-known system of conservation laws is the
inviscid Euler equations of compressible gas dynamics.  As an
application, we give a detailed description of the mFT for that
system.  Because the system consists of three families and the middle
(contact) family is linearly degenerate, we do not expect self-similar
patterns to develop, so we choose \emph{not to use} composite waves in
the scheme.  We derive exact expressions for the wave curves and for
the residual of each wave.  We then show that the mFT schemes and
their convergence properties are consistent between the spatial Euler
frame and the material Lagrangian frame, so it is sufficient to
consider only the Lagrangian frame, in which the wavespeeds are
simplified.  Finally, we restrict to the $p$-system of isentropic gas
dynamics, and we show that if a (nonlocal) Glimm potential $\mc P$ for
the total variation can be found, then the scheme is defined for all
times.

\begin{theorem}[Theorem \ref{thm:noacc}]
  Fix $\e>0$ and for some $T>0$, suppose that a potential $\mc P$ can
  be found for the $p$-system, such that
  \[
    \Delta\,\mc P\le 0 \qcom{and} \Delta(\mc V+\mc P) \le 0,
  \]
  for interactions occuring up to time $t<T$.  Then the scheme can be
  defined beyond time $T$; that is, there is some $k\in\B N$ such that
  the $k$-th interaction time satisfies $t_k>T$.
\end{theorem}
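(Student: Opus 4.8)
The plan is to argue by contradiction and invoke the no-accumulation criterion of Theorem~\ref{thm:noaccum}. By Theorem~\ref{thm:hatU} the scheme is defined at every interaction time, so the strictly increasing sequence $t_0<t_1<\dots$ is well defined; suppose, contrary to the assertion, that it accumulates at some finite $t_\infty\le T$. Then every interaction occurs at a time $t_i<t_\infty\le T$, so the hypotheses $\Delta\mc P\le0$ and $\Delta(\mc V+\mc P)\le0$ apply at all of them. It therefore suffices to verify the $\al$-ray condition — a requirement on the behaviour of $U^\e$ along rays, weaker than being $BV$ along rays — for all $(x,t)$ with $t\le t_\infty$: Theorem~\ref{thm:noaccum} then gives $t_\infty=\infty$, contradicting $t_\infty\le T$. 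Hence the interaction times cannot accumulate in $[0,T]$, and, being strictly increasing, some $t_k>T$, which is the claim.

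First I would turn the potential hypotheses into quantitative bounds. Between interactions $\mc V$ and the (nonlocal) Glimm potential $\mc P$ are constant, and $\mc P\ge0$, so telescoping $\Delta(\mc V+\mc P)\le0$ over the interactions in $[0,t_\infty)$ gives
\[
  \mc V(\Gamma^t)\;\le\;\mc V(\Gamma^t)+\mc P(\Gamma^t)\;\le\;\mc V(\Gamma^{0+})+\mc P(\Gamma^{0+})\;=:\;V
  \qcom{for} t<t_\infty,
\]
and lower semicontinuity of the total variation extends this to $t=t_\infty$; the same telescoping controls the cumulative interaction size, $\sum_{t_i<t_\infty}(-\Delta(\mc V+\mc P)(t_i))\le V$, while $\Delta\mc P\le0$ bounds the total reflected strength ever created, $\sum_{t_i<t_\infty}(-\Delta\mc P(t_i))\le\mc P(\Gamma^{0+})$. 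For the $p$-system a bound on $\mc V$ yields an $L^\infty$ bound, so the specific volume stays in a fixed compact interval on which $-p'$ lies between two positive constants; hence all wavespeeds $\pm\sqrt{-p'}$ lie in a fixed compact set and the $1$- and $2$-wave speed ranges are strictly separated. Finally the multi-rarefaction construction keeps every split rarefaction front above a strength threshold $\delta(\e)>0$, so $\mc V\le V$ caps the number of rarefaction fronts ever present by $\lfloor V/\delta(\e)\rfloor$.

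The heart of the matter is to verify the $\al$-ray condition on $\{t\le t_\infty\}$ from these bounds and the explicit structure of the $p$-system, a $2\times2$ genuinely nonlinear system whose wave curves and Glimm interaction estimates are known exactly. Along a fixed ray the relevant quantity is assembled from the jumps of $U^\e$ at wave-ray crossings and at interactions lying on the ray; the latter are bounded by a fixed multiple of the cumulative interaction size $\le V$. For the crossings one uses the $p$-system interaction estimates: every interaction creates reflected (other-family) strength quadratic in the incoming strengths, and summing this against $-\Delta\mc P$ bounds the total reflected strength ever created by $C\,\mc P(\Gamma^{0+})$; moreover a given front can re-cross a fixed ray only when its speed traverses the ray slope, an event requiring interactions along that front's path whose total effect on its speed is bounded in terms of $V$ and $\mc P(\Gamma^{0+})$, so each front contributes only boundedly many crossings. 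Chaining these explicit estimates bounds the contribution of every front — in particular the weak reflected ones — along each ray, which is what the $\al$-ray condition requires. I expect the principal obstacle to be exactly this control of the weak reflected fronts, of which the contradiction hypothesis produces infinitely many: since composite (passenger) waves are deliberately not used for the $p$-system one cannot absorb them that way, and must instead show directly that their ray-crossing contributions are summable, leaning on the globally valid, explicitly computed $p$-system interaction estimates together with the $\mc P$- and $\mc V$-bounds above. Once the $\al$-ray condition is in hand, Theorem~\ref{thm:noaccum} closes the argument as in the first paragraph.
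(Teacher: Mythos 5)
Your proposal takes a genuinely different route from the paper, and the route has a substantive gap. The paper does \emph{not} verify the $\al$-ray condition and does \emph{not} invoke Theorem~\ref{thm:noaccum} in proving Theorem~\ref{thm:noacc}. Instead it proceeds in two steps that replace that machinery entirely. First (Lemma~\ref{lem:numwvs}) it shows that Assumption~\ref{ass:pot} forces a uniform bound $N=N(\mc P(\Gamma^0),\e)$ on the \emph{total number of waves present at any time}: the only interactions that increase the wave count are compression collapse, shock/compression merge, and strong rarefaction crossing a shock, and each of these forces a definite drop in $\mc P$ bounded below by a quantity depending on $\e$ (through the thresholds $\ee w$, $\ee r$), so their total number is at most $O\big(\mc P(\Gamma^0)/\ee r\big)$. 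From this, Corollary~\ref{cor:2in2out} gives a time $t_K$ beyond which every interaction is strictly two-in/two-out with one wave of each family. The proof of Theorem~\ref{thm:noacc} then argues by contradiction on the linearly ordered trajectories $\rho_\ell$: at an accumulation point $(x_\infty,t_\infty)$, finitely many trajectories can end at $x_\infty$, one builds the backward triangle $D_\Delta$, and because any interaction inside $D_\Delta$ would emit a forward and a backward wave, one of which must leave $D_\Delta$, trajectories confined to $D_\Delta$ must lie on rays into $(x_\infty,t_\infty)$ and hence cannot interact at all before $t_\infty$, a contradiction.

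The concrete problems with your plan: (1) You reduce matters to the $\al$-ray condition but only sketch why it should hold; the ``chaining'' of interaction estimates to bound ray-crossings is precisely the hard step and is not carried out. (2) More importantly, even if you verified the $\al$-ray condition, Theorem~\ref{thm:noaccum} cannot be invoked as stated: its proof relies on the lonely-weak-wave/composite-wave mechanism to argue that interactions inside $D_\Delta$ do not increase the wave count, and the $p$-system mFT scheme deliberately \emph{does not use composite waves}. You notice this tension yourself but leave it unresolved; the paper resolves it by replacing that mechanism with the two-in/two-out fact from Corollary~\ref{cor:2in2out}, which for a $2\times 2$ system is cleaner and $\al$-ray-free. (3) Your intermediate bound ``$\mc V\le V$ caps the number of rarefaction fronts by $V/\delta(\e)$'' controls only rarefactions; shocks have no strength floor, so bounding the \emph{total} number of fronts requires the $\mc P$-cost argument of Lemma~\ref{lem:numwvs}, which you do not carry out. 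So the proposal is not a complete proof, and it also misses the fact that the paper circumvents the $\al$-ray machinery entirely for the $p$-system.
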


This theorem rules out wave patterns in which the variation becomes
unbounded, such as those in examples of \cite{BJ1} and \cite{BCZ18}.
Together with our earlier theorems, the theorem implies that if such a
potential can be found for large but finite amplitudes and variations,
then existence of $BV$ weak* solutions is implied for times
$t\in[0,T]$.  In the forthcoming paper \cite{BY2}, the authors
construct such a Glimm potential for a large class of initial data,
provided the vacuum is avoided for $t< T$.  We remark that this
potential is nonlocal, so is not inconsistent with~\cite{ChenJens}.
As a corollary, assuming the existence of the potential, we obtain
existence of solutions and convergence of the scheme to a weak*
solution \emph{with a rate} $\e^{2e_s}$ as $\e\to0^+$, uniformly for
$t\le T$.

The paper is laid out as follows: in Section~\ref{sec:w*}, we recall
the notion of weak* solution to \eqref{eqsys} and describe the
residual of piecewise constant approximations.  In
Section~\ref{sec:rp} we recall the Riemann problem and define and
solve the generalized Riemann problem (gRP).  Then in
Section~\ref{sec:mft} we define the modified Front Tracking (mFT)
approximations.  In Section~\ref{sec:ir} we describe the
initialization of the scheme and reconstruction of solutions using the
gRP.  In Section~\ref{sec:continue}, we show that the scheme can be
defined indefinitely, and in Section~\ref{sec:conv} we prove
convergence of a subsequence of the mFT approximations to a weak*
solution, assuming \emph{a priori} $BV$ bounds.  Finally in
Section~\ref{sec:gd} we specialize to the case of the Euler equations
of compressible gas dynamics, and in Section~\ref{sec:psys} we treat
the isentropic case, and show that the example of~\cite{BCZ18} cannot
be realized.

\section{Weak* Solutions}
\label{sec:w*}

We begin by recalling the notion of weak* solution, introduced in
~\cite{MY1,MY2}.  This is a modification (practically slight, but
philosophically significant) of the well-known notion of weak
solution~\cite{Lax,S,Dafermos}.  Recall that $u=u(x,t)$ is a weak
solution to the Cauchy problem \eqref{eqsys}, \eqref{u0} if it is a
bounded distributional solution; that is,
$u(x,t)\in L^\infty\big(\mathbb R\times[0,T)\big)$ is a weak solution
if
\begin{equation}
  \label{weak}
  \int_0^T\!\!\! \int_{\mathbb{R}}
  q\big(u(\fs x,\fs t)\big)\, \psi_t(\fs x,\fs t) +
  f\big(u(\fs x,\fs t)\big)\, \psi_x(\fs x,\fs t)\; d\fs x\;d\fs t +
  \int_\mathbb{R} u^o(\fs x)\, \psi(\fs x,0)\; d\fs x = 0,
\end{equation}
for all compactly supported smooth test functions
$\psi\in C_c^\infty ([0,T)\times \mathbb{R})$.

In contrast, weak* solutions are defined based on the observation that
the action \eqref{weak} of $u$ on test functions is \emph{linear}, so
that solutions should be correctly interpreted as living in the dual
of the Banach space of test functions.  With this interpretation, the
PDE system \eqref{eqsys} can be re-interpreted as an ODE in the Banach
space $X^*$, which is the dual of the space of test functions $X$,
namely
\begin{equation}
  \label{w*ode}
  q(u)' + \mathbb Df(u) = 0, \qquad u(0) = u^o,
\end{equation}
in which $\mathbb D$ is the spatial distributional derivative, and
$'=\frac d{dt}$ is the Gelfand-weak derivative of a Banach space
valued function $[0,T)\to X^*$.  This ODE can be integrated directly
to get the equivalent condition
\begin{equation}
  \label{w*int}
  q\big(u(\cdot,t)\big) - q\big(u^o\big)
  + \wint_0^t\B Df\big(u(\cdot,\fs t)\big)\;d\fs t = 0,
\end{equation}
in $X^*$, for all $t\in [0,T)$.  Here $\wint$ is the \emph{Gelfand
  integral}, defined by the condition
\[
  \Big\langle\wint_E g(\fs t)\;d\fs t,\varphi\Big\rangle
  = \int_E\big\langle g(\fs t),\varphi\big\rangle\;d\fs t \com{for all}
  \varphi\in X,
\]
where $E\subset [0,T)$ is Borel.  Note that the integral on the right
is just a Lebesgue integral, and this definition immediately yields an
interchange of limits (or Fubini theorem).  Thus \eqref{w*int} can be
written as
\begin{equation}
  \label{w*phi}
  \left\langle q\big(u(\cdot,t)\big),\varphi\right\rangle -
  \left\langle q\big(u^o\big),\varphi\right\rangle -
  \int_0^t\left\langle f\big(u(\cdot,\fs
    t)\big),\frac{d\varphi}{dx}\right\rangle\;d\fs t = 0,
\end{equation}
for all $t\in [0,T)$ and test functions $\varphi \in X$, where we have
integrated by parts inside the integral.  For this definition to make
sense, note that $q(u)$ must take values in $X^*$ and $f(u)$ takes
values in ${X^\dag}^*$, where $X^\dag$ are derivatives of functions in
$X$.

In \cite{MY1}, it is shown that when restricting to locally bounded
$BV_{loc}$ solutions, weak and weak* solutions are equivalent.  Due to
their flexibility in choice of test functions, and the ability to use
calculus in the space of measures, in this paper we consider only
weak* solutions.

More precisely, we define $BV$ weak* solutions of \eqref{eqsys} as
follows.  Note that for fixed compact set
$K = K_{\mc U}\subset \mc U$, $BV(K)$ is the dual of a space, which we
write as $BV(K) = \big(C^{-1}(K)\big)^*$, although care must be taken
in defining the separable space $C^{-1}(K)$; see~\cite{AmbFusPal}.
This notation is consistent with the fact that derivatives of $BV$
functions are Radon measures, which are in turn the dual of continuous
functions.  Since the compact set $K$ is arbitrary, and all our
approximations are constant outside a compact set, we drop explicit
mention of $K$ and write
\[
  BV_{loc} = (C^{-1}_0)^* \com{and} M_{loc} = (C^0_0)^*.
\]
Here the functions and corresponding measures are implicitly taken to
have values in $\mathbb R^N$ with the Euclidean norm; this is no
restriction because the target space is finite dimensional.
Throughout the paper, we use the convention that $BV$ functions are
the average of their one-sided limits,
\begin{equation}
  \label{bvm}
  u(x) = \frac{u(x+)+u(x-)}2, \com{for all} x.
\end{equation}

Recalling that $f$ and $q$ are $C^2$, we require that $u\in BV_{loc}$,
so that both $q\circ u$ and $f\circ u$ are also in
$BV_{loc}\subset M_{loc}$.  Next, since $\mathbb Df\in M_{loc}$, we
check \eqref{w*int} in $M_{loc}$, so we apply continuous test
functions.  As in \cite{MY1,MY2}, we let
\begin{equation}
  \label{dualSp}
  \begin{gathered}
    L^p_*(0,T;BV_{loc})= L^p_*\big(0,T;(C^{-1}_0)^*\big) \com{and}\\
    W^{1,p}_*(0,T;BV_{loc},M_{loc}) =
    W^{1,p}_*\big(0,T;(C^{-1}_0)^*,(C^0_0)^*\big)
  \end{gathered}
\end{equation}
denote the Gelfand $p$-integrable and absolutely continuous functions,
respectively.  Here, functions in $W^{1,p}_*(0,T;{X^\dag}^*,X^*)$ take
their values in ${X^\dag}^*(=BV_{loc})$ but are Gelfand differentiable in
the larger space $X^*(=M_{loc})$.  Note that $C^0_0$ is separable as
required, and it suffices to take $p=\infty$, because we are assuming
global $BV$ bounds.

\begin{defn}
  \label{def:w*}
  The function
  \begin{equation}
  \label{space}
    u\in \mc Z:= f^{-1}\big(L^\infty_*(0,T;BV_{loc})\big) \cap
    q^{-1}\big(W^{1,\infty}_*(0,T;BV_{loc},M_{loc})\big)
  \end{equation}
  is a BV-weak* solution to \eqref{eqsys} if \eqref{w*ode} holds in
  $M_{loc}$ for a.e.~$t\in[0,T)$, or equivalently, if \eqref{w*int}
  holds in $M_{loc}$ for every $t\in[0,T)$, that is \eqref{w*phi}
  holds for all continuous test functions $\varphi$.
\end{defn}

Here $u$ can be taken to be a norm-measurable representative, and
since the integral is absolutely continuous, \eqref{w*int} holds at
all times; see~\cite{MY1,MY2} for details.  Note that $\mc Z$ is
\emph{not} a Banach space, but as the intersection of preimages of
Banach spaces, is a complete metric space with a naturally endowed
metric.

We can similarly define weak* solutions in Sobolev spaces $H^{-b}$,
$b\in\B R$.  In this case we require that \eqref{w*int} holds in
$H^{-b}=(H^b)^*$, or equivalently that \eqref{w*phi} holds for all
$\varphi\in H^b$.  Making sense of this requires that
$f\circ u\in H^{1-b}=(H^{b-1})^*$ is Gelfand integrable, and that
$q\circ u$ lives in $H^{1-b}$ and is absolutely continuous in
$H^{-b}$.  Thus a Gelfand $p$-integrable $H^{-b}$ weak* solution is a
function
\begin{equation}
  \label{Hbspace}
  u \in \mc Z^{-b,p} := f^{-1}\big(L^p_*(0,T;H^{1-b})\big) \cap
  q^{-1}\big(W^{1,p}_*(0,T;H^{1-b},H^{-b})\big)
\end{equation}
for which \eqref{w*int} holds in $H^{-b}$.  

\subsection{Residual of Front Tracking Approximations}

A $BV$ weak* solution satisfies \eqref{w*ode} or \eqref{w*int} as an
equality in $M_{loc}$.  Although this can be checked by considering
\eqref{w*phi} for any test function $\varphi\in C^0_0$, it is easier
to simply check \eqref{w*ode} in the space $M_{loc}$ of measures.
That is, because $f\circ u$ is also $BV_{loc}$, the weak derivative
$\B Df(u)$ can be directly regarded as a measure.  Indeed, for any
function $u\in \mc Z$, where $\mc Z$ is the metric space given by
\eqref{space}, the right hand side of \eqref{w*ode} makes sense, and
so we can define a residual on $\mc Z$.

\begin{defn}
  \label{def:resid}
  For any $u\in \mc Z$, the \emph{residual} of $u$ is defined to be the
  time-dependent measure
  \begin{equation}
    \label{resid}
    \mc R(u)(t) := q\big(u(\cdot,t)\big)'
    + \B Df\big(u(\cdot,t)\big)\in M_{loc}.
  \end{equation}
  Similarly, if $u \in \mc Z^{-b,p}$, we define the residual
  $\mc R(u)\in L^p_*(0,T;H^{-b})$ in the same way.
\end{defn}

This ability to define the residual is a powerful distinction between
weak and weak* solutions: indeed, the residual can in principle be
calculated for any candidate $u\in \mc Z$, and it is clear that $u$ is
a weak* solution if and only if the residual $\mc R(u)$ vanishes for
almost every $t\in[0,T)$.  Moreover, in this context we can use the
calculus of measures to simplify and understand the residual because
we have a well understood topology to deal with.  In particular, in
this paper we are developing an approximation scheme for which the
residuals are bounded by the discretization parameter, and so it is
unsurprising that some subsequence of these will converge to a weak*
solution.

Front Tracking approximations are piecewise constant functions that
are built to approximate solutions of \eqref{eqsys}, and the class of
piecewise constant functions is particularly well suited to the
calculation of residuals as measures.  In particular, in a FT
approximation, each discontinuity, which represents a wave or part
thereof, propagates with a given, fixed speed for almost all times.

Thus suppose we are given some $U\in \mc Z$ which is piecewise constant
and absolutely continuous for $t$ in some open interval $(t_a,t_b)$.
That is, say $U(\cdot,t)$ has $m$ jumps, each located at
$\xi_i(t)$.  For $t\in(t_a,t_b)$, we can represent $U$ as
\[
  U(x,t) =
  \begin{cases}
    U_0(t), &x<\xi_1(t),\\
    U_i(t), &\xi_i(t)<x<\xi_{i+1}(t),\\
    U_m(t), &\xi_m(t)<x,
  \end{cases}
\]
and each of $U_i$ and $\xi_i$ are differentiable on $(t_a,t_b)$.
For each $i$, we define the jump
\[
  [U]_i(t) := U\big(\xi_i(t)+,t\big) - U\big(\xi_i(t)-,t\big)
  = U_i(t) - U_{i-1}(t).
\]
If we  let $H$ denote the Heaviside function, using the convention that $H(0)=1/2$, we can rewrite 
\begin{equation}
  \label{pwc}
  U(\cdot,t) = U_0(t) + \sum_{i=1}^m[U]_i(t)\,H\big(x-\xi_i(t)\big),
  \qquad t\in(t_a,t_b).
\end{equation}

\begin{lemma}
  \label{lem:res}
  Suppose that $U\in \mc Z$ is piecewise constant and has the form
  \eqref{pwc}, with each state $U_i(t)=U_i$ constant for
  $t\in(t_a,t_b)$.  Then for such $t$, the residual has the form
  \begin{equation}
    \label{pwcres}
    \mc R(U) =
    \sum_{i=1}^m\big([f]_i-\xi_i'(t)\,[q]_i\big)\,\delta_{\xi_i} \in M_{loc},
  \end{equation}
  where $[q]_i$ and $[f]_i$ are the jumps in $q(u)$ and $f(u)$ at
  $\xi_i(t)$, respectively,
  \begin{equation}
    \label{qfjump}
    [q]_i(t) := q(U_i) - q(U_{i-1}), \com{and}
    [f]_i(t) := f(U_i) - f(U_{i-1}).
  \end{equation}
\end{lemma}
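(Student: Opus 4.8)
The plan is to compute the Gelfand derivative $q(U(\cdot,t))'$ and the distributional derivative $\B Df(U(\cdot,t))$ separately, using the explicit representation \eqref{pwc}, and then combine them according to the definition \eqref{resid} of the residual. The key simplification is that on the interval $(t_a,t_b)$ each state $U_i$ is constant, so the only time-dependence in \eqref{pwc} comes from the shock locations $\xi_i(t)$.

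First I would address the spatial term. Applying $q$ to \eqref{pwc}, and using that $q$ evaluated on a piecewise constant function is piecewise constant with jumps $[q]_i = q(U_i)-q(U_{i-1})$, we may write $q(U(\cdot,t)) = q(U_0) + \sum_i [q]_i\,H(x-\xi_i(t))$, and similarly for $f$. Since the distributional $x$-derivative of $H(x-\xi_i(t))$ is the Dirac mass $\delta_{\xi_i(t)}$, we get $\B Df(U(\cdot,t)) = \sum_{i=1}^m [f]_i\,\delta_{\xi_i(t)}$ in $M_{loc}$; here the constancy of the $U_i$ means there is no absolutely continuous part and no subtlety about where to evaluate $f$ — the jumps are exactly \eqref{qfjump}.

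Next I would compute the time derivative. We must differentiate $q(U(\cdot,t)) = q(U_0) + \sum_i [q]_i\,H(x-\xi_i(t))$ with respect to $t$ in the Gelfand sense, i.e.\ weakly against test functions $\varphi\in C^0_0$. Pairing with $\varphi$ gives $\langle q(U(\cdot,t)),\varphi\rangle = \langle q(U_0),\varphi\rangle + \sum_i [q]_i\int_{\xi_i(t)}^\infty \varphi(x)\,dx$ (up to the $H(0)=1/2$ convention, which contributes a measure-zero set of $x$ and is harmless). Differentiating in $t$ and using the fundamental theorem of calculus, $\frac{d}{dt}\int_{\xi_i(t)}^\infty\varphi\,dx = -\xi_i'(t)\,\varphi(\xi_i(t)) = -\xi_i'(t)\,\langle\delta_{\xi_i(t)},\varphi\rangle$, so that $q(U(\cdot,t))' = -\sum_i \xi_i'(t)\,[q]_i\,\delta_{\xi_i(t)}$ in $M_{loc}$, valid for a.e.\ (indeed every) $t\in(t_a,t_b)$ since each $\xi_i$ is differentiable there. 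Adding the two contributions yields \eqref{pwcres}.

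The main obstacle is not any deep estimate but making the Gelfand differentiation rigorous: one must check that $t\mapsto q(U(\cdot,t))$ is genuinely absolutely continuous into $M_{loc}$ with the claimed derivative, rather than merely computing formally. This follows because each map $t\mapsto H(x-\xi_i(t))$, paired against a fixed $\varphi\in C^0_0$, is Lipschitz in $t$ with constant controlled by $\|\varphi\|_\infty$ and $\sup|\xi_i'|$, and there are only finitely many jumps; hence the sum is Lipschitz into $M_{loc}$, so absolutely continuous, and its weak derivative is obtained by differentiating under the pairing as above. The hypothesis $U\in\mc Z$ guarantees we are in the right function space for \eqref{resid} to be meaningful, so once the derivative is identified the conclusion is immediate. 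A minor point to note in passing is that the $H(0)=1/2$ averaging convention \eqref{bvm} affects only the pointwise value of $U$ on the null set $\{x=\xi_i(t)\}$ and therefore changes neither the measure $\B Df(U)$ nor the integrals $\int_{\xi_i(t)}^\infty\varphi\,dx$.
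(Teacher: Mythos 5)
Your proof is correct and follows essentially the same route as the paper's: expand $q\circ U$ and $f\circ U$ as Heaviside sums, take the distributional $x$-derivative of $f\circ U$ and the Gelfand $t$-derivative of $q\circ U$, and add. The paper phrases it slightly more generally by first writing down a residual with a bounded remainder term $B(t)$ arising from possibly time-varying $U_i(t)$ and then observing that $B\equiv 0$ when the states are constant, whereas you assume constancy from the outset and instead devote space to justifying the Gelfand differentiation (Lipschitz continuity into $M_{loc}$, irrelevance of the $H(0)=\tfrac12$ convention) — a welcome bit of extra rigor but not a different method.
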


\begin{proof}
Since $U(\cdot,t)$ is $BV$, we can simply plug this in to
\eqref{resid} and evaluate it as a measure.  It follows immediately
that
\[
  \begin{aligned}
    q\big(U(\cdot,t)\big)
    &= q\big(U_0(t)\big) +
      \sum_{i=1}^m[q]_i(t)\,H\big(x-\xi_i(t)\big),\\
    f\big(U(\cdot,t)\big)
    &= f\big(U_0(t)\big) + \sum_{i=1}^m[f]_i(t)\,H\big(x-\xi_i(t)\big),   
  \end{aligned}
\]
where the jumps in $q$ and $f$ are given by \eqref{qfjump}.
Plugging these into \eqref{resid} immediately yields
\begin{equation}
  \label{pwcR}
  \mc R(U)(t) = B(t) +
  \sum_{i=1}^m\big([f]_i-\xi_i'\,[q]_i\big)\,\delta_{\xi_i} \in M_{loc},
\end{equation}
where $\delta_\xi$ is the Dirac mass, and $B(t)$ is the bounded
function
\[
  B(t) := q\big(U_0(t)\big)' +
      \sum_{i=1}^m[q]_i'(t)\,H\big(x-\xi_i(t)\big).
\]
If each state $U_i$ is constant for $t\in(t_a,t_b)$, it follows that
$B(t)$ vanishes identically there.
\end{proof}

Note that when we work in the space $\mc Z^{-b,p}$, the Lemma follows
as above, the only difference being that we should choose $b$ large
enough that the Dirac mass $\delta\in H^{-b}$, namely $b>1/2$, and in
that case we can again take $p=\infty$.

\section{The Riemann Problem and Extensions}
\label{sec:rp}

We briefly recall the solution of the Riemann problem~\cite{Lax,S}.
As is well known, this is the Cauchy problem in which the Cauchy data
consists of two constant states, namely
\[
  u^o(x) =
  \begin{cases}
    u_L, &x<0, \\
    u_R, &x>0.
  \end{cases}
\]
Note that this data respects the scale invariance of system
\eqref{eqsys}.  In one space dimension, the solution of the Riemann
problem has two major applications: it represents the asymptotic limit
of general solutions with fixed states $u(\pm\infty,t)$, and it serves
as the main building block of more general solutions, including FT
approximations.

For strictly hyperbolic systems, the solution of the Riemann problem
is given by $N+1$ constant states separated by \emph{elementary
  centered waves}.  For genuinely nonlinear (GNL) fields, these
elementary waves are either \emph{shocks} or centered
\emph{rarefactions}, while for linearly degenerate fields, they are
\emph{contact discontinuities}.  For fields that are not either
genuinely nonlinear or linearly degenerate, centered waves are
composites of rarefactions and (degenerate) \emph{contact shocks}, but
for ease of discussion we will not consider such families here.

We begin by describing a single shock or contact, which is a sharp
discontinuity separating two constant states.  These take the form
\eqref{pwc} with a single jump, so have the form
\[
  u =
  \begin{cases}
    u_-, &x<\sigma\,t,\\
    u_+, &x>\sigma\,t,
  \end{cases}
\]
where the discontinuity propagates along the ray $x=\sigma\,t$, which
is forced by self-similarity.  If this is to be an exact solution of
the system, the residual must vanish, so by Lemma~\ref{lem:res},
\begin{equation}
  \label{sc}
  [f] = \sigma\,[q], \com{or}
  f(u_+) - f(u_-) = \sigma\,\big(q(u_+)-q(u_-)\big),
\end{equation}
which is the \emph{Rankine-Hugoniot jump condition}.  Note that this
is $N$ equations in $2N+1$ variables, so for one state, say $u_-$
fixed, it gives a one-parameter family of solutions $(u_+,\sigma)$,
which describes a curve in state space.

This can be clarified by writing the Hugoniot condition as a
generalized eigenvalue problem, as follows.  We set
\[
  [u] := u_+-u_-, \quad \ol u = \frac{u_-+u_+}2, \com{and}
  u(\eta):=\ol u + \eta\,[u],
\]
and integrate along the segment joining $u_-$ and $u_+$, namely
$\big\{u(\eta)\;:\;\eta\in(-\frac12,\frac12)\big\}$, to get
\[
  \begin{aligned}
    f(u_+)-f(u_-)
    &= \int_{-1/2}^{1/2}Df\big(u(\fs\eta)\big)\;d\fs\eta\;[u]
      =:\<Df\>\,[u],\\
    q(u_+)-q(u_-)
    &= \int_{-1/2}^{1/2}Dq\big(u(\fs\eta)\big)\;d\fs\eta\;[u]
      =:\<Dq\>\,[u],
  \end{aligned}
\]
so that $\<Df\>$ and $\<Dq\>$ are averaged Jacobian matrices.  The
Hugoniot condition is thus simply a generalized eigenvalue problem,
namely
\[
  \<Df\>\,[u] = \sigma\,\<Dq\>\,[u],
\]
in which the matrices $\<Df\>$ and $\<Dq\>$ are nonlinear functions
of $\ol u$ and $[u]$.  In general we expect $N$ different shock or
contact families, each of which will yield a one-parameter family of
solutions for each reference state, which could be either of $u_-$,
$u_+$ or $\ol u$.  We will refer to these as shock or contact curves
according to whether the family is genuinely nonlinear or linearly
degenerate.

For genuinely nonlinear (GNL) families, the \emph{Lax entropy condition} is
used to rule out inadmissible discontinuities: this states that for a
$k$-shock, the left and right $k$-characteristics must impinge on the
shock, namely
\begin{equation}
  \label{Laxent}
  \lambda_k(u_-) > \sigma(u_-,u_+) > \lambda_k(u_+), \com{while also}
  \lambda_{k-1}(u_-) < \sigma(u_-,u_+) < \lambda_{k+1}(u_+),
\end{equation}
and this in turn rules out one half of the Hugoniot locus.  We will
denote the admissible half of the $k$-shock curve by
\[
  \mc S_k(u_-) := \big\{ u_+\;;\;
  [f] = \sigma_k\,[q],\;
  \lambda_k(u_-) > \sigma_k > \lambda_k(u_+)\big\},
\]
and we will write
\begin{equation}
  \label{Sk}
  u_+ \in \mc S_k(u_-) \com{and}
  u_- \in \mc S_k^\T(u_+),
\end{equation}
if left state $u_-$ can be joined to right state $u_+$ by an
admissible $k$-shock.

For strong shocks, we assume the following \emph{extension}
of Lax's entropy condition.  Given any $u_m\in\mc U$ and
$k<k'$, we require that
\begin{equation}
  \label{sig}
  \sigma_k(u_\ell,u_m) < \sigma_{k'}(u_m,u_r),
\end{equation}
for any $u_\ell\in\mc S_k^\T(u_m)$, and $u_r\in \mc S_{k'}(u_m)$.
Unlike Lax's condition, this compares shock speeds across
\emph{different} shock curves.  To the authors' knowledge, this
extension comparing shock speeds of different families has not been
previously formulated, and should extend consistently to more general
entropy conditions such as those given by \cite{Wendroff, TPLadm,
  LR03}.

We next describe \emph{simple waves}, which are $C^1$ solutions
of \eqref{eqsys}, and which include the centered rarefaction waves.  A
simple wave is a solution $u(x,t)$ of \eqref{eqsys} which has a
one-dimensional image in state space, and can thus be written
\[
  u(x,t) = U\big(\zeta(x,t)\big), \com{where}
  \zeta:\Omega\subset\B R\times[0,T)\to\B R, \com{and}
  U:\B R\to\mc U,
\]
see \cite{John,S,Ysurv}.  Plugging this in to \eqref{eqsys} or
\eqref{w*ode}, and using classical differentiability, we immediately
get
\[
  Dq(U)\,\frac{dU}{d\zeta}\,\zeta_t
  + Df(U)\,\frac{dU}{d\zeta}\,\zeta_x = 0.
\]
This PDE system can immediately be broken down into three simpler
problems, namely a generalized eigenvalue problem,
\begin{equation}
  \label{ev}
  Df(U)\,r_k(U) = \lambda_k(U)\,Dq(U)\,r_k(U),
\end{equation}
an autonomous ODE,
\begin{equation}
  \label{ODE}
  \frac{dU}{d\zeta} = r_k(U),
\end{equation}
and a scalar transport equation,
\[
  \zeta_t + \lambda_k\big(U(\zeta)\big)\,\zeta_x = 0.
\]
These are solved in the order in which they appear: namely, first, the
eigenvalue problem is solved for all $U\in \mc U$; next, the
(appropriately normalized) eigenvectors are integrated to get $N$
\emph{simple wave curves}, parameterized by $\zeta$; and finally, the
transport equation is solved as a scalar hyperbolic PDE.  Assuming
that the wave curves $U(\zeta)$ have been described, we solve by
characteristics: that is, set
\[
  \frac{dx}{dt} = \lambda_k\big(U(\zeta)\big), \com{so that}
  \frac{d\zeta}{dt} = 0,
\]
so that $\zeta$ is constant and the characteristics are straight
lines,
\begin{equation}
  \label{simple}
  x = x_0 + \lambda_k\big(U(\zeta)\big)\,(t-t_0).
\end{equation}
This solution is determined by Cauchy data
$\left\{\big(x_0(\zeta),t_0(\zeta)\big),U(\zeta)\right\}$, and extends
to the largest neighborhood of the plane for which different
characteristics do not focus or intersect.  For any fixed value of
$t$, the \emph{profile} of the simple wave is the shape of the wave,
which we can write as $\big(x(\zeta),U(\zeta)\big)$, with $x=x(\zeta)$
given by \eqref{simple}.

The simple wave is \emph{centered} if all characteristics
\eqref{simple} focus at a single \emph{center} $(x_c,t_c)$, and in
this case the simple wave consists of one half of the characteristic
cone through the center,
\[
  x-x_c = \lambda_k\big(U(\zeta)\big)\,(t-t_c).
\]
Here $U(\zeta)$ parameterizes the wave curve connecting states $u_-$
and $u_+$, and the centered simple wave is a rarefaction if
$\lambda_k(u_-)<\lambda_k(u_+)$, in which case the upper cone $t>t_c$
is used, while if $\lambda_k(u_-)>\lambda_k(u_+)$ the wave is a
\emph{centered compression} and the lower cone $t<t_c$ is used.

We now describe the \emph{simple wave curves} in more detail.  The
generalized eigenvalue problem yields $N$ eigenvector fields $r_k(U)$
defined throughout $\mc U$, and each such (smoothly varying) field is
integrated as the autonomous ODE \eqref{ODE}.  For each $k$, this
generates a flow on $\mc U$, and each orbit of this flow is a simple
wave curve.  We denote the $k$-wave curve through $u_0$ by
\begin{equation}
  \label{Wk}
  \mc W_k(u_0) = \Big\{ u(\zeta) : 
  u(\zeta) = W_k(u_0,\zeta)
  := u_0 + \int_0^\zeta r_k(u(\fs\zeta))\;d\fs\zeta \Big\},
\end{equation}
Because the ODE is autonomous, the functions $W_k$ have the local
group property
\begin{equation}
  \label{lgp}
  W_k(u_0,z_1+z_2) = W_k\big(W_k(u_0,z_1),z_2\big),
\end{equation}
as long as these expressions make sense, and in particular, the wave
curves are disjoint,
\[
\mc W_k(u_1) = \mc W_k(u_0) \com{whenever} u_1 \in \mc W_k(u_0).
\]
If the $k$-th field is genuinely nonlinear (GNL), for each fixed left state
$u_-$, the wave curve $\mc W_k(u_-)$ consists of two distinct pieces,
namely the \emph{rarefaction curve} of right states $u_+$ that can be
reached by a $k$-rarefaction, for which
$\lambda_k(u_-)<\lambda_k(u_+)$, and the \emph{compression curve} of
those $u_+$ that can be connected by a $k$-compression, and for which
$\lambda_k(u_+)<\lambda_k(u_-)$.  By convention, 
$r_k\cdot\nabla\lambda_k>0$, so that the rarefaction curve corresponds
to positive parameter $\zeta>0$, while compressions correspond to
$\zeta<0$.  This in turn is consistent with the description of the
$k$-shock curve above.

We put these curves together to describe the \emph{elementary centered
  wave curves}: the $k$-th such curve is the union of the $k$-shock
curve and the $k$-rarefaction curve,
\begin{equation}
  \label{Ek}
  \mc E_k(u_-) := \mc S_k(u_-) \cup
  \big\{W_k(u_-,\zeta)\;;\;\zeta\ge 0\big\},  
\end{equation}
and this consists of all right states $u_+$ that can be reached from
left state $u_-$ by an elementary centered $k$-wave.  Recall that for
a linearly degenerate family, the Hugoniot locus $\mc S_k$ and simple
wave curves $\mc W_k$ coincide, and the corresponding wavespeed
$\lambda_k$ is constant on that curve~\cite{Lax,S}.

We now describe the solution of the Riemann problem, which is well
known~\cite{Lax,S,Dafermos}.  Setting $u_0:=u_L$, we inductively take
\begin{equation}
  \label{Ekz}
  u_k := E_k(u_{k-1},\zeta_k) \in \mc E_k(u_{k-1}),
\end{equation}
where $E_k(u_-,\zeta)$ is a consistent parameterization of
$\mc E_k(u_-)$.  This yields a map $u_N$, given by
\[
  u_N(\zeta_1,\zeta_2,\dots,\zeta_N) =
  E_N\Big(E_{N-1}\big(\dots,E_1(u_0,\zeta_1),
  \dots,\zeta_{N-1}\big),\zeta_N\Big)\in\mc U,
\]
defined on some maximal set $Z\in\B R^N$ containing $u_L$, and on
which the map is invertible.  We now solve the equation
\[
  u_N(\zeta_1,\zeta_2,\dots,\zeta_N) = u_R,
\]
to get the strengths of the individual waves, and generate the Riemann
solution by connecting the intermediate states by the corresponding
centered elementary waves.

Existence and uniqueness of small amplitude solutions of the Riemann
problem was shown by Lax for general systems~\cite{Lax}, and global
solutions have been found for physically interesting systems
\cite{S,Ystr}, but because we are working with a general system,
existence (and uniqueness) of solutions of the Riemann problem is an
\emph{assumption}.  We will specify our main assumption more precisely
after generalizing the Riemann problem below.

\subsection{Generalized Riemann Problem (gRP)}

In our development of the modified Front Tracking scheme, we wish to
account for the widths of simple waves and understand the consequences
thereof for general interactions.  In order to consistently take this
into account, we will incorporate the explicit use of compressions
into the scheme.  We begin with the observation that providing a given
width to a GNL simple wave gives a well-defined way of centering the
wave.

\begin{lemma}
  \label{lem:center}
  Assume the $k$-th family is genuinely nonlinear, and suppose we are
  given states $u_-$ and $u_+\in\mc W_k(u_-)$, or equivalently $u_+$
  and $u_-\in\mc W_k(u_+)$.  Suppose also we are given a positive
  initial width $w$.  Then there is a unique profile $u^o=u^o(x)$,
  with
  \[
    u^o(x) = u_-,\ x<0, \qquad u^o(x) = u_+,\ x>w,
  \]
  and a unique center $(x_c,t_c)$, such that the Cauchy problem
  $u(x,0)=u^o(x)$ has a solution initially consisting of a simple
  centered $k$-wave.  If $t_c<0$, this simple wave is a rarefaction
  and is defined for all $t\in(t_c,\infty)$, while if $t_c>0$, the
  simple wave is a compression for $t\in(-\infty,t_c)$, and can be
  continued as a translated Riemann solution for all times $t\ge t_c$.
  Moreover, the $k$-th wavespeed $\lambda_k(u^o)$ is a monotonic
  function of $x$, increasing for a rarefaction and decreasing for a
  compression.

  The position $(x_c,t_c)$ of the center scales linearly with width
  $w$, and the profile of the data scales similarly: that is, if
  $u^o(x;w)$ denotes the profile determined by $w>0$, then for any
  $\alpha>0$,
  \[
    u^o(x;\alpha\,w) = u^o(x/\alpha;w).
  \]
\end{lemma}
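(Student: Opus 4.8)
The plan is to reduce the statement to solving a scalar transport problem for $\zeta(x,t)$ with characteristic speed $\lambda_k(U(\zeta))$, after fixing the wave curve parameterization. First I would use the group property \eqref{lgp} to write $u_+ = W_k(u_-,Z)$ for a unique parameter value $Z$ (of the same sign as the rarefaction/compression distinction: $Z>0$ for a rarefaction, $Z<0$ for a compression), and set $U(\zeta):=W_k(u_-,\zeta)$ for $\zeta\in[0,Z]$ (or $[Z,0]$). The profile $u^o(x)$ is then prescribed by choosing a monotone bijection $\zeta = \phi(x)$ from $[0,w]$ onto the $\zeta$-interval, with $u^o(x)=u_-$ for $x<0$ and $u^o(x)=u_+$ for $x>w$; since we want $\lambda_k(u^o(x))$ to be monotone in $x$, the natural choice is to let $x$ be an \emph{affine} function of $\lambda_k$, i.e. demand that $\lambda_k(U(\zeta))$ depend affinely on $x$ along the profile. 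By genuine nonlinearity, $\zeta\mapsto\lambda_k(U(\zeta))$ is strictly increasing, so this uniquely determines $\phi$ and hence $u^o$.

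Next I would produce the center. The characteristics \eqref{simple} emanating from $(x,0)$ with $x\in[0,w]$ carry constant $\zeta=\phi(x)$ and have speed $\lambda_k(U(\phi(x)))$; by the affine dependence just arranged, $\lambda_k(U(\phi(x))) = a + b\,x$ for constants $a,b$ with $b\neq 0$ (the sign of $b$ is that of $Z$). Two such characteristics $x_1(t), x_2(t)$ from distinct base points $x^{(1)}\neq x^{(2)}$ satisfy $x_i(t) = x^{(i)} + (a+b\,x^{(i)})\,t$, and one checks $x_1(t)=x_2(t)$ precisely when $1+bt=0$, i.e. at the single time $t_c=-1/b$, at which point they all pass through $x_c = a/b \cdot(-1)+\dots$ — a short computation gives $(x_c,t_c)$ explicitly, independent of which pair we chose, so all characteristics focus at one center. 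If $Z>0$ (rarefaction) then $b>0$ and $t_c<0$, so the focusing is in the past and the characteristics fan out for $t>0$, filling a region without crossing, giving a genuine $C^1$ simple wave for all $t\in(t_c,\infty)$; if $Z<0$ (compression) then $b<0$ and $t_c>0$, the characteristics converge, the simple-wave solution is valid on $(-\infty,t_c)$, and at $t=t_c$ all the characteristics meet, producing Riemann data $u_-,u_+$ at a point, which we continue by the (translated, centered at $(x_c,t_c)$) Riemann solution — here I would invoke the standing assumption that the Riemann problem is globally solvable. Uniqueness of both the profile and the center follows from the fact that each of the three sub-problems (the parameter $Z$, the affine rescaling forcing monotonicity of $\lambda_k$, and the linear characteristic geometry) has a unique solution.

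Finally, the scaling claim is essentially immediate from the construction: if we replace $w$ by $\alpha w$, the only thing that changes is the bijection $\phi$, which becomes $\phi_\alpha(x) = \phi(x/\alpha)$ since it is determined by affine dependence of $\lambda_k$ on $x$ over an interval of length $\alpha w$ instead of $w$; hence $u^o(x;\alpha w) = U(\phi_\alpha(x)) = U(\phi(x/\alpha)) = u^o(x/\alpha;w)$, and correspondingly $(x_c,t_c)$ scales by $\alpha$ because $b$ scales by $1/\alpha$ while $a$ is unchanged. I expect the main obstacle to be the verification that the fanned-out (rarefaction) or converging (compression) characteristics genuinely cover an open neighborhood of the relevant half-cone \emph{injectively} — i.e. that no characteristics cross except at the center — together with checking that the resulting $u(x,t)$ is $C^1$ up to the edges where it matches the constant states $u_\pm$; this is where one must use strict monotonicity of $\lambda_k$ along the curve (genuine nonlinearity) in an essential way rather than just formally. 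The bookkeeping of signs (rarefaction versus compression, sign of $b$, upper versus lower cone) is routine but must be done carefully to match the conventions fixed earlier in \eqref{Ek} and the surrounding discussion.
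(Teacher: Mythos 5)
Your construction is essentially the paper's: you parameterize the $k$-wave curve, arrange for $\lambda_k(u^o(x))$ to be an affine function of $x$ on $[0,w]$, and deduce that all characteristics issuing from $[0,w]$ focus at a single point $(x_c,t_c)=(-a/b,-1/b)$, with genuine nonlinearity guaranteeing strict monotonicity of $\lambda_k$ along the wave curve so that the profile is well-defined by inversion. The paper proceeds in the opposite order: it first locates $(x_c,t_c)$ as the intersection of the two edge characteristics $x_-(t)=\lambda_k(u_-)\,t$ and $x_+(t)=w+\lambda_k(u_+)\,t$, and then determines the profile by imposing that every interior characteristic pass through that same point, namely $x_c-x^o(\zeta)=\lambda_k(u(\zeta))\,t_c$, with uniqueness and monotonicity via the implicit function theorem. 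These are two presentations of the same computation, and the scaling and sign bookkeeping agree.

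The one place your write-up is logically loose is the uniqueness claim for the profile. You motivate the affine form of $\lambda_k(u^o(x))$ by ``we want $\lambda_k$ monotone in $x$'' --- but monotonicity alone is compatible with infinitely many non-affine profiles, so this does not pin down $\phi$. What actually forces the affine form is the \emph{centering} requirement: for $x_c=x^o+\lambda_k(u^o(x^o))\,t_c$ to hold for every $x^o\in[0,w]$ with $t_c\neq 0$, one must have $\lambda_k(u^o(x^o))=(x_c-x^o)/t_c$, an affine function of $x^o$. Your closing remark that uniqueness follows because ``each sub-problem has a unique solution'' assumes precisely what needs to be shown at this step. The fix is to lead with the focusing condition, as the paper does, and then observe that monotonicity of $\lambda_k(u^o)$ is an automatic consequence of genuine nonlinearity together with the affine dependence, rather than the other way around.
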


\begin{proof}
  As functions of $t$, the left and right edges of the wave are
  \[
    x_-(t) = \lambda_k(u_-)\,t \com{and}
    x_+(t) = w + \lambda_k(u_+)\,t,
  \]
  respectively.  These intersect at the point
  \[
    t_c := \frac w{\lambda_k(u_-)-\lambda_k(u_+)}, \qquad
    x_c := \lambda_k(u_-)\,t_c = w + \lambda_k(u_+)\,t_c,
  \]
  which is the center of the simple wave.  Linear dependence of the
  center on width $w$ is evident.  Genuine nonlinearity implies that
  $t_c$ is finite and nonzero.  Since $w>0$, we have $t_c<0$ if and
  only if $\lambda_k(u_+)>\lambda_k(u_-)$, corresponding to a
  rarefaction, and $t_c<0$ if and only if
  $\lambda_k(u_+)<\lambda_k(u_-)$, corresponding to a compression.

  For clarity suppose that the wave is a compression, and parameterize
  $\mc W_k(u_-)$ by $\zeta\in[\zeta_+,0]$, where
  $u_+=W_k(u_-,\zeta_+)$ with $\zeta_+<0$ as in \eqref{Wk}.  We choose
  the profile of $u^o$ so that all characteristics in the simple wave
  focus at $(x_c,t_c)$, so we choose $x^o(\zeta)$ by setting
  \[
    x_c-x^o(\zeta) = \lambda_k\big(u(\zeta)\big)\,t_c, \com{and}
    u^o\big(x^o(\zeta)\big) := u(\zeta),
  \]
  for $\zeta\in[\zeta_+,0]$.  Since
  \[
    \frac{d}{d\zeta}\lambda_k = \nabla\lambda_k\cdot\frac{du}{d\zeta}
    = r_k\cdot\nabla\lambda_k> 0,
  \]
  uniqueness and monotonicity of the profile follow by the implicit
  function theorem.

  By construction, the characteristics intersect only at $(x_c,t_c)$, so
  the compression is defined for all $t<t_c$.  Moreover, the solution
  at time $t_c$ is exactly $u_-$ for $x<x_c$, $u_+$ for $x>x_c$, which
  is Riemann data translated to the focus $(x_c,t_c)$, and so
  extends to a Riemann solution for all $t\ge t_c$.  The case of a
  rarefaction is similar, but we do not attempt to extend the solution
  backwards beyond $t_c<0$.
\end{proof}

In the case of a compression, we refer to the time $t_c$ in the above
lemma as the \emph{time of collapse} of the compression; at that time
the compression will generally form a shock in the $k$-th family, with
associated waves in other families being reflected as a consequence of
nonlinearity.

Linearity in the width $w$ is a consequence of scale invariance of the
PDE: this is most evident if we use translation invariance to shift
the center of the simple wave to the origin.  For a rarefaction, this
is the `usual' centered rarefaction, while for a compression the
profile is chosen so that the wave focuses exactly at the origin and
the solution continued as a Riemann solution.

Recall that in our solution to the Riemann problem, we used three
types of waves, namely shocks and contacts, which have no width, and
rarefactions centered at the origin, which also have no width at
$t=0$.  We then pieced these waves together in increasing order of
wavespeed to get the Riemann solution for $t\ge 0$.

Following Lemma~\ref{lem:center}, we can enlarge the class of waves we
use by adding centered simple waves of a given initial width.  We can
again piece these together in increasing order of wavespeed, to get a
solution for some positive time $t_\#$, which is \emph{beyond} the
earliest time of collapse of any compressions.  This time $t_\#$ is
the first time of interaction of any reflected wave from a collapsed
compression with any other wave in the solution.

\begin{defn}
  \label{def:gRP}
  The \emph{generalized Riemann problem} (gRP) is the (implicitly
  defined) initial value problem determined by data consisting of two
  states $u_L$ and $u_R$ together with $N$ nonnegative initial
  widths $\{w_1,w_2,\dots,w_N\}$, such that $w_k=0$ for every linearly
  degenerate family $k$.  This problem is solved by piecing together,
  in increasing order of wavespeed, \emph{either} elementary centered
  waves given by \eqref{Ek}, \eqref{Ekz} in case $w_k=0$, \emph{or}
  centered simple waves of initial width $w_k>0$.  The profile of
  centered waves is determined by Lemma~\ref{lem:center}, and the
  waves are adjacent and non-overlapping at time $t=0$, so that the
  data satisfies $u^o(x)=u_L$ for $x<0$, $u^o(x)=u_R$ for
  $x>\sum w_k$.
\end{defn}

More precisely, to solve the gRP, we again set $u_0:=u_L$, and
referring to \eqref{Wk} and \eqref{Ek}, inductively set
\begin{equation}
  \label{gRsol}
  u_k :=
  \begin{cases}
    E_k(u_{k-1},\zeta_k) \in \mc E_k(u_{k-1}),&w_k=0,\\
    W_k(u_{k-1},\zeta_k) \in \mc W_k(u_{k-1}),&w_k>0,    
  \end{cases}  
\end{equation}
and again solve $u_N = u_R$.  Once we have found the intermediate
states $\{u_k\}$, we resolve any simple waves with $w_k>0$ into a
centered wave with initial profile given by Lemma~\ref{lem:center};
any shocks, contacts or origin centered rarefactions have zero width.
Finally, we piece each of these waves together to determine the
initial profile, and to give an exact weak or weak* solution of
\eqref{eqsys} for some finite time $t_\#$ beyond the first collapse
time but before multiple waves interact.  These adjacent waves do not
overlap in the time interval $[0,t_\#)$ by the Lax entropy condition
\eqref{Laxent}.  This gRP solution is shown in the left panel of
Figure~\ref{fig:gdRP}.

Because we are working with an abstract system, we do not prove
existence of solutions, but make the following abstract assumption on
the system.

\begin{assum}
  \label{ass:gRP}
  The generalized Riemann problem is globally uniquely solvable in
  $\mc U$.  That is, given states $u_L$, $u_R\in \mc U$ and initial
  widths $w_k$, the equation $u_N=u_R$ has a unique solution
  $\{\zeta_k\}$, in which each $u_k$ is given by \eqref{gRsol}.
\end{assum}

Note that the branch chosen in \eqref{gRsol} depends only on whether
or not $w_k>0$ , and once the $u_k$'s are known, the profile of the
solution is fully and uniquely determined by Lemma~\ref{lem:center}.

Our construction of solutions to the gRP follows Lax's solution of the
Riemann problem, in which the states $u_k$ are inductively defined.
We provide elementary conditions which are sufficient for
Assumption~\ref{ass:gRP} to hold.  These conditions are easily checked
in important physical systems such as gas dynamics, as shown in
Section \ref{sec:gd} below.

\begin{lemma}
  \label{lem:gRPsol}
  For a strictly hyperbolic system, assume Lax's extended entropy
  condition \eqref{sig} holds.  Given any $u_0\in\mc U$ and $u_k$,
  $k = 1,\dots,N$, satisfying
  \begin{equation}
  \label{ukzr}
    u_k\in \mc W_k(u_{k-1})\cup\mc E_k(u_{k-1}),
    \com{set}
    \z_k\,\ol r_k := u_k-u_{k-1},
  \end{equation}
  where say $|\ol r_k|=1$.  A \emph{sufficient} condition for
  existence of a unique solution to the gRP is that the set of
  \emph{non-zero} $\big\{\ol r_k\big\}$ be independent for all such
  states $\{u_k\}$.  If this independence holds, then the gRP has a
  unique solution provided
  \begin{equation}
  \label{range}
    u_r \in (\mc W_N\cup\mc E_N)\Big(\dots
    (\mc W_1\cup\mc E_1)\big(u_l\big)..\Big),
  \end{equation}
  in which each branch $\mc W_k$ or $\mc E_k$ is chosen consistently
  with the given width $w_k$.
\end{lemma}

For generic systems, we expect the conditions of this lemma to be
easier to check for the wave curves $\mc W_k$ than for the mixed
curves $\mc E_k$, because the wave curves satisfy the same autonomous
ODE on both branches.

\begin{proof}
  Given gRP data $u_L$, $u_R$ and widths $w_k$, we form the sequence
  $\big\{u_k\;;\;k=0,\dots,N\big\}$ using \eqref{gRsol}.  It then
  follows that
  \begin{equation}
  \label{urul}
    u_N = u_L + \sum_{k=1}^N\z_k\,\ol r_k,
  \end{equation}
  and by independence the coefficients $\z_k$ are uniquely determined.
  We now put each of these together as waves: that is, for a centered
  wave, corresponding to $w_k>0$, we use Lemma~\ref{lem:center} to
  define the profile and simple wave, while for a discontinuity or
  shock, we use use a jump having the correct speed $\sigma_k$.
  Genuine nonlinearity and the
  extended Lax entropy condition imply that these waves do not
  overlap, and so combining them according to Definition~\ref{def:gRP}
  yields a weak* solution and corresponding profiles for the gRP.
\end{proof}

It appears that the extended Lax entropy condition is a necessary
condition for strictly hyperbolic systems, as if it was violated, the
order of $k$ and $k'$ shocks would change, and a non-physical overlap
would occur.  We note that this can happen for nonstrictly hyperbolic
systems such as the nonlinear elastic string,~\cite{Ynonstr}, but we
will not pursue this here.

\subsection{Discretized Riemann Solution (dgRS)}

We cannot apply the gRP directly to generate an FT approximation,
because the simple waves have continuous profiles.  On the other hand,
the gRP solution is an exact solution, so is not piecewise constant,
and so we discretize to get a piecewise constant approximation
thereof.  We will refer to this as the discretized (generalized)
Riemann solution (dgRS).

The data for the dgRS is the same as that of the gRP, namely left and
right states $u_L$ and $u_R$, together with widths $\{w_k\}$
satisfying $w_k\ge0$ and $w_k=0$ for linearly degenerate $k$.  Here we
regard the widths $w_k$ as ``virtual widths'' of the waves, which will
in turn be represented by piecewise constant approximations.  That is,
in the discretization, we do not give waves actual width, but treat
them as jumps, while using these virtual widths as a bookkeeping
device to track whether a discretized wave is simple or not.

\begin{figure}[thb]
  \centering
  % \include{Pics/fig1-grp.tex}
  % This file was created with tikzplotlib v0.10.1.post13.
\begin{tikzpicture}

\begin{groupplot}[group style={group size=2 by 1}]
\nextgroupplot[
hide x axis,
hide y axis,
tick align=outside,
tick pos=left,
%x grid style={darkgrey176},
xmin=-0.205, xmax=0.785,
%xtick style={color=black},
%y grid style={darkgrey176},
ymin=-0.006, ymax=0.126,
%ytick style={color=black}
]
\path [draw=bakShock, fill=bakShock, opacity=0.2]
(axis cs:0.1,0)
--(axis cs:0,0)
--(axis cs:-0.1,0.1)
--(axis cs:-0.1,0.1)
--(axis cs:-0.1,0.1)
--(axis cs:0.1,0)
--cycle;

\path [draw=extra, fill=extra, opacity=0.2]
(axis cs:0.3,0)
--(axis cs:0.1,0)
--(axis cs:0.22,0.12)
--(axis cs:0.54,0.12)
--(axis cs:0.54,0.12)
--(axis cs:0.3,0)
--cycle;

\path [draw=fwdShock, fill=fwdShock, opacity=0.2]
(axis cs:0.5,0)
--(axis cs:0.3,0)
--(axis cs:0.66,0.12)
--(axis cs:0.74,0.12)
--(axis cs:0.74,0.12)
--(axis cs:0.5,0)
--cycle;

\path [draw=fwdShock, fill=fwdShock, opacity=0.2]
(axis cs:-0.1,0.1)
--(axis cs:-0.1,0.1)
--(axis cs:0.0599999999999999,0.12)
--(axis cs:0.0999999999999999,0.12)
--(axis cs:0.0999999999999999,0.12)
--(axis cs:-0.1,0.1)
--cycle;

\addplot [thick, contact]
table {%
0.1 0
0.1 0.12
};
\addplot [thick, bakShock]
table {%
-0.1 0.1
-0.16 0.12
};
\addplot [thick, contact]
table {%
-0.1 0.1
-0.1 0.12
};
\addplot [thick, extra]
table {%
-0.1 0.1
-5.55111512312578e-17 0.12
};

\nextgroupplot[
hide x axis,
hide y axis,
tick align=outside,
tick pos=left,
%x grid style={darkgrey176},
xmin=-0.22575, xmax=0.45075,
%xtick style={color=black},
%y grid style={darkgrey176},
ymin=-0.00575, ymax=0.12075,
%ytick style={color=black}
]
\path [draw=bakShock, fill=bakShock, opacity=0.2]
(axis cs:-0.05,0)
--(axis cs:0.05,0)
--(axis cs:-0.15,0.1)
--(axis cs:-0.15,0.1)
--(axis cs:-0.15,0.1)
--(axis cs:-0.05,0)
--cycle;

\path [draw=extra, fill=extra, opacity=0.2]
(axis cs:0,0)
--(axis cs:0,0)
--(axis cs:0.2875,0.115)
--(axis cs:0.1725,0.115)
--(axis cs:0.1725,0.115)
--(axis cs:0,0)
--cycle;

\path [draw=fwdShock, fill=fwdShock, opacity=0.2]
(axis cs:-0.075,0)
--(axis cs:0.0750000000000001,0)
--(axis cs:0.42,0.115)
--(axis cs:0.385,0.115)
--(axis cs:0.385,0.115)
--(axis cs:-0.075,0)
--cycle;

\path [draw=fwdShock, fill=fwdShock, opacity=0.2]
(axis cs:-0.15,0.1)
--(axis cs:-0.15,0.1)
--(axis cs:-0.03,0.115)
--(axis cs:0,0.115)
--(axis cs:0,0.115)
--(axis cs:-0.15,0.1)
--cycle;

\addplot [thick, bakShock]
table {%
0 0
-0.15 0.1
};
\addplot [thick, contact]
table {%
0 0
0 0.115
};
\addplot [thick, extra]
table {%
0 0
0.17 0.1
};
\addplot [thick, extra]
table {%
0 0
0.2 0.1
};
\addplot [thick, extra]
table {%
0 0
0.23 0.1
};
\addplot [thick, extra]
table {%
0.17 0.1
0.1955 0.115
};
\addplot [thick, extra]
table {%
0.2 0.1
0.23 0.115
};
\addplot [thick, extra]
table {%
0.23 0.1
0.2645 0.115
};
\addplot [thick, fwdShock]
table {%
0 0
0.35 0.1
};
\addplot [thick, fwdShock]
table {%
0.35 0.1
0.4025 0.115
};
\addplot [thick, bakShock]
table {%
-0.15 0.1
-0.195 0.115
};
\addplot [thick, contact]
table {%
-0.15 0.1
-0.15 0.115
};
\addplot [thick, extra]
table {%
-0.15 0.1
-0.075 0.115
};
\addplot [line width=0.32pt, fwdShock]
table {%
-0.15 0.1
-0.0225 0.115
};
\addplot [line width=0.32pt, fwdShock]
table {%
-0.15 0.1
-0.00750000000000001 0.115
};
\end{groupplot}

\end{tikzpicture}
  \caption{Generalized (left) and discretized (right) Riemann
    solutions}
  \label{fig:gdRP}
\end{figure}

Given gRP data, we again define intermediate states $u_k$ as in
\eqref{gRsol}, and solve the equation $u_N=u_R$ exactly.  However,
instead of then giving each simple $k$-wave a profile with initial
width $w_k$, we approximate the wave by (one or more) jumps having no
actual width but with initial virtual width $w_k$.  We then place each
of these jumps at the origin and propagate them as jumps with
wavespeed $s_k$ which approximates the actual wavespeed
$\lambda_k\big(u(\zeta)\big)$ or $\sigma(u_{k-1},u_k)$, according as
whether the wave is simple or a shock.  This discretized solution thus
resembles an actual Riemann solution, with all waves emerging from the
origin, but allows for compressions and discretizes centered
rarefactions.  By introducing and using virtual widths, we have
essentially introduced ``centered compressions'' into the approximate
solution.  As in solutions of the gRP, the dgRS is defined only for
short times, although again it can be continued beyond the earliest
collapse time of compressions, to say $t_\#>t_c$.  However, for our
purposes it will be sufficient to continue only up to $t_c$ and to
treat the collapse as an explicit interaction.  The dgRS is
illustrated in the right panel of Figure~\ref{fig:gdRP}: the shaded
areas represent the shifted individual simple waves, while the dark
lines are are the actual discontinuities in the dgRS.

The dgRS is not an actual solution of the system, so does not
have vanishing residual.  However, because the solution is piecewise
constant, we can calculate the residual as in Lemma~\ref{lem:res}, to
give
\[
  \mc R = \sum_{j=1}^m\big([f]_j-s_j\,[q]_j\big)\,\delta_{s_j\,t},
\]
valid for $0\le t\le t_c$, where
\[
  [f]_j := f(u_j)-f(u_{j-1}) \com{and}
  [q]_j := q(u_j)-q(u_{j-1}).
\]
In fact, since a weak* solution corresponds to vanishing residual, but
our construction develops approximations to the solution, we can
modify the residual slightly by applying a \emph{preconditioner},
provided that this is uniformly bounded and invertible, as follows.
For each jump $[u]_j$, we pick some $N\times N$ preconditioning matrix
$A_j$, and define the \emph{preconditioned residual}
\begin{equation}
  \label{pRes}
  \mc R^P := \sum_{j=1}^mA_j\,
  \big([f]_j-s_j\,[q]_j\big)\,\delta_{s_j\,t}.
\end{equation}
Since $A_j$ is uniformly bounded and invertible, it is clear that
there is some constant $K$ such that
\[
  \frac1K\,\|\mc R\|_M \le \|\mc R^P\|_M \le K\,\|\mc R\|_M,
\]
so convergence of the approximation to a solution is equivalent
for either the unconditioned or preconditioned residuals.

For moderate simple waves, the dgRS may have a large residual, because
a moderate simple wave cannot be well approximated by a single jump.
We thus choose to further discretize \emph{rarefactions} in the dgRS,
using the group property \eqref{lgp} of simple waves.  There is no way
to further discretize compressions, as all waves in the dgRS emanate
from the origin.  In particular, in asking that the residual be small,
so that our approximations are close to solutions, \emph{requires}
that all compressions in the approximation be sufficiently weak; if
compressions become too strong, they are immediately replaced by
shocks.

We describe how to further discretize a moderate $k$-rarefaction, for
which
\[
  u_k = W_k(u_{k-1},\zeta_k),
\]
and $\zeta_k>0$ is relatively large.  If we want a maximum rarefaction
strength of $\eta$, say, we choose
\[
  M_k=M(\zeta_k)\ge\lceil \zeta_k/\eta\rceil, \qquad M_k\in\B N,
\]
and choose $M_k$ strengths $\zeta_k^{(m)}$ satisfying
\begin{equation}
  \label{zki}
  \sum_{i=1}^{M_k}\zeta_k^{(i)} = \zeta_k, \qquad
  0<\zeta_k^{(i)}\le \eta,\ i=1,\dots,M_k.
\end{equation}
Then we set
\[
  u_k^{(0)} := u_{k-1}, \qquad
  u_k^{(i)} := W_k\big(u_k^{(i-1)},\zeta_k^{(i)}\big),
\]
for $i=1,\dots,M_k$, so that $\big\{\la_k(u_k^{(i)})\big\}$ is
monotonically increasing, and $u_k^{(M_k)}=u_k$, by the group property
\eqref{lgp}.  We now choose an approximate speed
$s_k^{(i)}\in \big(\la_k(u_k^{(i-1)}),\la_k(u_k^{(i)})\big)$, and
approximate the part of the centered rarefaction between $u_k^{(i-1)}$
and $u_k^{(i)}$ by a jump with speed $s_k^{(i)}$.  We summarize the
foregoing in the following lemma.

\begin{lemma}
  \label{lem:dgRS}
  Given any gRP data $\big\{u_L,u_R,w_1,\dots,w_N\big\}$, there is a
  dgRS approximation $u$ of the gRP solution, defined for
  \[
    0\le t\le t_\# :=
    \min_{k,\;w_k>0}\Big\{\frac{w_k}
    {\big(\la_k(u_{k-1})-\la_k(u_k)\big)_+}\Big\}>0,
  \]
  which has the form
  \[
    u(x,t) =
    \begin{cases}
      u_1^{(0)}, &x<s_1^{(1)}\,t,\\[3pt]
      u_k^{(i)}, &s_k^{(i)}\,t < x < s_k^{(i+1)},\\[3pt]
      u_k^{(0)}, &s_{k-1}^{(M_{k-1})} < x < s_k^{(1)}\,t,\\[3pt]
      u_N^{(M_N)}, &s_N^{(M_N)}\,t < x,
    \end{cases}
  \]
  in which
  \[
    u_1^{(0)} = u_0 = u_L, \quad
    u_{k-1}^{(M_{k-1})} = u_k^{(0)} = u_{k-1}, \quad
    u_N^{(M_N)} = u_N = u_R,
  \]
  and in which $M_k=1$ unless $\zeta_k>\eta$.  The wavespeeds
  $s_k^{(i)}$ are monotone increasing, and this solution
  is unique up to choices of the strengths $\zeta_k^{(i)}$ in
  \eqref{zki} and speeds $s_k^{(i)}$.

  The preconditioned residual of the dgRS approximation is
  \begin{equation}
    \label{dgRSres}
    \mc R^P(u) = \sum_{k=1}^N\sum_{i=1}^{M_k}A_k^{(i)}\,
    \big([f]_k^{(i)}-s_k^{(i)}\,[q]_k^{(i)}\big)\,\delta_{s_k^{(i)}\,t},
  \end{equation}
  where we have set
  \[
    [f]_k^{(i)} := f(u_k^{(i)})-f(u_k^{(i-1)}) \com{and}
    [q]_k^{(i)} := q(u_k^{(i)})-q(u_k^{(i-1)}),
  \]
  and where $A_k^{(i)}$ is the preconditioner of the wave $\g_k^{(i)}$.
\end{lemma}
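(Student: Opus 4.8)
The statement is essentially a summary of the construction carried out in the paragraphs preceding it, so the approach is to assemble that construction and verify each assertion in turn; there is no single deep step, but a chain of routine verifications resting on strict hyperbolicity and the standard and extended Lax conditions. First I would apply Assumption~\ref{ass:gRP} --- or, under \eqref{sig}, Lemma~\ref{lem:gRPsol} --- to the gRP data $\{u_L,u_R,w_1,\dots,w_N\}$ to obtain the uniquely determined intermediate states $u_0=u_L,u_1,\dots,u_N=u_R$ and strengths $\zeta_k$, with $u_k=E_k(u_{k-1},\zeta_k)$ when $w_k=0$ and $u_k=W_k(u_{k-1},\zeta_k)$ when $w_k>0$. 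This fixes every state and speed in the dgRS except the internal data of the discretized rarefactions.

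Next I would perform the discretization family by family. For a linearly degenerate family, for a shock ($\zeta_k<0$, $w_k=0$), and for a compression ($\zeta_k<0$, $w_k>0$), put $M_k=1$ and $u_k^{(1)}=u_k$, taking $s_k^{(1)}$ to be the contact or shock speed in the first two cases and any speed in $\big(\lambda_k(u_k),\lambda_k(u_{k-1})\big)$ for a compression. For a rarefaction ($\zeta_k>0$), choose $M_k=1$ if $\zeta_k\le\eta$ and $M_k\ge\lceil\zeta_k/\eta\rceil$ otherwise, pick sub-strengths $\zeta_k^{(i)}$ obeying \eqref{zki}, set $u_k^{(i)}:=W_k(u_k^{(i-1)},\zeta_k^{(i)})$, and note that the local group property \eqref{lgp} telescopes these to $u_k^{(M_k)}=W_k(u_{k-1},\zeta_k)=u_k$; then select $s_k^{(i)}\in\big(\lambda_k(u_k^{(i-1)}),\lambda_k(u_k^{(i)})\big)$, a nonempty interval because genuine nonlinearity makes $\lambda_k$ strictly increasing along the rarefaction curve, so that $s_k^{(1)}<\cdots<s_k^{(M_k)}$. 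In every case $M_k=1$ unless $\zeta_k>\eta$, as claimed, and a family with $\zeta_k=0$ contributes no wave.

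The key structural step is the global ordering $s_1^{(1)}<\cdots<s_1^{(M_1)}<s_2^{(1)}<\cdots<s_N^{(M_N)}$. Within a family this is the previous step; across consecutive families $k$ and $k+1$ I would run through the pairings of wave types: two simple waves are ordered by strict hyperbolicity $\lambda_k<\lambda_{k+1}$ evaluated at the common state $u_k$; a shock adjacent to a simple wave is ordered by the secondary inequalities of \eqref{Laxent}, which bound a $k$-shock speed strictly between $\lambda_{k-1}$ and $\lambda_{k+1}$ at the adjoining states; and a $k$-shock against a $(k+1)$-shock is ordered precisely by the extended condition \eqref{sig} with $u_m=u_k$. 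With this ordering, placing the $m=\sum_k M_k$ jumps at the origin and propagating each along $x=s_k^{(i)}t$ gives non-intersecting rays for $t>0$, hence the claimed piecewise constant form, whose states fit together because $u_{k-1}^{(M_{k-1})}=u_{k-1}=u_k^{(0)}$ and $u_N^{(M_N)}=u_R$.

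Fourth, I would identify the interval of definition: in the minimum defining $t_\#$, a rarefaction with $w_k>0$ has $\big(\lambda_k(u_{k-1})-\lambda_k(u_k)\big)_+=0$, contributing $+\infty$, so only compressions enter, and for a compression Lemma~\ref{lem:center} identifies $w_k/\big(\lambda_k(u_{k-1})-\lambda_k(u_k)\big)$ with its (finite, positive) time of collapse; thus $t_\#>0$ --- infinite absent compressions --- and on $[0,t_\#]$ no compression of the underlying gRP has collapsed, so the dgRS is a valid piecewise constant approximation there. Finally, since the dgRS is piecewise constant with every state constant in time, Lemma~\ref{lem:res} gives $\mc R=\sum_{k,i}\big([f]_k^{(i)}-s_k^{(i)}[q]_k^{(i)}\big)\delta_{s_k^{(i)}t}$, and multiplying the $(k,i)$-term by $A_k^{(i)}$ yields \eqref{dgRSres}; uniqueness up to the stated choices is then immediate, since the $u_k$ are forced by Assumption~\ref{ass:gRP} and the shock and contact speeds are forced, the only freedom being the $\zeta_k^{(i)}$ constrained by \eqref{zki} and the $s_k^{(i)}$ constrained to their intervals. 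I expect the cross-family speed ordering --- especially the appeal to the extended entropy condition \eqref{sig} --- to be the real substance of the proof, with a close second being the need to make the role of $t_\#$ precise by matching its formula to the collapse times from Lemma~\ref{lem:center}.
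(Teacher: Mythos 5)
Your proposal is correct and follows the same route as the paper's own proof, which is considerably terser: it simply cites Assumption~\ref{ass:gRP} for the existence and uniqueness of the $u_k$ and the positivity of $t_\#$, refers to the preceding construction for the rarefaction splitting, and invokes Lemma~\ref{lem:res} for the residual formula \eqref{dgRSres}. The additional detail you supply --- the cross-family speed ordering via strict hyperbolicity, the secondary Lax inequalities in \eqref{Laxent}, and the extended entropy condition \eqref{sig}, together with matching the $t_\#$ formula to the collapse time from Lemma~\ref{lem:center} --- is all sound, but the paper delegates those points to the gRP construction and the proof of Lemma~\ref{lem:gRPsol} rather than re-deriving them here.
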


We have not yet chosen the preconditioners, which could in principle
be any uniformly bounded matrix with bounded inverse.  It turns out
that there is a natural preconditioner in the most important case that
the system \eqref{eqsys} is symmetric, which corresponds to most
physical systems for which energy is conserved.  Henceforth we assume
that the preconditioner $A$ is implicitly given and drop the
superscript $P$.

\begin{proof}
  Existence and uniqueness of intermediate states $u_k$ and positivity
  of $t_\#$ is a result of Assumption~\ref{ass:gRP}.  If the $k$-wave
  is a shock, contact, compression or weak rarefaction, we have
  $M_k=1$, and if it is a moderate rarefaction, $\zeta_k>\eta$, then we
  discretize the corresponding wave as above, with smaller wave
  strengths satisfying \eqref{zki}.  Finally, because the
  approximation is piecewise constant, the form of the residual
  follows by Lemma~\ref{lem:res}.
\end{proof}

\section{Modified Front Tracking (mFT)}
\label{sec:mft}

Having developed the dgRS as a piecewise constant approximation of the
gRP solution, we now combine these to build piecewise constant
approximations of more general solutions.  As in other FT
applications, we glue different dgRSs together with common adjacent
states.  Effectively, we pose a series of non-overlapping gRPs,
resolve these to get a solution until two nonlinear waves interact,
and then try to repose the gRPs to resolve the interactions.  Because
the system is highly nonlinear and wave interactions cannot be easily
resolved, we instead use dgRSs, which keep us in the class of
piecewise constant solutions.  It is then easy to decide when waves
interact, and because we remain in the piecewise constant class, we
can resolve these interactions using more dgRSs.

There are several questions that need to be resolved to carry this
out: first, we must choose wavespeeds for the approximate waves;
second, we must decide exactly which dgRS to pose in order to resolve
interactions; and third, we must endeavour to keep the number of waves
finite and avoid accumulation of interaction times, so that the scheme
can be continued for long times.  In particular, the last requirement,
that of keeping the number of waves finite and avoiding accumulation
of interaction times, requires some technical modifications of the
scheme.  Note that we must also consistently include dependence on the
discretization parameters.

\subsection{Bookkeeping}

At any given time, the modified FT scheme has a finite number of
waves, each of which is a single jump discontinuity propagating at a
given speed and with several other data of interest.  Following the
method of reorderings, developed in~\cite{Yth}, we define a
\emph{wave sequence} which names each wave and each associated piece
of data.  We find this to be an efficient bookkeeping tool for
tracking the position, speed, strength and other data associated to
each wave.

For a given time $t$, we thus define the wave sequence $\Gamma$ to be
a finite sequence of waves,
\[
  \Gamma^t = \big\{\gamma^t_j\;;\;j= 1,\dots,n^t\big\},
\]
in which $n^t$ is the number of waves in the sequence.  The time $t$
is usually implicitly understood, and when this is the case we will
drop the superscript $t$ and write
$\Gamma= \big\{\gamma_j\;:\;j= 1,\dots,n\big\}$.  Each wave in the
sequence carries its own data, and we can write it as an ordered
tuple, such as
\begin{equation}
  \label{wave}
  \gamma_j = \big(\zeta_j,k_j,u_{j-1},u_j, s_j,
  w_j(t),x^c_j,t^c_j,t^!_j, \dot w_j, A_j\big);
\end{equation}
these components are, respectively: wave strength $\zeta_j$, family
$k_j\in 1,\dots,N$, left state $u_{j-1}$, right state $u_j$, wavespeed
$s_j$, virtual width $w_j(t)\ge 0$, virtual center or origin
$(x^c_j,t^c_j)$, interaction time $t^!_j$, rate of expansion
$\dot w_j$, and preconditioner $A_j$.  Several of these data are
related, for example
\[
  \dot w_j = \la_{k_j}(u_j) - \la_{k_j}(u_{j-1}), \com{or}
  u_j = W_{k_j}(u_{j-1},\zeta_j),
\]
for a simple wave or contact.  We can also store other information as
convenient or necessary, such as time of origin of the wave.  We can
similarly calculate other quantities, such as the \emph{position}
$x^p_j(t)$ of the wave $\g_j$ at time $t$, namely
\[
  x^p_j(t) = x^c_j + s_j\,(t-t^c_j).
\]
We note that the origin and center of a simple wave are generally
distinct, because the wave is generated by a nonlinear
interaction.  For a rarefaction, the center occurs earlier than the
origin, while for a compression, the center is in the future.
However, both points lie on the wave trajectory $x^p(t)$, which
strictly speaking applies only from the origin to the next interaction
time of the simple wave.  By a slight abuse of notation, we record
only the center, and implicitly take $t$ in the appropriate interval.
For consistency, if the wave is a shock or contact, we let $(x^c,t^c)$
denote the origin of the wave.

In this wave sequence notation, the (preconditioned) \emph{residual}
of the wave sequence is defined as above, and we get an expression
similar to \eqref{pwcres} and \eqref{pRes}, namely
\begin{equation}
  \label{RGam}
  \mc R(\Gamma) = \sum_{j=1}^nA_j\,\big([f]_j-s_j\,[q]_j)\,
  \delta_{x^p_j(t)},  
  \end{equation}
with $[f]_j=f(u_j)-f(u_{j-1})$ and $[q]_j = q(u_j)-q(u_{j-1})$.
Another functional of great interest is the \emph{variation} of the
wave sequence, defined by
\begin{equation}
  \label{Vdef}
  \mc V(\Gamma) := \sum_{j=1}^n|\z_j|,
\end{equation}
and which is equivalent to the $BV$ norm, as well as other norms in
which the Heaviside function is bounded, such as $H^b$ for $b<1/2$.

Note that at an interaction, the number of waves and the corresponding
indices of noninteracting waves will generally change, so care must be
taken in associating waves between different interaction times.  In the
implementation of the scheme, the wave sequence and corresponding
\emph{state sequence}
\[
  u := \big\{u_j\;:\;j = 0,\dots n\big\},
\]
are stored as linked lists, which combines well with changing numbers
of waves.  By allowing adjacent waves to occupy the same physical
point $(x,t)$ when an interaction occurs, this notation extends to
interaction times as well, so for a discretized system, we have a
well-defined wave sequence for all times $t\in[0,T)$.

Recall that one of our guiding principles is to treat states
\emph{exactly}, so that the total variation of approximations is
always accurately given, while other features of waves may be
approximate.  Thus in our description \eqref{wave}, some data,
particularly those data that are defined by states, must be exact,
while others may be approximated.  We have no choice in specifying any
of $\zeta_j$, $k_j$, $u_{j-1}$, $u_j$ or $\dot w_j$, while we may vary
speed $s_j$ and width $w_j$, and choose preconditioner $A_j$, as
needed for consistency and to bound the number of waves and/or
residual; on the other hand $(x^c_j,t^c_j)$ and $t^!_j$ are calculated
quantities, so are not directly adjustable.  We conclude with the
important observation that \emph{we are able to modify the speed
  $s_j$, virtual width $w_j$, and preconditioner $A_j$ of each wave as
  needed, but we cannot adjust the other parameters in \eqref{wave}.}

We will use the following notation for the (various) discretization
parameters: first, we let $\e$ denote the basic discretization
parameter, so we are interested in the limit $\e\to 0+$.  We then
introduce various exponents $e_\square$ to denote the different
thresholds which we use.  For example, we will refer to the maximum
(absolute) compression or rarefaction strengths of an individual wave
as $\ee c$ and $\ee r$, respectively.  We will then establish
convergence by taking $\e\to 0$, while keeping the various exponents
$e_\square$ finite.

\subsection{Choice of Wavespeed}

A shock or contact is a single jump with wavespeed given by
\eqref{sc}, and so if this exact speed is chosen, the corresponding
approximate wave contributes nothing to the residual.  It is thus
natural to use the exact wavespeed $s = \sigma(u_-,u_+)$ for these
waves.

For simple waves, we observe that each individual wave contributes a
single term to the residual in \eqref{RGam}, namely
\[
  R\,\delta_{x^(t)} := A\,\big([f]-s\,[q]\big)\,\delta_{x^(t)},
\]
in which the left and right states are $u_\mp$.  Since Dirac measures
supported at different points are independent, by the triangle
inequality each wave contributes its Euclidean norm
$\big|A\,[f]-s\,A\,[q]\big|$ to the residual (since
$\|\delta\|=1$).  It thus makes sense to try to minimize this norm in
choosing the wavespeed $s$.

We define the \emph{``vector residual''}
$R:=A\,\big([f]-s\,[q]\big)\in\B R^N$ of each wave, and use least
squares to find $s$ that minimizes this residual: that is, we minimize
\begin{equation}
  \label{lsq}
  \big(A\,([f]-s\,[q])\big)^2 =
  \wh{[f]}\cdot\wh{[f]} -
  2\,s\,\wh{[f]}\cdot\wh{[q]} +
  s^2\,\wh{[q]}\cdot\wh{[q]},
\end{equation}
where we have set 
\[
  \wh{[q]} := A[q]  \qcom{and}
  \wh{[f]} := A[f].
\]
Note that this least square minimization gives $s=\sigma(u_-,u_+)$ and
$R=0$ if the wave is a shock or contact, so this is a consistent
choice for all waves.

\begin{lemma}
  \label{lem:spdRes}
  For $u_+ = W_k(u_-,\pm\z)$, $\z>0$, the least squared wave speed and
  corresponding vector residual are
  \begin{equation}
    \label{sR}
    s_* = \frac{\wh{[q]}\cdot\wh{[f]}}{\wh{[q]}\cdot\wh{[q]}},
    \com{and}
    R_* = \wh{[f]} -
    \frac{\wh{[q]}\cdot\wh{[f]}}{\wh{[q]}\cdot\wh{[q]}}\,\wh{[q]}
    =: \Pi_{\wh{[q]}^\perp}\wh{[f]},
  \end{equation}
  respectively, where
  \[
    \wh{[q]} := A[q], \qquad
    \wh{[f]} := A[f], \qcom{and}
    \Pi_{\wh{[q]}^\perp} := I -
    \frac{\wh{[q]}\,\wh{[q]}^\T}{\wh{[q]}^\T\wh{[q]}}
  \]
  is projection orthogonal to $\wh{[q]}$.  If the same jump is given
  speed $s$, then the corresponding residual length is
  \begin{equation}
    \label{Rs}
    \begin{aligned}
      \big|R(s)\big|
      &= \sqrt{\big|R_*\big|^2 + (s-s_*)^2\big|\wh{[q]}\big|^2}\\
      &= \big|\wh{[f]}\big|\,\sqrt{\sin^2\theta +
        \big(1-\frac s{s_*}\big)^2\,\cos^2\theta},
    \end{aligned}
  \end{equation}
  where $\theta$ is the angle between the vectors $\wh{[q]}$ and
  $\wh{[f]} \in \B R^N$.

  If the wave is weak and either simple or elementary,
  \[
    u_+ = W_k(u_-,\pm\z) \com{or}u_+ = E_k(u_-,\pm\z),
    \com{and} 0\le \z\ll 1,
  \]
  then the states satisfy
  \begin{equation}
    \label{fexp}
    \wh{[f]} = \la_k(\ol u)\,\wh{[q]} + O(\z^3), \qquad
    [f] = \la_k(\ol u)\,[q] + O(\z^3),
  \end{equation}
  and the least squared wave speed satisfies
  \begin{equation}
    \label{smRes}
    s_* = \la_k(\ol u) + O(\z^2),
  \end{equation}
  where $\ol u:=W_k(u_-,\pm\frac\z2)=W_k(u_+,\mp\frac\z2)$.  The
  residuals satisfy
  \begin{equation}
    \label{Res*s}
    R(s) = \big(\la_k(\ol u)-s\big)\,\wh{[q]} + O(\z^3)
    = (s_*-s)\,\wh{[q]} + O(\z^3),
  \end{equation}
  and in particular, $R_* = O(\z^3)$.
\end{lemma}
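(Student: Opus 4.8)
The plan is to prove the statement in three logically separate pieces. The first piece — the exact formulas \eqref{sR} and \eqref{Rs} — is pure finite-dimensional linear algebra and requires no hyperbolicity. Writing $g(s) = \bigl(\wh{[f]} - s\,\wh{[q]}\bigr)\cdot\bigl(\wh{[f]} - s\,\wh{[q]}\bigr)$ as the quadratic in \eqref{lsq}, I would differentiate in $s$, set $g'(s)=0$, and read off $s_* = (\wh{[q]}\cdot\wh{[f]})/(\wh{[q]}\cdot\wh{[q]})$, which is well-defined because $\wh{[q]}\ne 0$ for a genuine wave (the jump $[u]\ne 0$ and $A$ is invertible; for the nondegenerate cases $[q]=\langle Dq\rangle[u]\ne 0$). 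Substituting back gives $R_* = \wh{[f]} - s_*\,\wh{[q]} = \Pi_{\wh{[q]}^\perp}\wh{[f]}$, since this is exactly the orthogonal projection of $\wh{[f]}$ off the line spanned by $\wh{[q]}$. For a general speed $s$, I write $R(s) = R_* + (s_*-s)\,\wh{[q]}$ and note $R_*\perp\wh{[q]}$ by construction, so the Pythagorean identity gives $|R(s)|^2 = |R_*|^2 + (s-s_*)^2|\wh{[q]}|^2$. The second form in \eqref{Rs} then follows by writing $|R_*| = |\wh{[f]}|\sin\theta$ and $s_*|\wh{[q]}| = |\wh{[f]}|\cos\theta$ (which is just the definition of $\theta$ as the angle between $\wh{[f]}$ and $\wh{[q]}$, together with $s_*|\wh{[q]}|^2 = \wh{[q]}\cdot\wh{[f]} = |\wh{[q]}|\,|\wh{[f]}|\cos\theta$), and factoring; one must also observe that the shock/contact case gives $\wh{[f]}\parallel\wh{[q]}$, hence $s_* = \sigma$ and $R_* = 0$, establishing consistency.

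The second piece is the weak-wave expansion \eqref{fexp}. Here the idea is to integrate the generalized eigenvalue relation \eqref{ev} along the wave curve from the midpoint. Set $\ol u := W_k(u_-,\pm\tfrac\z2) = W_k(u_+,\mp\tfrac\z2)$ as in the statement, and parameterize the curve by arclength-like parameter so that $du/d\eta = r_k(u)$. Then $[q] = q(u_+)-q(u_-) = \int Dq(u(\fs\eta))\,r_k(u(\fs\eta))\,d\fs\eta$ and similarly for $[f]$, with the integral running symmetrically about $\ol u$. Using $Df(u)r_k(u) = \la_k(u)Dq(u)r_k(u)$ pointwise, the integrand for $[f]$ is $\la_k(u(\fs\eta))\,Dq(u(\fs\eta))\,r_k(u(\fs\eta))$. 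Taylor-expanding $\la_k(u(\fs\eta))$ and $Dq(u(\fs\eta))r_k(u(\fs\eta))$ about $\eta=0$ (i.e.\ about $\ol u$), the odd-order terms in the symmetric integral cancel, leaving $[f] = \la_k(\ol u)\,[q] + O(\z^3)$, because the first-order term vanishes by symmetry and the quadratic correction to $[f] - \la_k(\ol u)[q]$ also vanishes by parity — so the leading error is cubic. Applying the bounded matrix $A$ gives the hatted version. The same computation works verbatim for the elementary curve $E_k$ because for weak waves $\mc E_k$ and $\mc W_k$ agree to high order (they have second-order contact at $u_-$, a standard fact from Lax's analysis).

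The third piece is routine once the second is in hand. From $s_* = (\wh{[q]}\cdot\wh{[f]})/(\wh{[q]}\cdot\wh{[q]})$ and $\wh{[f]} = \la_k(\ol u)\wh{[q]} + O(\z^3)$, together with $|\wh{[q]}| = O(\z)$ (so $|\wh{[q]}|^2 = O(\z^2)$ in the denominator), I get $s_* = \la_k(\ol u) + \wh{[q]}\cdot O(\z^3)/O(\z^2) = \la_k(\ol u) + O(\z^2)$, giving \eqref{smRes}. Then $R(s) = \wh{[f]} - s\,\wh{[q]} = (\la_k(\ol u) - s)\wh{[q]} + O(\z^3) = (s_*-s)\wh{[q]} + O(\z^3)$ using \eqref{smRes} to trade $\la_k(\ol u)$ for $s_*$ (the difference $(\la_k(\ol u)-s_*)\wh{[q]} = O(\z^2)\cdot O(\z) = O(\z^3)$ is absorbed), which is \eqref{Res*s}; and setting $s=s_*$ there forces $R_* = O(\z^3)$, consistent with $R_* = \Pi_{\wh{[q]}^\perp}\wh{[f]} = \Pi_{\wh{[q]}^\perp}O(\z^3)$ from \eqref{fexp}. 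I expect the main obstacle to be the careful bookkeeping of the symmetric Taylor expansion in the second piece — in particular making explicit why the quadratic term in $[f] - \la_k(\ol u)[q]$ vanishes (it is the combination of a genuine parity cancellation in the symmetric integral and the fact that we expand about the \emph{midpoint} $\ol u$ rather than an endpoint) — and verifying that the $\mc W_k$ and $\mc E_k$ curves really do share enough jets at the base point for the shock branch to satisfy the same estimate; everything else is elementary.
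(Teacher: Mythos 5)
Your proposal is correct and follows essentially the same path as the paper's own proof: minimize the scalar quadratic in $s$ to get \eqref{sR}, use the angle $\theta$ to get the second form of \eqref{Rs}, and then prove \eqref{fexp} by integrating the pointwise eigenvalue relation along the wave curve from the midpoint $\ol u$, observing that the linear (odd) term cancels by symmetry. Your derivation of the first line of \eqref{Rs} via the orthogonal decomposition $R(s) = R_* + (s_*-s)\,\wh{[q]}$ with $R_*\perp\wh{[q]}$ is a cleaner route to the Pythagorean identity than the paper's direct expansion of $|R(s)|^2$, but it is the same content.

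The one place your proposal goes further than the paper is worth noting: the paper's proof of \eqref{fexp} explicitly carries out the midpoint calculation only for the simple-wave branch $W_k$ and never returns to the $E_k$ case that appears in the lemma statement. You correctly flag that the shock branch must be handled by invoking second-order contact of $\mc S_k$ and $\mc W_k$ at $u_-$ (equivalently, $\sigma(u_-,u_+) = \la_k(\ol u)+O(\z^2)$, so that $[f]=\sigma[q]$ already has the required form). This is a standard fact and the paper evidently treats it as such, but making it explicit closes a small gap. Everything else — the $|\wh{[q]}|=O(\z)$ estimate that turns the $O(\z^3)$ error in $\wh{[q]}\cdot\wh{[f]}$ into the $O(\z^2)$ error in $s_*$, and the derivation of \eqref{Res*s} by trading $\la_k(\ol u)$ for $s_*$ at cost $O(\z^3)$ — matches the paper line for line.
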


\begin{proof}
  The expressions \eqref{sR} for the least square speed and
  corresponding residual are immediate from \eqref{lsq}.  The residual
  is the vector measure $R_*\,\delta$, and, using $\|\delta\|=1$, the
  squared size of this is
  \[
    |R_*|^2 = R_*^\T R_* = \wh{[f]}\cdot\wh{[f]} - s_*\,\wh{[q]}\cdot\wh{[f]}
    = \wh{[f]}\cdot\wh{[f]} - s_*^2\,\wh{[q]}\cdot\wh{[q]},
  \]
  where we have used \eqref{sR}.

  For a general speed $s$, we have
  \[
    \begin{aligned}
      \big|R(s)\big|^2 &= \wh{[f]}\cdot\wh{[f]} - 2\,s\,\wh{[f]}\cdot\wh{[q]} +
                         s^2\,\wh{[q]}\cdot\wh{[q]}\\
      &= \big|R_*(s)\big|^2 + \big(s_*^2 - 2\,s\,s_* + s^2\big)\,
                         \wh{[q]}\cdot\wh{[q]},
    \end{aligned}
  \]
  where we have again used \eqref{sR}, and the first part of
  \eqref{Rs} follows.  Next, if $\theta$ denotes the angle between
  the vectors $\wh{[f]}$ and $\wh{[q]}$, then
  \[
    \wh{[f]}\cdot\wh{[q]} = \big|\wh{[f]}\big|\,\big|\wh{[q]}\big|\,\cos\theta
    \com{and}
    s_*\,\big|\wh{[q]}\big| = \big|\wh{[f]}\big|\,\cos\theta,
  \]
  which yields
  \[
    |R_*|^2 = \big|\wh{[f]}\big|^2 - \big|\wh{[f]}\big|^2\,\cos^2\theta,
    \com{so}
    |R_*| = \big|\wh{[f]}\big|\,|\sin\theta|,
  \]
  so in turn,
  \[
    \big|R(s)\big|^2 = \big|\wh{[f]}\big|^2\,
    \Big(\sin^2\theta + \big(1-\frac s{s_*}\big)^2\,\cos^2\theta\Big),
  \]
  which yields the second part of \eqref{Rs}.

  Recall that the simple wave curve $\mc W_k(u_-)$, given by
  \eqref{Wk}, is the integral of the generalized eigenvector $r_k(u)$,
  \eqref{ev}, \eqref{ODE}.  Using the group property \eqref{lgp}, we
  define
  \begin{equation}
    \label{midu}
    \begin{gathered}
      \ol u:=W_k\Big(u_-,\pm\frac\z2\Big) =
      W_k\Big(u_+,\mp\frac\z2\Big), \com{so}\\
      u_- = W_k\Big(\ol u,\mp\frac\z2\Big), \qquad
      u_+ = W_k\Big(\ol u,\pm\frac\z2\Big).
    \end{gathered}
  \end{equation}  
  For this single wave, we regard $\ol u$ as fixed and parameterize
  the curve $\mc W_k(\ol u)$ by $z$, so that $u=u(z)$, and write
  \begin{equation}
    \label{[qf]}
    \begin{aligned}{}
      [q] &:= q\big(u(\pm\zeta/2)\big) - q\big(u(\mp\zeta/2)\big)
            = \int_{\mp\z/2}^{\pm\z/2} Dq\big(u(\fs z)\big)\,
            r_k\big(u(\fs z)\big)\;d\fs z,\com{and}\\
      [f] &:= f\big(u(\pm\z/2)\big) - f\big(u(\mp\z/2)\big)
            = \int_{\mp\z/2}^{\pm\z/2} \la_k\big(u(\fs z)\big)\,
            Dq\big(u(\fs z)\big)\,r_k\big(u(\fs z)\;d\fs z.
    \end{aligned}
  \end{equation}

  For weak simple waves, $0<\z\ll 1$, we expand $\la_k$ along
  the wave curve to get
  \[
    \la_k\big(u(z)\big) = \la_k(\ol u) + \dot\la_k(\ol u)\,z
     + O(z^2), \qquad |z|<\z.
  \]
  Now use \eqref{[qf]} to write
  \[
    \begin{aligned}
      {}[f]
      &= \int_{\mp\z/2}^{\pm\z/2}\Big(\la_k(\ol u) + \dot\la_k(\ol u)\,\fs z
        + O(\fs z^2)\Big)\,
        Dq\big(u(\fs z)\big)\,r_k\big(u(\fs z)\big)\;d\fs z\\[3pt]
      &= \la_k(\ol u)\,[q] + \dot\la_k(\ol u)\,
        \int_{\mp\z/2}^{\pm\z/2}\fs z\,
        Dq\big(u(\fs z)\big)\,r_k\big(u(\fs z)\big)\;d\fs z + O(\z^3).
    \end{aligned}
  \]
  Similarly expanding
  \[
    Dq\big(u(z)\big)\,r_k\big(u(z)\big)
    = Dq(\ol u)\,r_k(\ol u) + O(z),
  \]
  and substituting in, we get
 \[
   [f] = \la_k(\ol u)\,[q] + \dot\la_k(\ol u)\,
   \int_{\mp\z/2}^{\pm\z/2}\fs z\,Dq(\ol u)\,r_k(\ol u)\;d\fs z
   + O(\z^3),
 \]
 and the integral vanishes by symmetry.  We have effectively used the
 midpoint rule to carry out the integration.  We conclude that
 \[
   [f] = \la_k(\ol u)\,[q] + O(\z^3), \com{so also}
   \wh{[f]} = \la_k(\ol u)\,\wh{[q]} + O(\z^3),
 \]
 which is \eqref{fexp}, and so by \eqref{sR},
 \[
   s_* = \frac{\la_k(\ol u)\,\wh{[q]}\cdot\wh{[q]}}{\wh{[q]}\cdot\wh{[q]}}
   + \frac{O(\z^3)}{\big|\wh{[q]}\big|} = \la_k(\ol u) + O(\z^2),
 \]
 since $\wh{[q]} = O(\z)$, which proves \eqref{smRes}.  Similarly,
 \[
   R(s) = \wh{[f]} - s\,\wh{[q]} = \big(\la_k(\ol u)-s\big)\,\wh{[q]} + O(\z^3),
 \]
 which is \eqref{Res*s}.
\end{proof}

Although the least square choice is best in terms of minimizing the
residual, below we will sometimes adjust the wavespeed of (weak) waves
to keep the number of waves under control.  In particular, if
$\big|s-\la_k(\ol u)\big| = O(\z^\beta)$ for $\beta<2$, then
$R(s) = O(\z^{1+\beta})$.

Although the order of the residual in \eqref{Res*s} is primarily
determined by the choice of $s$, the choice of preconditioner $A$ is
clearly reflected in the formulas \eqref{sR}.  In particular, for
specific systems, certain calculations may be simplified by a
convenient choice of $A$.  One such example is the $p$-system,
described in Section~\ref{sec:psys} below.

\subsection{Interactions}

Different waves generally propagate with different speeds, and so
adjacent waves may meet, at which point a nonlinear interaction
occurs.  Because waves are represented by jump discontinuities, they
meet at a single point, and we will continue the solution beyond that
point by placing a dgRS at that point.  The data for the dgRS consists
of left and right states, which are exact and given, together with
outgoing virtual wave widths, which implicitly determine the type of
wave and virtual center of a simple wave.  If the wave is not simple,
we take the interaction point as the wave origin.  Because we are
regarding the virtual width as a bookkeeping device, which
nevertheless affects future wave patterns, we are able to choose the
virtual widths of outgoing waves.  We will do so in such a way as to
maximize consistency and avoid inadmissible overlaps of simple waves.

To be precise, consider the waves $\gamma_{j-1}$ and $\gamma_j$.
Their trajectories are given by
\begin{equation}
  \label{xp}
  x^p_{j-1}(t) = x^c_{j-1}+s_{j-1}\,(t-t^c_{j-1}) \com{and}
  x^p_j(t) = x^c_j+s_j\,(t-t^c_j),
\end{equation}
respectively, and we have $x^p_{j-1}(t)<x^p_j(t)$ for times $t<t^!$,
the time of interaction, and $s_{j-1}>s_j$.  We calculate $t^!$ by
setting $x^p_{j-1}(t^!)=x^p_j(t^!)=:x^!$, which gives
\begin{equation}
  \label{xt!}
  \begin{aligned}
    t^! &= \frac{s_{j-1}}{s_{j-1}-s_j}\,\Big(t_{j-1}^c
          -\frac{x^c_{j-1}}{s_{j-1}}\Big)
          + \frac{-s_j}{s_{j-1}-s_j}\,\Big(t_j^c
          -\frac{x^c_j}{s_j}\Big),\\
    x^! &=\frac{s_{j-1}}{s_{j-1}-s_j}\,\big(x_j^c-s_j\,t_j^c\big)
          +\frac{-s_j}{s_{j-1}-s_j}\,\big(x_{j-1}^c-s_{j-1}\,t_{j-1}^c\big).
  \end{aligned}
\end{equation}
Thus the new dgRS is placed at the interaction point $(x^!,t^!)$, and
the left and right states are those of the incident waves, namely
$u_{j-2}$ and $u_j$, respectively.

To continue the solution, it remains to specify the virtual widths
$w'_k\ge 0$ of the outgoing waves.  As always, we set $w'_k=0$ for
linearly degenerate families $k$.  The exact value of the assigned
widths $w'_k$ is less important than whether or not these vanish,
because this changes the type of compressive waves: if $w'_k>0$, the
$k$-wave is a compression, while if $w'_k=0$ it is a shock.  Because
we want to avoid overlap of waves, we use the convention that \emph{at
  an interaction, the virtual width $w'_k$ of an emergent $k$-wave is
  no larger than that of incident $k$-waves.}

We are considering the interaction of the $k_{j-1}$-wave
$\gamma_{j-1}$ and the $k_j$-wave $\gamma_j$, and these have
non-negative virtual widths $w_{j-1}(t^!)$ and $w_j(t^!)$,
respectively.  Because $s_{j-1}>s_j$, there are two cases: either
$k_{j-1}=k_j$ or $k_{j-1}>k_j$.  Both of these are illustrated in
Figure~\ref{fig:int}: on the left panel a rarefaction catches up to a
shock, and on the right two simple waves of different families cross;
in this case, the widths of the incident waves do not increase.

\begin{figure}[thb]
  \centering
  % \include{Pics/fig2-ints.tex}
  % This file was created with tikzplotlib v0.10.1.post13.
\begin{tikzpicture}

\begin{groupplot}[group style={group size=2 by 1}]
\nextgroupplot[
hide x axis,
hide y axis,
tick align=outside,
tick pos=left,
%x grid style={darkgrey176},
xmin=-0.1838, xmax=0.2738,
%xtick style={color=black},
%y grid style={darkgrey176},
ymin=-0.0555, ymax=0.0655,
%ytick style={color=black}
]
\path [draw=extra, fill=extra, opacity=0.2]
(axis cs:-0.025,0)
--(axis cs:0.025,0)
--(axis cs:-0.09,-0.05)
--(axis cs:-0.1,-0.05)
--(axis cs:-0.1,-0.05)
--(axis cs:-0.025,0)
--cycle;

\path [draw=bakShock, fill=bakShock, opacity=0.2]
(axis cs:-0.025,0)
--(axis cs:0.025,0)
--(axis cs:-0.137,0.06)
--(axis cs:-0.163,0.06)
--(axis cs:-0.163,0.06)
--(axis cs:-0.025,0)
--cycle;

\path [draw=fwdShock, fill=fwdShock, opacity=0.2]
(axis cs:-0.025,0)
--(axis cs:0.025,0)
--(axis cs:0.253,0.06)
--(axis cs:0.167,0.06)
--(axis cs:0.167,0.06)
--(axis cs:-0.025,0)
--cycle;

\addplot [thick, extra]
table {%
0 0
-0.0325 -0.05
};
\addplot [thick, extra]
table {%
0 0
-0.095 -0.05
};
\addplot [thick, bakShock]
table {%
0 0
-0.15 0.06
};
\addplot [thick, contact]
table {%
0 0
0 0.06
};
\addplot [thick, extra]
table {%
0 0
0.051 0.06
};
\addplot [thick, fwdShock]
table {%
0 0
0.21 0.06
};
\addplot [line width=0.32pt, black, dashed]
table {%
-0.15 0
0.2 0
};

\nextgroupplot[
hide x axis,
hide y axis,
tick align=outside,
tick pos=left,
%x grid style={darkgrey176},
xmin=-0.1778, xmax=0.2358,
%xtick style={color=black},
%y grid style={darkgrey176},
ymin=-0.0555, ymax=0.0655,
%ytick style={color=black}
]
\path [draw=bakShock, fill=bakShock, opacity=0.2]
(axis cs:-0.015,0)
--(axis cs:0.015,0)
--(axis cs:0.165,-0.05)
--(axis cs:0.085,-0.05)
--(axis cs:0.085,-0.05)
--(axis cs:-0.015,0)
--cycle;

\path [draw=extra, fill=extra, opacity=0.2]
(axis cs:-0.025,0)
--(axis cs:0.025,0)
--(axis cs:-0.09,-0.05)
--(axis cs:-0.1,-0.05)
--(axis cs:-0.1,-0.05)
--(axis cs:-0.025,0)
--cycle;

\path [draw=bakShock, fill=bakShock, opacity=0.2]
(axis cs:-0.015,0)
--(axis cs:0.015,0)
--(axis cs:-0.141,0.06)
--(axis cs:-0.159,0.06)
--(axis cs:-0.159,0.06)
--(axis cs:-0.015,0)
--cycle;

\path [draw=extra, fill=extra, opacity=0.2]
(axis cs:-0.025,0)
--(axis cs:0.025,0)
--(axis cs:0.091,0.06)
--(axis cs:0.011,0.06)
--(axis cs:0.011,0.06)
--(axis cs:-0.025,0)
--cycle;

\path [draw=fwdShock, fill=fwdShock, opacity=0.2]
(axis cs:-0.025,0)
--(axis cs:0.025,0)
--(axis cs:0.217,0.06)
--(axis cs:0.203,0.06)
--(axis cs:0.203,0.06)
--(axis cs:-0.025,0)
--cycle;

\addplot [thick, bakShock]
table {%
0 0
0.125 -0.05
};
\addplot [thick, extra]
table {%
0 0
-0.095 -0.05
};
\addplot [thick, bakShock]
table {%
0 0
-0.15 0.06
};
\addplot [thick, contact]
table {%
0 0
0 0.06
};
\addplot [thick, extra]
table {%
0 0
0.051 0.06
};
\addplot [thick, fwdShock]
table {%
0 0
0.21 0.06
};
\addplot [line width=0.32pt, black, dashed]
table {%
-0.15 0
0.2 0
};
\end{groupplot}

\end{tikzpicture}
  \caption{Interactions of waves in the same family (left) and
    different families (right)}
  \label{fig:int}
\end{figure}

In the case of two waves in the same family, $k_{j-1}=k_j$, at least
one of the waves must be a shock, so one of $w_{j-1}(t^!)$ or
$w_j(t^!)$ vanishes.  In this case, the emerging $k_j$ wave must also
be a shock, so we set $w'_{k_j}=0$.  On the other hand, if the other
incident wave has positive width, so is simple, then the interaction
necessarily takes some positive time, and the reflected waves in other
GNL families will be simple, so we choose
\[
  0 \le w'_k \le \max\big\{w_{j-1}(t^!),\,w_j(t^!)\big\},
\]
for GNL families $k\ne k_{j-1}=k_j$.  Note that if both incident waves
are shocks, then all emerging waves will have zero width.  Here we use
an interval condition to choose the emerging virtual widths, because
we may need to adjust these to avoid inadmissible overlaps of simple
waves, as discussed below.

In case the two interacting waves are from different families, so
$k_{j-1}>k_j$, the virtual widths of the outgoing waves are chosen so that
they do not increase,
\[
  0\le w'_{k_{j-1}}\le w_{j-1}(t^!), \com{and}
  0\le w'_{k_j} \le w_j(t^!),
\]
while other GNL waves $k\ne k_j,\ k_{j-1}$ are again assigned widths
\begin{equation}
  \label{wid}
  0 \le w'_k \le \max\big\{w_{j-1}(t^!),\,w_j(t^!)\big\},  
\end{equation}
because we expect the interaction zone to extend over the larger width.

Note that pairwise interactions do not exhaust all possible
interactions.  On the one hand, three or more waves can in principle
interact at the same point $(x^!,t^!)$, although this is a measure
zero occurrence, and virtual widths can be consistently assigned in a
similar way.  More importantly, because the scheme has compressions,
we need to introduce \emph{self-interactions}, in which a single
simple compression collapses.  In the absence of other waves, a
compressive wave $\gamma_j$ with virtual width $w_j(t_0)>0$ at time
$t_0$ will collapse at time $t_j^!=t_j^c$ when this width becomes
zero, which is
\[
  w_j(t_0) + \dot w_j\,(t_j^!-t_0) = 0, \com{that is}
  t^!_j = t_0 - \frac{w_j(t_0)}{\dot w_j} > t_0.
\]
When this collapse of a compression occurs, we again place a dgRS with
left state $u_{j-1}$ and right state $u_j$, but since the wave is
virtually centered and the collapse takes place at a single point, we
set all emergent virtual widths to zero, $w_k'=0$, $k=1,\dots,N$.
Thus the new dgRS is a standard Riemann solution, consistent with
Lemma~\ref{lem:center},

Note that the wave $\gamma_j$ could have up to three potential
interaction times, corresponding to interaction with either of its
adjacent waves $\gamma_{j-1}$ and $\gamma_{j+1}$, as well as a
possible self-interaction.  In all cases, when setting $t_j^!$ upon
generation of the wave, we set $t_j^!$ to be the minimum such
interaction time.

\subsection{Accelerated Collapse of Compressions}

We have observed above that if the strength of a single simple wave is
too large, then so to will the residual be too large, and the
approximation insufficiently accurate.  We can impose this upon
initialization, but moderate simple waves could appear as a result of
some nonlinear interactions.  If the large simple wave is a
rarefaction, we will split it into smaller pieces using the group
property \eqref{lgp} as described below, but if the wave is a
compression, this is not an option, as all waves emanate from the
point of interaction.

In this case, suppose that some interaction produces a moderate
compression, say in the GNL $k$-th family.  That is, having solved the
dgRS with given virtual widths including $w'_k>0$, the $k$-th
resultant wave $\gamma'_k$ is a compression whose strength is beyond
some threshold, say $\zeta_k'<-\ee c$.  This means that this $k$-th
compression is too strong, and hence the residual too large.  In this
case, we will \emph{force} the $k$-wave to instead be a shock, by
adjusting the virtual width of the emergent $k$-wave to zero: that is,
we reset $w'_k=0$ and re-evaluate the dgRS with this adjusted width
data.

Use of compressions eventually leads to an increase in the number of
waves, because when the compression collapses, it generates waves in
other families.  To avoid excessive increase in the number of waves,
we therefore \emph{also} collapse very weak compressions that arise as
a result of interactions into shocks, immediately.  Thus if a
compression satisfies \emph{either}
\[
  \z'_k < -\ee c, \qcom{or}
  -\ee w < \z'_k < 0,
\]
then we immediately collapse it into a shock.

Because the character of the solution will change, since a compression
has been replaced by a shock, we will need to re-check that no new
emergent compressions are outside the interval $[-\ee c,-\ee w]$.
This in turn means that we could in principle need to repeat this step
(that is solve the dgRS) up to $N-1$ times.  During this process, once
any emerging virtual width has been set to zero, we \emph{do not}
allow it to again become non-zero: this guarantees that eventually all
compressions emerging from the interaction have strength
$\z\in[-\ee c,-\ee w]$.  Note also, however, that this could in
principle break uniqueness of the resolution of this interaction: if
two or more emerging waves (of different families) are moderate
compressions, there may be some choice as to which ones should be forced
to become shocks.

\subsection{Avoidance of Overlaps}
\label{sec:avoid}

We wish to avoid overlap of adjacent waves of the same family
anywhere in the approximation.  Such an overlap is nonphysical because
it indicates that characteristics would have overlapped previously, so
a shock should have formed earlier.  We enforce this condition
inductively: first upon initialization, and then by restricting
virtual wave widths as necessary when resolving interactions.

We use the same notation as above: that is, we assume waves
$\gamma_{j-1}$ and $\gamma_j$ interact at the space-time point
$(x^!,t^!)$ calculated in \eqref{xt!}.  If the edgemost emerging waves
are simple, and of the same family of the adjacent waves, then their
combined widths must be chosen small enough at time $t^!$ that they do
not overlap.  Thus we must also consider the adjacent waves
$\gamma_{j-2}$ and $\gamma_{j+1}$.  We will consider only
$\gamma_{j+1}$, the case of $\gamma_{j-2}$ being essentially the same.

Thus suppose that $\gamma_{j+1}$, which is to the right of the
interaction, is a $k_{j+1}$-simple wave.  The \emph{position}
$x^p_{j+1}(t)$ and \emph{left virtual edge} $x^{e-}_{j+1}(t)$ of that
wave are the segments
\[
  \begin{aligned}
    x^p_{j+1}(t) &= x^c_{j+1} + s_{k_{j+1}}\,(t-t^c_{j+1}),\com{and}\\
    x^{e-}_{j+1}(t) &= x^c_{j+1} + \lambda_{k_{j+1}}(u_j)\,(t-t^c_{j+1}),
  \end{aligned}
\]
respectively.  We check for overlap if the rightmost wave emerging
from the interaction is also a $k_{j+1}$-simple wave.  The interaction
occurs at $(x^!,t^!)$, and so, if
\[
  x^{e-}_{j+1}(t^!) \le x^!,
\]
then there is an overlap, and we conclude that the wave
$\gamma_{j+1}$, which is \emph{not} part of the interaction, is too
wide.  If this is the case, we \emph{discontinuously} reduce the width
of $\gamma_{j+1}$ at time $t^!$, while leaving the position and speed
unchanged.  Note that if $\g_{j+1}$ is not simple, so is a shock or
contact, it has zero width and so there will be no overlap.  This is
illustrated in Figure~\ref{fig:ol}: the widths of (both) simple waves
of the fourth (green) family are decreased to avoid overlap.

\begin{figure}[thb]
  \centering
  % \include{Pics/fig3-ovlap.tex}
  % This file was created with tikzplotlib v0.10.1.post13.
\begin{tikzpicture}

\begin{groupplot}[group style={group size=2 by 1}]
\nextgroupplot[
hide x axis,
hide y axis,
tick align=outside,
tick pos=left,
%x grid style={darkgrey176},
xmin=-0.16945, xmax=0.25845,
%xtick style={color=black},
%y grid style={darkgrey176},
ymin=-0.00725, ymax=0.04225,
%ytick style={color=black}
]
\path [draw=extra, fill=extra, opacity=0.2]
(axis cs:-0.025,0)
--(axis cs:0.025,0)
--(axis cs:0.011,-0.005)
--(axis cs:-0.03,-0.005)
--(axis cs:-0.03,-0.005)
--(axis cs:-0.025,0)
--cycle;

\path [draw=bakShock, fill=bakShock, opacity=0.2]
(axis cs:-0.025,0)
--(axis cs:0.025,0)
--(axis cs:-0.083,0.04)
--(axis cs:-0.117,0.04)
--(axis cs:-0.117,0.04)
--(axis cs:-0.025,0)
--cycle;

\path [draw=fwdSimple, fill=fwdSimple, opacity=0.2]
(axis cs:-0.025,0)
--(axis cs:0.025,0)
--(axis cs:0.177,0.04)
--(axis cs:0.103,0.04)
--(axis cs:0.103,0.04)
--(axis cs:-0.025,0)
--cycle;

\path [draw=fwdSimple, fill=fwdSimple, opacity=0.2]
(axis cs:0.005,0)
--(axis cs:0.055,0)
--(axis cs:0.239,0.04)
--(axis cs:0.157,0.04)
--(axis cs:0.157,0.04)
--(axis cs:0.005,0)
--cycle;

\path [draw=fwdSimple, fill=fwdSimple, opacity=0.2]
(axis cs:0.005,0)
--(axis cs:0.055,0)
--(axis cs:0.032,-0.005)
--(axis cs:-0.014,-0.005)
--(axis cs:-0.014,-0.005)
--(axis cs:0.005,0)
--cycle;

\addplot [thick, extra]
table {%
0 0
-0.00325 -0.005
};
\addplot [thick, extra]
table {%
0 0
-0.0095 -0.005
};
\addplot [thick, bakShock]
table {%
0 0
-0.1 0.04
};
\addplot [thick, contact]
table {%
0 0
0 0.04
};
\addplot [thick, extra]
table {%
0 0
0.034 0.04
};
\addplot [thick, fwdSimple]
table {%
0 0
0.14 0.04
};
\addplot [thick, fwdSimple]
table {%
0.03 0
0.198 0.04
};
\addplot [thick, fwdSimple]
table {%
0.03 0
0.009 -0.005
};
\addplot [line width=0.32pt, black, dashed]
table {%
-0.15 0
0.2 0
};

\nextgroupplot[
hide x axis,
hide y axis,
tick align=outside,
tick pos=left,
%x grid style={darkgrey176},
xmin=-0.168825, xmax=0.245325,
%xtick style={color=black},
%y grid style={darkgrey176},
ymin=-0.00725, ymax=0.04225,
%ytick style={color=black}
]
\path [draw=extra, fill=extra, opacity=0.2]
(axis cs:-0.025,0)
--(axis cs:0.025,0)
--(axis cs:0.011,-0.005)
--(axis cs:-0.03,-0.005)
--(axis cs:-0.03,-0.005)
--(axis cs:-0.025,0)
--cycle;

\path [draw=bakShock, fill=bakShock, opacity=0.2]
(axis cs:-0.025,0)
--(axis cs:0.025,0)
--(axis cs:-0.083,0.04)
--(axis cs:-0.117,0.04)
--(axis cs:-0.117,0.04)
--(axis cs:-0.025,0)
--cycle;

\path [draw=fwdSimple, fill=fwdSimple, opacity=0.2]
(axis cs:-0.0125,0)
--(axis cs:0.0125,0)
--(axis cs:0.1645,0.04)
--(axis cs:0.1155,0.04)
--(axis cs:0.1155,0.04)
--(axis cs:-0.0125,0)
--cycle;

\path [draw=fwdSimple, fill=fwdSimple, opacity=0.2]
(axis cs:0.0175,0)
--(axis cs:0.0425,0)
--(axis cs:0.2265,0.04)
--(axis cs:0.1695,0.04)
--(axis cs:0.1695,0.04)
--(axis cs:0.0175,0)
--cycle;

\path [draw=fwdSimple, fill=fwdSimple, opacity=0.2]
(axis cs:0.005,0)
--(axis cs:0.055,0)
--(axis cs:0.032,-0.005)
--(axis cs:-0.014,-0.005)
--(axis cs:-0.014,-0.005)
--(axis cs:0.005,0)
--cycle;

\addplot [thick, extra]
table {%
0 0
-0.00325 -0.005
};
\addplot [thick, extra]
table {%
0 0
-0.0095 -0.005
};
\addplot [thick, bakShock]
table {%
0 0
-0.1 0.04
};
\addplot [thick, contact]
table {%
0 0
0 0.04
};
\addplot [thick, extra]
table {%
0 0
0.034 0.04
};
\addplot [thick, fwdSimple]
table {%
0 0
0.14 0.04
};
\addplot [thick, fwdSimple]
table {%
0.03 0
0.198 0.04
};
\addplot [thick, fwdSimple]
table {%
0.03 0
0.009 -0.005
};
\addplot [line width=0.32pt, black, dashed]
table {%
-0.15 0
0.2 0
};
\end{groupplot}

\end{tikzpicture}
  \caption{Avoiding overlap: reduction of widths}
  \label{fig:ol}
\end{figure}

This reduction of virtual width of $\gamma_{j+1}$ has the effect of
moving the center $(x^c_{j+1},t^c_{j+1})$ closer to the point
$\big(x^p_{j+1}(t^!),t^!\big)$, and simultaneously increasing
$x^{e-}_{j+1}(t^!)$.  As in the last conclusion of
Lemma~\ref{lem:center}, this is a consequence of scale invariance, and
indeed the linear dependence of center position on width allows us to
calculate exactly the minimum adjustment necessary to undo the
overlap, namely
\[
  \frac{w_{j+1}(t^!+)}{w_{j+1}(t^!-)} <
  \frac{x^p_{j+1}(t^!)-x^!}{x^p_{j+1}(t^!)-x^{e-}_{j+1}(t^!-)} < 1.
\]
Because we have left the position $x^p_{j+1}(t^!)$ and speed $s_{j+1}$
of $\gamma_{j+1}$ unchanged, the new center of $\gamma_{j+1}$ will
remain on the path of the wave, and the only other change to
$\gamma_{j+1}$ may be that the next time of interaction may change, if
$\g_{j+1}$ is a compression which may collapse earlier after the
reduction in virtual width.

Having reduced the width of $\gamma_{j+1}$ as necessary, we now avoid
overlap by choosing the width of the (rightmost) emerging $k$-simple
wave $\g_k'$ to satisfy \emph{both} \eqref{wid} and the further
restriction
\begin{equation}
  \label{noOL}
  {x'}^{e+}_k(t^!+)\le x^{e-}_{j+1}(t^!+),
\end{equation}
which is always possible because the width of $\g_{j+1}$ has
previously been reduced.  Since the right edge of the emerging simple
wave and the left edge of $\gamma_{j+1}$ propagate with the same speed
$\la_{k_{j+1}}(u_j)$, these two waves will not overlap at later times.

\subsection{Multi-rarefactions}

As with compressions, it is possible that after a nonlinear
interaction, say crossing a strong shock of a different family, a
rarefaction of positive width and large strength emerges.  In this
case again the residual could grow, and an adjustment must be made so
that the approximation remains near a solution, that is so that the
residual remains small.

The solution is to split the moderate rarefaction that emerges from the
interaction into smaller rarefactions, as in Lemma~\ref{lem:dgRS}.
However, we cannot do this exactly as in that lemma, because the
virtual center of the rarefaction to be split, $(x^c,t^c)$
\emph{differs from} the interaction point $(x^!,t^!)$, from which each
wave must emerge.  

To resolve this issue, we first locate the virtual center $(x^c,t^c)$
and actual ``background'' characteristics of the moderate rarefaction
to be split.  We then split this centered rarefaction into smaller
\emph{``split waves''} as in Lemma~\ref{lem:center}, which are still
centered at $(x^c,t^c)$, with virtual center satisfying $t^c<t^!$.  We
calculate the corresponding speeds, widths and positions of these
split waves using the virtual center $(x^c,t^c)$.  If these waves
could be propagated as is, we would have reduced the wave strengths
and residual without any change to the approximation.  However, we
cannot do this directly because all waves must emanate from the
interaction point $(x^!,t^!)$, which satisfies $t^!>t^c$.  Instead,
keeping the states exact as usual, we place each of these smaller
waves at $(x^!,t^!)$, but \emph{temporarily} assign each of them a
\emph{finite but inaccurate wavespeed}, so that they ``catch up'' to
where they should be in a relatively short time.  This assignation of
incorrect wavespeed has the effect of increasing the residual from the
minimum, but only until the time that the wavespeeds are corrected.
At the same time, we keep the virtual widths of the split waves as
calculated from $(x^c,t^c)$.  When the split waves reach the correct
position, they are reassigned the correct wave speeds, and the
multi-rarefaction reverts to several individual virtually centered
rarefactions.

This is illustrated in Figure~\ref{fig:mr}.  Here a backward (blue)
compression and shock collide, generating a (relatively strong)
forward (green) rarefaction of positive width, and with virtual center
$(x^c,t^c)$. This rarefaction is first split into smaller pieces
(grey/green), which however are centered at the wrong point, $(x^c,t^c)$.
To center these at the correct point, $(x^!,t^!)$, we linearly
interpolate the paths by temporarily changing the wavespeed, to get the
multirarefaction wave (red), which reverts to the actual rarefaction
at $t=t^\#$.

\begin{figure}[thb]
  \centering
  % \include{Pics/fig4-mr.tex}
  % This file was created with tikzplotlib v0.10.1.post13.
% This file was created with tikzplotlib v0.10.1.post13.
\begin{tikzpicture}

\begin{axis}[
hide x axis,
hide y axis,
tick align=outside,
tick pos=left,
%x grid style={darkgrey176},
xmin=-0.29, xmax=0.59,
%xtick style={color=black},
%y grid style={darkgrey176},
ymin=-0.1175, ymax=0.2675,
%ytick style={color=black}
]
\path [draw=fwdSimple, fill=fwdSimple, opacity=0.2]
(axis cs:-0.05,0)
--(axis cs:0.05,0)
--(axis cs:0.55,0.25)
--(axis cs:0.2,0.25)
--(axis cs:0.2,0.25)
--(axis cs:-0.05,0)
--cycle;

\path [draw=fwdSimple, fill=fwdSimple, opacity=0.2]
(axis cs:-0.05,0)
--(axis cs:0.05,0)
--(axis cs:-0.15,-0.1)
--(axis cs:-0.15,-0.1)
--(axis cs:-0.15,-0.1)
--(axis cs:-0.05,0)
--cycle;

\path [draw=bakShock, fill=bakShock, opacity=0.2]
(axis cs:-0.05,0)
--(axis cs:0.05,0)
--(axis cs:0.25,-0.1)
--(axis cs:0.05,-0.1)
--(axis cs:0.05,-0.1)
--(axis cs:-0.05,0)
--cycle;

\addplot [thick, fwdShock]
table {%
0 0
-0.15 -0.1
};
\addplot [thick, multiRf]
table {%
-0.03 0
-0.15 -0.1
};
\addplot [thick, fwdSimple]
table {%
0.03 0
-0.15 -0.1
};
\addplot [thick, multiRf]
table {%
-0.03 0
0.15 0.15
};
\addplot [thick, multiRf]
table {%
0.03 0
0.3 0.15
};
\addplot [thick, fwdShock]
table {%
0.15 0.15
0.27 0.25
};
\addplot [thick, fwdShock]
table {%
0.3 0.15
0.48 0.25
};
\addplot [thick, fwdShock]
table {%
0 0
0.375 0.25
};
\addplot [thick, fwdShock]
table {%
0 0
0.15 0.15
};
\addplot [thick, fwdShock]
table {%
0 0
0.3 0.15
};
\addplot [thick, bakShock]
table {%
0 0
0.15 -0.1
};
\addplot [thick, bakShock]
table {%
0 0
0.3 -0.1
};
\addplot [thick, bakShock]
table {%
0 0
-0.25 0.2
};
\addplot [thick, multiRf]
table {%
-0.03 0
-0.15 -0.1
};
\addplot [thick, multiRf]
table {%
0 0
-0.15 -0.1
};
\addplot [thick, multiRf]
table {%
0.03 0
-0.15 -0.1
};
\addplot [line width=0.32pt, black, dashed]
table {%
-0.15 0
0.3 0
};
\addplot [line width=0.32pt, black, dashed]
table {%
0.05 0.15
0.4 0.15
};
\draw (axis cs:0.32,0) node[
  scale=1,
  anchor=base west,
  text=black,
  rotate=0.0
]{$t^!$};
\draw (axis cs:0.42,0.15) node[
  scale=1,
  anchor=base west,
  text=black,
  rotate=0.0
]{$t^{\#}$};
\end{axis}

\end{tikzpicture}
  \caption{Multi-rarefaction}
  \label{fig:mr}
\end{figure}

To make this precise, suppose we are resolving an interaction at the
point $(x^!,t^!)$, which yields a dgRS in which the emerging GNL
$k$-wave, say $\g$, is a moderate rarefaction of strength
$\z\ge \ee r$ of finite width $w$.  Denote the left and right states
adjacent to this wave by $u_\mp$, so that $u_+ = W_k(u_-,\z)$.  The
virtual center of $\g$ will be the point where the left and right edge
characteristics intersect, so that
\[
  x^c := {x}^{e-}(t^c) = {x}^p(t^c) = {x}^{e+}(t^c),
\]
while also
\[
  {x}^p(t^!) = x^!, \qquad w = w(t^!) = \dot w\,(t^!-t^c).
\]
We write this (unused) single jump solution as
\begin{equation}
  \label{unuse}
  u_J(x,t) := u_- + (u_+-u_-)\,H\big(x-x^c-\la_k(\ol u)\,(t-t^c)\big),
  \qquad  \ol u := W_k(u_-,\z/2).
\end{equation}
Here it is convenient to use the averaged wavespeed rather than the
actual least squares wavespeed determined by Lemma~\ref{lem:spdRes}.
Since $\dot w = \la_k(u_+)-\la_k(u_-)$, this yields
\begin{equation}
  \label{txc!}
  \begin{aligned}
  x^c = x^! &- \la_k(\ol u)\,(t^!-t^c), \com{where}\\
  t^!-t^c &= \frac{w}{\la_k(u_+)-\la_k(u_-)} \approx O(\ee s/\z).
  \end{aligned}
\end{equation}
This size estimate follows because the wave is created by an
interaction of some incident simple wave, which has maximum strength
$\ee s := \max\{\ee r,\ee c\}$, so we expect its width to be
\[
  w\approx \ee s, \com{while}
  \la_k(u_+)-\la_k(u_-) = O(\z).
\]

Since we are assuming this wave $\g$ has moderate strength $\z$, the
residual is too large and we wish to split it up.  In order to take
maximum advantage of symmetry, we split the large wave into an
\emph{odd} number $2m+1$ of smaller rarefactions $\g^{(i)}$, each of
strength $\z^{(i)}$, as follows.  First fix some constant
$0<\kappa<1/3$ to be chosen, and then choose $2m+1$ smaller strengths
$\z^{(-m)},\dots,\z^{(m)}$, so that
\[
  0 < \kappa\,\ee r \le \z^{(-i)} = \z^{(i)} \le \ee r, \quad i=0,\dots,m,
  \com{and} \sum_{i=-m}^m\z^{(i)} = \z.
\]
Here $\kappa$ is the \emph{minimum} ratio of the size of a split
rarefaction to the maximum rarefaction size, and this in turn places
an upper bound on the number of rarefactions in a split.  Next, set
$\ol u^{(0)} := \ol u$ and inductively define adjacent states by
\begin{equation}
  \label{u(i)}
  \begin{aligned}
    u^{(1)} := W_k\big(\ol u^{(0)},\z^{(0)}/2\big), &\qquad
    u^{(i+1)} := W_k\big( u^{(i)},\z^{(i)}\big),\\
    u^{(-1)} := W_k\big(\ol u^{(0)},-\z^{(0)}/2\big), &\qquad
    u^{(-i-1)} := W_k\big( u^{(-i)},-\z^{(i)}\big),
  \end{aligned}
\end{equation}
for $i=1,\dots,m$, and similarly define the ``midpoint'' of each such
wave by
\[
  \begin{aligned}
    \ol u^{(-i)} &:= W_k\big(u^{(-i-1)},\z^{(-i)}/2\big)
                   = W_k\big(u^{(-i)},-\z^{(-i)}/2\big),\\
    \ol u^{(i)} &:= W_k\big(u^{(i)},\z^{(i)}/2\big)
                  = W_k\big(u^{(i+1)},-\z^{(i)}/2\big),
  \end{aligned}
\]
$i=1,\dots,m$.  By the group property \eqref{lgp}, $u^{(m+1)}=u_+$ and
$u^{(-m-1)}=u_-$, while $u^{(0)}$ is left undefined.  The $i$-th split
wave is then given the calculated speed $s^{(i)}$, so has target
trajectory
\begin{equation}
  \label{s(i)}
  x^{(i)}(t) = x^c + s^{(i)}\,(t-t^c), \qquad
  s^{(i)} = \la_k\big(\ol u^{(i)}\big) + O\big(\z^{(i)\,2}\big),
\end{equation}
which emanates from the virtual center $(x^c,t^c)$, and we have used
\eqref{smRes}.   We denote this
discretized rarefaction by
\begin{equation}
  \label{rareSplit}
  \begin{gathered}
    u_D(x,t) = u_- + \sum_{i=-m}^m[u]^{(i)}\,
    H\big(x-x^c-s^{(i)}(t-t^c)\big), \com{where}\\
    [u]^{(0)} := u^{(1)}-u^{(-1)}, \quad
    [u]^{(i)} := u^{(i+1)}-u^{(i)}, \quad
    [u]^{(-i)} := u^{(-i)}-u^{(-i-1)}.
  \end{gathered}
\end{equation}
Although this discretized rarefaction $u_D$ is the preferred solution,
we cannot use it directly for $t\ge t^!$, because all waves must
emanate from the interaction point $(x^!,t^!)$.

Our solution is to use the states identified in \eqref{u(i)} to define
the waves, but to initiate them from $(x^!,t^!)$, while temporarily
assigning them an incorrect wavespeed $\wt s^{(i)}$.  That is, the
multi-rarefaction has (correct) states \eqref{u(i)} and actual
trajectory
\[
  \wt x^{(i)}(t) = x^! + \wt s^{(i)}\,(t-t^!).
\]
The actual and target trajectories \eqref{s(i)} then intersect at the
point
\[
  x^{(i)}(t^\#) = \wt x^{(i)}(t^\#), \com{that is}
  x^c + s^{(i)}\,(t^\# - t^c) =
  x^! + \wt s^{(i)}\,(t^\#-t^!),
\]
which determines each $\wt s^{(i)}$ in terms of $t^\#$.  Using
\eqref{txc!}, this simplifies to
\begin{equation}
  \label{ratio}
  \big(\wt s^{(i)}-s^{(i)}\big)\,(t^\#-t^!) =
  \big(s^{(i)}-\la_k(\ol u)\big)\,(t^!-t^c).
\end{equation}
Using these modified wavespeeds and initial point $(x^!,t^!)$, we thus
describe the \emph{multi-rarefaction} as
\begin{equation}
  \label{mr}
  u_M(x,t) := u_- +
  \sum_{i=-m}^m [u]^{(i)}\,
  H\big(x-x^!-\wt s^{(i)}(t-t^!)\big), \qquad t^!\le t\le t^\#.
\end{equation}
The multi-rarefaction is used until time $t=t^\#$, when the solution
reverts to the standard centered rarefaction $u_D$ given in
\eqref{rareSplit}.

Although we do not use the single jump \eqref{unuse} because its
residual is too large, we can use it to conveniently write the
multi-rarefaction as a linear interpolation of the single jump $u_J$
and the ``correct'' discretized rarefaction $u_D$, namely
\[
  u_M = \frac{t^\#-t}{t^\#-t^!}\,u_J + \frac{t-t^!}{t^\#-t^!}\,u_D,
  \qquad t\in[t^!,t^\#].
\]

\begin{lemma}
  \label{lem:mr}
  The residual of the multi-rarefaction $u_M$ satisfies the estimate
  \begin{equation}
    \label{mrRes1}
    \begin{aligned}
      \big\|\mc R(u_M)\big\|_{M} &\le K_M\,\frac{t^!-t^c}{t^\#-t^!}\,
    \sum_{i=1}^m\z^{(i)}\,\Big(\frac{\z^{(0)}}2 +
    \sum_{j=1}^{i-1}\z^{(j)} + \frac{\z^{(i)}}2\Big)
                   \\{}&\qquad{}{}
    + O\Big(\sum_{i=0}^m\z^{(i)}\sum_{l=0}^i
                         \z^{(l)}\sum_{j=0}^l\z^{(j)}\Big),
    \end{aligned}
  \end{equation}
  where $K_M$ is a constant depending only on the system.  In
  particular, if we choose $t^\#$ large enough that
  \begin{equation}
    \label{mrRes}
    \frac{t^!-t^c}{t^\#-t^!} = O(m\,\ee r),
    \com{then}
    \big\|\mc R(u_M)\big\|_{M} \le
    \sum_{i=-m}^m\z^{(i)}\,K_{mr}\,m^2\,\e^{2e_r},
  \end{equation}
  for a different constant $K_{mr}$.
\end{lemma}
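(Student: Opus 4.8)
The plan is to use the explicit piecewise constant form \eqref{mr} of the multi-rarefaction, read off its residual termwise via Lemma~\ref{lem:res}, and estimate each term with the weak-wave expansion of Lemma~\ref{lem:spdRes}.  Throughout write $\rho := (t^!-t^c)/(t^\#-t^!)$ for the interpolation ratio.  In \eqref{mr} the states $u^{(i)}$ from \eqref{u(i)} and the wavespeeds $\wt s^{(i)}$ are all independent of $t$, so the bounded ``$B(t)$'' term in Lemma~\ref{lem:res} vanishes and the residual is a finite sum of vector Dirac masses,
\[
  \mc R(u_M) = \sum_{i=-m}^m R\big(\wt s^{(i)}\big)\,\delta_{\wt x^{(i)}(t)},
  \qquad R\big(\wt s^{(i)}\big) := A^{(i)}\big([f]^{(i)} - \wt s^{(i)}\,[q]^{(i)}\big),
\]
with $[q]^{(i)}$, $[f]^{(i)}$ the jumps across the $i$-th split wave (and the evident modification for the central jump $[u]^{(0)}=u^{(1)}-u^{(-1)}$).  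Since $\|\delta\|=1$, the triangle inequality gives $\|\mc R(u_M)\|_M \le \sum_{i=-m}^m|R(\wt s^{(i)})|$, so everything reduces to bounding the vector residual of each of the $2m+1$ \emph{weak} split waves.

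The core of the argument is the single-wave estimate.  Each split wave has strength $|\z^{(i)}|\le\ee r\ll1$, so Lemma~\ref{lem:spdRes} in the form \eqref{Res*s} gives $R(\wt s^{(i)}) = \big(\la_k(\ol u^{(i)})-\wt s^{(i)}\big)\,\wh{[q]}^{(i)} + O(\z^{(i)\,3})$ with $|\wh{[q]}^{(i)}|=O(\z^{(i)})$, where $\ol u^{(i)}$ is the midpoint of the $i$-th wave.  It thus remains to control the speed error $\la_k(\ol u^{(i)})-\wt s^{(i)}$, and for this I would combine three facts: \eqref{s(i)}, which says $s^{(i)}=\la_k(\ol u^{(i)})+O(\z^{(i)\,2})$; the interpolation identity \eqref{ratio}, $\wt s^{(i)}-s^{(i)} = \big(s^{(i)}-\la_k(\ol u)\big)\,\rho$; and the group property \eqref{lgp} applied to \eqref{u(i)}, which shows $\ol u^{(i)}=W_k\big(\ol u,d^{(i)}\big)$ with cumulative strength $d^{(i)}:=\tfrac{\z^{(0)}}2+\sum_{j=1}^{i-1}\z^{(j)}+\tfrac{\z^{(i)}}2$, whence (integrating $\dot\la_k=r_k\cdot\nabla\la_k$ along $\mc W_k(\ol u)$ and using its boundedness) $|\la_k(\ol u^{(i)})-\la_k(\ol u)|\le C\,d^{(i)}$.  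Assembling these gives $|\la_k(\ol u^{(i)})-\wt s^{(i)}| \le C\,\rho\,d^{(i)} + O(\z^{(i)\,2})(1+\rho)$ and hence $|R(\wt s^{(i)})| \le C\,\rho\,\z^{(i)}\,d^{(i)} + O\big(\z^{(i)\,3}+\z^{(i)}(\z^{(i)\,2}+d^{(i)\,2})\,\rho\big)$.

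Summing over $i=-m,\dots,m$ and using the symmetry of \eqref{u(i)} --- $\z^{(-i)}=\z^{(i)}$, $d^{(-i)}=d^{(i)}$, while the central wave has $d^{(0)}=0$ and contributes only at cubic order --- produces the leading term $K_M\,\rho\sum_{i=1}^m\z^{(i)}d^{(i)}$ of \eqref{mrRes1}.  The mixed error pieces still carrying $\rho$ rejoin this term (enlarging $K_M$), since each extra factor they carry is bounded, using $\z^{(i)\,2}\le 2\z^{(i)}d^{(i)}$ and $d^{(i)}\le\z$, while the genuinely cubic leftovers obey $\sum_i\z^{(i)\,3}\le\sum_{i=0}^m\z^{(i)}\sum_{l=0}^i\z^{(l)}\sum_{j=0}^l\z^{(j)}$; together these give \eqref{mrRes1}.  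For the second statement, choosing $t^\#$ so that $\rho=O(m\,\ee r)$ --- possible by \eqref{txc!} since $t^!-t^c$ is finite --- and bounding $d^{(i)}$ and the partial sums $\sum_{l=0}^i\z^{(l)}$ by $O(m\,\ee r)$ makes both terms of \eqref{mrRes1} at most $K_{mr}\,m^2\,\e^{2e_r}\sum_{i=0}^m\z^{(i)} \le K_{mr}\,m^2\,\e^{2e_r}\sum_{i=-m}^m\z^{(i)}$, which is \eqref{mrRes}.

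I expect the main obstacle to be the speed-error bookkeeping in the core step: identifying that the dominant deviation of the temporarily assigned wavespeed $\wt s^{(i)}$ from the true $k$-wavespeed is $d^{(i)}\,\rho$, which relies on the group-property identity $\ol u^{(i)}=W_k(\ol u,d^{(i)})$ together with genuine nonlinearity of the $k$-field, and then keeping the genuinely cubic remainder free of the factor $\rho$ --- since $\rho$ is not yet small at this stage --- while reabsorbing the mixed remainders, which do carry $\rho$ but come with a bounded extra factor and so fold back into the main term.
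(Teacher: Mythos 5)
Your proof is correct and follows essentially the same route as the paper: reduce to a per-jump bound via the triangle inequality and $\|\delta\|=1$, apply \eqref{Res*s}, express the speed error via \eqref{s(i)} and \eqref{ratio} in terms of $\la_k(\ol u^{(i)})-\la_k(\ol u)$ scaled by the ratio $\rho=(t^!-t^c)/(t^\#-t^!)$, then sum and bound the cumulative strengths by $m\,\ee r$. Your two small variations --- a Lipschitz bound on $\la_k$ along the wave curve in place of the paper's explicit Taylor expansion (which suffices since only an upper bound is claimed), and more careful tracking of the $(1+\rho)$ factor on the $O(\z^{(i)\,2})$ error, which the paper's \eqref{R(i)} absorbs implicitly --- do not change the conclusion.
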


Note that the sums appearing in \eqref{mrRes} are significant only
when $m$ is large, which in turn implies a dramatic increase in wave
strength, which is $O(\z/\ee s)$ across the interaction, which we
expect to be a rare occurrence, because the absolute strength of
shocks is limited by the compact domain $K_{\mc U}$ in state space
$\mc U$.  We are implicitly assuming that $m\,\ee r$ is small; if not,
we should not use a multi-rarefaction.  In any case, after the
multi-rarefaction $u_M$ reverts to the discretized rarefaction $u_D$
at time $t=t^\#$, the residual decreases to
$O\big(\sum\z^{(i)\,3}\big)$.  Also, the individual terms increase as
$|i|$ does, so the middle wave strengths ($i\sim0$) should be chosen
as large as possible, while the outside waves ($i\sim m$) should be as
small as possible, within the allowable range.  The reason for the
restriction $\kappa<1/3$ is that any rarefaction slightly stronger
than $\ee r$ must be split into at least three pieces, so the minimum
split strength should be (strictly) smaller than $\ee r/3$.

\begin{proof}
  According to \eqref{Res*s}, if we define
  \[
    \begin{aligned}
      {}[q]^{(0)} &:= q\big(u^{(1)}\big)-q\big(u^{(-1)}\big), \\
      [q]^{(i)} &:= q\big(u^{(i+1)}\big)-q\big(u^{(i)}\big), \\
      [q]^{(-i)} &:= q\big(u^{(-i)}\big)-q\big(u^{(-i-1)}\big),
    \end{aligned}
  \]
  $i=1,\dots,m$, and $\wh{[q]}^{(i)}:=A^{(i)}[q]^{(i)}$, then the
  vector residual of each jump in \eqref{mr} is
  \begin{equation}
    \label{R(i)}
    \begin{aligned}
      R^{(i)} &:= \Big(\la_k\big(\ol u^{(i)}\big) - \wt s^{(i)}\Big)\,
                \wh{[q]}^{(i)} + O\big(\z^{(i)\,3}\big) \\
              &= \Big(\la_k\big(\ol u^{(i)}\big) - \la_k(\ol u)\Big)\,
                \frac{t^!-t^c}{t^\#-t^!}\,
                \wh{[q]}^{(i)} + O\big(\z^{(i)\,3}\big),
    \end{aligned}
  \end{equation}
  $i = -m,\dots,m$, where we have used \eqref{s(i)} and \eqref{ratio}.
  Each $\wh{[q]}^{(i)} = O\big(\z^{(i)}\big)$, and by construction,
  $\ol u^{(0)}=\ol u$.  We now approximate each
  $\la_k\big(\ol u^{(i)}\big)$ by Taylor expansion.  Using
  \eqref{u(i)}, we get
  \[
    \la_k\big(u^{(\pm 1)}\big)
    = \la_k(\ol u) \pm \frac{\z^{(0)}}2\,\dot\la_k(\ol u)
    + O\big(\z^{(0)\,2}\big),
  \]
  and similarly, 
  \[
    \begin{aligned}
      \la_k\big(u^{(\pm i\pm 1)}\big)
      &= \la_k\big(u^{(\pm i)}\big) \pm
        \z^{(i)}\,\dot\la_k\big(u^{(\pm i)}\big)
        + O\big(\z^{(i)\,2}\big)\\
      &= \la_k\big(u^{(\pm i)}\big) \pm
        \z^{(i)}\,\dot\la_k(\ol u)
        + O\Big(\z^{(i)}\sum_{j=0}^i\z^{(j)}\Big).
    \end{aligned}
  \]
  Inductively combining these expressions, it follows after
  simplification that
  \[
    \la_k\big(\ol u^{(\pm i)}\big) - \la_k(\ol u)
    = \pm\Big(\frac{\z^{(0)}}2+\sum_{j=1}^{i-1}\z^{(j)} +
    \frac{\z^{(i)}}2\Big)\,\dot\la_k(\ol u)
    + O\Big(\sum_{l=0}^i\z^{(l)}\sum_{j=0}^l\z^{(j)}\Big),
  \]
  for $i=1,\dots,m$.

  Next, expand $q(u)$ along the curve $W_k(u,\z)$, to get
  \[
    \begin{aligned}
      {}[q]^{(\pm i)}
      &= q\Big(W_k(\ol u^{(\pm i)}),\z^{(\pm i)}/2\Big)
        - q\Big(W_k(\ol u^{(\pm i)}),-\z^{(\pm i)}/2\Big)\\
      &= \dot q\big(\ol u^{(\pm i)}\big)\,\z^{(\pm i)}
        + O\big(\z^{(\pm i)\,3}\big),
    \end{aligned}
  \]
  where $\dot q(u) := Dq(u)\,\dot u = Dq(u)\,r_k(u)$ is the derivative
  along the wave curve.

  We now use these expressions in \eqref{R(i)} to get
  \[
    \begin{aligned}
      \big|R^{(i)}\big| = \z^{(i)}\,
    \Big(\frac{\z^{(0)}}2&+\sum_{j=1}^{i-1}\z^{(j)} +
    \frac{\z^{(i)}}2\Big)\,\frac{t^!-t^c}{t^\#-t^!}\,
    \big|\dot\la_k(\ol u)\,A^{(i)}\dot q\big(\ol u^{(i)}\big)\big|
    \\&{}+ O\Big(\z^{(i)}\sum_{l=0}^i\z^{(l)}\sum_{j=0}^l\z^{(j)}\Big),
    \end{aligned}
  \]
  for $i=-m,\dots,m$; note that the leading (quadratic) term vanishes
  for $i=0$.  Finally, the residual of $u_M$ is
  \[
    \mc R(u_M) = \sum_{i=-m}^m R^{(i)}\,
    \delta_{x^!+\wt s^{(i)}(t-t^!)},
  \]
  so the triangle inequality together with $\|\delta\|=1$ gives the
  expression \eqref{mrRes1}.  Finally, estimate each \emph{interior} sum by
  \[
    \sum_{j=0}^l\z^{(j)} \le l\,\max\z^{(j)} \le m\,\ee r,
  \]
  and combine the two terms to get the estimate \eqref{mrRes}.
\end{proof}

We remark that we could avoid multi-rarefactions by treating
moderate rarefactions in the same way as strong compressions: that
is, if a rarefaction has strength $\z\ge \ee r$ after an interaction,
we could set the width to zero, which effectively centers the
rarefaction at $(x^!,t^!)$, and which could then be split as a
centered rarefaction with no temporary increase in residual.  However,
we believe that the use of multi-rarefactions allows a more accurate
$L^1$ profile of the actual solution.  Thus although use of
multi-rarefactions has a cost in terms of the residual, we believe
that they will yield a better rate of convergence in $L^1$.

\subsection{Composite Waves}
\label{sec:comp}

Because we are treating a general $N\times N$ system \eqref{eqsys},
the number of waves can in principle become unbounded in finite time.
This is particularly evident for $N\ge 4$, or even $N=3$ for general
systems.  In particular, there are known examples in which, if all
waves are discretized, infinitely many waves and interactions
accumulate around a single finite point.  Three such pictures of wave
patterns are shown in Figure~\ref{fig:accum}.  The left-most picture
can be realized exactly as waves in a linearly degenerate
(geometrically) nonlinear elastic string~\cite{Ystr}, which we expect
does not blow up despite infinitely many interactions.  The middle
pattern occurs in the example of the solution of a non-physical system
in which the amplitude blows up in finite time~\cite{Jens}.  The third
pattern, which we expect is non-realizable in a strictly hyperbolic
system, shows a wave pattern in which there are finitely many waves at
any one time, but which nevertheless has a finite accumulation of
interaction points.

\begin{figure}[thb]
  \centering
%\include{Pics/fig5-acc.tex}
% This file was created with tikzplotlib v0.10.1.post13.
\begin{tikzpicture}[scale=0.7]

\begin{groupplot}[group style={group size=3 by 1}]
\nextgroupplot[
hide x axis,
hide y axis,
tick align=outside,
tick pos=left,
%x grid style={darkgrey176},
xmin=-0.48, xmax=1.28,
%xtick style={color=black},
%y grid style={darkgrey176},
ymin=-0.049925989400576, ymax=1.0484457774121,
%ytick style={color=black}
]
\addplot [line width=0.48pt, black]
table {%
0 0
0.396 0.99
};
\addplot [line width=0.48pt, black]
table {%
0.8 0
0.404 0.99
};
\addplot [line width=0.48pt, black]
table {%
0.64 0
0.128 0.32
-0.4 0.65
};
\addplot [line width=0.48pt, black]
table {%
0.16 0
0.672 0.32
1.2 0.65
};
\addplot [line width=0.48pt, black]
table {%
0.128 0.32
0.5632 0.592
1.2 0.99
};
\addplot [line width=0.48pt, black]
table {%
0.672 0.32
0.2368 0.592
-0.4 0.99
};
\addplot [line width=0.48pt, black]
table {%
0.5632 0.592
0.30208 0.7552
-0.0735999999999999 0.99
};
\addplot [line width=0.48pt, black]
table {%
0.2368 0.592
0.49792 0.7552
0.8736 0.99
};
\addplot [line width=0.48pt, black]
table {%
0.30208 0.7552
0.458752 0.85312
0.67776 0.99
};
\addplot [line width=0.48pt, black]
table {%
0.49792 0.7552
0.341248 0.85312
0.12224 0.99
};
\addplot [line width=0.48pt, black]
table {%
0.458752 0.85312
0.3647488 0.911872
0.239744 0.99
};
\addplot [line width=0.48pt, black]
table {%
0.341248 0.85312
0.4352512 0.911872
0.560256 0.99
};
\addplot [line width=0.48pt, black]
table {%
0.3647488 0.911872
0.42115072 0.9471232
0.4897536 0.99
};
\addplot [line width=0.48pt, black]
table {%
0.4352512 0.911872
0.37884928 0.9471232
0.3102464 0.99
};
\addplot [line width=0.48pt, black]
table {%
0.42115072 0.9471232
0.387309568 0.96827392
0.35254784 0.99
};
\addplot [line width=0.48pt, black]
table {%
0.37884928 0.9471232
0.412690432 0.96827392
0.44745216 0.99
};
\addplot [line width=0.48pt, black]
table {%
0.387309568 0.96827392
0.4076142592 0.980964352
0.422071296 0.99
};
\addplot [line width=0.48pt, black]
table {%
0.412690432 0.96827392
0.3923857408 0.980964352
0.377928704 0.99
};
\addplot [line width=0.48pt, black]
table {%
0.4076142592 0.980964352
0.39543144448 0.9885786112
0.3931572224 0.99
};
\addplot [line width=0.48pt, black]
table {%
0.3923857408 0.980964352
0.40456855552 0.9885786112
0.4068427776 0.99
};
\addplot [line width=0.48pt, black]
table {%
0.39543144448 0.9885786112
0.402741133312 0.99314716672
0.39770566656 0.99
};
\addplot [line width=0.48pt, black]
table {%
0.40456855552 0.9885786112
0.397258866688 0.99314716672
0.40229433344 0.99
};
\addplot [line width=0.48pt, black]
table {%
0.402741133312 0.99314716672
0.3983553200128 0.995888300032
0.407776600064 0.99
};
\addplot [line width=0.48pt, black]
table {%
0.397258866688 0.99314716672
0.4016446799872 0.995888300032
0.392223399936 0.99
};
\addplot [line width=0.48pt, black]
table {%
0.3983553200128 0.995888300032
0.40098680799232 0.9975329800192
0.3889340399616 0.99
};
\addplot [line width=0.48pt, black]
table {%
0.4016446799872 0.995888300032
0.39901319200768 0.9975329800192
0.4110659600384 0.99
};
\addplot [line width=0.48pt, black]
table {%
0.40098680799232 0.9975329800192
0.399407915204608 0.99851978801152
0.41303957602304 0.99
};
\addplot [line width=0.48pt, black]
table {%
0.39901319200768 0.9975329800192
0.400592084795392 0.99851978801152
0.38696042397696 0.99
};
\addplot [line width=0.48pt, black]
table {%
0.672 0.32
0.724164383561644 0.692602739726027
};
\addplot [line width=0.48pt, black]
table {%
0.128 0.32
0.0758356164383562 0.692602739726027
};
\addplot [line width=0.48pt, black]
table {%
0.5632 0.592
0.653097028972135 0.99
};
\addplot [line width=0.48pt, black]
table {%
0.724164383561644 0.692602739726027
0.656990689785054 0.99
};
\addplot [line width=0.48pt, black]
table {%
0.0758356164383562 0.692602739726027
0.143009310214946 0.99
};
\addplot [line width=0.48pt, black]
table {%
0.2368 0.592
0.146902971027865 0.99
};
\addplot [line width=0.48pt, black]
table {%
0.724164383561644 0.692602739726027
0.843123287671233 0.99
};
\addplot [line width=0.48pt, black]
table {%
0.0758356164383562 0.692602739726027
-0.0431232876712328 0.99
};
\addplot [line width=0.48pt, black]
table {%
0.724164383561644 0.692602739726027
0.603407664350184 0.770010893066707
0.260224657534247 0.99
};
\addplot [line width=0.48pt, black]
table {%
0.0758356164383562 0.692602739726027
0.196592335649816 0.770010893066707
0.539775342465753 0.99
};
\addplot [line width=0.48pt, black]
table {%
0.603407664350184 0.770010893066707
0.693618460316963 0.827838326378744
0.946590671166122 0.99
};
\addplot [line width=0.48pt, black]
table {%
0.196592335649816 0.770010893066707
0.106381539683037 0.827838326378744
-0.146590671166122 0.99
};
\addplot [line width=0.48pt, black]
table {%
0.693618460316963 0.827838326378744
0.62622686702012 0.871038065671592
0.440646249467804 0.99
};
\addplot [line width=0.48pt, black]
table {%
0.106381539683037 0.827838326378744
0.17377313297988 0.871038065671592
0.359353750532196 0.99
};
\addplot [line width=0.48pt, black]
table {%
0.62622686702012 0.871038065671592
0.676571471547938 0.903310248061219
0.811807484572435 0.99
};
\addplot [line width=0.48pt, black]
table {%
0.17377313297988 0.871038065671592
0.123428528452063 0.903310248061219
-0.0118074845724355 0.99
};
\addplot [line width=0.48pt, black]
table {%
0.676571471547938 0.903310248061219
0.638961747826194 0.927419045318747
0.54133545852344 0.99
};
\addplot [line width=0.48pt, black]
table {%
0.123428528452063 0.903310248061219
0.161038252173806 0.927419045318747
0.258664541476561 0.99
};
\addplot [line width=0.48pt, black]
table {%
0.638961747826194 0.927419045318747
0.66705793274397 0.945429420266039
0.736588037128949 0.99
};
\addplot [line width=0.48pt, black]
table {%
0.161038252173806 0.927419045318747
0.13294206725603 0.945429420266039
0.063411962871051 0.99
};
\addplot [line width=0.48pt, black]
table {%
0.66705793274397 0.945429420266039
0.64606879672228 0.958883994638917
0.597527828358991 0.99
};
\addplot [line width=0.48pt, black]
table {%
0.13294206725603 0.945429420266039
0.15393120327772 0.958883994638917
0.202472171641009 0.99
};
\addplot [line width=0.48pt, black]
table {%
0.64606879672228 0.958883994638917
0.661748641947754 0.968935177475759
0.694609765085569 0.99
};
\addplot [line width=0.48pt, black]
table {%
0.15393120327772 0.958883994638917
0.138251358052247 0.968935177475759
0.105390234914431 0.99
};
\addplot [line width=0.48pt, black]
table {%
0.661748641947754 0.968935177475759
0.650035079939258 0.976443871070949
0.628887518809938 0.99
};
\addplot [line width=0.48pt, black]
table {%
0.138251358052247 0.968935177475759
0.149964920060742 0.976443871070949
0.171112481190062 0.99
};
\addplot [line width=0.48pt, black]
table {%
0.650035079939258 0.976443871070949
0.658785646858373 0.982053208839612
0.671182641068579 0.99
};
\addplot [line width=0.48pt, black]
table {%
0.149964920060742 0.976443871070949
0.141214353141627 0.982053208839612
0.128817358931421 0.99
};

\nextgroupplot[
hide x axis,
hide y axis,
tick align=outside,
tick pos=left,
%x grid style={darkgrey176},
xmin=-0.48, xmax=1.28,
%xtick style={color=black},
%y grid style={darkgrey176},
ymin=-0.04987664900096, ymax=1.04740962902016,
%ytick style={color=black}
]
\addplot [line width=0.48pt, black]
table {%
0 0
0.396 0.99
};
\addplot [line width=0.48pt, black]
table {%
0.8 0
0.404 0.99
};
\addplot [line width=0.48pt, black]
table {%
0.64 0
0.128 0.32
-0.4 0.65
};
\addplot [line width=0.48pt, black]
table {%
0.128 0.32
0.5632 0.592
1.2 0.99
};
\addplot [line width=0.48pt, black]
table {%
0.5632 0.592
0.30208 0.7552
-0.0735999999999999 0.99
};
\addplot [line width=0.48pt, black]
table {%
0.30208 0.7552
0.458752 0.85312
0.67776 0.99
};
\addplot [line width=0.48pt, black]
table {%
0.458752 0.85312
0.3647488 0.911872
0.239744 0.99
};
\addplot [line width=0.48pt, black]
table {%
0.3647488 0.911872
0.42115072 0.9471232
0.4897536 0.99
};
\addplot [line width=0.48pt, black]
table {%
0.42115072 0.9471232
0.387309568 0.96827392
0.35254784 0.99
};
\addplot [line width=0.48pt, black]
table {%
0.387309568 0.96827392
0.4076142592 0.980964352
0.422071296 0.99
};
\addplot [line width=0.48pt, black]
table {%
0.4076142592 0.980964352
0.39543144448 0.9885786112
0.3931572224 0.99
};
\addplot [line width=0.48pt, black]
table {%
0.39543144448 0.9885786112
0.402741133312 0.99314716672
0.39770566656 0.99
};
\addplot [line width=0.48pt, black]
table {%
0.402741133312 0.99314716672
0.3983553200128 0.995888300032
0.407776600064 0.99
};
\addplot [line width=0.48pt, black]
table {%
0.3983553200128 0.995888300032
0.40098680799232 0.9975329800192
0.3889340399616 0.99
};

\nextgroupplot[
hide x axis,
hide y axis,
tick align=outside,
tick pos=left,
%x grid style={darkgrey176},
xmin=-10.96, xmax=18.96,
%xtick style={color=black},
%y grid style={darkgrey176},
ymin=-0.594062862967084, ymax=12.4753201223088,
%ytick style={color=black}
]
\addplot [line width=0.48pt, black]
table {%
0 0
1.00903155966486 3.03675078193788
1.6396762844554 4.93472002064905
1.6396762844554 4.93472002064905
2.23508656445788 6.72664874285069
2.60721798945943 7.84660419422672
2.60721798945943 7.84660419422672
2.95855824055166 8.90398715912124
3.1781458974843 9.56485151218032
3.1781458974843 9.56485151218032
3.3854650792039 10.1887930342937
3.51503956777864 10.5787564856145
3.51503956777864 10.5787564856145
3.63737466310366 10.9469324785538
3.71383409768179 11.1770424741409
3.71383409768179 11.1770424741409
3.78602170436655 11.3942961060481
3.83113895854453 11.53007962599
3.83113895854453 11.53007962599
3.87373548855114 11.6582768509097
3.90035831980527 11.7384001164845
};
\addplot [line width=0.48pt, black]
table {%
8 0
6.99096844033514 3.03675078193788
6.3603237155446 4.93472002064905
6.3603237155446 4.93472002064905
5.76491343554212 6.72664874285069
5.39278201054057 7.84660419422672
5.39278201054057 7.84660419422672
5.04144175944834 8.90398715912124
4.8218541025157 9.56485151218032
4.8218541025157 9.56485151218032
4.61453492079611 10.1887930342937
4.48496043222136 10.5787564856145
4.48496043222136 10.5787564856145
4.36262533689635 10.9469324785538
4.28616590231821 11.1770424741409
4.28616590231821 11.1770424741409
4.21397829563345 11.3942961060481
4.16886104145547 11.53007962599
4.16886104145547 11.53007962599
4.12626451144886 11.6582768509097
4.09964168019473 11.7384001164845
};
\addplot [line width=0.32pt, black]
table {%
4 0
1.00903155966486 3.03675078193788
4 4.93472002064905
4 4.93472002064905
2.23508656445788 6.72664874285069
4 7.84660419422672
4 7.84660419422672
2.95855824055166 8.90398715912124
4 9.56485151218032
4 9.56485151218032
3.3854650792039 10.1887930342937
4 10.5787564856145
4 10.5787564856145
3.63737466310366 10.9469324785538
4 11.1770424741409
4 11.1770424741409
3.78602170436655 11.3942961060481
4 11.53007962599
4 11.53007962599
3.87373548855114 11.6582768509097
4 11.7384001164845
};
\addplot [line width=0.32pt, black]
table {%
4 0
6.99096844033514 3.03675078193788
4 4.93472002064905
4 4.93472002064905
5.76491343554212 6.72664874285069
4 7.84660419422672
4 7.84660419422672
5.04144175944834 8.90398715912124
4 9.56485151218032
4 9.56485151218032
4.61453492079611 10.1887930342937
4 10.5787564856145
4 10.5787564856145
4.36262533689635 10.9469324785538
4 11.1770424741409
4 11.1770424741409
4.21397829563345 11.3942961060481
4 11.53007962599
4 11.53007962599
4.12626451144886 11.6582768509097
4 11.7384001164845
};
\addplot [line width=0.32pt, black]
table {%
1.00903155966486 3.03675078193788
-2.13684166041597 4.93472002064905
-0.472064743108921 0
};
\addplot [line width=0.32pt, black]
table {%
6.99096844033514 3.03675078193788
10.136841660416 4.93472002064905
8.47206474310892 0
};
\addplot [very thin, black]
table {%
-2.13684166041597 4.93472002064905
-2.89537101418625 5.93472002064905
};
\addplot [very thin, black]
table {%
10.136841660416 4.93472002064905
10.8953710141863 5.93472002064905
};
\addplot [line width=0.32pt, black]
table {%
2.95855824055166 8.90398715912124
1.86317933345918 9.56485151218032
2.23508656445788 6.72664874285069
};
\addplot [line width=0.32pt, black]
table {%
5.04144175944834 8.90398715912124
6.13682066654082 9.56485151218032
5.76491343554212 6.72664874285069
};
\addplot [very thin, black]
table {%
1.86317933345918 9.56485151218032
1.58227259100175 9.89818484551365
};
\addplot [very thin, black]
table {%
6.13682066654082 9.56485151218032
6.41772740899825 9.89818484551365
};
\addplot [line width=0.32pt, black]
table {%
3.63737466310366 10.9469324785538
3.25596865397266 11.1770424741409
3.3854650792039 10.1887930342937
};
\addplot [line width=0.32pt, black]
table {%
4.36262533689635 10.9469324785538
4.74403134602734 11.1770424741409
4.61453492079611 10.1887930342937
};
\addplot [very thin, black]
table {%
3.25596865397266 11.1770424741409
3.25298185585117 11.3770424741409
};
\addplot [very thin, black]
table {%
4.74403134602734 11.1770424741409
4.74701814414883 11.3770424741409
};
\addplot [line width=0.32pt, black]
table {%
3.87373548855114 11.6582768509097
3.74093163149371 11.7384001164845
3.78602170436655 11.3942961060481
};
\addplot [line width=0.32pt, black]
table {%
4.12626451144886 11.6582768509097
4.25906836850629 11.7384001164845
4.21397829563345 11.3942961060481
};
\addplot [very thin, black]
table {%
3.74093163149371 11.7384001164845
3.82260823286156 11.8812572593417
};
\addplot [very thin, black]
table {%
4.25906836850629 11.7384001164845
4.17739176713844 11.8812572593417
};
\addplot [line width=0.48pt, black]
table {%
-9.6 0
3.66121828733793 11.7384001164845
};
\addplot [line width=0.48pt, black]
table {%
17.6 0
4.33878171266207 11.7384001164845
};
\end{groupplot}

\end{tikzpicture}
  \caption{Wave patterns with accumulation of interactions}
  \label{fig:accum}
\end{figure}

It follows that to continue our mFT approximation for sufficiently
large times, we must find a way to limit the number of waves in the
scheme.  However, we also wish to avoid non-physical waves and to keep
states exact throughout the scheme.  We can achieve this consistently
because we are allowed to adjust the speeds (with a residual penalty)
and virtual widths (which affects future wave patterns only), while
preserving exact states and wave strengths.

Our assumption that the system is globally strictly hyperbolic has two
important consequences for us: first, there is a natural ordering of
the wave families by increasing (actual) wavespeed; and second, the
vectors spanning waves of different families are always independent.
The upshot of this is that in our scheme, in which waves are
represented by jump discontinuities, we can \emph{superpose} waves of
different families, by giving them the same initial position and
wavespeed.

Specifically, suppose a dgRS has waves $\z_k$ as in
Lemma~\ref{lem:dgRS}, so each state $u_k$ is exactly connected to
$u_{k-1}$ as in \eqref{gRsol}.  Then we can allow two or more consecutive
waves to be superposed, say by setting $s_{k-i}=s_k=s_{k+1}=:s$, and
the approximation still has the piecewise constant form \eqref{pwc}.
In this case, we would have left and right states
\[
  u_\ell := u\big(\xi(t)-,t\big) = u_{k-i-1} \com{and}
  u_r := u\big(\xi(t)+,t) = u_{k+1},\qquad \xi'(t) = s,
\]
and although $u_\ell$ and $u_r$ are not connected by a single wave, we
do have
\begin{equation}
  \label{comp}
  u_r = \wt W_{k+1}\big(\wt W_k\big(\dots
  \wt W_{k-i}(u_\ell,\z_{k-i})..,\z_k\big),\z_{k+1}\big),
\end{equation}
where each $\wt W_j$ is either $W_j$ or $E_j$, given by \eqref{Wk},
\eqref{Ek}, respectively.  Note that this description is forced on us
by the ordering of waves, as long as we ensure that the speeds $s_k$
are non-decreasing in $k$.  Also, note that we can combine waves in
this way \emph{only if} they emanate from the same point, that is as a
result of a wave interaction.  We call such a superposition of jumps a
\emph{composite wave}.  We do not give the composite wave a single
strength, but rather treat it as several superposed waves (of
consecutive families) each having their own strengths.

We identify and name the \emph{base wave} or \emph{carrier wave} of a
composite wave to be that wave having largest strength; this need not
itself be a weak wave.  Since all other components are small, we
regard the base wave as dominant, and use the virtual width of the
base wave as that of the composite.  In this way, the smaller
components ``piggyback'' onto the carrier; we will call these
\emph{passengers}.

It remains to specify precisely when to use composite waves, which
consecutive waves should be included in a composite wave, and what
speed to assign to the composite wave.  We must use composite waves
often enough that interactions do not accumulate, but we wish to also
control the total residual.  Since we are effectively using an
incorrect wavespeed for several of the different components, according
to \eqref{Rs}, the strength of any such component must be small if we
are to control the residual.

After resolving an interaction by solving the dgRS, and carrying out
the checks above, including splitting into multi-rarefactions, we
check the strength of each emerging wave $\g_j$.  If the wave has
strength \emph{at least} $\ee p$, $|\z_j|\ge \ee p$, then that wave
\emph{may not} be a passenger, so it must be a base or carrier wave.
Thus any passenger is necessarily weaker than $\ee p$.

If there are any passengers in the emerging waves, each such passenger
must be associated with a carrier which emerges fron the same point.
There are three possibilities for each passenger $\g_j$: first, there
are base waves both to the left and right of $\g_j$; second, there is
a base wave either to the left or right of $\g_j$ emerging from the
interaction; and third, there are \emph{no base waves} emerging from
the interaction.  In the first case, we choose the \emph{stronger} of
the two base waves to be the carrier, and in the second case there is
no choice to make.  In the third case, there are no ``strong'' wave
candidates for the carrier; in this case we choose as the carrier the
\emph{largest} of the waves emerging from the interaction point.  In
cases in which two or more candidates have the same strength, we
merely enforce consistency by not allowing wave to cross, so that
\emph{all} waves between a passenger and its carrier are passengers
with the \emph{same} carrier.  This is illustrated in
Figure~\ref{fig:comp}: the solid waves in the left panel are strong
enough to be carriers, while the dotted waves become passengers, and
the backward carrier is stronger.  Both carriers become composite
waves, and we keep the states adjacent to these exact.  Effectively we
have just changed the speeds and widths of the passengers.

\begin{figure}[thb]
  \centering
  \begin{tikzpicture}[scale=1.5]
  
  \draw[thick] (-0.9,0.2) -- (1,2);
  \draw[thick] (-0.9,0.2) -- (-2.5,2);

  \draw[thick,dotted] (-0.9,0.2) -- (-3,1.8);
  \draw[thick,dotted] (-0.9,0.2) -- (0,2);
  \draw[thick,dotted] (-0.9,0.2) -- (-1,2);
  \draw[thick,dotted] (-0.9,0.2) -- (1.4,1.4);

  \draw[dashed] (-3,0.2) -- (1.4,0.2);

  \draw[->,draw=direct,thick] (-0.2,1.8) -- (-0.5,1.8);
  \draw[->,draw=direct,thick] (-2.9,1.8) -- (-2.6,1.8);
  \draw[->,draw=direct,thick] (1,1.35) -- (0.7,1.35);
  \draw[->,draw=direct,thick] (-1.1,1.8) -- (-1.4,1.8);

  \draw[thick] (-0.6,-0.2) -- (-0.9,0.2);
  \draw[thick] (-0.4,-0.2) -- (-0.9,0.2);

  \node at (-2.4,1) {$u_0$};
  \node at (-2,1.2) {$u_1$};
  \node at (-1.3,1.3) {$u_2$};
  \node at (-0.7,1.3) {$u_3$};
  \node at (-0.1,1.25) {$u_4$};
  \node at (0.5,1.2) {$u_5$};
  \node at (1.2,0.9) {$u_6$};
  
%  \node at (0.2,1.7) {};
\end{tikzpicture}
\qquad
\begin{tikzpicture}[scale = 1.5]  
  \draw[thick] (-0.9,0.2) -- (1,2);
  \draw[thick] (-0.9,0.2) -- (-2.5,2);

  \draw[dashed] (-3,0.2) -- (1.4,0.2);

  \draw[thick] (-0.6,-0.2) -- (-0.9,0.2);
  \draw[thick] (-0.4,-0.2) -- (-0.9,0.2);

  \node at (-2,1) {$u_0$};
  \node at (-1,1.3) {$u_4$};
  \node at (0.8,1) {$u_6$};

%  \draw[thick,dashed] (0,0.8) -- (-0.5,0.8);
%  \draw[thick,dashed] (0.1,1.4) -- (0.6,1.4);
%  \draw[thick,dashed] (0.1,1.4) -- (-0.5,0.8);
%  \draw[thick,dashed] (0,0.8) -- (0.6,1.4);

 % \draw[thick,dotted] (-0.9,0.2) -- (-3,1.8);
 % \draw[thick,dotted] (-0.9,0.2) -- (0,2);
 % \draw[thick,dotted] (-0.9,0.2) -- (1.4,1.4);
  
\end{tikzpicture}  

  \caption{Composite waves}
  \label{fig:comp}
\end{figure}

Thus for every wave $\g_j$ emerging from the interaction, we identify
a carrier wave $\g_{b_j}$, satisfying the following properties:
\begin{itemize}
\item the passenger is weaker than the carrier, $|\z_j|\le|\z_{b_j}|$;
\item if a wave is not weak, it carries itself: $|\z_j|\ge \ee p$
  implies $b_j=j$;
\item waves do not cross: if $b_j<j$, then
  \[
    |\z_{j'}|<\ee p \com{and} b_{j'}=b_j \com{for each}
    b_j< j'\le j,
  \]
  and similarly for $b_j>j$.
\end{itemize}
In the third case above, in which no waves emerging from the
interaction are base waves, that is $|\z_j|<\ee p$ for all $j$, then
all waves have the same carrier, which is the strongest emerging wave.
We call such a composite wave a \emph{lonely weak wave}.

Notice that, like multi-rarefactions, composite waves are generally
short-lived: whenever a composite wave interacts with another wave,
the dgRS is resolved with left and right states that incorporate the
composite wave, and after resolution of the dgRS, a check is made for
new composite waves which are then assigned regardless of the various
waves entering the interaction.

Because passengers are weak waves, we generally expect that any
rarefaction that is split, either into centered rarefactions or into a
multi-rarefaction, \emph{will not} yield any waves weak enough to
become passengers.  We can ensure that this is the case by
appropriately relating the exponents $\ee r$ for rarefactions and
$\ee p$ for weak waves.  This is done in Section~\ref{sec:conv}
below.

We illustrate the effect of using composite waves in limiting the
number of waves in Figure~\ref{fig:noaccum}.  This shows an
accumulation of infinitely many interaction by self-similarity in a
$3\times 3$ system such as shown in Figure~\ref{fig:accum}.  On the
left, there is an accumulation of interactions due to self-similarity
at the focusing point.  However, if the interacting waves weaken
enough due to multiple interactions, they will eventually become
passengers.  This in turn breaks the self-similarity and the two
focusing waves will meet after finitely many interactions, so the
solution can be continued.

\begin{figure}[thb]
  \centering
  % \include{Pics/fig7-noac.tex}
  % This file was created with tikzplotlib v0.10.1.post13.
\begin{tikzpicture}[scale=0.7]

\begin{groupplot}[group style={group size=3 by 1}]
\nextgroupplot[
hide x axis,
hide y axis,
tick align=outside,
tick pos=left,
%x grid style={darkgrey176},
xmin=-0.48, xmax=1.28,
%xtick style={color=black},
%y grid style={darkgrey176},
ymin=-0.04987664900096, ymax=1.04740962902016,
%ytick style={color=black}
]
\addplot [line width=0.32pt, black]
table {%
0 0
0.396 0.99
};
\addplot [line width=0.32pt, black]
table {%
0.8 0
0.404 0.99
};
\addplot [line width=0.32pt, black]
table {%
0.64 0
0.128 0.32
-0.4 0.65
};
\addplot [line width=0.32pt, black]
table {%
0.128 0.32
0.5632 0.592
1.2 0.99
};
\addplot [line width=0.32pt, black]
table {%
0.5632 0.592
0.30208 0.7552
-0.0735999999999999 0.99
};
\addplot [line width=0.32pt, black]
table {%
0.30208 0.7552
0.458752 0.85312
0.67776 0.99
};
\addplot [line width=0.32pt, black]
table {%
0.458752 0.85312
0.3647488 0.911872
0.239744 0.99
};
\addplot [line width=0.32pt, black]
table {%
0.3647488 0.911872
0.42115072 0.9471232
0.4897536 0.99
};
\addplot [line width=0.32pt, black]
table {%
0.42115072 0.9471232
0.387309568 0.96827392
0.35254784 0.99
};
\addplot [line width=0.32pt, black]
table {%
0.387309568 0.96827392
0.4076142592 0.980964352
0.422071296 0.99
};
\addplot [line width=0.32pt, black]
table {%
0.4076142592 0.980964352
0.39543144448 0.9885786112
0.3931572224 0.99
};
\addplot [line width=0.32pt, black]
table {%
0.39543144448 0.9885786112
0.402741133312 0.99314716672
0.39770566656 0.99
};
\addplot [line width=0.32pt, black]
table {%
0.402741133312 0.99314716672
0.3983553200128 0.995888300032
0.407776600064 0.99
};
\addplot [line width=0.32pt, black]
table {%
0.3983553200128 0.995888300032
0.40098680799232 0.9975329800192
0.3889340399616 0.99
};

\nextgroupplot[
hide x axis,
hide y axis,
tick align=outside,
tick pos=left,
%x grid style={darkgrey176},
xmin=-0.48, xmax=1.28,
%xtick style={color=black},
%y grid style={darkgrey176},
ymin=-0.05, ymax=1.05,
%ytick style={color=black}
]
\addplot [line width=0.32pt, black]
table {%
0 0
0.4 1
};
\addplot [line width=0.32pt, black]
table {%
0.8 0
0.4 1
};
\addplot [line width=0.32pt, black]
table {%
0.64 0
0.128 0.32
-0.4 0.65
};
\addplot [line width=0.32pt, black]
table {%
0.128 0.32
0.5632 0.592
1.2 0.99
};
\addplot [line width=0.32pt, black]
table {%
0.5632 0.592
0.30208 0.7552
-0.0735999999999999 0.99
};
\addplot [line width=0.32pt, black]
table {%
0.30208 0.7552
0.458752 0.85312
};
\addplot [line width=2pt, composite]
table {%
0.458752 0.85312
0.4 1
};
\addplot [very thin, black]
table {%
0.398752 0.77312
0.398752 0.93312
};
\addplot [very thin, black]
table {%
0.398752 0.77312
0.528752 0.77312
};
\addplot [very thin, black]
table {%
0.528752 0.77312
0.528752 0.93312
};
\addplot [very thin, black]
table {%
0.398752 0.93312
0.528752 0.93312
};

\nextgroupplot[
hide x axis,
hide y axis,
tick align=outside,
tick pos=left,
%x grid style={darkgrey176},
xmin=-0.65, xmax=13.65,
%xtick style={color=black},
%y grid style={darkgrey176},
ymin=-0.8, ymax=16.8,
%ytick style={color=black}
]
\path [draw=direct, fill=direct]
(axis cs:4.75,8.6)
--(axis cs:4,8.35)
--(axis cs:4,8.59975)
--(axis cs:3,8.59975)
--(axis cs:3,8.60025)
--(axis cs:4,8.60025)
--(axis cs:4,8.85)
--cycle;
\path [draw=direct, fill=direct]
(axis cs:6.55,8.6)
--(axis cs:7.3,8.85)
--(axis cs:7.3,8.60025)
--(axis cs:8.3,8.60025)
--(axis cs:8.3,8.59975)
--(axis cs:7.3,8.59975)
--(axis cs:7.3,8.35)
--cycle;
\addplot [very thin, black]
table {%
0 0
0 16
};
\addplot [very thin, black]
table {%
0 0
13 0
};
\addplot [very thin, black]
table {%
13 0
13 16
};
\addplot [very thin, black]
table {%
0 16
13 16
};
\addplot [black, dotted]
table {%
2 6
11 6
};
\addplot [line width=0.32pt, black]
table {%
6.5 6
3.17 4
};
\addplot [line width=0.32pt, black]
table {%
6.5 6
8 2
};
\addplot [line width=2pt, composite]
table {%
6.5 6
4.7 10.8
};
\addplot [line width=0.28pt, black, dashed]
table {%
6.5 6
1.1 9
};
\addplot [line width=0.28pt, black, dashed]
table {%
6.5 6
9.7 9
};
\end{groupplot}

\end{tikzpicture}

  \caption{Avoiding accumulation of trajectories}
  \label{fig:noaccum}
\end{figure}

\subsection{Speed of Composite Waves}

In all cases, the composite wave inherits the width of the base wave.
It remains to choose the speed of the composite wave: to do so, we
check the relative strengths of the carrier wave $\g_b$ and its
passengers $\g_j$, $b_j=b$.  Fix a constant $Q=Q(\e)<1$ to be chosen
later.  We use this $Q$ to distinguish between \emph{``light''} and
\emph{``heavy''} passengers according as whether
\begin{equation}
  \label{lh}
  |\z_j| \le Q(\e)\,|\z_{b_j}|, \com{or}
  Q(\e)\,|\z_{b_j}| < |\z_j|\le |\z_{b_j}|,
\end{equation}
respectively.  Thus a passenger is heavy if its strength is comparable
to that of its carrier, and light otherwise.  We call the composite
wave ``unbalanced'' if all passengers in the wave are light, and
``balanced'' if at least one of the passengers is heavy, and we choose
the composite wave speeds accordingly.  We write $Q(\e)$ rather than
$\ee q$ because we do not require a rate for $Q$: all we'll require is
that $Q(\e) = o(1)$ as $\e\to 0^+$, so it suffices to take say
$Q(\e) = 1/|\log\e|$.

If a composite wave is unbalanced, so that all of its passengers are
light, then we choose the speed of the composite wave to be the least
square speed of the carrier as in Lemma~\ref{lem:spdRes}.  Thus the
base wave is essentially unaffected by the passengers, while the
passengers inherit the speed and width of the base wave.

On the other hand, if the composite wave is balanced, the carrier does
not dominate and the heavy passengers have an effect on the wave
speed.  In this case, we choose the composite wave using least squares
again, but now using all parts of the composite wave.  That is, we
treat the composite wave as a single entity and compute the least
squared speed of the total, namely
\[
  \text{minimize }
  \big(\wh{[f]}-s\,\wh{[q]}\big)^2 :=
    \Big(\sum_{k'}A_{k'}([f]_{k'}-s\,\sum_{k'}[q]_{k'})\Big)^2 .  
\]
In this case we cannot use
Lemma~\ref{lem:spdRes} and we need to recalculate the residual.

For definiteness, suppose that the composite wave is given by
\eqref{comp}.  Then each individual wave satisfies \eqref{fexp},
namely
\begin{equation}
  \label{oluk}
  [f]_{k'} = \la_{k'}(\ol u_{k'})\,[q]_{k'} + O(|\z_{k'}|^3), \qquad
  \ol u_{k'} := \wt W_{k'}\Big(u_{k'-1},\frac{\z_{k'}}2\Big),
\end{equation}
for $k' = k-i,\dots,k+1$, and we have
\[
  [f] = \sum_{k'}[f]_{k'} := f(u_{k+1})- f(u_{k-i-1}) ,\qquad
  [q] = \sum_{k'}[q]_{k'} := q(u_{k+1})- q(u_{k-i-1}).
\]
Since we have a composite wave we need a common reference midpoint,
and so we set
\begin{equation}
  \label{midp}
  \ol u := \wt W_{k+1}\Big(\wt W_k\big(\dots
  \wt W_{k-i}(u_\ell,\frac{\z_{k-i}}2)..,
  \frac{\z_k}2\big),\frac{\z_{k+1}}2\Big),
\end{equation}
which we'll call the \emph{midpoint of the cube}.

\begin{lemma}
  \label{lem:compspd}
    Suppose the composite wave is given by \eqref{comp}, define $\ol u$
  by \eqref{midp}, and suppose that
  $\zeta := \sqrt{\sum\z_{k'}^2} \ll 1$.  Then the least square speed
  $s_*$ is a convex combination
  \[
    s_* = \sum_{k'} \la_{k'}(\ol u)\,a_{k'} + O(\z), \qquad
    \sum_{k'}a_{k'} = 1,
  \]
  and the corresponding residual of the composite wave satisfies
  $R_* = O(\z)$.  If the system is symmetric hyperbolic and the
  preconditioner is chosen to be
  \begin{equation}
    \label{precond}
    A(\ol u) := \sqrt{Dq(\ol u)}^{-\T},
  \end{equation}
  then each coefficient becomes $a_{k'} = \z_{k'}^2/\z^2 \ge 0$.
\end{lemma}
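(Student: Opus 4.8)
The plan is to reduce everything to the single-wave formula of Lemma~\ref{lem:spdRes}, applied now to the telescoping jumps $[q]=\sum_{k'}[q]_{k'}$, $[f]=\sum_{k'}[f]_{k'}$ across the composite \eqref{comp}, and then to expand componentwise using the weak-wave estimate \eqref{fexp}. Writing $\wh{[q]}_{k'}:=A_{k'}[q]_{k'}$, $\wh{[f]}_{k'}:=A_{k'}[f]_{k'}$ and $\wh{[q]}:=\sum_{k'}\wh{[q]}_{k'}$, $\wh{[f]}:=\sum_{k'}\wh{[f]}_{k'}$, the functional $(\wh{[f]}-s\,\wh{[q]})^2$ is exactly the quadratic \eqref{lsq} in $s$, so its minimiser and the associated vector residual are, as in \eqref{sR},
\[
  s_*=\frac{\wh{[q]}\cdot\wh{[f]}}{\wh{[q]}\cdot\wh{[q]}},\qquad
  R_*=\wh{[f]}-s_*\,\wh{[q]}=\Pi_{\wh{[q]}^\perp}\wh{[f]},
\]
the only difference from the single-wave case being that $[q],[f]$ are the full composite jumps.

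Since a composite \eqref{comp} has at most $N$ components, each a weak simple or elementary wave with $|\z_{k'}|\le\z\ll1$, estimate \eqref{fexp} gives $[f]_{k'}=\la_{k'}(\ol u_{k'})\,[q]_{k'}+O(\z_{k'}^3)$ and $[q]_{k'}=O(\z_{k'})$, where $\ol u_{k'}$ is the midpoint of the $k'$-th component. All the intermediate states, and in particular every $\ol u_{k'}$ and the common midpoint $\ol u$ of \eqref{midp}, lie within $O(\z)$ of $u_\ell$, so $\la_{k'}(\ol u_{k'})=\la_{k'}(\ol u)+O(\z)$ and hence $\wh{[f]}=\sum_{k'}\la_{k'}(\ol u)\,\wh{[q]}_{k'}+O(\z^2)$, with $\wh{[q]}=O(\z)$. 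Setting $a_{k'}:=(\wh{[q]}_{k'}\cdot\wh{[q]})/|\wh{[q]}|^2$, so that $\sum_{k'}a_{k'}=1$ automatically (since $\sum_{k'}\wh{[q]}_{k'}=\wh{[q]}$), and dotting the expansion of $\wh{[f]}$ into $\wh{[q]}$, we get $s_*=\wh{[q]}\cdot\wh{[f]}/|\wh{[q]}|^2=\sum_{k'}a_{k'}\la_{k'}(\ol u)+O(\z^3)/|\wh{[q]}|^2$. Provided $|\wh{[q]}|\ge c\,\z$ for a fixed $c>0$ — established in the symmetric case below, and a structural hypothesis in general — the error is $O(\z)$ and $|a_{k'}|\le|\wh{[q]}_{k'}|/|\wh{[q]}|=O(1)$, so $s_*$ is bounded. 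Finally $R_*=\sum_{k'}\big(\la_{k'}(\ol u)-s_*\big)\wh{[q]}_{k'}+O(\z^2)$, and since each $\wh{[q]}_{k'}=O(\z_{k'})$, the sum has system-bounded length, and $\la_{k'}(\ol u)-s_*=O(1)$, we conclude $R_*=O(\z)$.

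For the symmetric case, take $A_{k'}=A(\ol u)=\sqrt{Dq(\ol u)}^{-\T}$. The midpoint-rule computation behind \eqref{[qf]} gives $[q]_{k'}=Dq(\ol u_{k'})\,r_{k'}(\ol u_{k'})\,\z_{k'}+O(\z_{k'}^3)$, and $A(\ol u)\,Dq(\ol u)=\sqrt{Dq(\ol u)}^{-\T}\sqrt{Dq(\ol u)}^\T\sqrt{Dq(\ol u)}=\sqrt{Dq(\ol u)}$, so replacing $\ol u_{k'}$ by $\ol u$ at cost $O(\z_{k'}^2)$ yields $\wh{[q]}_{k'}=\z_{k'}\,\sqrt{Dq(\ol u)}\,r_{k'}(\ol u)+O(\z_{k'}^2)$. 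By \eqref{ortho}, $\{\sqrt{Dq(\ol u)}\,r_{k'}(\ol u)\}$ is orthonormal, so $\wh{[q]}_{k'}\cdot\wh{[q]}_l=\z_{k'}\z_l\,\delta_{k'l}+O(\z^3)$, whence $\wh{[q]}_{k'}\cdot\wh{[q]}=\z_{k'}^2+O(\z^3)$, $|\wh{[q]}|^2=\z^2+O(\z^3)$ — which in particular supplies the lower bound $|\wh{[q]}|\ge c\,\z$ used above — and $a_{k'}=\z_{k'}^2/\z^2+O(\z)\ge0$, as claimed. The main obstacle is precisely this lower bound $|\wh{[q]}|\ge c\,\z$: without it the $O(\z^3)/|\wh{[q]}|^2$ error in $s_*$ is not controlled and the $a_{k'}$ need not be $O(1)$. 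For the preconditioner \eqref{precond} it is immediate from \eqref{ortho}; in general it is the statement that the leading per-family directions $A_{k'}Dq(\ol u_{k'})r_{k'}(\ol u_{k'})$ stay uniformly linearly independent across the consecutive families of the composite, which is where strict hyperbolicity — independence of the $r_{k'}$ — and the structural hypotheses on the choice of preconditioner enter. Everything else is routine first-order bookkeeping, relying only on the composite having a system-bounded number of components so that all sums over $k'$ are finite.
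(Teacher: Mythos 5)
Your proof takes essentially the same route as the paper's: Taylor expansion of each $[q]_{k'}$, $[f]_{k'}$ around the common midpoint $\ol u$, the least-squares formula for $s_*$ from \eqref{sR}, the same coefficients $a_{k'}$ (your $(\wh{[q]}_{k'}\cdot\wh{[q]})/|\wh{[q]}|^2$ is precisely the paper's $\ol a_{k'}/\sum_{j'}\ol a_{j'}$ after unwinding $\ol a_{k'} = \wh{[q]}\cdot\wh{[q]}_{k'}$), and the orthonormality argument via \eqref{ortho} in the symmetric case.

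The one place where you go beyond the paper is in explicitly flagging the need for a lower bound $|\wh{[q]}|\ge c\,\z$. The paper passes from $\wh{[q]}\cdot\wh{[q]} = \sum_{k'}\ol a_{k'} = O(\z^2)$ (an upper bound) to $O(\z^3)/\big(\wh{[q]}\cdot\wh{[q]}\big) = O(\z)$ without comment; as you observe, this silently requires the leading per-family directions $A_{k'}\,Dq(\ol u)\,r_{k'}(\ol u)$ to stay uniformly linearly independent, which is transparent when a common preconditioner $A(\ol u)$ is used (and in particular with \eqref{precond}), but is not a priori automatic if each wave carries its own $A_{k'}$. In the symmetric case your verification $|\wh{[q]}|^2 = \z^2 + O(\z^3)$ closes the gap cleanly, and your refinement $a_{k'}=\z_{k'}^2/\z^2+O(\z)$ is a more honest reading of the lemma's nominal equality, which as the paper's own calculation shows is only exact up to $O(\z)$. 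Same argument, slightly more care with the invertibility of the denominator — and that is exactly the right thing to worry about here.
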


\begin{proof}
  Expanding $[q]_{k'}$ as in Lemma~\ref{lem:spdRes}, we get
  \[
    [q]_{k'} = \z_{k'}\,Dq(\ol u_{k'})\,r_{k'}(\ol u_{k'})
    + O\big(|\z_{k'}|^3\big),
  \]
  where $\ol u_{k'}$ is given in \eqref{oluk}.  Since each wave
  strength $\z_{k'}$ is small and the shock and wave curves
  $\mc S_{k'}$ and $\mc W_{k'}$ agree to quadratic order, this
  expansion holds for both cases.  Further expanding this and
  \eqref{oluk} around the midpoint $\ol u$, we get
  \begin{equation}
    \label{fqk}
    \begin{aligned}
      {}[q]_{k'} &= \z_{k'}\,Dq(\ol u)\,r_{k'}(\ol u) + O(\zeta^2),\\
      [f]_{k'} &= \la_{k'}(\ol u)\,[q]_{k'} + O(\z^2),
    \end{aligned}
  \end{equation}
  where we have lost one order by expanding in other directions, but
  now $Dq$, $\la_{k'}$ and $r_{k'}$ are all evaluated at the common
  point $\ol u$.

  As in \eqref{sR}, the least square solution is given by
  \[
    s_* = \frac{\wh{[q]}\cdot\wh{[f]}}{\wh{[q]}\cdot\wh{[q]}},
  \]
  where now we have defined
  \[
    \begin{aligned}
      \wh{[q]} &= \sum_{k'}A_{k'}[q]_{k'}, \com{and}\\
      \wh{[f]} &= \sum_{k'}A_{k'}[f]_{k'}
            = \sum_{k'}\la_{k'}\,A_{k'}[q]_{k'} + O(\z^2).
    \end{aligned}
  \]
  It follows that
  \[
    \wh{[q]}\cdot\wh{[f]} = \sum_{k'}\la_{k'}\,\ol a_{k'} + O(\z^3), \com{and}
    \wh{[q]}\cdot\wh{[q]} = \sum_{k'}\ol a_{k'} = O(\z^2),
  \]
  where the coefficients $\ol a_{k'}$ are given by
  \[
    \ol a_{k'} := \sum_{j'}\wh{[q]}_{j'}\cdot\wh{[q]}_{k'}
    \qcom{where} \wh{[q]}_{j'} := A_{j'}[q]_{j'},
  \]
  and we have set
  \[
    a_{k'} := \frac{\ol a_{k'}}{\sum_{j'}\ol a_{j'}} \qcom{so}
    \sum_{k'} a_{k'} = 1,
  \]
  and the expression for $s_*$ holds.  It is immediate that
  $R_*=O(\z)$.  We cannot do better than this because the expansion is
  transverse to wave curves and we are left with first order errors.

  If the system is symmetric hyperbolic, and the preconditioner is
  given by \eqref{precond}, then by \eqref{ortho}, the eigenvectors
  $\{r_k\}$ are orthonormal, and by \eqref{fqk}, we have
  \[
    \begin{aligned}
      \wh{[q]}_{j'}\cdot\wh{[q]}_{k'}
      &= \Big(\sqrt{Dq(\ol u_{j'})}^{-\T}[q]_{j'}\Big)\cdot
        \Big(\sqrt{Dq(\ol u_{k'})}^{-\T}[q]_{k'}\Big)\\
      &= \Big(\sqrt{Dq(\ol u)}^{-\T}[q]_{j'}\Big)\cdot
        \Big(\sqrt{Dq(\ol u)}^{-\T}[q]_{k'}\Big) + O(\z^3)\\
      &= [q]_{j'}^\T\,Dq(\ol u)^{-1}[q]_{k'} + O(\z^3)\\
      &= \z_{j'}\,\z_{k'}\,r_{j'}(\ol u)^\T\,
        Dq(\ol u)\,r_{k'}(\ol u) + O(\z^3)\\
      &= \z_{j'}\,\z_{k'}\,\delta_{j'\,k'} + O(\z^3),
    \end{aligned}
  \]
  and the result follows.
\end{proof}

We note that because errors are $O(\z)$, the choice of $\ol u$ can be
anywhere inside the region spanned by the composite wave, say
\[
  \ol u := \wt W_{k+1}\Big(\wt W_k\big(\dots
  \wt W_{k-i}(u_\ell,\theta_{k-i}\,\z_{k-i})..,
  \theta_k\,\z_k\big),\theta_{k+1}\,\z_{k+1}\Big),
\]
where each $\theta_{k'}\in[0,1]$.  

\section{Initialization and Reconstruction}
\label{sec:ir}

By assumption, the initial data $u^o$ is $BV_{loc}$, and we are
working on some compact interval $K = K_{\B R}\subset\B R$.  Our goal in
this section is to provide a sufficiently accurate initial wave
sequence $\Gamma^0$ with which to begin the mFT approximations.
Referring to \eqref{wave}, we must determine initial waves $\g_j$ with
small residual \eqref{RGam}, and which approximates $u^o$ closely.  In
doing so, we must avoid overlap of adjacent simple waves of the same
family; in fact, we will choose the initial wave sequence so that no
waves overlap, but this can be weakened if desired.

Recall that our discretization parameter is $\e>0$.  Since $u^o$ is
$BV$, left and right limits $u^o(x-)$ and $u^o(x+)$ exist for every
$x\in K$, and by \eqref{bvm} we have
\[
  u(x) = \frac{u(x+)+u(x-)}2 \com{for all} x\in K.
\]
The data may have discontinuities, and we explicitly note
finite discontinuities in the data by setting
\[
  X_d := \big\{x\;:\;|u^o(x+)-u^o(x-)|\ge\ee d\big\},
\]
which is a finite set.  Since $u^o\in BV$, between any consecutive
points $\{x_\flat,x_\sharp\}$ of $X_d$, there are finitely many
points, labeled $x_j$, with
\[
  x_\flat =: x_0 < x_1 < \dots < x_{m-1} < x_m := x_\sharp,
\]
and such that
\begin{equation}
  \label{BV0}
  \sum_{x_\sharp,x_\flat}\Big|\big\|u^o\big\|_{BV(x_\flat+,x_\sharp-)} -
  \sum_{j=1}^m\big|u^o(x_j) - u^o(x_{j-1})\big|\,\Big| < \ee i,
\end{equation}
where the outside sum is over all consecutive pairs
$\{x_\flat,x_\sharp\}$ of discontinuities in $X_d$ (together with the
end intervals), and the $x_j$'s are implicitly chosen inside the
interval $(x_\flat+,x_\sharp-)$.  By choosing more points if
necessary, we can also ensure that the corresponding $L^1$ difference
is also small.

We now further discretize the data between points $x_{j-1}$ and $x_j$
by setting an \emph{exact} generalized Riemann problem on each of the
subintervals $(x_{j-1}+,x_j-)$.    We use the following gRP data: for the
states, we use left and right states $u_-:=u^o(x_{j-1}+)$ and
$u_+:=u^o(x_j-)$, respectively, and we need to assign initial widths
$w_k$.  As usual, if the $k$-th family is linearly degenerate, we
assign $w_k=0$.  For the GNL families, we assign widths
\[
  w_k>0, \com{such that} \sum_{k=1}^Nw_k < x_j-x_{j-1}.
\]
We allow maximum flexibility in the widths $w_k>0$ in order to allow
slope information to be incorporated in the approximation, if desired.

We now solve the exact gRP as in Definition~\ref{def:gRP}, and shift
this to the middle of the interval $[x_{j-1},x_j]$: that is, we
translate the gRP solution at the origin to the point
\[
  x_{j}^{(0)} := x_{j-1} +
  \frac12\Big( x_j-x_{j-1}-\sum_{k=1}^Nw_k \Big),
\]
say.  If the slowest or fastest families are GNL, we need not reduce
the width, whereas if they are both linearly degenerate, a reduction
in width ensures that the first interaction of waves from consecutive
subintervals will take place at some positive time $t^!>0$.  In
keeping with our philosophy of maximal flexibility, we allow the
reduction to be chosen as desired.

Having chosen the widths $w_j$ and shift $x_{j}^{(0)}$, we now resolve
the gRP using Assumption \ref{ass:gRP}, and label the corresponding
states as $u_j^{(k)}$ as in \eqref{Wk}, \eqref{gRsol}, so that
\[
  u_j^{(0)} = u^o(x_{j-1}+), \qquad
  u_j^{(k)} \in \mc W_k\big(u_j^{(k-1)}\big), \qquad
  u_j^{(N)} = u^o(x_j-).
\]
We now define points $x_j^{(k)}$ inductively by
$x_j^{(k)}:=x_j^{(k-1)}+w_k$, and use the profile given by
Lemma~\ref{lem:center} to define the piecewise continuous function
$\wh u^o$ on $(x_{j-1}+,x_j-)$ which is the exact data for the gRP, namely
\begin{equation}
  \label{u0def}
  \wh u^o(x) :=
  \begin{cases}
    u_j^{(0)}, & x_{j-1}< x\le x_{j}^{(0)},\\[3pt]
    u_k^o(\z), & x_j^{(k-1)} < x = x_k^o(\z) < x_j^{(k)},\\[3pt]
    u_j^{(N)}, & x_j^{(N)} \le x < x_j,
  \end{cases}
\end{equation}
where $(x_k^o,u_k^o)$ denotes the $k$-simple wave profile given by
Lemma~\ref{lem:center}.  This function is defined for each interval
$(x_{j-1},x_j)\subset(x_\flat,x_\sharp)$, and for all consectuive
pairs $\{x_\flat,x_\sharp\}$ from $X_d$, to give a piecewise
continuous function defined on all of $K$.  Because all differences
are small away from the discontinuity set $X_d$, this interpolation is
a good approximation of the given data $u^o(x)$ on each subinterval
$(x_{j-1},x_j)$ in the $BV$ norm.

Combining all of these subintervals, we have constructed a piecewise
continuous approximation $\wh u^o$ of the initial data $u^o$, which
satisfies both
\begin{equation}
  \label{u0h}
  \sum\big\|u^o - \wh u^o\big\|_{BV(x_\flat+,x_\sharp-)} < \ee i
  \com{and}
  \sum\big\|u^o - \wh u^o\big\|_{L^1(x_\flat+,x_\sharp-)} < \ee i,
\end{equation}
where the sums are over all intervals $(x_\flat+,x_\sharp-)$ defined
by the explicit discontinuities.  Note that the approximation
$\wh u^o$ is implicitly defined, because it requires solution of the
gRP on each subinterval.

At this stage the gRP solution on each subinterval $(x_{j-1}+,x_j-)$
consists of exact simple waves, centered away from the ``origin'', and
contacts; there are no shocks because we chose $w_k>0$ for GNL
families.  Moreover, by \eqref{u0h}, all waves in this gRP solution
are weak, say with strength $|\zeta|<\ee d$.  We now replace each of
these simple waves with a single jump as in our construction of the
dgRS.  The center of each wave is the exact center obtained in the gRP
solution, and the speed $s$ of each wave is chosen by least squares,
as in \eqref{sR} and Lemma~\ref{lem:spdRes}.  Finally, the initial
position of the wave is calculated by \eqref{xp}, namely
\[
  x^p(0) := x^c - s\,t^c,
\]
and the width is chosen no larger than the corresponding $w_k$.  This
ensures that adjacent simple waves do not overlap.  This yields a
\emph{piecewise constant} approximation, denoted $\whh u^o$ of the
initial data, in which each discontinuity is represented as a wave
$\g$ as in \eqref{wave}.
Figure~\ref{fig:init} illustrates the initialization: first, identify
discontinuities, indicated by grey rectangles, and solve the
(discretized) Riemann problem there (bold waves).  Then solve the
refined dgRS between these points, with only one simple wave emerging
from each dgRS posed; here we have narrowed the initial virtual waves
slightly, which may not always be necessary.

\begin{figure}[thb]
  \centering
  % \include{Pics/fig8-init.tex}
  % This file was created with tikzplotlib v0.10.1.post13.
\begin{tikzpicture}[yscale=0.4,xscale=2]

\begin{axis}[
hide x axis,
hide y axis,
tick align=outside,
tick pos=left,
%x grid style={darkgrey176},
xmin=-1.223, xmax=16.883,
%xtick style={color=black},
%y grid style={darkgrey176},
ymin=-0.002, ymax=0.042,
%ytick style={color=black}
]
\path [draw=bakShock, fill=bakShock, opacity=0.2]
(axis cs:1.4,0)
--(axis cs:0,0)
--(axis cs:-0.4,0.04)
--(axis cs:0.919999999999999,0.04)
--(axis cs:0.919999999999999,0.04)
--(axis cs:1.4,0)
--cycle;

\path [draw=fwdShock, fill=fwdShock, opacity=0.2]
(axis cs:3.7,0)
--(axis cs:2.3,0)
--(axis cs:2.74,0.04)
--(axis cs:4.1,0.04)
--(axis cs:4.1,0.04)
--(axis cs:3.7,0)
--cycle;

\path [draw=bakShock, fill=bakShock, opacity=0.2]
(axis cs:5.6,0)
--(axis cs:4.2,0)
--(axis cs:3.76,0.04)
--(axis cs:5.2,0.04)
--(axis cs:5.2,0.04)
--(axis cs:5.6,0)
--cycle;

\path [draw=fwdShock, fill=fwdShock, opacity=0.2]
(axis cs:7.6,0)
--(axis cs:6.2,0)
--(axis cs:6.6,0.04)
--(axis cs:7.96,0.04)
--(axis cs:7.96,0.04)
--(axis cs:7.6,0)
--cycle;

\path [draw=fwdShock, fill=fwdShock, opacity=0.2]
(axis cs:8,0)
--(axis cs:8,0)
--(axis cs:8.32,0.04)
--(axis cs:8.48,0.04)
--(axis cs:8.48,0.04)
--(axis cs:8,0)
--cycle;

\path [draw=bakShock, fill=bakShock, opacity=0.2]
(axis cs:9.7,0)
--(axis cs:8.3,0)
--(axis cs:7.94,0.04)
--(axis cs:9.26,0.04)
--(axis cs:9.26,0.04)
--(axis cs:9.7,0)
--cycle;

\path [draw=fwdShock, fill=fwdShock, opacity=0.2]
(axis cs:11.7,0)
--(axis cs:10.3,0)
--(axis cs:10.66,0.04)
--(axis cs:12.1,0.04)
--(axis cs:12.1,0.04)
--(axis cs:11.7,0)
--cycle;

\path [draw=bakShock, fill=bakShock, opacity=0.2]
(axis cs:13.4,0)
--(axis cs:12,0)
--(axis cs:11.64,0.04)
--(axis cs:12.96,0.04)
--(axis cs:12.96,0.04)
--(axis cs:13.4,0)
--cycle;

\path [draw=fwdShock, fill=fwdShock, opacity=0.2]
(axis cs:15.7,0)
--(axis cs:14.3,0)
--(axis cs:14.7,0.04)
--(axis cs:16.06,0.04)
--(axis cs:16.06,0.04)
--(axis cs:15.7,0)
--cycle;

\addplot [bakShock]
table {%
0.7 0
0.260000000000001 0.04
};
\addplot [contact]
table {%
2 0
2 0.04
};
\addplot [fwdShock]
table {%
3 0
3.42 0.04
};
\addplot [thick, bakShock]
table {%
4 0
3.6 0.04
};
\addplot [thick, contact]
table {%
4 0
4 0.04
};
\addplot [thick, fwdShock]
table {%
4 0
4.4 0.04
};
\addplot [bakShock]
table {%
4.9 0
4.48 0.04
};
\addplot [contact]
table {%
6 0
6 0.04
};
\addplot [fwdShock]
table {%
6.9 0
7.28 0.04
};
\addplot [thick, bakShock]
table {%
8 0
7.56 0.04
};
\addplot [thick, contact]
table {%
8 0
8 0.04
};
\addplot [thick, fwdShock]
table {%
8 0
8.32 0.04
};
\addplot [thick, fwdShock]
table {%
8 0
8.48 0.04
};
\addplot [bakShock]
table {%
9 0
8.6 0.04
};
\addplot [contact]
table {%
10 0
10 0.04
};
\addplot [fwdShock]
table {%
11 0
11.38 0.04
};
\addplot [bakShock]
table {%
12.7 0
12.3 0.04
};
\addplot [contact]
table {%
14 0
14 0.04
};
\addplot [fwdShock]
table {%
15 0
15.38 0.04
};
\path [draw=none, fill=grey127]
(axis cs:3.9,0)
--(axis cs:4.1,0)
--(axis cs:4.1,0.002)
--(axis cs:3.9,0.002)
--cycle;
\path [draw=none, fill=grey127]
(axis cs:7.9,0)
--(axis cs:8.1,0)
--(axis cs:8.1,0.002)
--(axis cs:7.9,0.002)
--cycle;
\end{axis}

\end{tikzpicture}
  \caption{Initialization}
  \label{fig:init}
\end{figure}

When solving the exact RP at each $x'\in X_d$, we similarly discretize
any rarefaction as above, with all centers exactly at $(x',0)$, so
that each discretized rarefaction has strength
$0 < {\z'}_k^{(i)} < \ee r$, recalling that all compressive waves in
the exact RP are shocks.

Having discretized each explicit RP on $X_d$ and the gRPs between
point of $X_d$, we form the initial wave sequence $\Gamma^0$.  It
remains to calculate the interaction time of each wave in the
sequence: for each consecutive pair, we use \eqref{xt!} to calculate
$t^!(\g_{j-1},\g_j)$, and for each wave we calculate the
self-interaction time $t^!_s(\g_j)$; if any of these is negative, we
set them to $\infty$.  We then set $t^!_j$ to be the minimum of these
(up to three) interaction times, namely
\[
  t^!_j := \min\big\{t^!(\g_{j-1},\g_j),\;
  t^!(\g_j,\g_{j+1}),\;t^!_s(\g_j)\big\}.
\]

\begin{lemma}
  \label{lem:init}
  The procedure given above generates a wave sequence such that no
  adjacent waves overlap, and such that the total variation of the
  sequence approximates the total variation of the data.  That is,
  given data $u^o\in BV$ and (small) $\e>0$, there are piecewise
  function $\wh u^o$ defined by \eqref{u0def}, and piecewise constant
  function $\whh u^o$, such that
  \[
    \big\|u^o - \wh u^o\big\|_{BV} < C\,\ee i \qcom{and}
    \big\|u^o - \whh u^o\big\|_{M} < C\, \epsilon^{\min\{ e_i,e_d \}} .
  \]
  The piecewise constant function $\whh u^o$ in turn determines a wave
  sequence $\Gamma^{0+}$ such that the variation
  \[
    \mc V(\Gamma^{0+}) = \|\whh u^o\|_{BV}, \com{satisfies}
    \frac1C\,\mc V(\Gamma^{0+})\le \|u^o\|_{BV}\le
    C\,\mc V(\Gamma^{0+}),
  \]
  where the constant $C$ depends only on the compact region
  $K_{\mc U}$, and $\mc V(\Gamma)$ is given by \eqref{Vdef}.
  Moreover, there is a finite, positive time
  \[
    t^!_* := \min_j t^!_j > 0,
  \]
  such that no waves in $\Gamma^0$ interact or collapse in
  the interval $[0,t^!_*)$.
\end{lemma}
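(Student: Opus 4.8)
The plan is to verify that the initialization procedure above does exactly what was claimed, treating the four assertions --- non-overlap, the two approximation estimates, the identity $\mc V(\Gamma^{0+})=\|\whh u^0\|_{BV}$ with the comparability bound, and positivity of $t^!_*$ --- in turn. \emph{Non-overlap.} Within a subinterval $(x_{j-1}+,x_j-)$, the gRP \eqref{gRsol} produces exactly one wave per family, which is placed (as a single jump of virtual width at most $w_k$) inside the disjoint reserved interval $[x_j^{(k-1)},x_j^{(k)}]$ of length $w_k$; since $\sum_k w_k<x_j-x_{j-1}$ and the block is shifted to the middle, these wave regions are pairwise disjoint at $t=0$ and are separated from those of the neighbouring subintervals by positive gaps. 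At each $x'\in X_d$ the discretized Riemann solution has all its jumps centred at $(x',0)$, and by the Lax condition \eqref{Laxent} together with its extension \eqref{sig} the split rarefaction pieces (consecutive pieces being of the same family) emerge with ordered speeds and do not overlap. Hence no adjacent waves of $\Gamma^0$ overlap.

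\emph{The two estimates.} For the $BV$ estimate I would use that $\wh u^0$ agrees with $u^0$ at every transition point $x_j$ and at every $x'\in X_d$, so summing \eqref{u0h} over the intervals cut out by $X_d$ introduces no extra boundary terms and yields $\|u^0-\wh u^0\|_{BV}<C\,\ee i$, with $C$ depending only on $K=K_{\B R}$. For the $M$ estimate I would split $u^0-\whh u^0=(u^0-\wh u^0)+(\wh u^0-\whh u^0)$. The first term is an $L^1$ function whose $M_{loc}$-norm equals its $L^1$-norm, bounded by $C\,\ee i$ by the $L^1$ half of \eqref{u0h}. The second term is supported on the simple-wave regions of the subinterval gRPs, on each of which a monotone profile of width at most $w_k$ is replaced by a single jump with the same endpoints, contributing at most $|[u]_k|\,w_k$ in $L^1$; since every such wave is weak, $|[u]_k|<\ee d$, and the widths sum to at most $|K|$, this totals at most $\ee d\,|K|$. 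At the points of $X_d$ the functions $\wh u^0$ and $\whh u^0$ coincide at $t=0$, all discretized Riemann waves being stacked at $x'$ there, so there is no further contribution, and combining gives $\|u^0-\whh u^0\|_M<C\,\epsilon^{\min\{e_i,e_d\}}$.

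\emph{Variation.} With the normalization $\z_j\,\ol r_j:=u_j-u_{j-1}$, $|\ol r_j|=1$, applied to each wave of $\Gamma^{0+}$, one has $|\z_j|=|[u]_j|$, so $\mc V(\Gamma^{0+})=\sum_j|[u]_j|=\|\whh u^0\|_{BV}$ by \eqref{Vdef}. The two-sided bound $\tfrac1C\,\mc V(\Gamma^{0+})\le\|u^0\|_{BV}\le C\,\mc V(\Gamma^{0+})$ then follows by combining the $BV$ estimate for $\wh u^0$ just obtained with the uniform equivalence of $\mc V$ and $\|\cdot\|_{BV}$ over the compact state region $K_{\mc U}$ (the arc length of a wave-curve segment of strength $\z$ differing from its chord only by $O(\z^3)$), so that $C$ depends only on $K_{\mc U}$.

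\emph{Positivity of $t^!_*$, and the main difficulty.} Finally I would note that each $t^!_j$ is the minimum of up to three quantities: a self-interaction time, which is finite and positive only for a compression, where it equals the collapse time $t^c>0$ of Lemma~\ref{lem:center} (positive since the assigned $w_k>0$) and is $+\infty$ otherwise; and two pairwise interaction times given by \eqref{xt!}, each positive precisely because adjacent waves are spatially separated at $t=0$, which is what the non-overlap step established. Taking the minimum over the finitely many waves of $\Gamma^0$ yields $t^!_*>0$ (and finite unless no two waves ever meet, in which case the scheme trivially continues). There is no essential obstacle here, since the lemma merely records the designed properties of the construction; the only care needed is bookkeeping --- checking that the reserved widths genuinely separate the subinterval gRP waves from the Riemann solutions posed at $X_d$, controlling the width-times-strength sum in the $M$ estimate, and ensuring that every equivalence constant can be taken uniform over the (large but compact) regions $K_{\B R}$ and $K_{\mc U}$.
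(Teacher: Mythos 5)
Your proof is correct and follows essentially the same route as the paper's: the $BV$ estimate comes directly from \eqref{u0h} since $u^0$ and $\wh u^0$ share the same jumps at $X_d$; the $M$-estimate splits $u^0-\whh u^0$ into the $L^1$ part of \eqref{u0h} plus the jump-versus-profile difference $\wh u^0-\whh u^0$, the latter bounded by (max jump size) $\times$ (total width) $\lesssim \ee d\,|K|$; the two-sided variation bound rests on uniform independence and boundedness of the eigenframe on $K_{\mc U}$; and $t^!_*>0$ follows from non-overlap plus finitely many waves and finite speeds. The one place you go beyond the paper is a detailed non-overlap argument (reserved intervals, shift to center, Lax ordering), which the paper treats implicitly as a design feature of the initialization; and you are more explicit than the paper in combining the two contributions to the $M$-estimate, where the paper only records the $\|\wh u^0-\whh u^0\|_{L^1}\le\wh C\,\ee d$ piece. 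These additions make the argument more self-contained but do not change the method.
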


\begin{proof}
  Since $\wh u^o$ and $u^o$ have the same discontinuities at each
  point of $X_d$, the first inequality is \eqref{u0h}.  Next, since
  each eigenvector $r_k$ has uniformly bounded length and $\{r_k\}$
  are  uniformly independent on any compact subset of the state space
  $\mc U$, it follows that across any wave $u_+=W_k(u_-,\z)$ or
  $u_+=E_k(u_-,\z)$ given by \eqref{Wk} or \eqref{Ek} respectively,
  there is some $C$ such that
  \[
    \frac1C\,|\z| \le \big|u_+-u_-\big| \le C\,|\z|,
  \]
  and the second inequality follows by adding all waves.  Moreover,
  since $\Gamma^{0+}$ is such that all simple waves at $t=0^+$ are
  weaker than $\ee{d}$, we have
  \[
    \|\wh{u}^o - \whh{u}^o\|_M = \|\wh{u}^o - \whh{u}^o\|_{L^1} \leq
    \wh{C} \ee{d},
  \]
  where $\wh C$ is the length of the compact spatial domain.
  
  Lastly, since there
  are finitely many nonoverlapping waves in $\Gamma^{0+}$, and any
  approaching waves are a finite distance apart at time $t=0$, the
  minimum time for any two waves to meet or a compression to collapse
  is positive, so $t^!_*>0$ by finite speed of propagation.
\end{proof}

Note that for weak waves, we have $C\approx 1$, but for strong shocks
and contacts, the size of $C$ will depend on the definition of wave
strength.  In particular, the wave strength can be any monotonic
parameter along the curves $\mc W_k$ and $\mc E_k$; if we were to use
the variation of $u$ along those curves as the strength parameter,
then we would get $C=1$ and
\[
  \big\|\wh u^o\big\|_{BV} = \mc V(\Gamma^0), \com{if}
  \z := \pm\int_0^\z|u'(\fs\z)|\;d\fs\z.
\]
In particular this holds along $\mc W_k$ if we choose $|r_k|=1$, in
which case $\z$ is given by the signed arclength.

\subsection{Reconstruction}

The mFT approximation is evolved by repeated adjustment of the wave
sequence $\Gamma^t$ via interactions as described above.  However,
given the wave sequence at time $t$, the question arises as to how to
map this wave sequence back into a $BV$ function $u(\cdot,t)$ which is
an approximate solution of the system \eqref{eqsys}.  Typically this
reconstruction occurs only at a discrete set of times, so we regard
$t$ and $\Gamma^t$ as fixed and given.

We begin by locating all discontinuities, which are shocks and
contacts, and are identified by $w_j(t)=0$.  These yield the function
\[
  u^d(x,t) := \sum_{j'}\big(u_{j'} -
  u_{j'-1}\big)\,H\big(x-x^p_{j'}(t)\big),
\]
where $j'$ runs over shocks and contacts, and $H$ is the Heaviside
function.  This accounts for all discontinuities of the reconstructed
solution.

In order to reconstruct the continuous part of the solution, we treat
the simple waves in each GNL family separately.  Consider the $k$-th
family, which is GNL, and extract the subsequence of all those simple
waves for which $k_j=k$.  We will provide a profile for each of these,
but before we do so we must make sure that none of the waves overlap.
If there is some overlap of $k$-waves, including shocks, we reduce the
widths of the overlapping simple waves as in Section~\ref{sec:avoid}
above.  In doing so, we make sure that the reduced widths remain
non-zero: in particular, we would typically reduce widths as little as
possible, so  that $k$-waves may touch but not overlap: that is, after
reduction,
\begin{equation}
  \label{nokol}
  x^{e+}_{j'}(t)\le x^{e-}_j(t),
\end{equation}
where $e\pm$ refer to the right and left edges of the wave, and
$\g_{j'}$ and $\g_j$ are \emph{consecutive} $k$-waves, so $k_i\ne k$
for all $j'<i<j$.

Once the wave widths have been reduced so that \eqref{nokol} holds and
no consecutive $k$-waves overlap, we fill in the profile of the simple
waves as in Lemma~\ref{lem:center} above; according to that lemma,
this profile is unique for each simple wave $\g_j$, and
$u_j=W_k(u_{j-1},\z_j)$.  Moreover, we can parameterize this part of
the simple wave as
\[
  u(x) = W_k\big(u_{j-1},z(x)\big), \qcom{with}
  z\big(x^{e-}_j\big) = 0, \qquad
  z\big(x^{e-}_j\big) = \z_j,
\]
where $z(x)$ determines the profile of the centered wave.  Now define
the ``smoothed Heaviside function'' for the simple wave $\g_j$ to be
\[
  \Phi_j(x) =
  \begin{cases}
    0, & x\le x^{e-}_j,\\
    W_k\big(u_{j-1},z(x)\big) - u_{j-1},
       & x^{e-}_j \le x\le x^{e+}_j,\\
    u_j-u_{j-1}, & x\ge x^{e+}_j,
  \end{cases}
\]
and define the continuous profile function for the simple $k$ waves by
\[
  u^c_k(x,t) := \sum_{\substack{j, k_j=k\\\g_j \text{simple}}}\Phi_j(x).
\]

Having obtained the profile $u^c_k$ for each GNL family, we now
combine these together with the discontinuous part, to obtain the
reconstructed function
\[
  U(x,t) = u_0 + u^d(x,t) + \sum_{k\ \text{GNL}} u^c_k(x,t).
\]
We illustrate this reconstruction in Figure \ref{fig:recon}.  On the
left top panel, we show the original wave sequence.  Below that we
reduce widths to avoid overlap of all waves of the same family, and
below that, we extract the simple waves of the two GNL families.  On
the right, we first show each of the smoothed Heaviside functions
$\Phi_j$, and then then two continuous profiles $u^c_\pm$ and the full
reconstruction featuring all waves, including discontinuities.

\begin{figure}[thb]
  \centering
  % \include{Pics/fig9-recon-wvs.tex}
  % This file was created with tikzplotlib v0.10.1.post13.
% [inline block 0: 2 envs, 74221 chars -> data_tex | \begin{tikzpicture}[yscale=0.19,xscale=1] \pgfplotsset{every linear axis/.append style={xtick=\empty,ytick=\empty}}...]

  \caption{Reconstruction of profile}
  \label{fig:recon}
\end{figure}

\begin{lemma}
  \label{lem:recon}
  There is a constant $C'>0$ such that the reconstruction $U(x,t)$ has
  variation equivalent to that of the wave sequence $\Gamma^t$, that
  is
  \[
    \frac1{C'}\,\big\|U(\cdot,t)\big\|_{BV} \le
    \sum_{j=1}^n\big|\z_j\big| \le
    C'\,\big\|U(\cdot,t)\big\|_{BV}.
  \]
  Moreover, each separate profile of the functions $u^c_k(x,t)$
  is part of an exact gRP solution, so, if waves of other families are
  ignored, can be extended to an interval $(t-\tau,t+\tau)$ for which
  no residual is generated.
\end{lemma}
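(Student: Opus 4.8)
The plan is to split the claim into the variation equivalence and the exactness of the single‑family profiles, and to obtain the first by decomposing the measure $DU$. Write $U=u_0+u^d+\sum_{k\text{ GNL}}u^c_k$ with each $u^c_k$ continuous; then the atomic part of $DU$ is exactly $Du^d=\sum_{j'}(u_{j'}-u_{j'-1})\,\delta_{x^p_{j'}(t)}$, summed over the shocks and contacts, while the absolutely continuous part is $\sum_k(u^c_k)'$, and there is no Cantor part since $U$ is piecewise $C^1$ with finitely many pieces. For the upper bound I would use the triangle inequality, bound $|u_{j'}-u_{j'-1}|\le C\,|\z_{j'}|$ by the elementary estimate from the proof of Lemma~\ref{lem:init} (valid across $\mc W_k$ and $\mc E_k$ alike), and, for the continuous part, use that the reconstruction has already reduced widths so that $k$‑waves do not overlap, condition \eqref{nokol}; then the supports of $\Phi_j'$ for $k_j=k$ are disjoint, so $|Du^c_k|(\B R)=\sum_{k_j=k}\int_0^{\z_j}\big|r_k\big(W_k(u_{j-1},\fs z)\big)\big|\,d\fs z\le C\sum_{k_j=k}|\z_j|$ because $|r_k|$ is bounded on $K_{\mc U}$. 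Summing the two contributions gives $\|U(\cdot,t)\|_{BV}\le C\sum_j|\z_j|$.

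For the lower bound, the atomic part already gives $\|U(\cdot,t)\|_{BV}\ge\sum_{j'}|u_{j'}-u_{j'-1}|\ge\frac1C\sum_{j'}|\z_{j'}|$ over shocks and contacts (at the generic times at which the reconstruction is performed these positions are distinct, and the continuous part contributes no atom there). It remains to bound $|\sum_k(u^c_k)'|(\B R)$ below by a constant times $\sum_{j\text{ simple}}|\z_j|$; within a single family this is immediate from disjoint supports and $|D\Phi_j|(\B R)\ge|u_j-u_{j-1}|\ge\frac1C|\z_j|$. The genuine difficulty, which I expect to be the main obstacle, is that simple waves of \emph{different} families may overlap spatially, so a priori the integrand could cancel and be much smaller than $\sum_k|(u^c_k)'|$ pointwise. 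I would resolve this in one of two ways: either note that the reconstruction is free to reduce \emph{all} widths, not merely those within a family, so that no two simple waves overlap at the fixed time $t$ — legitimate because the wave positions $x^p_j(t)$ form a finite set with positive minimal spacing — whence $|\sum_k(u^c_k)'|(\B R)=\sum_k|Du^c_k|(\B R)$ exactly; or, retaining the reconstruction as described, invoke the uniform linear independence of the eigenframe $\{r_1,\dots,r_N\}$ over the compact set $K_{\mc U}$ to get a pointwise bound $\big|\sum_k r_k\,a_k\big|\ge c\sum_k|a_k|$ for the wave densities and integrate. Either way $\|U(\cdot,t)\|_{BV}\ge\frac1C\sum_j|\z_j|$, with $C'$ depending only on $K_{\mc U}$.

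For the second assertion, fix a GNL family $k$. By construction each profile $\Phi_j$ with $k_j=k$ is exactly the centered simple‑wave profile produced by Lemma~\ref{lem:center} from the adjacent states $u_{j-1},u_j\in\mc W_k$ and center $(x^c_j,t^c_j)$, so the corresponding centered $k$‑wave is an exact weak* solution of \eqref{eqsys} on $(t^c_j,\infty)$ for a rarefaction and on $(-\infty,t^c_j)$ for a compression; the current time $t$ lies strictly inside this interval, since for a rarefaction the center precedes the wave's origin so $t^c_j<t$, while a compression that has not yet collapsed has $w_j(t)>0$, hence $t<t^c_j$. Ignoring the other families, $u^c_k$ together with the constant background is then a juxtaposition of finitely many non‑overlapping centered simple $k$‑waves — precisely the single‑family content of a gRP solution by Definition~\ref{def:gRP} — and since these are finite in number and separated by a positive distance, there is $\tau>0$ such that on $(t-\tau,t+\tau)$ none of them interact and each stays within its interval of validity. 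On that interval the one‑family reconstruction evolves as an exact solution, so by Lemma~\ref{lem:res} its residual vanishes identically; taking $\tau$ uniform over the finitely many GNL families completes the argument.
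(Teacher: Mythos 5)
Your argument is essentially the paper's argument, organized slightly differently. The paper groups all contributions (jump plus continuous) of each family into a single function $g_k$, proves the per-family equivalence $\|g_k\|_{BV}\asymp\sum_{k_j=k}|\z_j|$ using the uniform independence and boundedness of the eigenframe on $K_{\mc U}$, and then sums over $k$, implicitly invoking the same independence to control cancellation between families. You instead split $DU$ into its atomic part (shocks and contacts) and its absolutely continuous part (simple-wave profiles), which by mutual singularity avoids any cancellation between those two pieces, and then address the remaining cross-family cancellation within $\sum_k u^c_k$ by the same uniform-independence observation the paper uses (your second alternative). Your first alternative — reducing all virtual widths so that waves of different families become spatially disjoint at the fixed time $t$, so that the decomposition becomes $\ell^1$ — is not what the paper does, but it is a legitimate simplification enabled by the reconstruction's freedom to adjust widths while keeping positions and speeds fixed, and it buys a cleaner lower bound at no cost since the time slice is fixed. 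Your treatment of the second assertion is more detailed than the paper's (the paper simply asserts it after the proof of the variation bound); your argument correctly identifies that each $\Phi_j$ is a piece of a Lemma~\ref{lem:center} profile with the current time strictly inside the simple wave's interval of validity, and that finitely many pairwise disjoint such profiles of a single family yield an exact solution for a uniformly positive time. One minor caveat you share with the paper: the ``uniform independence'' used for cross-family cancellation is really a statement about collections $\{r_{k}(u^{(k)})\}_k$ at possibly distinct base points $u^{(k)}\in K_{\mc U}$, which is stronger than pointwise independence; for strictly hyperbolic physical systems this is harmless, but strictly it is an additional (unstated) hypothesis in both proofs.
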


We note that passengers may be in the wrong place because wavespeeds
have been changed, but these are weak and since $t$ is fixed we cannot
adjust their locations.

In this reconstruction we have used the virtual width of waves to
provide approximate simple waves profiles, which would, locally and in
the absence of waves of other families, be exact solutions of the
system.  That is, by replacing simple jump approximations with
centered profiles, we locally reduce the residual to zero away from
overlapping waves.  However the presence of other families makes this
effect difficult to quantify exactly.

\begin{proof}
  Since each contact and shock appears in $u^d$, and there are no
  simple waves in a linearly degenerate family, we can write
  \[
    \begin{aligned}
      U(x,t) &= u_0 + \sum_{k=1}^Ng_k(x,t), \com{where}\\
      g_k(x,t) &:= u^c_k(x,t) +
                 \sum_{j',\ k_{j'}=k} \big(u_{j'} -
                 u_{j'-1}\big)\,H\big(x-x^p_{j'}(t)\big)\Big),
    \end{aligned}
  \]
  where again $j'$ refers to waves of virtual width $w_{j'} = 0$.
  Here each summand $g_k(x,t)$ corresponds to all waves $\g_j$ in the
  $k$-th family, $k_j=k$.  Since the eigenvectors are uniformly
  independent and bounded on the compact set $K_{\mc U}$, there is a
  single constant $C'$ such that
  \[
    \frac1{C'}\,\sum_{j,\ k_j=k}|\z_j| \le
    \big\|g_k(x,t)\big\|_{BV} \le
    C'\,\sum_{j,\ k_j=k}|\z_j|,
  \]
  for each $k=1,\dots,N$.  Now adding over $k$, the result follows.
\end{proof}

It follows from Lemmas~\ref{lem:init} and \ref{lem:recon}, as well as
the convergence results below, that in order to prove the existence of
$BV_{loc}$ weak* solutions, it is both necessary and sufficient to
obtain a Glimm-like bound for the variation of wave sequences,
\[
  \mc V(\Gamma^t) \le G\big(\mc V(\Gamma^0)\big),
\]
for some nonlinear, nonlocal function $G$.  In our scheme, this
statement is independent of the amplitudes of waves.

\section{Continuation of the Scheme}
\label{sec:continue}

We now show that the mFT scheme can be continued for arbitrary times.
For fixed $\e>0$, we define the scheme inductively as described above.
This yields a sequence
\begin{equation}
  \label{times}
  0<t_1\le t_2\le \dots \le t_i \le \dots,
\end{equation}
of \emph{interaction times} of the mFT approximation.  For each
$t\in[0,T]$, this yields the wave sequence $\Gamma^t$ encoding the
various data as in \eqref{wave}, and which in turn generates the
piecewise constant approximation
\begin{equation}
  \label{Uhat}
  U^\e(x,t) := u_0 + \sum_j[u]_j\,H\big(x-x^p_j(t)\big), \qquad
  [u]_j := u_j-u_{j-1},
\end{equation}
and where $x^p_j(t)$ is the position of the wave at time $t$, given by
\eqref{xp}.  Here it is implicit that the sum is only over waves that
are actually in the sequence $\Gamma^t$ at time $t$.

We first show that the approximation $U^\e$ is in the right space.

\begin{theorem}
  \label{thm:hatU}
  For each $\e>0$ and $i\in\B N$, the wave sequence $\Gamma^{t_i+}$
  has bounded variation, that is
  $\mc V\big(\Gamma^{t_i+}\big)<\infty$.  Moreover, the approximation
  \[
    U^\e \in W^{1,\infty}_*\big(0,t_{i+1};BV_{loc},M_{loc}\big),
  \]
  and for $t<t_{i+1}$, satisfies the bounds
  \begin{equation}
    \label{VMbd}
    \big\|U^\e{}'(\cdot,t)\big\|_{M_{loc}}
    \le \Lambda\,\big\|U^\e(\cdot,t)\big\|_{BV_{loc}}
    \le C\,\mc V(\Gamma^t),
  \end{equation}
  where $\Lambda$ is a bound on the absolute wavespeed, and $C$ is a
  constant depending only on the system.  Moreover $U^\e$ is in
  the space $\mc Z$ given by \eqref{space}, namely
  \[
    U^\e\in \mc Z:= f^{-1}\big(L^\infty_*(0,t_{i+1};BV_{loc})\big) \cap
    q^{-1}\big(W^{1,\infty}_*(0,t_{i+1};BV_{loc},M_{loc})\big).
  \]
\end{theorem}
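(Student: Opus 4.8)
The plan is to argue by induction on $i$, exploiting the fact that between interaction times the mFT approximation is piecewise constant with constant states and linearly moving fronts, so that Lemma~\ref{lem:res} applies verbatim. For the base case, Lemma~\ref{lem:init} produces the wave sequence $\Gamma^{0+}$ with finitely many non-overlapping waves, so $\mc V(\Gamma^{0+})=\sum|\z_j|$ is a finite sum, and the same lemma furnishes a first interaction time $t_1=t^!_*>0$ before which no waves meet or collapse. For the inductive step, suppose $\Gamma^{t_i+}$ consists of finitely many waves. Each wave carries at most three candidate interaction times (with each neighbour and a self-collapse), computed from \eqref{xt!} or the collapse formula and set to $+\infty$ when non-positive; the next interaction time $t_{i+1}$ is the minimum of these finitely many values, and in any case only finitely many interaction events occur up to and including $t_{i+1}$. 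Resolving an interaction means placing one dgRS (or, on a measure-zero set, finitely many) at the interaction point; by Lemma~\ref{lem:dgRS} each dgRS contributes only finitely many outgoing waves — at most $N$ families, each split into at most $\lceil|\z_k|/\eta\rceil$ pieces for a moderate rarefaction and at most $2m+1=O(1/\kappa)$ pieces for a multi-rarefaction. The accelerated-collapse step only replaces compressions by shocks, forming composite waves only superposes existing jumps, and the discontinuous width reductions of Section~\ref{sec:avoid} alter neither states nor the jump count. Hence $\Gamma^{t_{i+1}+}$ is again finite and $\mc V(\Gamma^{t_i+})<\infty$ for every $i$.

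Next I would establish the function-space claims on $(t_i,t_{i+1})$, where by construction every state $u_j$ is constant and every front travels along $x^p_j(t)=x^c_j+s_j(t-t^c_j)$ (treating a multi-rarefaction reversion time $t^\#$ as one of the interaction times). Then $U^\e$ has the form \eqref{pwc}, and since the $r_k$ are uniformly bounded and uniformly independent on the (automatically compact, for fixed $i$) set of states, $\tfrac1C|\z_j|\le|[u]_j|\le C|\z_j|$ as in Lemma~\ref{lem:init}, giving $\|U^\e(\cdot,t)\|_{BV_{loc}}=\sum_j|[u]_j|\le C\,\mc V(\Gamma^t)$. Differentiating $\langle U^\e(\cdot,t),\varphi\rangle$ for $\varphi\in C^0_0$ yields the Gelfand derivative $U^\e{}'(t)=-\sum_j s_j\,[u]_j\,\delta_{x^p_j(t)}\in M_{loc}$, with norm at most $\sum_j|s_j|\,|[u]_j|\le\Lambda\,\|U^\e(\cdot,t)\|_{BV_{loc}}$, which is exactly \eqref{VMbd} and shows $U^\e$ is Lipschitz from $(t_i,t_{i+1})$ into $M_{loc}$. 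At each interaction time $t^!\le t_{i+1}$ the reconfigured waves all emanate from the interaction point, so $U^\e(\cdot,t^!-)$, $U^\e(\cdot,t^!)$ and $U^\e(\cdot,t^!+)$ differ only in an $L^1$-small neighbourhood of that point and coincide in the limit; thus $U^\e$ is continuous, hence absolutely continuous, from $[0,t_{i+1})$ into $M_{loc}$, with a derivative that is piecewise of the above form with only finitely many jumps, hence in $L^\infty_*$. This gives $U^\e\in W^{1,\infty}_*(0,t_{i+1};BV_{loc},M_{loc})$.

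Finally, $U^\e$ takes values among finitely many states, all in $\mc U$ by Assumption~\ref{ass:gRP}, so its range is a compact subset of $\mc U$; since $f$ and $q$ are $C^2$, $f\circ U^\e$ and $q\circ U^\e$ are again piecewise constant with the same jump set, with jumps $[f]_j$, $[q]_j$ bounded by a Lipschitz constant times $|[u]_j|$. Repeating the previous paragraph with $f$ and $q$ in place of the identity shows $f(U^\e)\in L^\infty_*(0,t_{i+1};BV_{loc})$ and that $q(U^\e)$ is Gelfand absolutely continuous into $M_{loc}$ with $q(U^\e)'(t)=-\sum_j s_j[q]_j\,\delta_{x^p_j(t)}$, so $q(U^\e)\in W^{1,\infty}_*(0,t_{i+1};BV_{loc},M_{loc})$ and therefore $U^\e\in\mc Z$.

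I expect the main obstacle to be the careful verification of Gelfand absolute continuity across the interaction (and $t^\#$) times — confirming that $t\mapsto U^\e(\cdot,t)$ genuinely agrees a.e. with its formal derivative and that the finitely many jumps of that derivative are compatible with the $L^\infty_*$ requirement — together with checking that none of the bookkeeping operations (accelerated collapse, multi-rarefaction splitting, composite-wave formation, width reductions) ever enlarges the wave count beyond finiteness or pushes a state out of $\mc U$.
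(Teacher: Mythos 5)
Your proof is correct and follows essentially the same route as the paper's: finiteness by induction on interactions, the explicit Gelfand derivative formula for piecewise-constant $U^\e$ giving \eqref{VMbd}, continuity in $M_{loc}$ across interaction times since all outgoing waves emanate from the interaction point, and the same argument applied to $f\circ U^\e$ and $q\circ U^\e$ to place $U^\e$ in $\mc Z$. The only stylistic difference is that you assert $\|U^\e(\cdot,t)\|_{BV_{loc}}=\sum_j|[u]_j|$ while the paper uses $\le$ (the inequality can be strict for composite waves, where several $[u]_j$ share a support point); this does not affect the bound \eqref{VMbd}.
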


\begin{proof}
  After initialization there are a finite number of waves, and the
  first interaction time is $t_1>0$.  In particular, the initial
  variation $\mc V(\Gamma^{0+})$ is finite.  Since each interaction
  yields a finite number of waves of finite strength, it follows by
  induction that after any finite number of interactions, both the
  number of waves and variation remain finite.

  For any $t\in (t_i,t_{i+1})$, it is clear that
  \[
    \big\|U^\e(\cdot,t)\big\|_{BV_{loc}} \le
    \sum_j\big|[u]_j\big|,
  \]
  where we have used the triangle inequality in the case of composite
  waves.   Differentiating in time, we get
  \[
    U^\e{}'(\cdot,t) = \sum_j [u]_j\,s_j\,\delta_{x_p(t)},
  \]
  and since different Dirac masses have independent supports and
  $\|\delta\|=1$, we get
  \[
    \big\|U^\e{}'(\cdot,t)\big\|_{M_{loc}}
    \le \Lambda\,\sum_{x\in X^t_d}\Big|\sum_{j,\;x_j^p=x}[u]_j\Big|
    \le \Lambda\,\big\|U^\e(\cdot,t)\big\|_{BV_{loc}},
  \]
  where $X^t_d$ denotes the (finite) set of jumps at $t$, and each
  $s_j$ satisfies $|s_j|\le\Lambda$.  Now, treating composite waves as
  sums of individual waves $\g_j$, by \eqref{gRsol}, we have
  \[
    u_j = E_{k_j}(u_{j-1},\z_j) \com{or}
    u_j = W_{k_j}(u_{j-1},\z_j), \com{so}
    \big|[u]_j\big| \le C\,|\z_j|,
  \]
  for some constant $C$ determined by the compact set in which we
  solve the dgRS.  Now adding gives
  \[
    \sum_j\big|[u]_j\big| \le C\,\sum_j|\z_j| = C\,\mc V(\Gamma^t),
  \]
  which yields the bound \eqref{VMbd}.

  Next, we observe that, because states are treated exactly, and
  interactions in the mFT take place only at a single point, at which
  incident waves meet and/or reflected waves emerge, the function
  $U^\e$ is continuous in $M_{loc}$ at each point of interaction
  $t_i$ (noting that all sums are finite because there are only
  finitely many waves).  It follows that
  $U^\e:[0,t_{i+1}]\to M_{loc}$ is (strongly) Lipschitz with constant
  $\max_{i'\le i}\mc V(\Gamma^{t_{i'+}})$, so that
   \[
     U^\e \in W^{1,\infty}_*\big(0,t_{i+1};BV_{loc},M_{loc}\big)=
     W^{1,\infty}\big(0,t_{i+1};BV_{loc},M_{loc}\big).
  \]
  Finally, since both $f$ and $g$ are $C^1$ on $\mc U$, we can carry
  out the same argument for
  \[
    \begin{aligned}
      f\circ U^\e &= f(u_0) + \sum_j[f]_j\,H\big(x-x^p_j(t)\big)
      \com{and}\\
      q\circ U^\e &= q(u_0) + \sum_j[q]_j\,H\big(x-x^p_j(t)\big),      
    \end{aligned}
  \]
  which implies that $U^\e\in\mc Z$ given by \eqref{space}.
\end{proof}

We now define
\begin{equation}
  \label{tinf}
  t_\infty = t_\infty(\e) := \sup_i\{t_i\}.
\end{equation}
If $t_\infty(\e) = \infty$, then the scheme can be defined for all
times; if not, we say that there is an \emph{accumulation of
  interaction times} at $t_\infty<\infty$.

We claim that if interactions accumulate at $t_\infty<\infty$, then
some sort of blowup must be occurring, such as total variation blowup,
$\mc V(\Gamma^t)\to\infty$ as $t\to t_\infty^-$, or failure of the
\emph{``$\alpha$-ray condition''} which we introduce below.  Verifying
this condition for general systems will require a careful analysis of
wave interactions and self-similar wave patterns.  In Section
\ref{sec:psys} below, we show that in the important prototypical case
of the $p$-system modeling isentropic gas dynamics, there is no
accumulation of interaction times, so the scheme can indeed be defined
for all $t>0$.

\subsection{Trajectories}
\label{sec:traj}

Denoting the set of \emph{all} interaction times as in \eqref{times},
we define the set of \emph{trajectories} as follows.  Each trajectory
is a \emph{time-like} set of waves $\{\rho_m\}$, one from each wave
sequence $\Gamma^{t_m+}$, which generates a continuous path.  We write
\begin{equation}
  \label{traj}
  \rho_m = \g_{j_m}\in \Gamma^{t_m+}, \com{each} m,
\end{equation}
where $j_m$ is the index of $\rho_m$ in $\Gamma^{t_m+}$, so that using
\eqref{wave}, $\rho_m$ has family $k_{j_m}$, strength $\z_{j_m}$, and
speed $s_{j_m}$, etc., for times $t\in(t_m,t_{m+1})$.  Continuity
requires that the position given by \eqref{xp} satisfy
\[
  x^p_{j_m}(t_m+) = x^p_{j_{m-1}}(t_m-) \com{each} m,
\]
so that the path of each trajectory is continuous and piecewise
linear, so Lipschitz.  Note that if the $m$-th interaction takes place
away from this trajectory, we will have
\[
  k_{j_m} = k_{j_{m-1}}, \quad
  \z_{j_m} = \z_{j_{m-1}}, \com{and}
  s_{j_m} = s_{j_{m-1}},
\]
as well as other common properties, although the index will generally
change, $j_m\ne j_{m-1}$.  The interaction at $(x_m,t_m)$ is
\emph{nontrivial} if either of the states adjacent to $\rho_m$ changes
at $t_m$,
\[
  u_{j_m-1} \ne u_{j_{m-1}-1} \com{or}
  u_{j_m} \ne u_{j_{m-1}},
\]
which also implies that at least some other properties change.

If there is a finite accumulation of interaction times,
\[
  t_* := t_\infty = \lim_i t_i <\infty,
\]
then, since trajectories are uniformly Lipschitz and contained in a
compact set by finite speed of propagation, there must be at least one
limit point $x_* := \lim x_{j_m}$ for some subsequence $j_m$.  In
particular, there must be at least one \emph{accumulating trajectory},
which is a trajectory $\{\rho_m\}$ such that
\[
  x_m := x^p_{j_m}(t_m) \to x_* \com{as} m\to\infty,
\]
with infinitely many non-trivial interactions along the trajectory.
Our goal is to rule out the existence of such accumulating
trajectories, so that the mFT scheme can be defined for all time, for
finite discretization parameter $\e>0$.

Fix a point $(x_*,t_*)$ and speed $\al$.  Consider the trace of the
approximation on the ray with speed $\al$ and ending at $(x_*,t_*)$,
that is
\[
  V_{\al}(t) :=
  U^\e\big(x_*+\al\,(t-t_*),t\big), \qquad
  t\in [0,t_*),
\]
where we have used \eqref{bvm} for $U^\e\in BV$.  Because
$U^\e$ is piecewise constant, we can write $V_{\al}$ as a sum of
Heavisides,
\begin{equation}
  \label{Va}
  \begin{aligned}
    V_{\al}(t) &:= \sum_j[v]_j\,H(t-t_j), \quad 0\le t<t_*,
                  \com{where}\\
    [v]_j &:= U^\e\big(x_*+\al\,(t_j-t_*),t_j+\big)
            -U^\e\big(x_*+\al\,(t_j-t_*),t_j-\big),
  \end{aligned}
\end{equation}
where now there could be infinitely many jumps, and we take $H(0)=1/2$.

\begin{defn}
  \label{def:aray}
  The approximation $U^\e$ satisfies the \emph{$\al$-ray
    condition} at $(x_*,t_*)$, if the limit
  \[
    V_\al := \lim_{t\to t_*^-}V_{\al}(t)
  \]
  exists, uniformly for $\al$ in compact intervals.
\end{defn}

Note that this condition implicitly assumes that the scheme has been
defined up to time $t_*-$.

\begin{theorem}
  \label{thm:noaccum}
  For fixed $\e>0$, suppose that the mFT approximation $U^\e$
  satisfies the $\al$-ray condition for all $(x,t)$, with
  $t\le t_\infty$.  Then there are no accumulating trajectories and
  the scheme can be continued for all times, that is
  $t_\infty=\infty$.
\end{theorem}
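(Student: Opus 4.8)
The plan is to argue by contradiction: assume $t_\infty = t_* < \infty$. As recalled above, finite speed of propagation and the uniform Lipschitz bound on trajectories then force the existence of an \emph{accumulating trajectory} $\{\rho_m\}$, with $x_m := x^p_{j_m}(t_m)\to x_*$ and infinitely many nontrivial interactions at points $(x_m,t_m)\to(x_*,t_*)$. Since each $\rho_m$ is Lipschitz with constant $\le\Lambda$, and since by finite speed of propagation all these interactions are confined to the backward cone $\mc C := \{\,|x-x_*|\le\Lambda\,(t_*-t)\,\}$, it suffices to analyse $U^\e$ in $\mc C$ near $(x_*,t_*)$ and to exhibit a contradiction with the $\al$-ray condition at $(x_*,t_*)$.

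First I would extract the structural consequences of the $\al$-ray condition. Fix the compact interval $I := [-\Lambda-1,\Lambda+1]$. For each $t<t_*$ the map $\al\mapsto V_\al(t) = U^\e\big(x_*+\al(t-t_*),t\big)$ is a \emph{finite} step function (Theorem~\ref{thm:hatU}), whose jump at $\al = \al_j(t) := (x^p_j(t)-x_*)/(t-t_*)$ is $[u]_j$, of size comparable to $|\z_j|$. Uniform convergence of $V_\al(t)$ on $I$ as $t\to t_*^-$ forces the limit $\al\mapsto V_\al$ to be a regulated function, so for each $\delta>0$ it has only finitely many jumps of size $\ge\delta$; moreover a ``moving jump'' of fixed size $\gtrsim\delta$ at the varying location $\al_j(t)$ is incompatible with uniform Cauchy unless $\al_j(t)$ converges in $I$, and since $\al_j(t)\to s_j$ only when the wave $\g_j$ actually passes through $(x_*,t_*)$ (otherwise $\al_j(t)\to\pm\infty$ and $\g_j$ leaves $I$, hence leaves $\mc C$), every wave of strength $\ge\ee p$ that remains in $\mc C$ near $t_*$ must aim exactly at $(x_*,t_*)$. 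Distinct such aiming rays meet only at $(x_*,t_*)$, and there are finitely many of them, say $n_1$; so, once created, these ``substantial'' waves do not interact nontrivially with each other before $t_*$, and no new substantial wave is produced in $\mc C$ (a reflection of two weak, or of a weak and a substantial, wave is $O(\ee p)$-weak). In particular both $\rho_{m-1}$ and its colliding partner $\sigma_m$ are \emph{weak} for all large $m$, and there is $t_{**}<t_*$ such that every wave inside $\mc C$ for $t\in(t_{**},t_*)$ is either weak (strength $<\ee p$) or one of the $n_1$ fixed aiming waves.

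The core of the proof is then a combinatorial endgame using the rules of the scheme in this ``all weak'' regime. Because a weak wave emerging from any interaction is demoted to a passenger: (i) when a weak wave hits a substantial aiming wave, its transmitted and reflected waves are $O(\ee p)$-weak and piggyback onto the emerging substantial carrier, so the two merge into one object; (ii) when two weak composite waves collide, all $N$ emerging waves are weak, the dgRS resolves into a single ``lonely weak'' composite, and again the count of wave-objects decreases by one; (iii) no moderate rarefaction --- hence no multi-rarefaction split --- can occur, since the exponents are chosen with $\ee r>\ee p$, so a weak wave is never moderate; and (iv) a self-collapse of a weak compression emits only zero-width waves (Definition~\ref{def:gRP} and the collapse rule), so after collapsing once a weak composite carries no compression and cannot self-collapse again. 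By (i)--(iii) the number of wave-objects inside $\mc C$ is non-increasing for $t>t_{**}$, hence eventually constant, say from $t_{***}<t_*$; after $t_{***}$ only self-collapses remain, and by (iv) each weak composite (and each substantial aiming compression) self-collapses at most once. Once these finitely many collapses are exhausted, no adjacent pair inside $\mc C$ is on a collision course --- any such pair would be a further count-decreasing interaction, contradicting stability of the count, while the substantial aiming waves meet only at $t_*$ --- so no interaction remains before $t_*$. This contradicts the existence of infinitely many $t_m\to t_*$, and therefore $t_\infty=\infty$.

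The step I expect to be the main obstacle is the endgame, specifically making (ii)--(iv) rigorous: that in the all-weak regime every interaction either strictly decreases the number of wave-objects near $(x_*,t_*)$ or is a one-time self-collapse producing a zero-width object. This requires careful bookkeeping of the composite-wave assignment rule through each dgRS resolution, in particular ruling out that reflected weak waves can repeatedly re-create positive-width (hence re-collapsing) compressions or generate new carriers; it is precisely here that the ``piggyback'' mechanism of composite waves does the work of breaking self-similar accumulation patterns such as those in Figure~\ref{fig:accum}.
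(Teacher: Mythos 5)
Your approach is essentially the same as the paper's: argue by contradiction, use the $\al$-ray condition to split waves in a backward cone near the accumulation point into those weaker than $\ee p$ and those aiming exactly at $(x_\infty,t_\infty)$, observe that aiming rays through a common point cannot collide before $t_\infty$, and count trajectories using the composite-wave mechanism. The paper's version sets up the triangle $D_\Delta$ based at time $t_K$, uses uniform convergence to get $\big|U^\e - V_\al\big| < \frac12\ee p$ there, and concludes that every wave in $D_\Delta$ is a lonely weak composite or aiming, so that interactions involving lonely weak waves cannot increase the number of trajectories; finitely many waves at $t_K$ then gives finitely many interactions.

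Two of your intermediate claims need correcting, though they do not wreck the argument. First, ``no new substantial wave is produced'' is wrong: genuine nonlinearity lets a weak simple wave that crosses a strong aiming shock emerge with strength above $\ee p$ --- the \emph{reflected} wave is indeed $O(\ee p)$-small, but the \emph{transmitted} one need not be (cf.\ the strengthening of rarefactions across shocks in Lemma~\ref{lem:gdints}). The $\al$-ray condition then forces any such newly strong wave to aim, so it cannot collide with the other aiming rays before $t_\infty$; the paper's proof sidesteps the issue by never asserting that the set of aiming waves is fixed. Second, ``both $\rho_{m-1}$ and its colliding partner $\sigma_m$ are weak for all large $m$'' is unjustified and unneeded: since aiming rays through $(x_\infty,t_\infty)$ intersect only there, it is enough that every interaction inside $D_\Delta$ has at least one weak participant. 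With these fixes your steps (i)--(iv) line up with the paper's terser assertion that interactions resulting in lonely weak waves cannot increase the count, and you correctly isolate the self-collapse subtlety --- after a collapse all emergent widths are zero, so a lonely weak composite cannot re-collapse without an intervening, count-reducing interaction that assigns it a positive width again.
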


\begin{proof}
  Suppose $\{\rho_m\}$ is an accumulating trajectory, and let the
  accumulation point be
  \[
    (x_\infty,t_\infty) := \lim(x_m,t_m),
  \]
  which exists because the trajectory is Lipschitz.  Denote the
  minimum and maximum possible wavespeeds by
  $\ul\la := \min_{\mc U}\la_1(u)$ and
  $\ol\la := \max_{\mc U}\la_N(u)$, respectively.  By the $\al$-ray
  condition, there exists some $K\in\B N$ such that for all $k\ge K$,
  and all $\al\in[\ul\la,\ol\la]$, we have
  \[
    \big|U^\e\big(x_\infty+\al\,(t_k-t_\infty),t_k\big) - V_\al\big|
    < \frac12\,\ee p,
  \]
  where $\ee p$ is the maximum strength of any passenger.  Now
  consider the space-time triangle
  \begin{equation}
    \label{Ddel}
    D_\Delta := \Big\{ (x,t) \;:\; t_K\le t\le t_\infty, \
    x_\infty - \ol\la t\le x\le x_\infty - \ul\la t\Big\}.
  \end{equation}
  It follows by the triangle inequality that all waves in this
  triangle are \emph{either} weaker than $\ee p$, and so by
  construction  are lonely weak waves, \emph{or} lie on a
  ray running into the accumulation point $(x_\infty,t_\infty)$.

  Waves that lie on rays into $(x_\infty,t_\infty)$ cannot interact
  before $t=t_\infty$, so will not interact inside $D_\Delta$.  It
  follows that any interaction inside $D_\Delta$ must involve at least
  one lonely weak wave.  However, any interaction that results in a
  lonely weak wave cannot increase the number of waves.  Thus
  because there are only finitely many waves at time $t_K$, and in
  particular entering $D_\Delta$, there can be only finitely many
  interactions inside $D_\Delta$, contradicting our assumption that
  $(x_\infty,t_\infty)$ is an accumulation point.
\end{proof}

Together with Theorem \ref{thm:hatU}, this theorem implies that the mFT
approximations can be defined for all time, and moreover the variation
of these approximations remains finite for all time.  However, we have
no control over the size of the variation as $\e\to0$, and indeed,
this can become infinite in the limit in the known examples of
solutions which blow up.  This is analogous to the use of Euler's
method in approximating the ODE
\[
  \begin{aligned}
    y' &= y^2, \qquad y(0) = 1, \com{namely}\\
    y_{k+1} &= y_k + h\,y_k^2, \qquad y_0 = 1,
  \end{aligned}
\]
for which the approximation $y_k$ is defined and finite for any $h>0$,
even though the ODE solution exists only for $t<1$.

As an immediate corollary, we have the following
\emph{characterization of blowup} of mFT approximations for general
systems:

\begin{cor}
  \label{cor:blowup}
  If the total variation of the mFT approximation blows up in finite time,
  \[
    \big\|U^\e(\cdot,t)\big\|_{BV}\to\infty \com{as}
    t\to t_\infty,
  \]
  then the $\al$-ray condition fails at some point $(x_\infty,t_\infty)$.
\end{cor}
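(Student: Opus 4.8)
The plan is to obtain this as a localized contrapositive of Theorem~\ref{thm:noaccum}. First I would record the purely bookkeeping fact underlying everything: by Theorem~\ref{thm:hatU}, after any finite number of interactions both the number of waves and the variation $\mc V(\Gamma^t)$ are finite, and between consecutive interaction times the wave sequence — hence its variation, and by \eqref{VMbd} also $\big\|U^\e(\cdot,t)\big\|_{BV}$ — is unchanged. Each individual interaction is resolved by posing a dgRS (Lemma~\ref{lem:dgRS}) which, under Assumption~\ref{ass:gRP}, produces finitely many outgoing waves of finite strength, possibly followed by the finitely many additional adjustments (accelerated collapse, multi-rarefaction splitting, composite-wave assignment) already described. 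Consequently a finite number of interactions before any time $t<t_\infty$ would keep $\big\|U^\e(\cdot,t)\big\|_{BV}$ bounded as $t\to t_\infty$; so if the variation blows up at a finite time $t_\infty$, the interaction times $t_i$ must accumulate there, $t_\infty=\sup_i t_i$.

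Next I would invoke the trajectory analysis of Section~\ref{sec:traj} exactly as in the proof of Theorem~\ref{thm:noaccum}: since every trajectory $\{\rho_m\}$ is uniformly Lipschitz (speeds bounded by $\Lambda$) and confined to a compact region by finite propagation speed, there is at least one accumulating trajectory, with accumulation point $(x_\infty,t_\infty)=\lim(x_m,t_m)$ and infinitely many nontrivial interactions along it. I claim the $\al$-ray condition fails precisely at this $(x_\infty,t_\infty)$, and I would prove it by contradiction: assume $V_\al=\lim_{t\to t_\infty^-}V_\al(t)$ exists, uniformly for $\al\in[\ul\la,\ol\la]$. Then, choosing $t_K$ close enough to $t_\infty$, every wave in the space-time triangle $D_\Delta$ of \eqref{Ddel} is within $\tfrac12\ee p$ of the corresponding trace value, so by the triangle inequality each such wave is \emph{either} weaker than $\ee p$ — hence a lonely weak wave — \emph{or} lies on a ray running into $(x_\infty,t_\infty)$. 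Waves on such rays cannot interact before $t_\infty$, and any interaction involving a lonely weak wave cannot increase the wave count; since only finitely many waves enter $D_\Delta$ at time $t_K$, only finitely many interactions occur inside $D_\Delta$, contradicting the accumulation of interactions of $\{\rho_m\}$ at $(x_\infty,t_\infty)$.

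The genuinely new content beyond Theorem~\ref{thm:noaccum} is the first step — ruling out that $BV$ blow-up could occur through unbounded growth of wave strengths at only finitely many interactions — and I expect this to be the main (though mild) obstacle; it is dispatched by the fact that each dgRS resolution produces finitely many waves of finite strength, so finitely many interactions leave $\mc V$ finite. Everything after that is a verbatim reuse of the $D_\Delta$ argument already carried out for Theorem~\ref{thm:noaccum}. I would also note two small points for completeness: the statement is vacuous when $t_\infty=\infty$ (there is no finite-time blow-up), and the $\al$-ray condition at $(x_\infty,t_\infty)$ is well posed because, by Theorem~\ref{thm:hatU}, the scheme and hence $V_\al(t)$ is defined for all $t<t_\infty$.
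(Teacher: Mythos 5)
Your proof is correct and takes essentially the same approach the paper intends: the paper presents this as an immediate contrapositive of Theorem~\ref{thm:noaccum}, with the small bridging observation—that finite-time $BV$ blowup forces accumulation of interaction times, via Theorem~\ref{thm:hatU} and the constancy of $\mc V(\Gamma^t)$ between interactions—left implicit. Your write-up correctly identifies and fills in exactly that bridging step, then reruns the $D_\Delta$ argument from Theorem~\ref{thm:noaccum} to locate the failure of the $\alpha$-ray condition at the accumulation point $(x_\infty,t_\infty)$.
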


This corollary indicates that $BV$ blowup generally occurs only in
cases similar to known blowup results, such as the focussing blowup
due to infinitely many interactions as depicted in Figure
\ref{fig:accum}, \cite{Jens,JY}, or uniform amplitude blowup in
which multiple separated interactions drive the solution out of any
compact set~\cite{Yblowup1,szY,Jens,BJ1}.

\section{Convergence of the Scheme}
\label{sec:conv}

In defining the scheme for a general system, we have thus far
introduced several exponents $e_\square$ related to the discretization
parameter $\e$.  We now impose conditions relating these which will
allow us to establish convergence of the scheme, assuming that the
total variation $\mc V(\Gamma^t)$ is uniformly bounded for $t\le T$.

The exponents and their corresponding small parameters we have
introduced are:
\begin{itemize}
\item Rarefaction $e_r$: the maximum rarefaction strength is $\ee r$;
\item Compression $e_c$: the maximum (absolute) compression strength
  is $\ee c$;
\item Weak $e_w$: the minimum (absolute) compression strength is
  $\ee w$;
\item Simple $e_s:=\min\{e_r,e_c\}$: maximum strength of a simple wave
  is $\ee s$;
\item Passenger $e_p$: the maximum passenger strength is $\ee p$;
\item Jump $e_d$: the minimum discrete jump in initialization is $\ee d$;
\item Initial $e_i$: the maximum initial $BV$ error is $\ee i$.
\end{itemize}
We have also yet to choose the parameter $Q(\e)$ which distinguishes
light passengers from heavy ones, and the parameter $\kappa<1/3$ which
gives the relative minimum strength of a split rarefaction.  There are
several constraints on these indices which are required for
consistency of the construction.  For example, since we do not want
multi-rarefactions to be passengers, we must have
\[
  \ee p < \kappa\,\ee r, \com{so that} e_r<e_p.
\]

\subsection{Residual Calculations}

Recall that upon initialization, all waves are either shocks or
contacts, compressions or rarefactions, and initially, there are no
multi-rarefactions and no composite waves.  Also recall that the
residual $\mc R(\Gamma)$ of a wave sequence is the measure given by
\eqref{RGam}.  We will examine convergence of the mFT via this
measure: that is, the scheme converges provided $\mc R(\Gamma)\to 0$
as $\e\to0$.

\begin{lemma}
  \label{lem:initRes}
  If the initial wave sequence $\Gamma^{0+}$ has variation
  $\mc V(\Gamma^{0+}) =: V^0$, then up to the first
  interaction time, the residual satisfies
  \[
    \big\|\mc R(\Gamma^t)\big\|_M \le O(1)\,V^0\,\e^{2e_s}, \qquad 0<t<t^!_*,
  \]
  where $e_s:=\min\{e_r,e_c\}$, $t^!_*$ is given in
  Lemma~\ref{lem:init}, and $O(1)$ depends only on the system.
\end{lemma}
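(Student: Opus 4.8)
The plan is to reduce the estimate to a sum of individual wave contributions and to use that the wavespeed of every wave in $\Gamma^{0+}$ has been chosen so that its vector residual is cubically small in the wave's strength. Concretely, I would exploit two features of the initial sequence: up to the first interaction time the approximation is piecewise constant with \emph{time-independent} states, so its residual is exactly the wave-sequence residual \eqref{RGam} with the $B(t)$ term of Lemma~\ref{lem:res} absent; and each wave is either a shock/contact carrying the exact Rankine--Hugoniot speed, or a weak simple wave carrying the least-squares speed of Lemma~\ref{lem:spdRes}.

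First I would fix $t\in[0,t^!_*)$. By Lemma~\ref{lem:init} no wave of $\Gamma^0$ interacts or collapses on this interval, so the states $u_j$, strengths $\z_j$, speeds $s_j$ and preconditioners $A_j$ are all constant there and only the positions $x^p_j(t)$ move. Hence by \eqref{RGam} the residual is $\mc R(\Gamma^t)=\sum_j R_j\,\delta_{x^p_j(t)}$ with $R_j:=A_j\big([f]_j-s_j\,[q]_j\big)$. Since $\|\delta\|_M=1$ and several waves may share a common support point — those emerging from a single explicit Riemann problem posed at a point of $X_d$ — the triangle inequality gives the crude but sufficient bound
\[
  \big\|\mc R(\Gamma^t)\big\|_M \le \sum_j \big|R_j\big|,
\]
uniformly in $t\in[0,t^!_*)$.

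Next I would discard the strong waves. Every wave in $\Gamma^{0+}$ is a shock, a contact, or a weak simple wave (a split rarefaction posed at a point of $X_d$, or a weak simple wave coming from a subinterval gRP). For shocks and contacts the assigned speed is the exact speed $s_j=\sigma(u_{j-1},u_j)$, so $[f]_j-s_j\,[q]_j=0$ by \eqref{sc} and thus $R_j=0$; such waves, even of large amplitude, contribute nothing to the residual. For a simple wave $\g_j$ the speed is the least-squares speed $s_*$ of Lemma~\ref{lem:spdRes}, and $\g_j$ is weak: the split rarefactions at $X_d$ have strength $|\z_j|<\ee r$, and the interior simple waves have strength $|\z_j|<\ee d$, so in all cases $|\z_j|\le\ee s$ (using $e_s\le e_r$, and the standing relation $e_d\ge e_s$, i.e. $\ee d\le\ee s$, among the exponents). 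By the final assertion of Lemma~\ref{lem:spdRes}, with $s=s_*$ we have $R_j=R_*=O(|\z_j|^3)$, and since $A_j$ is uniformly bounded and invertible on the compact state region $K_{\mc U}$, this yields $|R_j|\le C\,|\z_j|^3$ with $C$ depending only on the system. Summing over the simple waves and bounding $|\z_j|^2\le(\ee s)^2=\e^{2e_s}$,
\[
  \big\|\mc R(\Gamma^t)\big\|_M \le \sum_{\g_j\ \text{simple}}|R_j|
  \le C\sum_j|\z_j|^3 \le C\,\e^{2e_s}\sum_j|\z_j|
  = C\,\e^{2e_s}\,\mc V(\Gamma^{0+}) = C\,V^0\,\e^{2e_s},
\]
which is the claimed estimate, valid for all $0<t<t^!_*$.

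I do not expect a genuine obstacle: given Lemma~\ref{lem:spdRes}, this is essentially bookkeeping around the wavespeed choice. The only points that need care are confirming that strong shocks and contacts truly have vanishing residual (immediate from the exact Rankine--Hugoniot speed), verifying the uniform weak-wave bound $|\z_j|\le\ee s$ for \emph{all} simple waves in $\Gamma^{0+}$ — this is where the relation between the initialization exponent $e_d$ and $e_s=\min\{e_r,e_c\}$ enters — and observing that the possible coincidence of support points costs only the harmless triangle inequality in place of an equality.
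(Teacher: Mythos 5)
Your argument matches the paper's proof: split into shocks/contacts (zero residual because the speed is exact), versus weak simple waves whose vector residual is $O(|\z|^3)$ by Lemma~\ref{lem:spdRes}, then bound $|\z_j|^3 \le \e^{2e_s}|\z_j|$ and sum to get $V^0\,\e^{2e_s}$. Your remark about coincident support points at $t=0$ and the explicit mention that one needs $e_d\ge e_s$ so that interior simple waves also satisfy $|\z_j|\le\ee s$ are both correct refinements of the same bookkeeping, not a different route.
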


\begin{proof}
  Each initial wave is given the least squared wave speed, and
  Lemma~\ref{lem:spdRes} applies.  Moreover, the residual vanishes for
  shocks and contacts having the correct (least square) speed.  It
  follows that these contribute nothing to the residual, whereas each
  simple wave of strength $\z$ contributes $O(|\z|^3)$.  Since all
  simple waves have strength $|\zeta|<\ee s$, we have
  \[
    \big\|\mc R\big(\Gamma^{0+}\big)\big\|_M \le \sum_j \big|R(\z_j)\big|
    \le O(1)\,\sum_j|\z_j|^3 \le O(1)\,\e^{2e_s}\,\sum_j|\z_j|
    \le O(1)\,V^0\,\e^{2e_s},
  \]
  where the sum is over initial simple waves and we have used
  \eqref{Res*s} of Lemma~\ref{lem:spdRes}.
\end{proof}

Our goal is to bound the residual after interactions in the same way:
try to get a bound of order $\zeta^{1+\ell}$ for each individual wave,
and add.  However, because we have used composite waves and
multi-rarefactions, more care must be taken.  In particular, in a
composite wave with a heavy passenger, we cannot get a uniform
superlinear bound for the residual.  We must thus make an assumption
about the number of such heavy passengers.  Recall that the passenger
$\g_h$ with carrier $\g_{b_h}$ is heavy if the ratio
$|\z_h/\z_{b_h}| > Q$, see \eqref{lh}.

We are assuming uniformity in $t$, so it suffices to consider the
residual of a single wave sequence $\Gamma$.  Recall from \eqref{wave}
that $\Gamma$ is indexed by $j=1,\dots,n$, and partition this set of
indices into
\[
  B_L \cup B_H \cup P,
\]
where $P$ is the set of passengers, $B_H$ the set of bases or carriers
$\g_b$ with at least one heavy passenger $\g_h$, and $B_L$ those of
bases with no heavy passengers.  To each $j\in P$ is associated a
carrier $b_j\in B_L\cup B_H$, while for each $j\in B_L\cup B_H$, we
have $b_j=j$.

Recall that the quantity $Q(\e)$ is used to distinguish light
passengers from heavy in \eqref{lh}.  If a passenger $\g_h\in P$ with
carrier $\g_b\in B_H$ is heavy, then their strengths satisfy
\[
  Q(\e)\,|\z_b| < |\z_h| \le \min\big\{|\z_b|,\,\ee p\big\}.
\]
Define the \emph{``weight of heavy passengers''} by
\begin{equation}
  \label{hp}
  W_H(\Gamma,\e) :=
  \sum_{b\in B_H}\sqrt{\sum_{b_h=b}\z_h^2},
\end{equation}
where $h$ runs over heavy passengers only, and interpret this as
follows.  If this sum is large, it indicates that there are many
interactions in which (weak) waves of different families, and hence
different wavespeeds, but comparable strength, do not separate.  This
indicates a strong nonlinear connection between different familes and
is likely a result of nonlinear resonances such as those described
in~\cite{Ysurv, Ysus, Yex}.  We expect that such resonances could be
understood in the context of WNGO approximations~\cite{MRS, HMR,
  JMRii}.  In general, we expect systems with several GNL families and
compactly supported data to separate after some transient
interactions, due to their distinct wave speeds~\cite{John, TPLdec},
in which case \eqref{hp} should be much smaller after the families
separate.  Similarly, we expect that when shocks are present, they are
usually strong, and will therefore have light passengers only, so not
contribute to this term.  In any event, these issues need to be
addressed in order to obtain $BV$ bounds which are necessary for the
proof of convergence, and are likely highly system dependent.

\begin{lemma}
  \label{lem:RG}
  For fixed $t$, the residual of the wave sequence $\Gamma=\Gamma^t$
  satisfies the estimate
  \begin{equation}
    \label{residG}
    \big\|\mc R(\Gamma)\big\|_M \le \mc V(\Gamma^t)\,
    \Big(K_s\,\e^{2e_s}
    + K_p\,Q(\e)\Big) + K_{hp}\, W_H(\Gamma,\e),
  \end{equation}
  where $K_\square$ are constants depending only on the system via a
  compact region of state space $\mc U$.
\end{lemma}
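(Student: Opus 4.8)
The plan is to expand the residual measure \eqref{RGam} as a sum of vector residuals supported at the wave positions, group these by common support (only the jumps making up a single composite wave share a point), and use $\|\delta\|_M=1$ together with the triangle inequality to reduce matters to estimating, for each ``bundle'' of superposed jumps, the Euclidean length of the corresponding summed vector residual. I would then partition the wave sequence according to the indexing $B_L\cup B_H\cup P$ introduced above the lemma: standalone waves and carriers with only light passengers (the set $B_L$), carriers possessing at least one heavy passenger ($B_H$), and all passengers ($P$, each tied to its carrier $\g_{b_j}$). The three terms of \eqref{residG} will come, respectively, from (i) the least-square residuals of simple waves and of the temporarily mis-speeded waves inside multi-rarefactions, (ii) the light passengers, and (iii) the balanced composite waves carried by $B_H$.

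For the first contribution, shocks and contacts are given their exact Rankine--Hugoniot speed, which by Lemma~\ref{lem:spdRes} coincides with the least-square speed and so contributes nothing; each standalone simple wave of strength $\z$ is given its least-square speed and contributes $|R_*|=O(|\z|^3)\le \ee s{}^2\,|\z|$ by \eqref{Res*s}, since $|\z|\le \ee s$; and the waves comprising a multi-rarefaction are controlled by Lemma~\ref{lem:mr}, whose estimate \eqref{mrRes} has the form (a system constant)$\,m^2\,\ee r$ times the sum of the piece strengths. Summing over all such waves and using $\sum|\z_j|=\mc V(\Gamma)$ then yields a term $K_s\,\e^{2e_s}\,\mc V(\Gamma)$, once the exponent relations of Section~\ref{sec:conv} (in particular $e_r<e_p$, together with the limited growth of wave strength across an interaction on the compact state region, which bounds $m$) are imposed so that the $m^2\,\ee r$ factor is absorbed.

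For the light passengers, an unbalanced composite wave is given the least-square speed $s_*$ of its carrier $\g_b$; the carrier's own contribution is zero if it is a shock or contact and otherwise is $O(|\z_b|^3)$, absorbed as before, while a light passenger $\g_h$ of family $k_h$ receives the carrier's speed and hence, by \eqref{Rs} and \eqref{fexp}, has residual of length $|(s_*-\la_{k_h}(\ol u_h))\,\wh{[q]}_h|+O(|\z_h|^3)=O(|\z_h|)$, where the eigenvalue gap $|s_*-\la_{k_h}|$ is bounded but not small. Since a composite wave spans at most $N$ consecutive families and each light passenger obeys $|\z_h|\le Q(\e)\,|\z_{b_h}|$ by \eqref{lh}, summing over the passengers of each carrier and then over carriers gives $K_p\,Q(\e)\,\mc V(\Gamma)$. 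For the balanced composites I would first note that a carrier with a heavy passenger is itself weak: a heavy passenger has $|\z_h|<\ee p$ yet $|\z_h|>Q(\e)\,|\z_b|$, so $|\z_b|<\ee p/Q(\e)$, which is small for the admissible $Q(\e)=1/|\log\e|$. Hence all components of a balanced composite wave are weak and Lemma~\ref{lem:compspd} applies, giving bundle residual $R_*=O\big(\sqrt{\sum_{b_{k'}=b}\z_{k'}^2}\big)$; bounding $|\z_b|$ by $|\z_{h_0}|/Q(\e)$ for a heavy passenger $\g_{h_0}$, and the light-passenger strengths by $Q(\e)|\z_b|$, controls $\sqrt{\sum_{k'}\z_{k'}^2}$ in terms of $\sqrt{\sum_{b_h=b}\z_h^2}$, and summing over $b\in B_H$ produces the term $K_{hp}\,W_H(\Gamma,\e)$.

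The main obstacle, and the step I would take the most care with, is the heavy-passenger term: because the residual of a balanced composite is only \emph{first order} in its total strength (Lemma~\ref{lem:compspd} cannot do better, the relevant expansion being transverse to the wave curves), there is no superlinear gain as for ordinary weak waves, and one must show that the full composite strength $\sqrt{\sum_{k'}\z_{k'}^2}$ is genuinely dominated by the heavy-passenger contribution entering \eqref{hp}, up to the admissible factor involving $Q(\e)$, with no double counting across the partition $B_L\cup B_H\cup P$. This requires carefully bookkeeping, at the interaction that creates each composite wave, which emerging waves are forced to be carriers versus passengers and heavy versus light. The secondary, more routine, difficulty is verifying that the $m^2$ dependence in the multi-rarefaction estimate \eqref{mrRes} really fits under $\e^{2e_s}\,\mc V(\Gamma)$ once the exponent relations are fixed, which is exactly where $e_r<e_p$ and the bound on the strength gain across interactions are used.
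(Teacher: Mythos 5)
Your overall decomposition matches the paper's: write the residual as a sum of Dirac masses, split according to the partition $B_L\cup B_H\cup P$, and obtain the $K_s\,\e^{2e_s}$ and $K_p\,Q(\e)$ terms from the least-square/multi-rarefaction residuals and the light passengers exactly as in the paper's proof. Your handling of multi-rarefactions via Lemma~\ref{lem:mr} and the bound $2\ol m+1\lesssim \ee s/(\kappa\,\ee r)$ is also what the paper does, and your observation that a carrier in $B_H$ must itself be weak (so that Lemma~\ref{lem:compspd} is applicable) is a correct and useful point that the paper leaves implicit.

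The gap is in the heavy-passenger term. You propose to control the carrier's strength by $|\z_b| \le |\z_{h_0}|/Q(\e)$ for some heavy passenger $\g_{h_0}$, and then absorb $|\z_b|$ into $\sqrt{\sum_{b_h=b}\z_h^2}$. This introduces an uncontrolled factor $1/Q(\e)\to\infty$, so the resulting bound is $K_{hp}\,W_H/Q(\e)$, not $K_{hp}\,W_H$. That is not ``the admissible factor'': the constant $K_{hp}$ in \eqref{residG} depends only on the system, and $W_H$ is the quantity that is assumed to vanish in Theorem~\ref{thm:conv}, so an extra $1/Q(\e)$ is fatal.

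The resolution is a cancellation in the carrier's own term which Lemma~\ref{lem:compspd} as stated does not record, but which the paper's proof uses implicitly when it drops $\z_b^2$ from the split of $\sum_{b_j=b}\z_j^2$ into light and heavy parts. With the least-square composite speed $s_*=\sum_{k'}\la_{k'}(\ol u)\,a_{k'}$ and (in the symmetric/preconditioned setting) $a_{k'}=\z_{k'}^2/\z^2$, the carrier's contribution to the vector residual is $(\la_b(\ol u)-s_*)\,\wh{[q]}_b$, and
\[
  \big|\la_b(\ol u)-s_*\big|
  = \Big|\sum_{k'\ne b}\big(\la_b(\ol u)-\la_{k'}(\ol u)\big)\,\frac{\z_{k'}^2}{\z^2}\Big|
  \le \Lambda\,\frac{\sum_{k'\ne b}\z_{k'}^2}{\z^2},
\]
where $\Lambda$ bounds the eigenvalue spread on $K_{\mc U}$. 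Multiplying by $|\wh{[q]}_b|=O(|\z_b|)$ and using $\z^2 = \z_b^2 + \sum_{k'\ne b}\z_{k'}^2 \ge 2|\z_b|\sqrt{\sum_{k'\ne b}\z_{k'}^2}$, one finds the carrier's term is $O\big(\sqrt{\sum_{k'\ne b}\z_{k'}^2}\,\big)$, i.e.\ it is controlled by the \emph{passengers'} $\ell^2$ norm, independently of $|\z_b|$. Only then does the split $\sqrt{\sum_{\text{pass}}\z_j^2}\le \sqrt{\sum_{b_l=b}\z_l^2}+\sqrt{\sum_{b_h=b}\z_h^2}$, with the light part bounded by $\sqrt{N-2}\,Q(\e)\,|\z_b|$, produce the clean $K_p\,Q(\e)\,\mc V + K_{hp}\,W_H$ without any $1/Q(\e)$. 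Your route bounds $|\z_b|$ rather than exploiting the near-cancellation $\la_b-s_*\approx0$, and that is why it cannot close.
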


\begin{proof}
  From \eqref{RGam}, the residual of the wave sequence can be written
  as
  \[
    \mc R(\Gamma) = \sum_{b\in B_L\cup B_H}
    \sum_{b_j=b}A_j
    \big([f]_j-s_b\,[q]_j\big)\,\delta_{x^p_b},
  \]
  where $A_j$ is the preconditioner, since each passenger inherits the
  speed of the carrier.  Since the Dirac mass has $\|\delta\|_M=1$,
  this yields
  \begin{equation}
    \label{RGS}
    \big\|\mc R(\Gamma)\big\|_M \le
    \sum_{b\in B_L} \Big|\sum_{b_l=b}
    A_l\big([f]_l-s_b\,[q]_l\big)\Big| +
    \sum_{b\in B_H} \Big|\sum_{b_h=b}
    A_h\big([f]_h-s_b\,[q]_h\big)\Big|,
  \end{equation}
  and we treat the two sums separately.  Here we have separated
  only composite waves, which are still treated as combined groupings.

  The first term in \eqref{RGS} handles all bases carrying light
  passengers.  In this case, the carrier itself has a small residual,
  and the passengers can be controlled by the carrier's strength.  For
  any $b\in B_L$, we have
  \[
    \begin{aligned}
      \Big|\sum_{b_l=b}
      A_l\big([f]_l-s_b\,[q]_l\big)\Big|
      &\le \big|A_b([f]_b-s_b\,[q]_b)\big| +
        \sum_{\substack{l\in P\\b_l=b}}\big|A_l([f]_l-s_b\,[q]_l)\big|\\
      &\le R(\g_b) + (N-1)\,\Big|\big(\la_l(\ol u_l)-s_b\big)\,
        A_l[q]_l + O(|\z_l|^3)\Big|,
    \end{aligned}
  \]
  where $R(\g_b)$ is the residual of the carrier and we have used
  \eqref{fexp}.  The sum is estimated because each carrier has at most
  one passenger from each family.  Since these passengers are light,
  we use \eqref{lh} to write
  \[
    (N-1)\,\Big|\big(\la_l(\ol u_l)-s_b\big)\,
    A_l[q]_l + O(|\z_l|^3)\Big| \le K_{lp}\,|\z_l| \le K_{lp}\,|\z_b|\,Q(\e),
  \]
  where $K_{lp}$ depends only on the system and state space $\mc U$.

  Since $\g_b$ carries only weak passengers, its residual is
  calculated with either the least squared or multi-rarefaction wave
  speed.  If the wave is a shock or contact, its residual is zero,
  while if it is simple, then by \eqref{Res*s} its residual is
  \[
    R(\g_b) = O(|\z_b|^3) \le K_b\,|\z_b|\,\e^{2e_s}.
  \]
  On the other hand, if $\g_b$ is part of a multi-rarefaction
  $\wh\g_b$, then we treat all parts of the same multi-rarefaction
  together, and use Lemma~\ref{lem:mr} to describe its residual.
  According to \eqref{mrRes}, the (combined) multi-rarefaction
  residual is estimated by
  \[
    \big\|R(\wh\g_b)\big\|
    \le \sum_{i}\z^{(i)}\,K_{mr}\,\ol m^2\,\e^{2e_r}
    =: \wh\z_b\,K_{mr}\,\ol m^2\,\e^{2e_r},
  \]
  where each multi-rarefaction consists of \emph{at most} $2\,\ol m+1$
  pieces.  Since each simple wave incident in an interaction has
  strength at most $\ee s$, and the total variation is assumed
  bounded, the maximum size of any newly generated rarefaction is
  $C\,\ee s$, for some constant $C$ depending only on the variation
  bound and compact region of state space.  On the other hand, any
  rarefaction is split into $2\,m+1$ multi-rarefactions of size at
  least $\kappa\,\ee r$, which yields the estimate
  \[
    2\,\ol m+1 \le \frac{C\,\ee s}{\kappa\,\ee r}, \qcom{so that}
    \ol m^2\,\e^{2e_r} \le \frac{C^2}{4\,\kappa^2}\,\e^{2e_s}.
  \]
  Combining these, we estimate the first sum in \eqref{RGS} as
  \begin{equation}
    \label{RGS1}
    \sum_{b\in B_L} \Big|\sum_{b_l=b}
    A_l\big([f]_l-s_b\,[q]_l\big)\Big| \le
    \sum_{b\in B_L} |\z_b|\,
    \big(K_s\,\e^{2e_s} + K_{lp}\,Q(\e)\Big),
  \end{equation}
  where $K_s:=\max\big\{K_b, C^2/4\kappa^2\big\}$.

  We now estimate the second sum in \eqref{RGS}, in which each base
  $\g_b$ carries at least one heavy passenger $\g_h$.  For such a
  composite wave $\wh \g_b$, we use Lemma~\ref{lem:compspd} to write
  \[
    R_*(\wh\g_b) \le K_{hp}\,\sqrt{\sum_{b_j=b}\z_j^2}
    \le K_{hp}\,\sqrt{\sum_{b_l=b}\z_l^2} +
    K_{hp}\,\sqrt{\sum_{b_h=b}\z_h^2},
  \]
  where we have split the sum up into light ($l$) and heavy
  ($h$) passengers.  For the light passengers, we estimate the first
  sum by
  \[
    \sqrt{\sum_{b_l=b}\z_l^2} \le |\z_b|\,\sqrt{N-2}\,Q(\e),
  \]
  while the term for the heavy passengers is left as is.
  Combining these with \eqref{RGS1} in \eqref{RGS} yields
  \[
    \begin{aligned}
      \big\|\mc R(\Gamma)\big\|_M
      &\le \sum_{b\in B_L} |\z_b|\,
        \big(K_s\,\e^{2e_s}
        + K_{lp}\,Q(\e)\Big)\\
      &{}\qquad{} + K_{hp}\,\sum_{b\in B_H}\Big(|\z_b|\,\sqrt{N-2}\,Q(\e)
        + \sqrt{\sum_{b_h=b}\z_h^2}\Big)\\
      &\le \sum_b|\z_b|\,\big(K_s\,\e^{2e_s}
        + K_p\,Q(\e)\Big) + K_{hp}\,\sum_{b\in B_H}\sqrt{\sum_{b_h=b}\z_h^2}\,,
    \end{aligned}
  \]
  where $K_p:=\max\big\{K_{lp},\,\sqrt{N-1}\,K_{hp}\big\}$, and this
  immediately implies \eqref{residG}.
\end{proof}

\subsection{Convergence}

We have defined the mFT scheme by essentially tracking and modifying
the wave sequences $\Gamma^t$.  Lemma~\ref{lem:RG} provides a bound
for the residual of $\Gamma^t$, uniform in $t$.  Our goal is to obtain
existence of a solution via the mFT scheme, as follows.  First use the
wave sequences to generate the corresponding piecewise constant
approximations $U^\e\in \mc Z$ given in \eqref{Uhat} and
Theorem~\ref{thm:hatU}.  Next, we use compactness to extract a
convergent subsequence and limit $\wh u$, and finally, we use the
residual calculation above to show that the limit is a solution.  In
order to apply compactness, we \emph{assume} that the variation is
uniformly bounded.

\begin{theorem}
  \label{thm:conv}
  Suppose the mFT scheme is defined for $t\le T$ with the uniform
  variation bound
  \[
    \mc V(\Gamma^t) < V, \com{for} 0\le t\le T.
  \]
%  and the $\al$-ray condition holds for all $(x,t)$ with $t\in[0,T]$.
  Suppose also that $Q(\e)$ can be chosen so that
  \begin{equation}
    \label{convAss}
    Q(\e) = o(1)  \qcom{and}
    W_H(\Gamma,\e) = o(1) \qcom{as}
    \e\to 0^+,      
  \end{equation}
  uniformly for $t\in[0,T]$.  Then some subsequence of the
  corresponding approximations $U^\e$ converges as $\e\to0^+$, and the
  limit is a weak* solution of the system \eqref{eqsys}.
\end{theorem}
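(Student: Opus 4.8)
The plan is to establish convergence along the standard three-step arc: compactness to extract a limit, identification of the limit via the residual bound, and verification that the limiting object satisfies the weak* formulation in the correct function space. First I would use the uniform variation bound $\mc V(\Gamma^t)<V$ together with Theorem~\ref{thm:hatU}, in particular the bound \eqref{VMbd}, which gives a uniform-in-$\e$ bound $\|U^\e(t,\cdot)\|_{BV_{loc}}\le C\,V$ and a uniform Lipschitz bound $\|U^\e{}'(t,\cdot)\|_{M_{loc}}\le\Lambda\,C\,V$. This means the family $\{U^\e\}$ is uniformly bounded in $W^{1,\infty}_*(0,T;BV_{loc},M_{loc})$; since $BV_{loc}$ embeds compactly into $L^1_{loc}$ and the Lipschitz bound in $M_{loc}$ controls time oscillation, a Helly/Arzel\`a--Ascoli argument (exactly as in~\cite{MY1,MY2}) yields a subsequence, still denoted $U^\e$, converging in $L^1_{loc}$ at almost every $t$, with a limit $\wh u\in W^{1,\infty}_*(0,T;BV_{loc},M_{loc})$ satisfying the same variation and Lipschitz bounds. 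Because $f$ and $q$ are $C^2$ and $U^\e$ stays in a fixed compact region $K_{\mc U}$, the compositions $f(U^\e)\to f(\wh u)$ and $q(U^\e)\to q(\wh u)$ in $L^1_{loc}$ as well, and $f(\wh u)\in L^\infty_*(0,T;BV_{loc})$, $q(\wh u)\in W^{1,\infty}_*(0,T;BV_{loc},M_{loc})$, so $\wh u\in\mc Z$ given by \eqref{space}.

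Next I would identify the limit as a weak* solution by passing to the limit in the integrated identity \eqref{w*int}. For each fixed $\e$, the approximation $U^\e$ is piecewise constant with fixed wavespeeds between interaction times, so by Lemma~\ref{lem:res} (applied on each $(t_i,t_{i+1})$ and using that interactions occur at isolated points), the residual is $\mc R(U^\e)(t)=\mc R(\Gamma^t)$, the measure in \eqref{RGam}. Integrating \eqref{resid} in time and testing against $\varphi\in C^0_0$, we get
\[
  \big\<q(U^\e(\cdot,t)),\varphi\big\>-\big\<q(U^\e(\cdot,0)),\varphi\big\>
  -\int_0^t\big\<f(U^\e(\cdot,\fs t)),\varphi'\big\>\;d\fs t
  = \int_0^t\big\<\mc R(\Gamma^{\fs t}),\varphi\big\>\;d\fs t.
\]
The right-hand side is bounded by $\|\varphi\|_\infty\int_0^t\|\mc R(\Gamma^{\fs t})\|_M\,d\fs t$, and Lemma~\ref{lem:RG} together with the uniform variation bound and the hypothesis \eqref{convAss} gives
\[
  \big\|\mc R(\Gamma^t)\big\|_M\le V\big(K_s\,\e^{2e_s}+K_p\,Q(\e)\big)+K_{hp}\,W_H(\Gamma,\e)=o(1)
\]
uniformly in $t\in[0,T]$, so the right-hand side tends to $0$. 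On the left, the initial term converges by Lemma~\ref{lem:init} (the initialization error $\|u^0-\whh u^0\|_M\to0$), and the remaining terms converge by the $L^1_{loc}$ convergence of $q(U^\e)$ and $f(U^\e)$ established above, using dominated convergence for the time integral (the integrand is uniformly bounded by $\|\varphi'\|_\infty\cdot C V$). Hence $\wh u$ satisfies \eqref{w*phi} for every $\varphi\in C^0_0$ and every $t\in[0,T)$, which is precisely the defining condition of a $BV$ weak* solution in Definition~\ref{def:w*}.

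The main obstacle is the careful bookkeeping in the identification step rather than any single hard estimate: one must check that the residual formula of Lemma~\ref{lem:res} genuinely applies to $U^\e$ \emph{globally in time}, i.e. that composite waves, multi-rarefactions, and the discontinuous width-reductions from Section~\ref{sec:avoid} do not introduce spurious contributions to $\mc R(U^\e)$ beyond those already accounted for in \eqref{RGam}. This is where the structure of the scheme pays off: states are always exact, so $q(U^\e)$ and $f(U^\e)$ are genuinely piecewise constant between interaction times with the jumps in \eqref{qfjump}; the only residual comes from the term $\sum([f]_j-s_j[q]_j)\delta_{x^p_j(t)}$, and the bad term $B(t)$ in \eqref{pwcR} vanishes because states are constant between interactions. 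The discontinuous adjustments (width reductions, composite reassignments, multi-rarefaction reversion at $t^\#$) change speeds but not states, so they contribute at most finitely many jump discontinuities in time to the $M_{loc}$-valued curve $t\mapsto q(U^\e(\cdot,t))$; since $q$ is continuous and states are preserved across such adjustments, $q(U^\e(\cdot,\cdot))$ is in fact continuous in $M_{loc}$ and the Gelfand-integrated identity holds without correction. With this verified, the limit passage is routine, and the conclusion follows.
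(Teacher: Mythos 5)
Your proof is correct, but it deviates from the paper's argument in two ways worth comparing. First, you extract the convergent subsequence via Helly's selection principle (spatial $BV$ compactness into $L^1_{loc}$ combined with the Lipschitz bound in $M_{loc}$ for time regularity), obtaining strong $L^1_{loc}$ convergence for a.e.\ $t$. The paper instead applies Banach--Alaoglu directly to $U^\e$ in $L^\infty_*(0,T;BV_{loc})$ and separately to $(q\circ U^\e)'$ in $L^\infty_*(0,T;M_{loc})$, obtaining only weak* convergence. The paper makes this choice deliberately, as noted immediately after its proof: Banach--Alaoglu applies unchanged to the $H^{-b}$ setting of Corollary~\ref{cor:main}, while Helly relies on the specific structure of $BV$ and would not carry over. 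Second, your identification step proceeds by passing to the limit in the time-integrated identity \eqref{w*int} tested against $\varphi$; the right side is controlled by Lemma~\ref{lem:RG} under the hypotheses \eqref{convAss} and vanishes, while the left side converges by your strong $L^1_{loc}$ convergence and dominated convergence. The paper instead first identifies the weak* limit $\mu$ of $(q\circ U^{\e_i})'$ as the Gelfand derivative of $q\circ U$ via the $\int\langle\mu,\phi\rangle\psi$ computation, concludes $U\in\mc Z$, and only then invokes the residual estimate. Your route is more concrete and handles the initial-time term $\langle q(U^\e(\cdot,0)),\varphi\rangle$ explicitly via Lemma~\ref{lem:init} (which the paper leaves implicit), at the cost of needing the strong convergence that Helly provides. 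One small point of care: you test against $\varphi\in C^0_0$ but then write $\langle f(U^\e),\varphi'\rangle$; strictly one should write $\langle\B D f(U^\e),\varphi\rangle$ for merely continuous $\varphi$, or restrict to $\varphi\in C^1_0$ and conclude by density --- this is cosmetic, not a gap. Your final paragraph flagging that composite waves, multi-rarefactions, and width reductions do not contaminate the residual formula \eqref{RGam} is a legitimate and useful check that the paper absorbs into Theorem~\ref{thm:hatU} and Lemmas~\ref{lem:mr},~\ref{lem:compspd}, and~\ref{lem:RG} without spelling out at this point.
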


In particular, referring to \eqref{hp}, and if $Q(\e) = O(\ee q)$,
then \eqref{convAss} holds if say
\[
  \#\big\{B_H\big\}\,\e^{e_p-e_q} = o(1), \com{as} \e\to 0^+.
\]

\begin{proof}
  By Theorem \ref{thm:noaccum}, the wave sequence $\Gamma^t$ is
  defined for all $t\in[0,T]$, and by Theorem \ref{thm:hatU}, the
  corresponding approximations $U^\e$ live in the space $\mc Z$
  given by \eqref{space}.  Since $\mc V(\Gamma)$ is bounded uniformly
  in $t$ and $\e$, by \eqref{VMbd}, so are both $U^\e$ and $\wh
  U^\e{}'$,
  \[
    \big\|U^\e(\cdot,t)\big\|_{BV_{loc}}
    \le \frac{C}{\Lambda}\,\mc V(\Gamma), \com{and}
    \big\|U^\e{}'(\cdot,t)\big\|_{M_{loc}}
    \le C\,\mc V(\Gamma^t),
  \]
  or in other words
  \[
    \begin{aligned}
      U^\e &\in L^\infty_*(0,T;BV_{loc})
                 = \big(L^1(0,T;C_0^{-1})\big)^*,
    \com{and}\\
      U^\e{}' &\in L^\infty_*(0,T;M_{loc})
                    = \big(L^1(0,T;C_0^0)\big)^*,
    \end{aligned}
  \]
  where we have used \eqref{dualSp}.

  It follows by the Banach-Alaoglu theorem that there is some sequence
  $\{\epsilon_i\}_{i=1}^\infty$, with $\e_i\to0^+$ monotonically, and
  limit $U$ such that
  \[
    U^{\e_i} \to U \in L^\infty_*(0,T;BV_{loc})
  \]
  as $i\to\infty$, where the convergence is in the dual topology of
  the space $C_0^{-1}$ of test functions.  Since $q$ and
  $f:\mc U\to\B R^N$ are $C^2$, it follows that
  \[
    \Big\|f\big(U^{\e_i}(\cdot,t)\big)
    - f\big(U(\cdot,t)\big)\Big\|_{BV}
    \le \sup_{u\in\mc U}|Df(u)|\,
    \big\|U^{\e_i}(\cdot,t)-U(\cdot,t)\big\|_{BV}
    \to 0,
  \]
  uniformly in $t$, and similarly for $q$, and so also
  \[
    \begin{aligned}
    f\circ U^{\e_i} &\to
    f\circ U \in L^\infty_*(0,T;BV_{loc}), \com{and}\\
    q\circ U^{\e_i} &\to
    q\circ U \in L^\infty_*(0,T;BV_{loc}).
    \end{aligned}
  \]

  Next, for each $\e>0$, and for each $t\in[0,T]$, the function
  $q\circ U^\e(\cdot,t)$ is piecewise constant and $BV$, so its
  (Gelfand) weak derivative $(q\circ U^\e)'$ is a measure of bounded
  variation,
  \[
    (q\circ U^\e)' \in L^\infty_*(0,T;M_{loc}), \com{with}
    \big\|(q\circ U^\e)'(\cdot,t)\big\|_{M_{loc}}
    \le \big\|q\circ U^\e(\cdot,t)\big\|_{BV_{loc}},
  \]
  uniformly for $t\in [0,T]$.  Again using \eqref{dualSp} and applying
  Banach-Alaoglu, there is some subsequence, again denoted $\e_i$, and
  time varying measure $\mu:[0,T]\to M_{loc}$ such that
  \[
    (q\circ U^{\e_i})' \to \mu \in L^\infty_*(0,T;M_{loc}),
  \]
  as $i\to\infty$.  Now for space and time test function
  $\phi=\phi(x)\in C^0(K)$ and $\psi=\psi(t)\in C^1([0,T])$,
  respectively, we have
  \[
    \begin{aligned}
      \int_0^T\big\langle\mu(t),\phi\big\rangle\,\psi(t)\;dt
      &= \int_0^T\lim_i\big\langle(q\circ U^{\e_i})'(t),
        \phi\big\rangle\,\psi(t)\;dt\\
      &= \lim_i\int_0^T\big\langle(q\circ U^{\e_i})'(t),
        \phi\big\rangle\,\psi(t)\;dt\\
      &= - \lim_i\int_0^T\big\langle q\circ U^{\e_i}(t),
        \phi\big\rangle\,\psi'(t)\;dt\\
      &= - \int_0^T\lim_i\big\langle q\circ U^{\e_i}(t),
        \phi\big\rangle\,\psi'(t)\;dt\\
       &= - \int_0^T\big\langle q\circ U(t),
        \phi\big\rangle\,\psi'(t)\;dt,
    \end{aligned}
  \]
  where we have used dominated convergence and the Gelfand weak
  derivative.  This implies
  \[
    \mu' = q\circ U \in L^\infty_*(0,T;M_{loc}),
  \]
  and so in particular, by \eqref{space}
  \[
    U^\e \to U \in f^{-1}\big(L^\infty_*(0,T;BV_{loc})\big) \cap
    q^{-1}\big(W^{1,\infty}_*(0,T;BV_{loc},M_{loc})\big) = \mc Z.
  \]

  Thus the limit $U$ of mFT approximations lives in the space $\mc Z$,
  and it remains to show that $U$ is a weak* solution.
  Lemma~\ref{lem:RG} provides the estimate \eqref{residG}, so by our
  assumptions \eqref{convAss} and continuity of the residual
  \eqref{resid} we have
  \[
    \mc R(U)(t) = \lim_i\mc R(\Gamma^t) = 0,
  \]
  so $U$ is a weak* solution and the proof is complete.  Note that the
  use of preconditioners $A$ does not affect this convergence, because
  we assumed that $A$ is uniformly bounded with uniformly bounded
  inverse.
\end{proof}

Here we have chosen to use Banach-Alaoglu rather than Helly's
Selection Principle because of its wider range of applicability: in
particular, the proof applies unchanged to the alternative space
$\mc Z^{-b,\infty}$ defined in \eqref{Hbspace}, provided we choose $b$
large enough that the Dirac mass $\delta\in H^{-b}$, that is for
$b>1/2$.

\begin{cor}
  \label{cor:main}
  Under the same assumptions as Theorem~\ref{thm:conv}, and for any
  $b>1/2$, a subsequence of the mFT approximations converges to an
  $H^{-b}$ weak* solution of system \eqref{eqsys}.
\end{cor}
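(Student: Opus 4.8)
The plan is to repeat the proof of Theorem~\ref{thm:conv} essentially verbatim, replacing $BV_{loc}$ by $H^{1-b}$ and $M_{loc}$ by $H^{-b}$ throughout, and checking that every estimate used survives this change. The foundation is the elementary one-dimensional fact that the Heaviside function lies in $H^s$ for $s<1/2$ while the Dirac mass lies in $H^{-s}$ for $s>1/2$; thus for $b>1/2$ we have $H\in H^{1-b}$ and $\delta\in H^{-b}$, with finite norms $\|H\|_{H^{1-b}}$, $\|\delta\|_{H^{-b}}$ depending only on $b$. The test-function spaces $C^{-1}_0$ and $C^0_0$ are correspondingly replaced by the separable Hilbert spaces $H^{b-1}$ and $H^b$, so that $BV_{loc}=(C^{-1}_0)^*$ becomes $H^{1-b}=(H^{b-1})^*$ and $M_{loc}=(C^0_0)^*$ becomes $H^{-b}=(H^b)^*$.

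First I would establish the analogue of the bound \eqref{VMbd}: for $t\in(t_i,t_{i+1})$, a piecewise constant $U^\e$ with jumps satisfying $|[u]_j|\le C|\z_j|$ has $\|U^\e(t,\cdot)\|_{H^{1-b}}\le C_b\,\mc V(\Gamma^t)$ and, differentiating in time, $\|U^\e{}'(t,\cdot)\|_{H^{-b}}\le\Lambda\,C_b\,\mc V(\Gamma^t)$, exactly as in Theorem~\ref{thm:hatU} but now using $\|H\|_{H^{1-b}}$ and $\|\delta\|_{H^{-b}}$ in place of the $BV_{loc}$ and $M_{loc}$ bounds. Under the assumed uniform bound $\mc V(\Gamma^t)<V$, this puts $U^\e$ in a bounded set of $L^\infty_*(0,T;H^{1-b})=\big(L^1(0,T;H^{b-1})\big)^*$ with time derivative bounded in $L^\infty_*(0,T;H^{-b})=\big(L^1(0,T;H^{b})\big)^*$. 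Since $H^{b-1}$ and $H^b$ are separable, Banach--Alaoglu provides $\e_i\to0^+$ and a limit $U$ with $U^{\e_i}\to U$ weak* in $L^\infty_*(0,T;H^{1-b})$; the $C^2$ regularity of $f$ and $q$, via $\|f(U^{\e_i})-f(U)\|_{H^{1-b}}\le\sup_{\mc U}|Df|\,\|U^{\e_i}-U\|_{H^{1-b}}$, transfers this to $f\circ U^{\e_i}$ and $q\circ U^{\e_i}$, and the Gelfand-derivative computation from the proof of Theorem~\ref{thm:conv} (extracting a further subsequence for $(q\circ U^{\e_i})'$ as there) carries over word for word to identify $(q\circ U)'$ and place $U\in\mc Z^{-b,\infty}$ as in \eqref{Hbspace}.

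Next I would re-run the residual bound. The proof of Lemma~\ref{lem:RG} estimates $\|\mc R(\Gamma)\|_M$ using only the triangle inequality on the finite sum of Diracs in \eqref{RGam}, the normalization $\|\delta\|_M=1$, and the estimates of Lemmas~\ref{lem:spdRes}, \ref{lem:mr}, \ref{lem:compspd} on the \emph{Euclidean} vector residuals $R^{(j)}\in\B R^N$ attached to each jump — none of which is tied to the measure topology. Replacing $\|\delta\|_M=1$ by the finite constant $\|\delta\|_{H^{-b}}$ merely multiplies the right side of \eqref{residG} by that constant, so
\[
  \big\|\mc R(\Gamma^t)\big\|_{H^{-b}}\le
  C_b\,\mc V(\Gamma^t)\,\big(K_s\,\e^{2e_s}+K_p\,Q(\e)\big)
  +C_b\,K_{hp}\,W_H(\Gamma,\e)=o(1)
\]
under \eqref{convAss}, uniformly for $t\in[0,T]$. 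Passing to the limit along $\e_i$ and using continuity of the residual in $L^\infty_*(0,T;H^{-b})$ then gives $\mc R(U)(t)=0$ for a.e.~$t$, so $U$ is an $H^{-b}$ weak* solution.

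There is no deep obstacle here — the corollary is a soft consequence of the robustness of the scheme, already flagged in the remarks after Lemmas~\ref{lem:res} and at the end of Section~\ref{sec:conv}. The one point that genuinely needs checking is the bookkeeping: that every quantitative lemma feeding Lemma~\ref{lem:RG} controls the \emph{Euclidean} norm of the relevant vector residual and invokes the topology of measures only through the triangle inequality, never through orthogonality of Diracs at distinct points — which \emph{fails} in $H^{-b}$, in contrast to the Pythagorean-type identity $\|\sum R^{(j)}\delta_{x_j}\|_M^2=\sum|R^{(j)}|^2$ used for $M$ in the proof of Theorem~\ref{thm:hatU}. Once this is confirmed, the substitutions of $H^{1-b}$ for $BV_{loc}$ and $H^{-b}$ for $M_{loc}$ go through mechanically, and since $p=\infty$ remains admissible for $b>1/2$, the argument closes.
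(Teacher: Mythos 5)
Your proposal is essentially correct and takes the same route as the paper, whose ``proof'' of the corollary is in fact the single remark preceding it: the Banach--Alaoglu argument of Theorem~\ref{thm:conv} carries over unchanged with $BV_{loc}\mapsto H^{1-b}$ and $M_{loc}\mapsto H^{-b}$, provided $b>1/2$ so that $\delta\in H^{-b}$ (and $H\in H^{1-b}_{loc}$). Your walkthrough simply makes this explicit, checking that the variation bound \eqref{VMbd}, the compactness step with separable preduals $H^{b-1}$, $H^b$, the Gelfand-derivative identification, and the residual estimate \eqref{residG} all survive the substitution, with only a constant $\|\delta\|_{H^{-b}}$ (and $\|H\|_{H^{1-b}_{loc}}$) appearing where the paper uses $\|\delta\|_M=1$.

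One point of correction to your closing caveat, which does not affect the validity of your argument. You attribute to the proof of Theorem~\ref{thm:hatU} a ``Pythagorean-type identity'' $\|\sum_j R^{(j)}\delta_{x_j}\|_M^2=\sum_j|R^{(j)}|^2$. No such identity is used there, nor would it be correct: the $M$-norm of a finite sum of Dirac masses at distinct points equals the $\ell^1$ sum $\sum_j|R^{(j)}|$, not the $\ell^2$ sum. In the event, the paper never invokes even this $\ell^1$ equality; everywhere (Lemma~\ref{lem:res}, Lemma~\ref{lem:dgRS}, Lemma~\ref{lem:mr}, Lemma~\ref{lem:RG}, Theorem~\ref{thm:hatU}) only the $\le$ direction --- the triangle inequality --- is used. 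Your conclusion that the scheme's estimates depend on the Dirac supports only through the triangle inequality, and hence transfer directly to $H^{-b}$, is therefore correct; you had simply mislabelled what needed confirming.
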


\subsection{General Approximation of $BV$ Solutions}

We now argue that, given a well-behaved $BV$ solution $u(x,t)$ of
system \eqref{eqsys}, we expect the approximation $U^\e(x,t)$
generated by the mFT scheme, to accurately resemble the actual
solution $u(x,t)$, for appropriately small values of $\e>0$.  Since we
are considering large amplitude, large $BV$ solutions of a general
system, for which existence and regularity theorems are not yet
available, there is little hope of proving anything rigorous in this
context, so our comments are necessarily speculative.

So suppose that we are given a solution $u(x,t)$ of
\eqref{eqsys}, such that $\|u(\cdot,t)\|_{BV}<\infty$ uniformly for
$t\in[0,T]$, and which takes on initial data $u^o(x)$ and appropriate
boundary values.  For given $\e$, we use this initial data and the
same boundary conditions to generate the mFT approximation
$U^\e(x,t)$.  Assuming the solution is unique, we then ask the
question: \emph{under what conditions will the mFT approximation
  \pmb{not} converge to the actual solution $u(x,t)$?}

Recall that in order to keep the number of waves finite, the mFT
approximation uses carrier and passenger waves, as described in
Section~\ref{sec:comp}.  In doing so, by construction, we are assured
that $U^\e$ satisfies the $\al$-ray condition for all $t\le T$.  The
assumption of uniform $BV$ bounds together with
Corollary~\ref{cor:blowup} implies that interaction times do not
accumulate for $t\in[0,T]$.  As a result, we expect an analog of the
$\al$-ray condition to continue to hold in the limit as $\e\to 0$.  We
thus introduce the following condition that we expect solutions which
are limits of mFT approximations to satisfy.

\begin{defn}
  \label{def:salray}
  We say that a function $u\in L^\infty(0,T;BV_{loc})$ satisfies the
  \emph{strong $\al$-ray condition}, if for almost every $\al\in\B R$
  and $(x_*,t_*)$, the limit
  \[
    \lim_{t\nearrow t_*}\big[u\big]\big(x_*-\al\,(t-t_*),t\big)
    \quad\text{exists},
  \]
  where $[u](x,t) := u(x+,t)-u(x-,t)$ is the jump in $u$, which exists
  for a.e.~$(x,t)$ because $u$ is uniformly $BV$.
\end{defn}

This implicitly assumes that the $\al$-ray condition applies to $U^\e$
uniformly in $\e$.  In particular, this condition holds if (the
restriction of) $u$ is $BV$ on time-like line segments.

We now assume we are given the known $BV$ solution $u(x,t)$ of
\eqref{eqsys}, and that the approximation $U^\e(x,t)$ has been
generated as described above.  We further assume that appropriate
entropy conditions hold for both the solution and approximation, and
that the corresponding entropy conditions are enough to guarantee
\emph{uniqueness} and \emph{stability} of solutions, in some norm.

Now suppose that the mFT approximations $U^\e$, which are assumed to
be uniformly in $BV$, \emph{fail to converge} to the actual solution
$u$, as $\e\to 0$.  We wish to understand what conditions prevent such
convergence.  We argue as follows.  By Helly's selection theorem or
Banach-Alaoglu, we can extract a subsequence $U^{\e_i}$ which
converges to a $BV$ limit
\[
  \wh U = \lim_{i\to\infty} U^{\e_i} \in L^\infty(0,T;BV_{loc}).
\]
By construction, for each $\e>0$, the approximation $U^\e$ satisfies
the (strong) $\al$-ray condition, and we assume that the given
solution $u$ also satisfies it.  This assumption effectively excludes
wave patterns similar to those of Figure~\ref{fig:accum}, in which the
accumulating interacting waves do not vanish in the limit, in which
case some form of blowup should occur.

According to Theorem~\ref{thm:conv}, because we are assuming \emph{a
  priori} $BV$ bounds, the only way in which the limit $\wh U$ is
\emph{not} a solution, is if \eqref{convAss} fails at some minimal
positive time $t=t_\sharp\in(0,T]$.  This means that given any
function $Q(\e)\to 0$, say including $Q(\e)=1/\log|\log\e|$, there
exists some $\delta = \delta(Q)$, and a subsequence $\e_{i_k}\to 0$,
such that
\[
  W_H(\Gamma^{t_\sharp},\e_{i_k}) \ge \delta, \qquad\text{uniformly in }k.
\]
Here the wave sequence $\Gamma^{t_\sharp}$ implicitly depends on
discretization parameter $\e_{i_k}$.

Using \eqref{hp}, we can estimate the number of heavy bases, as
\[
  \delta \le W_H(\Gamma^{t_\sharp},\e_{i_k})
  \le \#\{B_H\}\,\sqrt{N-1}\,\ee p, \qcom{so that}
  \#\{B_H\} \ge \frac \delta{\sqrt{N-1}}\,\e^{-e_p},
\]
since there is at most one passenger from each family distinct from
that of the base.  Moreover, taking the uniform bound for the total
variation to be $V_0$, we must also have
\[
  \#\{B_H\} \le \frac{V_0}{Q(\e)\,\ee p}, \qcom{since}
  |\z_b| \ge Q(\e)\,\ee p
\]
for each heavy base $\g_b$.  We therefore have both upper and lower
bounds for $\#\{B_H\}$ of almost the same order, namely
\[
  \frac{\delta\big(Q(\e)\big)}{\sqrt{N-1}}\,\e^{-e_p}
  \le \#\{B_H\} \le \frac{V_0}{Q(\e)}\,\e^{-e_p},
\]
and the function $Q(\e)\to0$ as slowly as we like.  At the same time,
each heavy base $\g_b$ has a corresponding heavy passenger, say
$\g_j$, for which
\begin{equation}
  \label{wvsz}
  Q(\e)\,|\z_b| < |\z_j| \le \min\big\{|\z_b|,\ee p\big\},
\end{equation}
so the waves have comparable strength, while all waves between $\g_j$
and $\g_b$ also have strength at most $|\z_b|$.

By the Pigeonhole Principle, there must be (at least) two distinct
families, say $b_1$ and $b_2$, and $O(\e^{-e_p})$ interactions, in
which the emerging wave strengths $\z_{b_i}$ are comparable and of
size \eqref{wvsz}, while all emerging waves between them have size at
most $\ee p$.  In this sense, the wave families $b_1$ and $b_2$ fail
to separate, in that the corresponding wave strengths are strongly
correlated rather than independent, and we say that they are
\emph{``strongly coupled by nonlinear interactions''}.  This is in
contrast to local small amplitude interactions in hyperbolic systems,
in which different families in compactly supported solutions generally
separate after a short time and propagate largely independently
afterwards, as described in ~\cite{John} and \cite{TPLdec}.

We can express this failure of separation by referring to the
classical Glimm interaction estimates~\cite{G,Yth}.  Recall that these
give accurate descriptions of the effects of nonlinear interactions of
small amplitude waves.  We can describe the estimates in the current
context as follows: suppose that (composite) waves $\al$ and $\beta$
span states $u_\ell$, $u_m$ and $u_r$, respectively.  Using the
notation of \eqref{urul}, we write
\[
  u_m - u_\ell = \sum \al_i\,\ol r^\al_i, \qcom{and}
  u_r - u_m = \sum \beta_i\,\ol r^\beta_i,
\]
in which the composite wave $\al$ has components of strength $\al_i$
in each family, and the superscripts indicate that the $\ol r^\al_i$,
as given by \eqref{ukzr}, vary with different waves.  In resolving the
interaction, we similarly write
\[
  u_r - u_\ell = \sum \z_i\,\ol r^\z_i,
\]
and resolve the $\z_i$'s in terms of the $\al_i$'s and $\beta_i$'s.
Glimm's estimate states that if the incident waves are weak, we have
\[
  \z_i = \al_i + \beta_i +
  \sum_{j>k}\al_j\,\beta_k\,\Lambda^{jk}_i + O_3,
\]
in which the $\Lambda$'s are second derivative Lie bracket
\emph{interaction coefficients}, and $O_3$ is cubic in incident wave
strength, see \cite{G, Yth, Ysurv}.

It follows that if a heavy passenger and carrier emerge from this
interaction, $|\z_{b_1}|\approx|\z_{b_2}|$, then either these were
present in the incident waves, or they arise as nonlinear effects of
the interaction of other waves of moderate strength.  If they were
previously present then $\#\{B_H\}$ does not increase across the
interaction.  On the other hand, there are only fintiely many waves of
moderate strength, so these should not interact very often.  In either
case, we do not expect $\#\{B_H\}$ to grow too large for generic
systems.  In any event, a deeper analysis of the interaction structure
will be essential in deriving $BV$ bounds, and accurate estimates for
$\#\{B_H\}$ will likely emerge from any such derivation.

\begin{conj}
  Suppose that the hyperbolic system \eqref{eqsys} has a unique $BV$
  entropy weak* solution $u$, which is stable in an appropriate
  topology.  Further, suppose that $u$ satisfies the strong $\al$-ray
  condition \ref{def:salray}, and that a ``weak separation condition''
  holds for the system.  Then the mFT approximation $U^\e$ generated
  with the same initial data will converge to the actual solution $u$
  of the nonlinear system.
\end{conj}

Note that we have not specified the ``weak separation condition''
beyond the heuristic given above.  Our conclusion is necessarily
``soft'', because little is known analytically about solutions to
nonlinear hyperbolic problems, particularly those of large amplitude.
Indeed, obtaining existence, uniqueness and/or stability in that
regime would constitute a major advance in the field.

\section{Application to Gas Dynamics}
\label{sec:gd}

We briefly recall the Euler equations for compressible gas dynamics.
We work primarily in a Lagrangian or material frame, in which the
equations and wavespeeds are simplified.  The equations are
\begin{equation}
  \label{euler}
  (-v)_t + u_x = 0, \qquad
  u_t + p_x = 0, \qquad
  \big(\textstyle{\frac12}\,u^2 + e\big)_t + (u\,p)_x = 0,
\end{equation}
expressing continuity, Newton's law, and conservation of energy,
respectively.  The equations are closed by an equation of state, say
$e=e(p,s)$, satisfying the Second Law of Thermodynamics.  Here $u$ is
the velocity, and the thermodynamics variables are specific volume
$v$, pressure $p$, specific internal energy $e$, specific enthalpy
$h$, specific entropy $s$, and temperature $\theta$.  We use $p$ and
$s$ as the primary thermodynamic variables, so that the equation of
state is given by the enthalpy $h=h(p,s)$, and the Second Law is
\[
  dh = v\,dp + \theta\,ds, \qquad h := e + p\,v,
\]
so also
\[
  v = \frac{\dd h}{\dd p} \qcom{and}
  \theta = \frac{\dd h}{\dd s}.
\]

The system is of the form \eqref{eqsys}, if we denote
\begin{equation}
  \label{gd}
  w := \(p\\u\\s\),\qquad
  q(w) := \(-v\\u\\\textstyle{\frac12}\,u^2 + h - p\,v\), \qcom{and}
  f(w) := \(u\\p\\u\,p\),
\end{equation}
and being a physical system it is symmetrizable; see below.

\subsection{Generalized Riemann Problem and mFT Scheme}

For smooth solutions the system can be written in quasilinear form,
\[
  Dq(w)\,w_t + Df(w)\,w_x = 0,
\]
where
\begin{equation}
  \label{DqDf}
  Dq(w) = \( -\frac{\dd v}{\dd p} & 0 & -\frac{\dd v}{\dd s}\\
  0 & 1 & 0 \\ -p\,\frac{\dd v}{\dd p} & u &
  \theta - p\,\frac{\dd v}{\dd s} \), \qcom{and}
  Df(w) = \( 0 & 1 & 0 \\ 1 & 0 & 0 \\ u & p & 0 \),
\end{equation}
in which we have used $e = h - p\,v$.  We can now easily find the
generalized eigenvalues \eqref{ev}: after two elementary row
operations, this is equivalent to
\[
  \( 0 & 1 & 0 \\ 1 & 0 & 0 \\ 0 & 0 & 0\)\,r_k(w)
  = \la_k(w)\,\( \frac1{c^2} & 0 & -\frac{\dd v}{\dd s}\\
  0 & 1 & 0 \\ 0 & 0 & \theta \)\,r_k(w),
\]
where we have defined the \emph{acoustic impedance} $c=c(p,s)$ by
\begin{equation}
  \label{imp}
  c^2 = -\frac{\dd p}{\dd v} = -1\Big/\frac{\dd v}{\dd p}.
\end{equation}
We can now read off the generalized eigenvalues and vectors, namely
\begin{equation}
  \label{gdev}
  \begin{aligned}
  \la_-(w):=- c(p,s),&\qquad
  \la_0(w):=0, \quad& \la_+(w):= c(p,s),\\
    r_-(w):=\(-1\\1/c\\0\),&\qquad r_0(w):=\(0\\0\\1\),
    \quad& r_+(w):=\(1\\1/c\\0\).
  \end{aligned}
\end{equation}
Thus, as is well known, the system is strictly hyperbolic as long as
$c(p,s)>0$, and the forward and backward waves are GNL while
$\frac{dc}{dp}>0$.  The eigenvectors are easily integrated to get the
simple wave curves: the forward and backward wave curves are
\begin{equation}
  \label{gdsimp}
  u_r - u_l = \pm\int_{p_l}^{p_r}\frac1{c(\fs p,s)}\;d\fs p, \qquad
  s_r = s_l,
\end{equation}
respectively, and the (stationary) entropy jumps are
\[
  p_r = p_l, \qquad u_r = u_l.
\]

To describe the shocks and contacts, we solve the jump conditions
\eqref{sc}, which are
\[
  [u] = \sigma\,[-v], \qquad [p] = \sigma\,[u], \qquad
  [u\,p] = \sigma\,\big[\textstyle{\frac12}\,u^2 + e\big].
\]
Eliminating $[u]$ in the first two equations gives
\begin{equation}
  \label{gdshock}
  \sigma = \pm\sqrt{\frac{[p]}{[-v]}} \qcom{and}
  [u] = \pm\sqrt{[p]\,[-v]},
\end{equation}
while the third equation becomes
\[
  \ol u\,[p] + [u]\,\ol p = \sigma\,\big(\ol u\,[u] + [e]\big),
  \qcom{or} \sigma\,\big([e] + \ol p\,[v] \big) = 0,
\]
where $\ol p:=\frac{p_l+p_r}2$, and we regard this as an equation for
the entropy jump across the shock.  If $\sigma=0$, then also
$[p]=[u]=0$, which is again an entropy jump, while if $\sigma\ne 0$
we rewrite this as
\[
  \begin{aligned}
    0 &= [e] + \ol p\,[v] = [h] - \ol v\,[p], \qcom{or}\\
    h(p_r,s_r)&-h(p_l,s_l) = \frac{v(p_r,s_r)+v(p_l,s_l)}2\,(p_r-p_l),
  \end{aligned}
\]
which is an equation for $s_r$, say, and coupled with \eqref{gdshock}
is the equation defining the shock curve.

The global existence and uniqueness of solutions to the Riemann
problem is well known; extension to the generalized Riemann problem
follows readily.

\begin{lemma}
  \label{lem:strgRP}
  The generalized Riemann problem for the Euler equations
  \eqref{euler} is globally uniquely solvable, although possibly with
  a vacuum.  A vacuum occurs if and only if the condition
  \begin{equation}
    \label{vac}
    u_r - u_l \ge \int_0^{p_l} \frac1{c(\fs p,s_l)}\;d\fs p
     + \int_0^{p_r} \frac1{c(\fs p,s_r)}\;d\fs p
  \end{equation}
  holds.
\end{lemma}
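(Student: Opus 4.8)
The plan is to reduce the gRP to a single scalar equation for the middle pressure, exactly as in the classical solution of the Riemann problem, and then to read off \eqref{vac} as the precise threshold at which that equation loses its solution in $(0,\infty)$.

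First I would exploit the structure of \eqref{euler}: the contact ($0$-)field is linearly degenerate with $\la_0\equiv0$, so it always carries zero virtual width and is a pure jump in $s$ across which $p$ and $u$ are continuous (by \eqref{gdev}). Hence any gRP solution consists of a backward wave joining $w_l=(p_l,u_l,s_l)$ to a state $(p_m,u_m,s^-)$, the contact joining $(p_m,u_m,s^-)$ to $(p_m,u_m,s^+)$, and a forward wave joining $(p_m,u_m,s^+)$ to $w_r=(p_r,u_r,s_r)$. The unknowns reduce to the pair $(p_m,u_m)$: once these are fixed, $s^-$ equals $s_l$ when the backward field is given positive width (a simple wave) and otherwise equals $s_l$ on the rarefaction branch $p_m\le p_l$ and the Hugoniot value on the shock branch $p_m>p_l$, and $s^+$ is determined symmetrically from $w_r$.

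Next I would parametrize each GNL wave curve by pressure. With the left state fixed, the backward curve gives $u_m=\beta(p_m)$, where $\beta(p)=u_l-\int_{p_l}^{p}\frac{d\fs p}{c(\fs p,s_l)}$ on its rarefaction or simple-wave pieces (strictly decreasing, with derivative $-1/c<0$) and $\beta(p)=u_l-\sqrt{(p-p_l)\,\big(v_l-v(p,s_-(p))\big)}$ on the shock piece (present only when the backward field has zero width and $p>p_l$), with $s_-(p)$ the Hugoniot entropy; by the classical structure theory of Lagrangian shock curves this piece is $C^2$, matches the rarefaction branch to second order at $p=p_l$, and is strictly decreasing. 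Similarly, with the right state fixed, the forward curve gives $u_m=\phi(p_m)$, a $C^2$, strictly \emph{increasing} function of $p_m$. Thus $\beta-\phi$ is continuous and strictly decreasing on $(0,\infty)$, so the equation $\beta(p_m)=\phi(p_m)$ — which is exactly the condition that the gRP have a solution with positive middle pressure — admits at most one root. Examining the endpoints, $\beta(p)\to u_l+\int_0^{p_l}\frac{d\fs p}{c(\fs p,s_l)}$ as $p\to0^+$ and $\beta(p)\to-\infty$ as $p\to\infty$ (the compression integral $\int_{p_l}^\infty d\fs p/c$, or the shock velocity deficit $\sqrt{(p-p_l)(v_l-v)}$, being unbounded under the standing thermodynamic hypotheses), and symmetrically $\phi(p)\to u_r-\int_0^{p_r}\frac{d\fs p}{c(\fs p,s_r)}$ as $p\to0^+$ and $\phi(p)\to+\infty$ as $p\to\infty$. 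Hence $\beta-\phi$ decreases from $u_l-u_r+\int_0^{p_l}\frac{d\fs p}{c(\fs p,s_l)}+\int_0^{p_r}\frac{d\fs p}{c(\fs p,s_r)}$ to $-\infty$: if this starting value is positive there is a unique $p_m>0$, hence a unique gRP solution; if it is $\le 0$ — which is precisely condition \eqref{vac} — there is no finite $p_m$, and the (still unique) solution is the vacuum one, in which the backward wave empties $w_l$ down to $p=0$, the forward wave fills $w_r$ up from $p=0$, and a vacuum band with boundary velocities $u_l+\int_0^{p_l}\frac{d\fs p}{c(\fs p,s_l)}$ and $u_r-\int_0^{p_r}\frac{d\fs p}{c(\fs p,s_r)}$ separates them. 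One can also organize the non-vacuum case via Lemma~\ref{lem:gRPsol}: the eigenvectors \eqref{gdev} are independent whenever $c>0$, the extended Lax condition \eqref{sig} holds trivially here (backward wave speeds are negative, the contact speed is $0$, forward wave speeds positive), and ``starting value $>0$'' is exactly the range condition \eqref{range}, so that lemma returns existence and uniqueness.

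The one non-elementary ingredient — and the main obstacle — is the classical structure theory of the shock branches: that along a GNL shock curve through a fixed state the velocity is a strictly monotone $C^2$ function of the pressure, joining the rarefaction curve to second order at the base point, with unbounded velocity jump as $p\to\infty$. These are standard consequences of the Second Law together with genuine nonlinearity ($\frac{dc}{dp}>0$) and the usual convexity of the equation of state (the Weyl/Bethe conditions); see Courant--Friedrichs or Smoller. Everything else — the reduction through the linearly degenerate contact, monotonicity of the rarefaction and simple-wave pieces from $c>0$, and the bookkeeping of the four endpoint limits — is routine.
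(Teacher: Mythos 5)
Your proof is correct, and it establishes exactly the right threshold, but you take a somewhat different route from the paper. The paper's proof is a direct application of its own Lemma~\ref{lem:gRPsol}: it first establishes independence of the three generalized wave-curve directions $\ol r_-,\ol r_0,\ol r_+$ from \eqref{rgd} (a pure linear-algebra fact, uniform along each wave), then invokes the abstract existence/uniqueness lemma, and finally checks the range condition \eqref{range} by examining the asymptotic behavior of each wave curve separately — shocks and compressions have unbounded velocity jump as pressure grows, while the rarefaction branch may be bounded as $p\to 0$, and \eqref{vac} is precisely the statement that the total available velocity change is exhausted. You instead quotient out the stationary contact immediately (using $p,u$ continuous across it), fix the middle pressure $p_m$, and reduce to the classical scalar equation $\beta(p_m)=\phi(p_m)$ with $\beta$ strictly decreasing, $\phi$ strictly increasing, which is the Courant--Friedrichs/Smoller route. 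The underlying analysis (monotonicity plus endpoint behavior at $p_m\to 0^+$ and $p_m\to\infty$) is the same in both proofs; you just package it as an intermediate value argument on a scalar rather than as invertibility of a three-parameter map. What the paper's framing buys is consistency with the general-$N$ machinery it already built, and it avoids having to cite external structure theory for the shock branches; what your framing buys is a more self-contained and explicit argument that also exhibits the vacuum structure directly (the two boundary velocities of the vacuum band), at the cost of importing from Courant--Friedrichs/Smoller that velocity is strictly monotone and $C^2$ along the Lagrangian Hugoniot curve with second-order contact at the base point — a fact the paper uses implicitly but does not need to isolate because its argument operates on the $\ol r_k$ vectors rather than on a scalar.
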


\begin{proof}
  The proof is a direct application of Lemma~\ref{lem:gRPsol}.  We
  wish to connect states \[ \{w_l=:w_0,w_1,w_2,w_3:=w_r\}, \]
  respectively by (arbitrarily large) backward, stationary and and
  forward waves, and we must check that the differences
  $[u]_k:=u_k-u_{k-1}$ form an independent set.

  If we interpret the integral in \eqref{gdsimp} as an average by setting
  \[
    \Big\langle\frac1c\Big\rangle_{lr} := \frac1{p_r-p_l}
    \int_{p_l}^{p_r}\frac1{c(\fs p,s)}\;d\fs p
    = \int_{-1/2}^{1/2}\frac1{c(\fs p,s)}\;d\fs \eta>0, \qquad
    \fs p := \ol p + \fs\eta\,(p_r-p_l),
  \]
  then both forward and backward simple wave curves can be written as
  \begin{equation}
    \label{gdsp}
    u_r - u_l = \Big\langle\frac1c\Big\rangle_{ab}\,(p_a - p_b),
    \qquad s_r - s_l = 0,
  \end{equation}
  where subscripts $a$ and $b$ denote the \emph{ahead} and
  \emph{behind} states, respectively.

  Similarly, recalling that the entropy condition is that the pressure
  be higher behind a shock, we write the shock curve \eqref{gdshock}
  as
  \begin{equation}
    \label{gdsh}
    u_r - u_l = \frac1{|\sigma|}\,(p_a-p_b), \qquad
    |\sigma| = \sqrt{\frac{[p]}{[-v]}}, \qquad p_b>p_a,
  \end{equation}
  in which the entropy is regarded as implicitly solved.

  Combining the waves together in order, we can then write
  \begin{equation}
    \label{wvc}
    w_1 - w_0 = (p_0-p_1)\,\ol r_-, \qquad
    w_2 - w_1 = (s_2-s_1)\,\ol r_0, \qquad
    w_3 - w_3 = (p_3-p_2)\,\ol r_+,
  \end{equation}
  where we have set
  \begin{equation}
    \label{rgd}
    \ol r_- := \(-1\\1/{\{c\}}\\{[s]}/{[p]}\), \qquad
    \ol r_0 := \(0\\0\\1\), \qquad
    \ol r_+ := \(1\\1/{\{c\}}\\{[s]}/{[p]}\),
  \end{equation}
  and where
  \[
    \frac1{\{c\}} := \Big\langle\frac1c\Big\rangle \com{or}
    \{c\} := |\sigma|,
  \]
  for simple waves or shocks, respectively, and $[s]=0$ for simple waves.

  Since these vectors are clearly independent, by
  Lemma~\ref{lem:gRPsol}, the gRP has a unique solution whenever $w_r$
  can be reached from $w_l$.  The pressure $p$ has the range $p>0$,
  with boundary $p=0$ describing the vacuum, at which the system loses
  hyperbolicity, $c(0)=0$.  Since any value of $s$ can be reached by a
  contact, we need to check the possible range of velocity differences
  $u_r-u_l$, so we consider \eqref{gdshock} and \eqref{gdsimp}.  For a
  shock wave, we have $p_b>p_a$, and we have
  \[
    u_r-u_l = \pm \sqrt{(p_b-p_a)\,(v_a-v_b)},
  \]
  which is unbounded if either $p_b\to\infty$ or $p_a\to 0$, because
  this implies $v_a\to\infty$.  Thus the shock curves are unbounded.

  For simple waves, if the wave is compressive, $p_b>p_a$,
  \eqref{gdsp} is again unbounded as $p_b\to\infty$.  However, for a
  rarefaction, $p_a>p_b$ and we must consider the limit $p_b\to 0$.
  If the integral \eqref{gdsimp} is unbounded, then again the
  rarefaction curve is unbounded and the gRP can always be solved with
  the available waves.  However, if the integral \eqref{gdsimp} is
  \emph{bounded}, then so is the velocity variation $u_r-u_l$.  In
  this case the range of allowable velocities is precisely those for
  which \eqref{vac} fails.

  If the vacuum condition \eqref{vac} holds, then we cannot solve the
  generalized Riemann problem with these waves because of the vacuum.
  However, we can still solve the gRP as a weak* solution, by
  introducing the vacuum as a new type of wave which is manifest as a
  Dirac mass in the specific volume $v$; see~\cite{MY2,YpRP2,LS} for details.
\end{proof}

Having solved the gRP, globally away from the vacuum, it follows that
the mFT scheme can be defined for gas dynamics.  However, because
there are only three wave families, the middle one of which is
linearly degenerate, wave patterns such as those in
Figure~\ref{fig:accum} cannot be drawn, and so we define the mFT
scheme \emph{without the use of composite waves}.  In this case we
define the mFT as in Section~\ref{sec:mft}, following the adjustments
in the same order but not joining any waves into composites.
According to Theorem~\ref{thm:conv}, the mFT approximations will
converge to a weak* solution as long as they are defined, uniformly in
discretization parameter $\e$, provided the total variation
$\mc V(\Gamma)$ remains uniformly bounded.

Thus, \emph{assuming} a uniform time independent variation bound, we
will obtain convergence of the scheme to a weak* solution, and thus
global existence, provided we show that there is no finite time
accumulation of interactions.  Below we show that in the $2\times2$ system
of isentropic gas dynamics, no such accumulation can occur while the
variation is uniformly bounded.  We expect that similarly there is no
such accumulation for the full $3\times3$ system, because there are no
points of self-similarity due to linear degeneracy of the (middle)
stationary contact field.

\subsection{Equivalence of Material and Spatial Frames}

System \eqref{euler}, \eqref{gd} are written in a
material or Lagrangian frame, and it is known that under weak
conditions, this is equivalent to the system written in a spatial, or
Eulerian frame.  If $y$ denotes the spatial variable, then the
equations are
\begin{equation}
  \label{euy}
  \begin{aligned}
    \rho_t &+ (\rho\,u)_y = 0, \\
    (\rho\,u)_t &+ \big(\rho\,u^2+p\big)_y = 0, \\
    \big(\textstyle{\frac12}\,\rho\,u^2 + \rho\,e\big)_t
    &+\big(\textstyle{\frac12}\,\rho\,u^3 + \rho\,u\,e + u\,p\big)_y = 0,
  \end{aligned}
\end{equation}
representing conservation of mass, momentum and energy, respectively.
Here $\rho:=1/v$ is the density of the gas, and all other variables
are the same, but regarded as functions of $(y,t)$ rather than
$(x,t)$.

The transformation $y\mapsto x$ from spatial to material coordinates
is given by
\begin{equation}
  \label{ytox}
  x := \int^y \rho(\fs y,t)\;d\fs y, \qcom{so that}
  \frac{\dd x}{\dd y} = \rho(y,t).
\end{equation}
This in turn can be inverted as
\begin{equation}
  \label{xtoy}
  \frac{\dd y}{\dd x} = \frac1{\rho(y,t)} = v(x,t), \qcom{so also}
  y = \int^x v(\fs x,t)\;d\fs x.
\end{equation}
This gives a bi-Lipschitz map as long as $v$ and $\rho$ both remain
integrable, and this is enough to show equivalence of weak solutions
of \eqref{euler} and \eqref{euy}~\cite{Wag}.  Moreover, this can be
extended up to the vacuum $\rho=0$ if we consistently allow $v$ to be
a Dirac mass and we consider weak* solutions, see~\cite{MY2}.

It follows immediately that solutions of the Riemann problem are
equivalent in the Lagrangian and Eulerian frames.  In particular, the
profiles of any centered rarefaction are consistent for positive
times.  Therefore the profiles of centered compressions must also be
consistent because compressions are backward rarefactions.  Thus the
gRP solutions are also consistent across the frames, and the dgRS and
mFT schemes must also be consistent.

Because the Lagrangian to Eulerian transformation is nonlinear, this
consistency of solutions must be reflected in the calculation of
wavespeeds in the Eulerian frame.  Here we carry out the various
calculations and check that the nonlinear wavespeeds in the two frames
are indeed consistent.

\begin{lemma}
  \label{lem:LE}
  The simple and shock wavespeeds in Eulerian and Lagrangian
  frames are related by
  \[
    \la^E_i(w) = u + v\,\la^L_i(w),\qquad
    \sigma^E_i(w) = \ol u + \ol v\,\sigma^L_i(w),
  \]
  respectively, while the states $w$ across the waves are identical,
  and these are consistently related by the maps \eqref{xtoy} and
  \eqref{ytox}.
\end{lemma}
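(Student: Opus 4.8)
The plan is to prove the wavespeed relations by a direct change-of-variables computation, using the fact (established above via \cite{Wag, MY2}) that weak and weak* solutions of \eqref{euler} and \eqref{euy} are equivalent under the bi-Lipschitz map \eqref{ytox}, \eqref{xtoy}. First I would treat the case of a single discontinuity (shock or contact) separately from simple waves, since these are governed by different conditions (Rankine--Hugoniot versus characteristic speeds), even though the final answer has the same form.

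For a discontinuity, I would start from the Eulerian jump relations for \eqref{euy} and the Lagrangian jump relations \eqref{sc} for \eqref{euler}. The key observation is that a discontinuity sitting at spatial position $y=y(t)$ with $y'(t)=\sigma^E$ corresponds, under $x=\int^y\rho\,d\fs y$, to a discontinuity at material position $x=x(t)$ whose speed is computed by differentiating: $x'(t) = \frac{d}{dt}\int^{y(t)}\rho\,d\fs y$. Splitting this derivative into the boundary term $\rho\,y'(t)$ (evaluated from the appropriate side) and the bulk term $\int^y\rho_t\,d\fs y = -\int^y(\rho u)_y\,d\fs y$ from the continuity equation \eqref{euy}$_1$, and then using the jump in $\rho u$ across the discontinuity, yields $\sigma^L = \rho_-(\sigma^E - u_-) = \rho_+(\sigma^E-u_+)$; that both expressions agree is exactly the mass jump condition. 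Solving for $\sigma^E$ gives $\sigma^E = u_\pm + v_\pm\,\sigma^L$ from either side. The only subtlety is identifying this common value with $\ol u + \ol v\,\sigma^L$ rather than $u_\pm + v_\pm\,\sigma^L$: this requires combining with the second (momentum) jump relation, and I would verify it by noting that the Lagrangian conditions \eqref{gdshock} give $[u]=-\sigma^L[v]$ and $[p] = \sigma^L[u]$, from which $u_+ + v_+\sigma^L - (u_- + v_-\sigma^L) = [u] + \sigma^L[v] = 0$, so the two one-sided expressions coincide and hence equal their average $\ol u + \ol v\,\sigma^L$.

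For simple waves, the argument is cleaner: along a simple wave the profile is a $C^1$ solution, and the characteristic speed in each frame is the local wavespeed evaluated at the state $w$. The Lagrangian characteristic through a point satisfies $dx/dt = \la^L_i(w)$, while the corresponding Eulerian characteristic satisfies $dy/dt = \la^E_i(w)$; but the map \eqref{xtoy} gives $dy = v\,dx$ along a fixed-$t$ slice and $dy = u\,dt$ for the motion of a material point, so differentiating $y = y(x(t),t)$ along the Lagrangian characteristic gives $\frac{dy}{dt} = v\,\frac{dx}{dt} + u = v\,\la^L_i(w) + u$. Since characteristics carry the state $w$ unchanged (the simple wave curves \eqref{gdsimp} are the same in both frames, involving only $p,u,s$), this is precisely $\la^E_i(w) = u + v\,\la^L_i(w)$. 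One can cross-check this against the explicit eigenvalues \eqref{gdev}: $\la^L_\pm = \mp c(p,s)$ in Lagrangian acoustic-impedance units, and the standard Eulerian sound speed is $c/\rho = v\,c$, giving $\la^E_\pm = u \pm v\,c = u + v\,\la^L_\pm$, with $\la^E_0 = u = u + v\cdot 0$ for the contact.

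I expect the main obstacle to be the bookkeeping at the level of the conserved quantities rather than anything conceptual: one must be careful that the Eulerian fluxes in \eqref{euy} and the Lagrangian fluxes in \eqref{gd} are related not just by the scalar Jacobian $\rho$ of the coordinate change but by a genuine change in which quantities are ``conserved'' (e.g.\ $\rho$ versus $-v$, $\rho u$ versus $u$), so the residual/jump bookkeeping from Lemma~\ref{lem:res} must be redone in the Eulerian variables. The cleanest route is probably to avoid recomputing jump conditions from scratch and instead invoke the already-cited equivalence of weak solutions \cite{Wag}: once one knows the two weak formulations define the same solution, a single discontinuity in one frame \emph{is} a single discontinuity in the other, and its speed is forced by the kinematic relation $x'(t) = \rho\,(y'(t) - u)$ derived above plus its averaged reformulation. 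The simple-wave case then follows either by the characteristic computation or, redundantly, by realizing compressions as backward rarefactions as the text already notes, so that consistency of rarefaction profiles across frames immediately gives consistency of all centered-wave speeds.
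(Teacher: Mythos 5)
Your proposal is correct and reaches the same wavespeed relations, but by a route that differs in emphasis from the paper's. For shocks, you compute kinematically: differentiate the coordinate change $x(t) = X(\xi(t),t)$ along the shock trajectory, split into boundary and bulk terms, and use the continuity equation to obtain $\sigma^L = \rho_\pm(\sigma^E - u_\pm)$, then use the Lagrangian mass jump $[u] + \sigma^L[v] = 0$ to show the two one-sided expressions for $\sigma^E$ coincide with their average. The paper instead manipulates the Eulerian Rankine--Hugoniot relations algebraically, following Courant--Friedrichs by introducing $b := u - \sigma^E$ and the constant mass flux $m := \rho\,b$, deriving $[m]=0$, $[u]=m[v]$, identifying $-m = \sigma^L$, and averaging to get $\sigma^E = \ol u - m\,\ol v$. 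These are logically equivalent and take comparable effort, but your kinematic route makes the role of the coordinate change more transparent and leans more heavily on the already-cited Wagner equivalence as a black box, whereas the paper independently rederives the equivalence of the jump conditions ($[u]^2 = [p][-v]$ and $[h] = \ol v[p]$) as a by-product. For simple waves, you use the chain-rule argument $dy/dt = u + v\,(dx/dt)$ along a characteristic; the paper's main argument is instead a direct diagonalization of the Eulerian Jacobians $Dq^E$, $Df^E$ (which additionally yields the explicit statement $r^E_i(w) = r^L_i(w)$ of equality of eigenvectors in \eqref{simpcomp}), and the chain-rule computation appears only at the end as a closing remark about general trajectories. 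Your appeal to the simple wave curves ``involving only $p,u,s$'' is correct but is given a concrete justification in the paper through that eigenvector identity; if you wanted to make your version self-contained you would want to verify that fact rather than just assert it.
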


\begin{proof}
  We use the state variable $w$ defined in \eqref{gd}, so that
  \eqref{euy} has the form \eqref{eqsys} for
  \begin{equation}
    \label{gde}
    q^E(w) := \(\rho\\\rho\,u\\\rho\,(\frac12\,u^2 + h) - p\),
    \qquad
    f^E(w) := \(\rho\,u\\\rho\,u^2 + p\\
    \rho\,u\,(\frac12\,u^2 + h)\),
  \end{equation}
  where we have used
  \[
    \rho\,e = \rho\,(h-p\,v) = \rho\,h - p, \qquad h = h(p,s).
  \]
  Differentiating, we have
  \[
    (\rho\,h)_p = \rho_p\,h + \rho\,h_p = h\,\rho_p + 1, \qquad
    (\rho\,h)_s = \rho_s\,h + \rho\,h_s = h\,\rho_s + \rho\,\theta,
  \]
  and we calculate
  \[
    Dq^E(w) = \(\rho_p & 0 & \rho_s\\[3pt]
    \rho_p\,u&\rho&\rho_s\,u\\[3pt]
    \rho_p\,(\frac12\,u^2 + h)&
    \rho\,u& \rho_s\,(\frac12\,u^2 + h) + \rho\,\theta \),
  \]
  and
  \[
    Df^E(w) = \(\rho_p\,u&\rho&\rho_s\,u\\[3pt]
    \rho_p\,u^2+1&2\,\rho\,u&\rho_s\,u^2\\[3pt]
    \rho_p\,u\,(\frac12\,u^2 + h) + u&
    \frac32\,\rho\,u^2 + \rho\,h&
    \rho_s\,u\,(\frac12\,u^2 + h) + \rho\,\theta\,u\).
  \]
  After applying elementary row operations and simplifying, these can
  be written as
  \[
    Dq^E(w) = \(1&0&0\\u&1&0\\\frac12\,u^2+h&u&1\)
    \(\rho_p&0&\rho_s\\0&\rho&0\\0&0&\rho\,\theta\),
  \]
  and
  \[
    Df^E(w) = u\,Dq^E(w) +
    \(1&0&0\\u&1&0\\\frac12\,u^2+h&u&1\)
    \(0 & \rho & 0\\1 & 0 & 0\\ 0 & 0 & 0\),
  \]
  so that the generalized eigenvalue problem \eqref{ev} becomes
  \[
    \(\rho_p & 0 & \rho_s \\ 0 & \rho & 0 \\
    0 & 0 & \rho\,\theta \)^{-1}\,
    \(0 & \rho & 0\\1 & 0 & 0\\ 0 & 0 & 0\)\,r^E =
    \(0 & \rho/\rho_p & 0 \\ 1/\rho & 0 & 0 \\ 0 & 0 & 0\)\,r^E =
    \big(\la^E-u\big)\,r^E.
  \]
  We now use \eqref{imp} to write
  \[
    \rho_p = \Big(\frac1v\Big)_p = \frac{-1}{v^2}\,v_p
    = \frac{1}{v^2\,c^2} = \frac{\rho^2}{c^2} =: \frac1{a^2},
  \]
  where we have defined the \emph{speed of sound} by
  \begin{equation}
    \label{sound}
    a(p,s) := \frac1{\sqrt{\rho_p(p,s)}}
    = \sqrt{\frac{\dd p}{\dd\rho}} = \frac c\rho = c\,v.
  \end{equation}
  It now follows easily that the eigensystem in Eulerian coordinates
  is
  \begin{equation}
    \label{gdevE}
    \begin{aligned}
      \la^E_-(w) := u - a(p,s),&\qquad
      \la^E_0(w) := u, \quad& \la^E_+(w) := u + a(p,s),\\
      r_-(w):=\(-1\\1/(\rho\,a)\\0\),&\qquad r_0(w):=\(0\\0\\1\),
      \quad& r_+(w):=\(1\\1/(\rho\,a)\\0\).
    \end{aligned}
  \end{equation}
  Comparison with \eqref{gdev} and use of \eqref{sound} shows that
  \begin{equation}
    \label{simpcomp}
    \la^E_i(w) = u + v\,\la_i^L(w), \qcom{and}
    r^E_i(w) = r_i^L(w),
  \end{equation}
  for each $i=-,0,+$ and each $w\in \mc U$, where the superscripts
  refer to the associated frame.  Equality of the
  eigenvectors is expected because eigenvectors determine
  \emph{changes of state} across the wave, while the different
  wavespeeds reflect the Lagrangian to Eulerian map.

  We similarly compute the shock curves in an Eulerian frame.
  The jump conditions \eqref{sc} are
  \[
    [\rho\,u] = \sigma^E\,[\rho], \qquad
    [\rho\,u^2+p] = \sigma^E\,[\rho\,u], \qquad
    \big[\rho\,u\,\big(\textstyle{\frac12}\,u^2+h\big)\big] = 
    \sigma^E\,[\rho\,\big(\textstyle{\frac12}\,u^2+h\big) - p\big],
  \]
  Following \cite{CF}, these simplify if we set
  \begin{equation}
    \label{bmv}
    b := u - \sigma^E, \qquad
    m := \rho\,b, \qcom{so that}
    \rho\,u = m +\sigma^E\,\rho, \qquad
    b = v\,m,
  \end{equation}
  since $v=1/\rho$.  In this notation, the jump conditions become,
  after simplifying,
  \[
    [m] = 0, \qquad
    [m\,u + p] =0, \qquad
    \big[m(\textstyle{\frac12}\,u^2+h)\big] + \sigma^E\,[p] = 0,
  \]
  which represent conservation of mass, momentum and energy across the
  shock, respectively, and by \eqref{bmv} we also have the continuity
  equation
  \[
    [u] = [b] = [m\,v] = m\,[v].
  \]
  We now write the energy conservation as
  \[
    \begin{aligned}
      m\,\ol u\,[u] + m\,[h] + \sigma^E\,[p]
      &= m\,[h] - (\ol u - \sigma^E)\,[p]\\
      &= m\,[h] - \ol b\,[p]
      = m\,\big([h] + \ol v\,[p]\big) = 0.
    \end{aligned}
  \]
  Comparing these with \eqref{gdshock}, we conclude that the jump
  conditions give equivalent conditions for the state across the jump,
  namely
  \[
    [u]^2 = [p]\,[-v] \qcom{and}
    [h] = \ol v\,[p],
  \]
  while we must have $-m = \sigma =: \sigma^L$, the Lagrangian wave
  speed, which in turn yields the relation
  \[
    \sigma^E = \ol u - m\,\ol v = \ol u + \ol v\,\sigma^L.
  \]

  Now suppose that a trajectory in material coordinates is described
  by the equation
  \[
    \frac{dx}{dt} = a^L.
  \]
  Then by \eqref{xtoy} and the chain rule, for a curve on which $w$ is
  continuous, we have
  \[
    \begin{aligned}
      \frac{dy}{dt}
      &= \int^x\frac{\dd v}{\dd t}(\fs x,t)\;d\fs x
        + v(x,t)\,\frac{dx}{dt}\\
      &= u(x,t) + v(x,t)\,a^L =: a^E.
    \end{aligned}
  \]
  On the other hand, if $w$ has a jump along the trajectory, we
  \emph{define} the Eulerian speed of the trajectory by
  \begin{equation}
    \label{LEs}
    a^E := \ol u + \ol v\,a^L,
  \end{equation}
  where $\ol\square$ is the average of left and right states.  This is
  consistent with the Heaviside function having value $\frac12$ at the
  origin.  It then follows that while $w$ remains $BV$, so that left
  and right limits exist, the relation \eqref{LEs} holds everywhere.
\end{proof}

As a corollary, we obtain equivalence of the mFT schemes defined in
either Lagrangian or Eulerian coordinates.

\begin{lemma}
  \label{lem:mFTequiv}
  The mFT schemes defined using either the Lagrangian or Eulerian
  frames are equivalent.  That is, the maps \eqref{xtoy} and
  \eqref{ytox} respect both the mFT scheme and the limits thereof.
\end{lemma}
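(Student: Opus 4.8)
The plan is to argue by induction over the interaction times $0=t_0<t_1<t_2<\dots$, showing that each step in the construction of the mFT approximation---initialization, resolution of an interaction by a dgRS, accelerated collapse of strong or weak compressions, avoidance of overlaps, and splitting of moderate rarefactions into multi-rarefactions---commutes with the coordinate change \eqref{xtoy}, \eqref{ytox}. The structural fact that makes this work is already supplied by Lemma~\ref{lem:LE}: for fixed $t$ the map $x\leftrightarrow y$ is an order-preserving bi-Lipschitz homeomorphism (away from the vacuum), it leaves the state variable $w$ unchanged, and it carries a straight wave trajectory $x^p_j(t)=x^c_j+s^L_j(t-t^c_j)$ to the straight trajectory $y^p_j(t)=y^c_j+s^E_j(t-t^c_j)$ with $s^E_j=\ol u_j+\ol v_j\,s^L_j$, where $\ol u_j,\ol v_j$ are the (constant) averages of the adjacent states. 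Hence a function that is piecewise constant in $x$ with finitely many straight fronts corresponds to one that is piecewise constant in $y$ with the same number of straight fronts and the \emph{same} adjacent states, and two fronts cross at a space-time point in one frame exactly when they cross at the corresponding point in the other; in particular the set $\{t_i\}$ of interaction times is frame-independent.

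First I would check the elementary building blocks. The gRP and dgRS are determined by left and right states together with virtual widths, and since the wave curves $\mc W_k$, $\mc E_k$ and the shock/contact relations \eqref{gdsp}--\eqref{gdsh} are expressed entirely through $w$, solvability (Lemma~\ref{lem:strgRP}) and the intermediate states $\{w_k\}$ agree in the two frames. Thus the data in \eqref{wave} that are forced by states and strengths are automatically frame-independent, while the adjustable data---the speeds $s_j$ and virtual widths $w_j$---are transported by $s^E=\ol u+\ol v\,s^L$ and by the requirement that the virtual center $(x^c_j,t^c_j)$ map to $(y^c_j,t^c_j)$. For shocks and contacts the relation $\sigma^E=\ol u+\ol v\,\sigma^L$ is \emph{exact} (Lemma~\ref{lem:LE}), so these stay zero-residual fronts in both frames; for a simple wave of strength $\z$ one has, by the midpoint rule, $\ol u=u(\ol w)+O(\z^2)$ and $\ol v=v(\ol w)+O(\z^2)$ for the midpoint state $\ol w$, so the transported Lagrangian least-squares speed agrees with the Eulerian least-squares speed up to $O(\z^2)$, and Lemma~\ref{lem:spdRes} then gives an Eulerian residual of order $\z^3$ for the transported front. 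So a Lagrangian mFT approximation maps to a bona fide Eulerian mFT approximation with residual of the same order, and conversely. The strength-threshold modifications---forcing a compression with $\z'_k<-\ee c$ or $-\ee w<\z'_k<0$ into a shock, and splitting a rarefaction with $\z\ge\ee r$ into a multi-rarefaction with the sub-strengths of \eqref{zki}, \eqref{u(i)}---are governed by quantities depending only on $\z$ and $w$, hence act on the same waves in both frames, and the temporary inaccurate wavespeeds $\wt s^{(i)}$ of a multi-rarefaction, as well as $t^\#$, are simply transported by \eqref{LEs}. The overlap conditions of Section~\ref{sec:avoid} and the reconstruction constraint \eqref{nokol} are inequalities between \emph{positions} of edge characteristics carrying speeds $\la_k(u_-)$, which transform by \eqref{simpcomp}, so because the spatial map is order-preserving an overlap occurs in one frame exactly when it does in the other, and the width reductions correspond. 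Initialization is handled identically: $u^0$ in one frame is the transform of $u^0$ in the other, the discontinuity set $X_d$ and the interpolation points $x_j$ correspond, and the gRPs posed on the subintervals have identical solutions. (Composite waves are not used for gas dynamics, but the same argument, via Lemma~\ref{lem:compspd}, would cover them.) By induction the wave sequence $\Gamma^t$, hence the approximation $U^\e$, in the Lagrangian frame corresponds under \eqref{ytox} to its Eulerian counterpart, for all $t$.

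Next I would pass to the limit. Take the convergent subsequence $U^{\e_i}\to U$ in $L^\infty_*(0,T;BV_{loc})$ from Theorem~\ref{thm:conv}, and let $\wt U^{\e_i}$, $\wt U$ be the corresponding Eulerian objects. Since $v=v(w)$ is a bounded $C^2$ function on any compact state region away from the vacuum, the maps $x\mapsto\int_0^x v(U^{\e_i})\,d\fs x$ and their inverses are uniformly bi-Lipschitz and converge uniformly in $x$, uniformly in $t$, to the maps built from $U$; composition with the uniformly bounded $BV$ functions $U^{\e_i}$ therefore preserves weak* convergence in $BV_{loc}$, so $\wt U^{\e_i}\to\wt U$ in $L^\infty_*(0,T;BV_{loc})$ and likewise for $q^E,f^E$ of these. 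Since the transported residual is still $o(1)$ in $M_{loc}$ uniformly in $t$ by the estimates above, the Eulerian limit $\wt U$ has vanishing residual, hence is a weak* solution of \eqref{euy}. The underlying equivalence of weak and weak* solutions under this coordinate change, away from and at the vacuum, is exactly~\cite{Wag} and~\cite{MY2} respectively.

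The hard part is bookkeeping rather than conceptual: one must make sure that \emph{every} discrete choice in the scheme is either determined by states and strengths, hence frame-independent, or is a free parameter that can be fixed so that \eqref{xtoy} intertwines the two constructions. The one genuinely delicate point is that the least-squares speed is computed with the Euclidean norm of $[f]-s\,[q]$, which is different in the two frames ($q^L\ne q^E$, $f^L\ne f^E$); the schemes are therefore \emph{not} literally identical but related by \eqref{xtoy}, with the transported front carrying a speed that sits within $O(\z^2)$ of that frame's least-squares speed. This is why the statement is an equivalence of schemes and limits rather than an equality, and why it leaves the residual bound of Lemma~\ref{lem:RG}, and hence the convergence of Theorem~\ref{thm:conv}, intact. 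At the vacuum the transformation degenerates and must be read in the weak* sense following~\cite{MY2}, but this does not affect the argument away from it.
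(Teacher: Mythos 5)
Your argument is correct in spirit but takes a genuinely different, and considerably longer, route than the paper. You argue by induction that every step of the mFT construction --- initialization, dgRS, accelerated collapse, overlap avoidance, multi-rarefactions --- commutes with the bi-Lipschitz change of spatial variable, using the frame-independence of states and strengths plus the $O(\z^2)$ agreement between the transported Lagrangian speed and the Eulerian least-squares speed for simple waves. The paper instead proves a single algebraic identity. It augments the system à la Wagner by adjoining the trivial conservation law $(\rho v)_t = 0$, so that the $4$-vectors satisfy $\wh q^L = v\,\wh q^E$ and $\wh f^L = \wh f^E - u\,\wh q^E$, and then computes directly
\[
\big(I_4 + \ol{\wh q^E}\,e_2^\T\big)\,\wh R^L
= \wh R^E + \big(a^E - \ol u - \ol v\,a^L\big)\,\big[\wh q^E\big],
\]
for \emph{any} jump and \emph{any} pair of speeds $a^L,a^E$. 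With speeds transported by \eqref{LEs} the last term vanishes, and the residuals of every front in the two frames are related by a single uniformly bounded, uniformly invertible (on the relevant subspace $\{x_1=0\}$, away from vacuum) linear map. This gives residual equivalence of the whole scheme in one stroke, regardless of which front is a shock, a least-squares simple wave, a composite wave, or a multi-rarefaction segment carrying a deliberately inaccurate speed.

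The place where your proposal has a real gap is precisely where the paper's identity earns its keep. Your per-wave residual estimate ``$O(\z^2)$ speed mismatch, hence $O(\z^3)$ residual by Lemma~\ref{lem:spdRes}'' is valid for fronts carrying the least-squares speed, but not for multi-rarefaction segments on $[t^!,t^\#]$: there the Lagrangian speed is \emph{intentionally} $O(\z)$ away from the least-squares value, so transporting it by \eqref{LEs} and invoking \eqref{Rs} gives only an $O(\z^2)$ bound per front, and the finer structure of Lemma~\ref{lem:mr} (the dependence on $(t^!-t^c)/(t^\#-t^!)$ and the telescoping sums) would have to be re-derived in the Eulerian frame from scratch. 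You assert that these speeds are ``simply transported,'' which is true, but you never estimate the resulting Eulerian residual. The paper's identity bypasses all of this because it is purely algebraic and speed-agnostic. If you want to keep your commutation argument, the clean fix is to insert the paper's residual identity (or at least the comparability $\|\wh R^E\| \asymp \|\wh R^L\|$ under exact speed transport) as a lemma and apply it uniformly instead of re-estimating residuals case by case; then your induction over interaction times becomes a (valuable but strictly optional) verification that the two schemes share the same combinatorial wave structure, rather than something the convergence argument actually needs.
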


Although we prove this lemma only for gas dynamics, evidently the same
result will hold for any more complicated system for which both
Lagrangian and Eulerian frames of reference can be consistently
defined.

\begin{proof}
  We check that the residuals of the two schemes are consistent,
  although they are not identical.  According to \eqref{pwcres}, it
  suffices to show that the residuals of a single jump in either frame
  are (uniformly) related.  We thus suppose that we have a jump in the
  state $w$, propagating with speeds $a^L$ and $a^E$ in the Lagrangian
  and Eulerian frames, respectively.  Following~\cite{Wag}, it is
  convenient to introduce a new (degenerate) equation for the identity
  $\rho\,v=1$ into the system; we do so by extending \eqref{gd} and
  \eqref{gde} by defining
  \[
    \wh q^L(w):=\(1\\-v\\u\\\frac12\,u^2+h-p\,v\), \qcom{and}
    \wh f^L(w):=\(0\\u\\p\\u\,p\),
  \]
  and by setting
  \begin{equation}
    \label{qfLE}
    \wh q^E(w) := \rho\,\wh q^L(w), \qcom{and}
    \wh f^E(w) := \wh f^L(w) + u\,\wh q^E(w).
  \end{equation}
  It follows that the systems \eqref{euler} and \eqref{euy} are
  equivalent to
  \[
    \wh q(w)_t + \wh f(w)_z = 0, \qcom{for}
    \big(\wh q,\wh f,z\big) := \big(\wh q^L,\wh f^L,x\big) \com{and}
    \big(\wh q,\wh f,z\big) := \big(\wh q^E,\wh f^E,y\big),
  \]
  respectively, where the first, respectively second, equation is
  trivial.

  Now consider the corresponding (vector) residuals
  \begin{equation}
    \label{vres}
    \wh R^L := \big[\wh f^L\big] - a^L\,\big[\wh q^L\big] \qcom{and}
    \wh R^E := \big[\wh f^E\big] - a^E\,\big[\wh q^E\big],
  \end{equation}
  which are equivalent to their $3\times3$ counterparts.  We write
  \[
    \wh q^L = v\,\wh q^E \qcom{and}
    \wh f^L = \wh f^E - u\,\wh q^E,
  \]
  and calculate
  \[
    \begin{aligned}
      \wh R^L - \wh R^E
      &= -\big[u\,\wh q^E\big] - a^L\,\big[v\,\wh q^E\big]
        + a^E\,\big[\wh q^E\big]\\
      &= \big(a^E-\ol u-a^L\,\ol v\big)\,\big[\wh q^E\big]
        - \ol{\wh q^E}\,\big([u] + a^L\,[v]\big).
    \end{aligned}
  \]
  This can be written as 
  \[
    \Big(I_4 + \ol{\wh q^E}\,{\scriptstyle{\(0\\1\\0\\0\)}}^\T\Big)\,\wh R^L
    = \wh R^E + \big(a^E-\ol u-a^L\,\ol v\big)\,\big[\wh q^E\big],
  \]
  so that the residuals are equivalent whenever
  \[
    \big(a^E-\ol u-a^L\,\ol v\big)\,\big[\wh q^E\big] = O(1)\,\wh R^E,
  \]
  and in particular, when using \eqref{LEs} to relate the wavespeeds.
\end{proof}

\subsection{Preconditioning and Residual}

We now introduce a preconditioner and calculate the corresponding
residual for a simple wave.  Thus suppose we consider a discontinuity
approximating a simple wave in the Lagrangian frame.  We use
\eqref{gd} and denote the wavespeed of the discontinuity by $a$.  Then
the unconditioned residual is
\begin{equation}
  \label{ucR}
  [f] - a\,[q]
  = \([u]-a\,[-v]\\{}[p]-a\,[u]\\
  -a\,\big[\frac12\,u^2+h-p\,v\big]\)
  = \(1&0&0\\0&1&0\\\ol p&\ol u&1\)
  \([u]-a\,[-v]\\{}[p]-a\,[u]\\
  a\,\big(\ol v\,[p]-[h]\big)\),
\end{equation}
where, as usual, $[\square]$ and $\ol \square$ are the (exact) jump
and average values of a state, respectively.  Next, we have
\[
  \frac{\dd h}{\dd p} = v, \qquad
  \frac{\dd v}{\dd p} = \frac{-1}{c^2},
\]
so integrating, we get
\begin{equation}
  \label{hvj}
  [h] = \big\langle v\big\rangle\,[p] \qcom{and}
  [-v] = \Big\langle \frac1{c^2}\Big\rangle\,[p],
\end{equation}
where we have defined
\begin{equation}
  \label{gbrak}
  \big\langle g\big\rangle := \frac1{p_r-p_l}
  \int_{p_l}^{p_r}g(\fs p,s)\;d\fs p
  = \int_{-1/2}^{1/2}g(\fs p,s)\;d\fs \eta, \qquad
  \fs p := \ol p + \fs\eta\,(p_r-p_l),
\end{equation}
so that $\big\langle g\big\rangle$ is an integral average of $g$
across the wave.  By \eqref{gdsp} we have
$[u]=\pm\big\langle\frac1c\big\rangle\,[p]$, and using this together
with \eqref{hvj} in \eqref{ucR}, we get, after simplifying,
\[
  \frac1{[p]}\(1&0&0\\0&1&0\\-\ol p&-\ol u&1\)
  \big([f] - a\,[q]\big)
  = \(\pm\big\<\frac1c\big\>\\1\\0\) - a\,\Big\<\frac1c\Big\>
  \( \Big\<\frac1{c^2}\Big\>\Big/\Big\<\frac1c\Big\>\\\pm 1\\
  \big\<\ol v - v\big\>/\Big\<\frac1c\Big\>\).
\]

We now choose the preconditioner and vector residual to be
\[
  A := \(\Big\<\frac1c\Big\>\Big/\Big\<\frac1{c^2}\Big\>
  &0&0\\0&1&0\\0&0&1\)\(1&0&0\\0&1&0\\-\ol p&-\ol u&1\)
  \qcom{and}
  R := A\,\big([f] - a\,[q]\big),
\]
and we then have
\begin{equation}
  \label{AR}
  \frac1{[p]}\,R = \(\pm\beta\\1\\0\)
  -a\,\Big\<\frac1c\Big\>\(1\\\pm 1\\
  \big\<\ol v - v\big\>/\Big\<\frac1c\Big\>\),
  \qcom{where}
  \beta := \Big\<\frac1c\Big\>^2\Big/\Big\<\frac1{c^2}\Big\>.
\end{equation}

\begin{lemma}
  \label{lem:gdR}
  The (preconditioned) least squared wavespeed $a_*$ and the ratio
  $\beta$ for a simple wave satisfy
  \[
    a_* = \pm\frac{\beta+1}{2\big\<1/c\big>} + O\big(\big|[p]\big|^2\big),
    \qcom{and}
    \beta = 1 + O\big(\big|[p]\big|^2\big),
  \]
  respectively, and the corresponding residual satisfies
  \[
    R_* = [p]\,\frac{\beta-1}2\(\pm1\\-1\\0\) + O\big(\big|[p]\big|^3\big)
     = O\big(\big|[p]\big|^3\big).
  \]
\end{lemma}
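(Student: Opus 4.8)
The plan is to compute everything by expanding in the single small parameter $[p] = p_r - p_l$ along the simple wave curve. Everything in sight -- the averages $\langle 1/c\rangle$, $\langle 1/c^2\rangle$, $\langle \ol v - v\rangle$ -- is an integral average of a $C^1$ function over an interval of length $|[p]|$ centered at $\ol p$, so by the midpoint rule (exactly as used in the proof of Lemma~\ref{lem:spdRes}) each equals its value at $\ol p$ plus an $O([p]^2)$ error, with the \emph{linear} term in the expansion vanishing by symmetry. Thus I would first record $\langle 1/c\rangle = 1/c(\ol p,s) + O([p]^2)$ and $\langle 1/c^2\rangle = 1/c(\ol p,s)^2 + O([p]^2)$, which immediately gives $\beta = \langle 1/c\rangle^2/\langle 1/c^2\rangle = 1 + O([p]^2)$ -- this is the second claimed identity. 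Note also $\langle \ol v - v\rangle = O([p]^2)$ since $\ol v - v(\fs p,s)$ vanishes to first order at the midpoint; this kills the third component of the second vector in \eqref{AR} up to higher order.

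Next I would feed these into the least-squares formula. From \eqref{AR} the two relevant vectors are $\whb v_1 := \frac1{[p]}\whb{[f]}$-analog, namely $(\pm\beta, 1, 0)^\T + O([p]^2)$, and $\whb{[q]}/[p]$-analog $\langle 1/c\rangle(1, \pm1, \langle\ol v - v\rangle/\langle 1/c\rangle)^\T = \langle 1/c\rangle(1,\pm1,0)^\T + O([p]^2)$. Applying the least-squares speed $a_* = \whb{[q]}\cdot\whb{[f]}/|\whb{[q]}|^2$ (equation \eqref{sR}): the numerator is $\langle 1/c\rangle\bigl((\pm\beta)(\pm1) + (1)(\pm1)\bigr)[p]^2 + O([p]^3)= \pm\langle 1/c\rangle(\beta+1)[p]^2 + O([p]^3)$, while the denominator is $2\langle 1/c\rangle^2[p]^2 + O([p]^3)$. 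Dividing yields $a_* = \pm(\beta+1)/(2\langle 1/c\rangle) + O([p]^2)$, the first claimed formula. (Using $\beta = 1 + O([p]^2)$ one could further simplify to $a_* = \pm 1/\langle 1/c\rangle + O([p]^2) = \pm c(\ol p,s) + O([p]^2)$, consistent with \eqref{gdev}, but I would keep the stated form.)

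For the residual $R_* = \whb{[f]} - a_*\whb{[q]}$, I would substitute the above. Writing $\whb{[f]}/[p] = (\pm\beta,1,0)^\T + O([p]^2)$ and $a_*\whb{[q]}/[p] = \bigl(\pm(\beta+1)/(2\langle 1/c\rangle)\bigr)\langle 1/c\rangle(1,\pm1,0)^\T + O([p]^2) = \bigl(\pm(\beta+1)/2\bigr)(1,\pm1,0)^\T + O([p]^2)$, the difference of first components is $\pm\beta - \pm(\beta+1)/2 = \pm(\beta-1)/2$, and the second components give $1 - (\beta+1)/2 = -(\beta-1)/2$; the third is $O([p]^2)$. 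Hence $\frac1{[p]}R_* = \frac{\beta-1}2(\pm1,-1,0)^\T + O([p]^2)$, i.e. $R_* = [p]\frac{\beta-1}2(\pm1,-1,0)^\T + O([p]^3)$. Finally, since $\beta - 1 = O([p]^2)$ by the first part, the leading term is itself $O([p]^3)$, giving $R_* = O([p]^3)$.

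The routine but slightly delicate point -- the \emph{main} place to be careful -- is the bookkeeping of which error terms are $O([p]^2)$ versus $O([p]^3)$ after multiplication by $[p]$ or $a_*$, and in particular making sure the apparent $O([p])$ term in $\frac1{[p]}R_*$ is actually $O([p]^2)$; this is forced precisely by $\beta - 1 = O([p]^2)$, which in turn is forced by the symmetry (vanishing of the linear term) in the midpoint expansions of $\langle 1/c\rangle$ and $\langle 1/c^2\rangle$. So the real content is the symmetric midpoint-rule cancellation already established in Lemma~\ref{lem:spdRes}; the rest is substitution. I expect no genuine obstacle, only the need to track the preconditioner $A$ from \eqref{AR} carefully so that the lower-triangular factor $\bigl(\begin{smallmatrix}1&0&0\\0&1&0\\-\ol p&-\ol u&1\end{smallmatrix}\bigr)$ acting on the $O([p]^3)$ remainder stays $O([p]^3)$, which it does since that matrix is bounded on the compact state region.
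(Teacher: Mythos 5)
Your proof follows essentially the same path as the paper's: start from the exact form \eqref{AR} of the preconditioned residual, apply the least-squares speed formula, and reduce the error estimates to the two midpoint-rule facts $\<g\> = g(\ol p,s) + O\big([p]^2\big)$ and $\<\ol v - v\> = O\big([p]^2\big)$. The only structural difference is ordering: the paper solves the one-parameter least-squares problem \emph{exactly} in terms of $\beta$ and $\<\ol v - v\>/\<1/c\>$ and only then Taylor-expands, which makes the error tracking automatic, whereas you expand first and then apply $a_* = \wh{[q]}\cdot\wh{[f]}/\big|\wh{[q]}\big|^2$ approximately.

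That reversal is fine, but it requires tighter bookkeeping than you record, and as stated your intermediate bounds do not quite yield the claim. You write the numerator and denominator of $a_*$ as a leading $O\big([p]^2\big)$ term plus an $O\big([p]^3\big)$ error; dividing two such quantities only gives $a_* = \pm(\beta+1)/\big(2\<1/c\>\big) + O\big([p]\big)$, one order short of the lemma. In fact the numerator is \emph{exact}: $\wh{[q]}\cdot\wh{[f]} = \pm[p]^2\<1/c\>(\beta+1)$ with no remainder, because the third component of $\wh{[f]}/[p]$ in \eqref{AR} is zero, so the $O\big([p]^2\big)$ third-component correction to $\wh{[q]}/[p]$ drops out of the inner product. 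And the denominator correction is $[p]^2\<1/c\>^2\big(\<\ol v - v\>/\<1/c\>\big)^2 = O\big([p]^6\big)$, not $O\big([p]^3\big)$, since the deviation enters only through its square. With those tighter estimates your division does give $O\big([p]^2\big)$ and the residual computation and the final $\beta-1=O\big([p]^2\big)\Rightarrow R_*=O\big([p]^3\big)$ step go through exactly as you say. (There is also a harmless transcription slip, $(\pm\beta)(\pm1)$ where $(\pm\beta)(1)$ is meant in the dot product; your stated value $\pm\<1/c\>(\beta+1)$ is nevertheless correct.)
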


\begin{proof}
  The least square problem \eqref{AR} regarding
  $a\,\Big\<\frac1c\Big\>$ as the parameter is easily solved, namely
  \[
    a_*\,\Big\<\frac1c\Big\> = \pm\frac{\beta+1}{2+\big(\<\ol v -
      v\>/\big\<\frac1c\big\>\big)^2},
  \]
  which in turn yields
  \[
    R_* = \frac{[p]}{2+\big(\<\ol v - v\>/\big\<\frac1c\big\>\big)^2}
    \left[(\beta-1)\(\pm1\\-1\\0\) +
      \frac{\<\ol v - v\>}{\<1/c\>}
      \( \pm\beta\,\frac{\<\ol v - v\>}{\<1/c\>} \\
      \frac{\<\ol v - v\>}{\<1/c\>} \\\pm(\beta+1)\)\right].
  \]
  
  We now use Taylor's expansion in \eqref{gbrak} to evaluate
  \[
    \begin{aligned}
      \<g\>
      &= \int_{-1/2}^{1/2}\Big(g(\ol p,s) +
      \fs\eta\,[p]\,\frac{\dd g}{\dd p}(\ol p,s)
      + O\big(\fs\eta^2\,[p]^2\big)\Big)\;d\fs\eta\\
      &= g(\ol p,s) + O\big([p]^2\big),
    \end{aligned}
  \]
  for any $C^2$ function $g(p,s)$, since entropy $s$ is constant on a
  simple wave.  It now follows from \eqref{AR} that
  \[
    \beta = \frac{\Big(1/{c(\ol p,s)}+O\big([p]^2\big)\Big)^2}
    {1/{c^2(\ol p,s)}+O\big([p]^2\big)}
    = 1 + O\big([p]^2\big).
  \]
  Similarly using Taylor expansions,
  \[
    \begin{aligned}
      \ol v
      &= v(\ol p,s) +
      \frac{v\big(\ol p + \frac12\,[p]\big)-v(\ol p,s)}2 -
      \frac{v(\ol p,s)-v\big(\ol p - \frac12\,[p]\big)}2\\
      &= v(\ol p,s) +
      \Big(\frac14\,[p]\,\frac{\dd v}{\dd p}(\ol p,s)
      + O\big([p]^2\big)\Big) -
      \Big(\frac14\,[p]\,\frac{\dd v}{\dd p}(\ol p,s)
      + O\big([p]^2\big)\Big)\\
      &= v(\ol p,s) + O\big([p]^2\big) = \<v\> + O\big([p]^2\big).
    \end{aligned}
  \]
  It follows that $\<\ol v - v\> = O\big([p]^2\big)$, and the claims
  follow by direct substitution.
\end{proof}

\section{Isentropic Gas Dynamics}
\label{sec:psys}

We now further restrict to the $p$-system of isentropic flow in a
Lagrangian frame.  This is the first two equations of \eqref{euler},
namely
\begin{equation}
  \label{psys}
  \big(-v(p)\big)_t + u_x = 0, \qquad
  u_t + p_x = 0,
\end{equation}
in which $v=v(p)$ is assumed given.  Dropping the energy equation
allows us to drop the entropy variable $s$, so the state is fully
described by $w=(p,u)$, while we similarly drop the last component of
$q(w)$ and $f(w)$.  The acoustic impedance is again given by
\eqref{imp}, but is a function of the pressure only, namely
\[
  c(p) = \sqrt{-1\Big/\frac{dv}{dp}}.
\]
The simple wave and shock curves are reduced versions of \eqref{wvc},
\eqref{rgd}, and we can describe both the forward and backward waves
with the same equation by writing
\begin{equation}
  \label{wvp}
  w_r-w_l = (p_a-p_b)\,\ol r_\pm, \qquad
  \ol r_\pm := \(\mp 1\\1/\{c\}\),
\end{equation}
where the $\pm$ refer to the direction of propagation and the
subscripts $a$ and $b$ refer to the states \emph{ahead of} and
\emph{behind} the wave, respectively, and again
\[
  \{c\} := 1\Big/\Big\langle\frac1c\Big\rangle \com{or}
  \{c\} := |\sigma| = 1\Big/\sqrt{\Big\<\frac1{c^2}\Big\>},
\]
for simple waves or shocks, respectively, where we have used
\eqref{gdsh}, \eqref{hvj} without $s$.

\subsection{Wave Curves, Strengths and Residuals}

We can rewrite \eqref{wvp} as the scalar equation
\[
  u_r - u_l = \frac1{\{c\}}\,(p_a-p_b).
\]
and for simple waves, we can in turn write this as
\begin{equation}
  \label{gdwv}
  u_r-u_l = \Big\<\frac1c\Big\>\,(p_a-p_b)
  =: z(p_a)-z(p_b),
\end{equation}
where $z=z(p)$ is the \emph{Riemann coordinate}, defined by
\begin{equation}
  \label{zdef}
  z(p) := \int_0^p\frac1{c(\fs p)}\;d\fs p.
\end{equation}
The well-known fact that simple waves are characterized by constant
values of the Riemann invariants $z\mp u$ follows immediately.

Because all simple waves satisfy \eqref{gdwv}, a natural choice of
(signed) wave strength is
\begin{equation}
  \label{gdstr}
  \z := \frac12\,\big(z(p_a)-z(p_b)+u_r-u_l\big),
\end{equation}
which is also the change in (opposite) Riemann invariant.  The wave is
expansive if $p_a>p_b$, or equivalently $\z > 0$, and compressive
otherwise; this is consistent with Lax's condition that the pressure
should be greater behind a shock.  Having defined the strength of an
individual wave, the variation of a wave sequence $\Gamma$ consisting
of any number $n$ of waves $\g_j$ satisfying \eqref{wave}, is given by
\eqref{Vdef}, namely
\[
  \mc V(\Gamma) := \sum_{j=1}^n \big|\zeta_j\big|,
\]
again consistent with other treatments such as \cite{GL, DiPerna76, S}.

Having described the wave curves and defined the wave strengths, we
record the following lemma, which describes the effects of pairwise
interactions of waves.  It is well known that this holds
asymptotically for small amplitude waves, by Taylor expansions.
However, because we are treating interactions exactly by using simple
waves (that is compressions) for reflections rather than
(asymptotically small) shocks, the estimates become exact and extend
directly to waves of \emph{arbitrary} amplitudes.  Because there are
only two families, there are only two types of pairwise interactions:
either two waves of different families cross, or waves of the same
family merge; when waves merge, at least one of the incident waves
must be a shock.

\begin{lemma}
  \label{lem:gdints}
  If any wave crosses an opposite simple wave, its strength is left
  unchanged.  If any wave crosses an opposite shock, its (absolute)
  strength is increased.  When two waves of the same family merge or a
  compression collapses, the wave strengths in that family combine
  linearly.  If both incident waves are compressive or a compression
  collapses, an opposite rarefaction is reflected, while if one
  incident wave is a rarefaction, an opposite compression is
  generated.  For a fixed shock, the strength of the reflected simple
  wave is a strictly monotone function of the strength of the other
  (merging) wave.  Similarly, if a compression collapses, the strength
  of the reflected rarefaction is strictly monotone in the strength of
  the collapsed wave.
\end{lemma}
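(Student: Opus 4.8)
The plan is to work entirely in the Riemann coordinate $z=z(p)$ given by \eqref{zdef}, together with the two Riemann invariants $r:=z-u$ and $\ell:=z+u$, so that a forward wave changes only $\ell$ and a backward wave changes only $r$. With the strength convention \eqref{gdstr}, a backward $j$-wave has strength $\z_j = \tfrac12[\ell]$ across the wave (a change in $r$ being zero), and a forward $j$-wave has strength $\z_j=\tfrac12[r]$ (a change in $\ell$ being zero); I would first record these identities carefully, distinguishing simple waves, where $z(p_a)-z(p_b)=\pm(u_r-u_l)$ holds by \eqref{gdwv} with $\{c\}=1/\langle 1/c\rangle$, from shocks, where instead $u_r-u_l=\tfrac1{|\sigma|}(p_a-p_b)$ with $|\sigma|=1/\sqrt{\langle 1/c^2\rangle}$ by \eqref{gdsh}. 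Since $\langle 1/c\rangle$ and $1/\sqrt{\langle 1/c^2\rangle}$ are strictly positive and differ in general, a shock does \emph{not} preserve the opposite invariant, and this is exactly the mechanism by which a wave crossing an opposite shock changes strength.

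Next I would treat the crossing interaction: two incident waves of opposite families, say a backward wave with left/mid/right states $w_\ell, w_m, w_r$ meeting a forward wave, resolved by the gRP of Lemma~\ref{lem:strgRP}, which (away from vacuum) uniquely produces one backward and one forward outgoing wave. If the incident backward wave is simple, it preserves $r$, hence passing through it leaves the outgoing forward wave's strength equal to the incident forward strength, and symmetrically for a simple forward wave: this gives the first assertion. If the incident backward wave is a shock, then crossing it changes $r$ by a definite nonzero amount proportional to the shock strength (the discrepancy between $\langle 1/c\rangle$-averaging and $|\sigma|$-averaging), and because the entropy condition fixes the sign ($p$ higher behind the shock) this change is monotone and increases $|\z|$; this is the ``exact'' version of the classical small-amplitude estimate, and no Taylor expansion is needed because the relations \eqref{gdwv}, \eqref{gdsh} are exact.

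For the merge/collapse interactions, incident waves are in the same family (say backward), so one must be a shock by the entropy condition (two rarefactions cannot collide, two compressions cannot exist adjacently without overlap). The outgoing backward wave strength is then $\tfrac12$ the total change in $\ell$, which adds linearly: $[\ell]_{\text{out}}=[\ell]_1+[\ell]_2$, giving linear combination of strengths in that family. The reflected opposite (forward) wave has strength $\tfrac12[r]_{\text{out}}$ where $[r]_{\text{out}}=-([r]_1+[r]_2)$ across the backward waves — but $[r]$ is nonzero only across shock components, so the reflected forward strength is determined by the shock-induced $r$-jumps. I would then show: if both incident are compressive (hence at least one collapses to a shock in the resolved dgRS) or a single compression collapses, the sign of $[r]_{\text{out}}$ forces the reflected forward wave to be expansive ($\z>0$); if one incident is a rarefaction, the balance of $r$-jumps reverses and the reflection is compressive. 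Finally, with one incident shock held fixed, the map from the other merging wave's strength to $[r]_{\text{out}}$ is affine-plus-a-strictly-monotone-shock-term, hence strictly monotone; likewise for a collapsing compression, where the reflected rarefaction strength depends strictly monotonically on the collapsed strength via the shock that forms.

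The main obstacle I expect is the monotonicity claims: linearity and sign of reflections follow cleanly from the invariant bookkeeping, but ``the strength of the reflected wave is a strictly monotone function of the incident strength'' requires differentiating the gRP resolution map and checking the derivative has a fixed sign uniformly in amplitude. I would handle this by parameterizing the shock and wave curves by $z$ (or by pressure), using that $c(p)>0$ and, for GNL, $dc/dp>0$, so that $|\sigma|=1/\sqrt{\langle 1/c^2\rangle}$ and $\langle 1/c\rangle$ are each strictly monotone along the shock curve; the composition defining the reflected strength is then a composition of strictly monotone maps, and the implicit function theorem (exactly as in the proof of Lemma~\ref{lem:center}) gives the strict monotonicity without any smallness assumption. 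Vacuum is excluded throughout by restricting to the region where \eqref{vac} fails, consistent with the hypotheses under which the mFT scheme is defined.
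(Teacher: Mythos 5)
Your approach --- encoding the strength convention \eqref{gdstr} in the two Riemann invariants $r:=z-u$ and $\ell:=z+u$, observing that each simple wave preserves one of them exactly while a shock does not, and extracting the amplification and monotonicity claims from genuine nonlinearity ($dc/dp>0$, equivalently the strict form of Jensen's inequality $\langle 1/c\rangle^2<\langle 1/c^2\rangle$) --- is the same as the paper's, which in turn defers the detailed estimates to \cite{Ypsys} and \cite{BY2}. There is, however, a consistent bookkeeping error in your assignment of invariants to wave families: from \eqref{gdev} one has $du=dz$ along a \emph{forward} simple wave curve and $du=-dz$ along a \emph{backward} one, so a forward simple wave preserves $r=z-u$ and has strength $\z=\tfrac12[\ell]$, while a backward simple wave preserves $\ell=z+u$ and has strength $\z=-\tfrac12[r]$. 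Your proposal reverses this (``a backward $j$-wave has strength $\z_j=\tfrac12[\ell]$ $\ldots$ a change in $r$ being zero''), and the parentheticals as written also contradict your opening sentence that ``a forward wave changes only $\ell$ and a backward wave changes only $r$.'' Once this swap is corrected the crossing and merge arguments go through exactly as the paper sketches; the remaining hand-waving on strict monotonicity (``composition of strictly monotone maps via the implicit function theorem'') is at roughly the same level of detail as the paper's own one-line appeal to monotonicity of $c(p)$.
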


\begin{proof}
  The proof follows because, apart from the crossing of two shocks or
  simultaneous collapse of compressions, \emph{all} wave interactions
  result in at least one emergent simple wave.  The corresponding
  Riemann invariant is constant along that wave, and defines the
  strength of both incident and emerging waves, which is then
  unchanged.  The fact that strengths increase following crossing of a
  shock is a consequence of monotonicity of $c(p)$, which is genuine
  nonlinearity.  Similarly the monotonicity of the reflected simple
  wave from a merge or compression collapse follows from monotonicity
  of $c(p)$.  We omit the details of the proof and exact estimates,
  which can be found in the preprint \cite{Ypsys} and our forthcoming
  paper \cite{BY2}.
\end{proof}

We note that there is an interplay between wave strengths, defined via
Riemann invariants, and residuals.  As we have seen, wave strengths
(and Riemann invariants) combine linearly for waves in the same family
and are particularly well suited to simple waves, allowing a clean
treatment of multiple interactions.  On the other hand, shocks, which
are nonlinear and strengthen crossing simple waves, are characterized by
vanishing residual because the jump conditions are satisfied exactly.

For the isentropic system, Lemma~\ref{lem:gdR} simplifies to give the
\emph{exact} wavespeed and (preconditioned) residual of a jump
approximation as
\[
  a_* = \pm\frac{\beta+1}{2\,\<1/c\>} \qcom{and}
  R_* = [p]\,\frac{\beta-1}2\(\pm1\\-1\),
\]
respectively, where $\beta$ is given by \eqref{AR}.  In particular,
the residuals are multiples of constant orthogonal vectors, so can be
regarded as scalars,
\[
  \big|R_*\big| = \frac1{\sqrt2}\,\big|[p]\,(\beta-1)\big|.
\]

It follows that when splitting up a \emph{monotone} simple wave into
smaller pieces, the split residual can be calculated as the sum of the
(scalar) residuals of the smaller waves,
\[
  \big|R_{sum}\big| = \frac1{\sqrt2}\,
  \Big|\sum_i[p]_i\,(\beta_i-1)\Big|,
\]
because the signs of each piece are the same.  

\begin{lemma}
  \label{lem:pres}
  If simple waves are approximated using least square wave speeds,
  then whenever \emph{any} monotone simple wave is split into smaller waves,
  the absolute total residual decreases.
\end{lemma}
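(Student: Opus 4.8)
The plan is to reduce Lemma~\ref{lem:pres} to the Cauchy--Schwarz inequality in Engel (Titu) form. Fix a monotone simple wave joining $w_l$ to $w_r$, say with $p_l<p_r$, and write $\ell:=p_r-p_l>0$. Suppose it is split, via the group property \eqref{lgp}, at intermediate pressures $p_l=p^{(0)}<p^{(1)}<\dots<p^{(M)}=p_r$, giving sub-waves $\g^{(i)}$ of the same family with pressure increments $\ell_i:=p^{(i)}-p^{(i-1)}>0$ and $\sum_i\ell_i=\ell$. By the isentropic specialization of Lemma~\ref{lem:gdR} recorded just above, each $\g^{(i)}$ carried at its least squared wavespeed has (preconditioned) vector residual $R_*^{(i)}=\ell_i\,\tfrac{\beta_i-1}2\,(\pm1,-1)^\T$, where $\beta_i:=\<1/c\>_i^2\big/\<1/c^2\>_i$ is built from the averages over $[p^{(i-1)},p^{(i)}]$, and likewise $R_*=\ell\,\tfrac{\beta-1}2\,(\pm1,-1)^\T$ for the unsplit wave with $\beta$ the average over $[p_l,p_r]$. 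Since Cauchy--Schwarz (equivalently Jensen for $t\mapsto t^2$) gives $\<1/c\>^2\le\<1/c^2\>$ on any interval, every $\beta_i\le1$ and $\beta\le1$, so the absolute residuals are $|R_*^{(i)}|=\tfrac1{\sqrt2}\,\ell_i(1-\beta_i)$ and $|R_*|=\tfrac1{\sqrt2}\,\ell(1-\beta)$. The $M$ sub-waves sit at distinct points, so the split residual is the measure $\sum_i R_*^{(i)}\,\delta_{x_i}$ of total mass $\sum_i|R_*^{(i)}|=|R_{sum}|$, and the lemma is exactly the claim
\[
  \sum_{i=1}^M\ell_i(1-\beta_i)\ \le\ \ell(1-\beta),
  \qquad\text{equivalently}\qquad
  \sum_{i=1}^M\ell_i\,\beta_i\ \ge\ \ell\,\beta,
\]
where we used $\sum_i\ell_i=\ell$.

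To finish, set $a_i:=\int_{p^{(i-1)}}^{p^{(i)}}\tfrac{d\fs p}{c(\fs p)}>0$ and $b_i:=\int_{p^{(i-1)}}^{p^{(i)}}\tfrac{d\fs p}{c(\fs p)^2}>0$. From \eqref{zdef} and \eqref{gbrak} one reads off $\<1/c\>_i=a_i/\ell_i$ and $\<1/c^2\>_i=b_i/\ell_i$, hence $\ell_i\beta_i=a_i^2/b_i$; and since the subintervals partition $[p_l,p_r]$ in order, $\sum_i a_i=\int_{p_l}^{p_r}\tfrac{d\fs p}{c}$ and $\sum_i b_i=\int_{p_l}^{p_r}\tfrac{d\fs p}{c^2}$, so $\ell\beta=\big(\sum_i a_i\big)^2\big/\sum_i b_i$. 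The required inequality becomes
\[
  \sum_{i=1}^M\frac{a_i^2}{b_i}\ \ge\ \frac{\big(\sum_{i=1}^M a_i\big)^2}{\sum_{i=1}^M b_i},
\]
which is Cauchy--Schwarz in Engel form (apply $\langle x,y\rangle^2\le\|x\|^2\|y\|^2$ with $x_i=a_i/\sqrt{b_i}$, $y_i=\sqrt{b_i}$). This gives $|R_{sum}|\le|R_*|$, with equality iff all ratios $a_i/b_i$ coincide; by genuine nonlinearity $c$ is strictly monotone, so for any nontrivial split these ratios differ and the absolute total residual strictly decreases.

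There is no genuine analytic obstacle here --- the whole content is arranging the bookkeeping so that the comparison collapses to Engel's inequality. The one point that needs care, and the reason the hypothesis that the wave be \emph{monotone} cannot be dropped, is that only for a monotone wave do all the increments $\ell_i$ carry the same sign: this is what lets us write the split residual as $\tfrac1{\sqrt2}\sum_i\ell_i(1-\beta_i)=\tfrac1{\sqrt2}\big(\ell-\sum_i\ell_i\beta_i\big)$ and compare it with $\tfrac1{\sqrt2}\ell(1-\beta)$ using $\ell=\sum_i\ell_i$; for a non-monotone simple wave the unsplit residual is computed with $\ell=\big|\sum_i\ell_i\big|<\sum_i|\ell_i|$, and splitting could then \emph{increase} the residual. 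The auxiliary fact $\beta\le1$ on every subinterval is itself the same Cauchy--Schwarz inequality and is already implicit in \eqref{AR} and Lemma~\ref{lem:gdR}, so the proof needs nothing beyond what the excerpt provides.
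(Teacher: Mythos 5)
Your proof is correct and follows essentially the same route as the paper's. Both arguments rest on the identity $\ell\beta = \big(\int 1/c\big)^2\big/\int 1/c^2$ (the paper's $I_1^2/I_2$) and on Cauchy--Schwarz/Jensen giving $\beta\le 1$; the only organizational difference is that the paper proves the two-term inequality
\[
  \frac{I_1(p_a,p_m)^2}{I_2(p_a,p_m)}+\frac{I_1(p_m,p_b)^2}{I_2(p_m,p_b)}
  \ \ge\ \frac{\big(I_1(p_a,p_m)+I_1(p_m,p_b)\big)^2}{I_2(p_a,p_m)+I_2(p_m,p_b)}
\]
by exhibiting the exact remainder $\big(I_1(p_a,p_m)I_2(p_m,p_b)-I_1(p_m,p_b)I_2(p_a,p_m)\big)^2/\big(I_2(p_a,p_m)I_2(p_m,p_b)(I_2(p_a,p_m)+I_2(p_m,p_b))\big)$ and then inducts, whereas you invoke the general $M$-term Engel form directly. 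That is the standard way to bootstrap the two-term case, so the two write-ups are equivalent in content; your version is a bit shorter, while the paper's explicit remainder makes the sign analysis (and hence the monotonicity hypothesis) visibly self-contained. Your closing remark on why monotonicity is needed — so that the $\ell_i$ carry a common sign and the scalar residuals sum with no cancellation — matches the paper's rationale, and your observation that strict monotonicity of $c$ gives strict decrease is a correct minor sharpening the paper does not state.
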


Although we do not split compressions in the mFT scheme, we do split
them when initializing.  This result is essentially a statement of
convexity: monotonicity of the wavespeed is implicit by our treatment
of rarefactions and compressions as different simple waves which are
never merged, and integrals of monotone functions are convex.

\begin{proof}
  Denote the \emph{signed scalar residual} of a jump approximating a
  simple wave between pressures $p_a$ ahead of the wave and $p_b$
  behind the wave as
  \[
    R_*(p_a,p_b) := \frac1{\sqrt2}\,[p]\,(\beta-1), \qcom{where}
    [p] := p_b - p_a,
  \]
  and $\beta$ is given by \eqref{AR}.  Using \eqref{gbrak}, we write
  \[
    [p]\,\beta
    = [p]\,\Big\<\frac1c\Big\>^2\Big/\Big\<\frac1{c^2}\Big\>
    = \frac{\I1ab^2}{\I2ab},
  \]
  where we have defined the integral
  \[
    \I kab := \int_{p_a}^{p_b}\frac{d\fs p}{c(\fs p)^k},
    \qquad k = 1,\;2,
  \]
  so the residual is expressed as
  \[
    \sqrt2\,R_*(p_a,p_b) = \frac{\I1ab^2}{\I2ab} + p_a - p_b.
  \]
  
  Now suppose that the simple wave is approximated by two jumps, each
  given the least squared speed, and denote the newly introduced
  middle pressure by $p_m$, where $p_m$ is any pressure between $p_a$
  and $p_b$, and with corresponding velocity given by \eqref{gdsimp}.
  Then the combined signed scalar residual of the split pair is
  $R_*(p_a,p_m)+R_*(p_m,p_b)$.  The difference in residuals after
  splitting is then
  \[
    \begin{aligned}
      \sqrt2\,\Delta R_*
      &:= \sqrt2\,\big(R_*(p_a,p_m)+R_*(p_m,p_b)-R_*(p_a,p_b)\big)\\
      &= \frac{\I1am^2}{\I2am} + \frac{\I1mb^2}{\I2mb}
        - \frac{\big(\I1am + \I1mb\big)^2}{\I2am + \I2mb},
    \end{aligned}
  \]
  and after simplifying this becomes
  \[
    \sqrt2\,\Delta R_* =
    \frac{\big(\I1am\,\I2mb - \I1mb\,\I2am\big)^2}
    {\I2am\,\I2mb\,\big(\I2am + \I2mb\big)}.
  \]
    
  It now follows that $\Delta R_*$ has the sign of $p_b-p_a$, which is
  also that of $p_b-p_m$ and $p_m-p_a$.  On the other hand, the sign
  of $R_*(p_a,p_b)$ is that of $(p_b-p_a)\,(\beta-1)$, and similarly
  for $R_*(p_a,p_m)$ and $R_*(p_m,p_b)$.  Using \eqref{gbrak} again,
  we have
  \[
    \beta = \Big\<\frac1c\Big\>^2\Big/\Big\<\frac1{c^2}\Big\>
    = \Big(\int_{-1/2}^{1/2}\;\frac1{c(\fs p)}d\fs \eta\Big)^2\Big/
    \int_{-1/2}^{1/2}\Big(\frac1{c(\fs p)}\Big)^2\;d\fs \eta < 1,
  \]
  by Jensen's inequality.  Thus $\Delta R_*$ and $R_*(p_a,p_b)$ have
  opposite signs, which implies
  \[
    \big|R_*(p_a,p_m)+R_*(p_m,p_b)\big| < \big|R_*(p_a,p_b)\big|,
  \]
  whenever $p_m$ is between $p_a$ and $p_b$.  A simple induction now
  shows that any further splits will further reduce the (absolute)
  residual.  
\end{proof}

\subsection{No Finite Accumulation of Interaction Times}

Finally, we show that for the $p$-system \eqref{psys}, because there
are only two families of waves, there is no finite accumulation of
interaction times.  The essential idea is that except for a finite
number of compression collapses or pairwise interactions, which
necessarily involve at least one strong wave, \emph{all} 
interactions consist of two waves in and two waves out.  This in turn
implies that the number of waves remains finite for all intermediate
times, and the interaction times can then accumulate only at
$t=\infty$.

Proof that there is no accumulation of interactions requires that we
have some control on the total variation of approximate solutions, so
in this context we \emph{assume} such bounds, which will be derived
in~\cite{BY2}.  Specifically, we assume the existence of a (nonlocal)
Glimm potential $\mc P = \mc P(\Gamma)\ge 0$, defined for the sequence
$\Gamma = \Gamma^t$, such that the Glimm functional
$\mc G := \mc V + \mc P$ for the total variation is nonincreasing
across \emph{every} pairwise interaction.  Because it is defined on a
wave sequence, the functional $\mc P$ is defined for the mFT
approximations for each finite discretization parameter $\e<\e_0$.

\begin{assum}
  \label{ass:pot}
  There is a nonlocal, non-negative Glimm potential
  $\mc P = \mc P(\Gamma^t)$ such that both $\mc P$ and $\mc V+\mc P$
  are decreasing,
  \begin{equation}
    \label{pot}
    \Delta\,\mc P\le 0 \qcom{and} \Delta(\mc V+\mc P) \le 0,
  \end{equation}
  across \emph{any} pairwise interaction, and such that
  $\mc V(\Gamma^0)+\mc P(\Gamma^0)$ is bounded.  That is, if $\Gamma'$
  is the wave sequence that arises after interaction of any pair of
  waves in $\Gamma$, then we have both
  \[
    \mc P(\Gamma')\le \mc P(\Gamma) \qcom{and}
    \mc V(\Gamma') + \mc P(\Gamma') \le \mc V(\Gamma) + \mc P(\Gamma).
  \]
\end{assum}

Here we are treating the discretization parameter $\e>0$ as fixed, so
we do not require uniformity in $\e$ in the assumption or in
Theorem~\ref{thm:noacc} below.  In \cite{BY2}, we will obtain
estimates uniform in both $\e$ and $t$, while assuming that the
solution remains uniformly away from vacuum, for waves of large
amplitude.  We expect that extension to solutions near or including
vacuum will require a further analysis of the relation between
discretization parameter $\e$, time of existence $T$ and lower bound
on pressure $\ul p$.  Specifically, we expect that any uniform limit
\[
  \limsup_{\e\to0^+} \mc P(\Gamma^0)
\]
will depend on some lower bound $\ul p>0$ for the pressure, away from
vacuum, and such a lower bound will in general depend on the time $T$
for which the solution is evolving; see~\cite{Chen2,Gengex}.

It is immediate that the existence of a Glimm potential implies total
variation bounds, and if we further have uniformity in $\e$ for $t<T$,
Theorem~\ref{thm:conv} implies convergence and existence of solutions
up to time $T$.

As a consequence of our assumption that there is a decreasing Glimm
potential for the $p$-system, we obtain a bound on the number of waves
in the mFT approximation for each fixed $\e>0$.  This is analogous to
the statement that a $BV$ function can have only a finite number of
jumps of finite size.

\begin{lemma}
  \label{lem:numwvs}
  Suppose that Assumption \ref{ass:pot} holds.  Then there exists some
  finite
  \[
    N = N\big(\mc P(\Gamma^0),\e\big),
  \]
  such that there are at most $N$ waves in the mFT approximation for
  any time $t$ for which the mFT approximation is defined.
\end{lemma}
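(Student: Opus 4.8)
The plan is to bound the number of waves by controlling two quantities: the total variation $\mc V(\Gamma^t)$, which Assumption~\ref{ass:pot} forces to stay below the fixed constant $\mc V(\Gamma^0)+\mc P(\Gamma^0)$, and the \emph{strengths} of individual waves, which are bounded below by the discretization thresholds. The key observation is that every wave carried in the mFT approximation for the $p$-system falls into one of a small number of types, each of which has strength bounded below by a fixed power of $\e$: simple rarefactions have strength in $[\kappa\,\ee r,\ee r]$ unless they are freshly created (but then they are immediately split), compressions have strength in $[-\ee c,-\ee w]$ by the accelerated-collapse mechanism, and shocks... here is where care is needed, since shocks can in principle be arbitrarily weak.

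First I would make precise the list of wave types and their lower bounds. For simple waves (rarefactions and compressions) this is immediate from the construction in Section~\ref{sec:mft}: after any interaction, a rarefaction of strength exceeding $\ee r$ is split into pieces of size at least $\kappa\,\ee r$, and a compression of strength outside $[-\ee c,-\ee w]$ is collapsed into a shock, so every \emph{simple} wave present satisfies $|\z_j|\ge\kappa\,\ee r$ or $|\z_j|\ge\ee w$, i.e.\ $|\z_j|\ge c_0(\e):=\min\{\kappa\,\ee r,\ee w\}$. Combined with $\sum|\z_j|\le\mc V(\Gamma^0)+\mc P(\Gamma^0)=:G_0$, this bounds the number of simple waves by $G_0/c_0(\e)$. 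The remaining issue is shocks: a priori a shock can be arbitrarily weak, so I cannot bound their number by their strength alone. The resolution is to use the Glimm potential and Lemma~\ref{lem:gdints}: weak shocks are only created at pairwise interactions, and each such creation is accounted for by the decrease of $\mc V+\mc P$ (or of $\mc V$ directly, since a reflected shock of strength $\eta$ across a merge changes $\mc V$ monotonically in $\eta$ by the lemma). Thus I would argue that the \emph{cumulative} creation of new waves is controlled: the total number of interactions that produce a genuinely new wave (as opposed to two-in-two-out crossings, which neither increase the wave count nor, by Lemma~\ref{lem:gdints}, change strengths) is finite, bounded in terms of $G_0$ and $c_0(\e)$.

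The cleanest way to organize this is to separate interactions into those that preserve the wave count --- opposite-family crossings, which by Lemma~\ref{lem:gdints} leave all four wave strengths and the count unchanged (two in, two out) --- and those that change it: same-family merges, compression collapses, and multi-rarefaction reversion. Merges and collapses \emph{decrease} the wave count (two in, one primary wave out) while possibly reflecting an opposite simple wave; but any reflected simple wave, if it survives, has strength at least $c_0(\e)$ or is split, and each split produces at most $C\,\ee s/(\kappa\,\ee r)$ pieces, a fixed number. Multi-rarefaction splits and the generation of reflected simple waves at interactions involving a strong wave are the only ways the count goes up, and each such event is triggered by the presence of a wave of strength at least $\ee s$, of which there can be at most $G_0/\ee s$ at any time; moreover each such strong wave participates in only finitely many such generative interactions before the potential $\mc P$ (or $\mc V$) is exhausted. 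Putting these together, at any fixed time $t$ the wave sequence $\Gamma^t$ consists of: at most $G_0/c_0(\e)$ simple waves, at most $G_0/\ee s$ strong shocks, and at most some $\e$-dependent number of weak shocks bounded via the total potential drop $G_0$ divided by the minimum potential decrement per weak-shock creation.

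The main obstacle I anticipate is the weak-shock bookkeeping: unlike simple waves, weak shocks have no strength lower bound, so one must genuinely invoke the Glimm functional to show that they cannot be created infinitely often. The delicate point is that a weak shock, once created, might cross many simple waves (strengthening each time, by Lemma~\ref{lem:gdints}), so its eventual strength is not controlled, but its \emph{creation} must coincide with a strict decrease in $\mc V+\mc P$ by an amount bounded below in terms of $\e$ --- this uses the strict monotonicity clause in Lemma~\ref{lem:gdints} together with the fact that the incident waves at a merge have strength at least $c_0(\e)$. I would therefore prove an auxiliary estimate: there is a constant $\delta(\e)>0$ such that every interaction which strictly increases the wave count decreases $\mc V+\mc P$ by at least $\delta(\e)$, whence the number of such count-increasing interactions up to any time is at most $G_0/\delta(\e)$; since count-preserving and count-decreasing interactions cannot make the count exceed the initial count plus the total increase, we get the uniform bound $N=N(\mc P(\Gamma^0),\e)$. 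The details of $\delta(\e)$ follow from the exact interaction estimates referenced to \cite{Ypsys,BY2}, which I would cite rather than reprove here.
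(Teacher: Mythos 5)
Your overall skeleton is correct---you identify that the wave count can only increase at a small number of interaction types, and that the Glimm potential should pay for each such event---but the key quantitative claim you rely on is false, and a secondary claim used to bound the simple waves is also incorrect.

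The auxiliary estimate you propose---``there is a constant $\delta(\e)>0$ such that every interaction which strictly increases the wave count decreases $\mc V+\mc P$ by at least $\delta(\e)$''---fails for the case of a rarefaction crossing an opposite shock. By Lemma~\ref{lem:gdints}, the rarefaction strengthens and the shock's strength is unchanged, so for that crossing $\Delta\mc V = \z_r' - \z_r$. The count increases precisely when $\z_r \le \ee r < \z_r'$, i.e.\ the rarefaction is just pushed over the splitting threshold, and $\z_r'-\z_r$ can then be arbitrarily small (think of a rarefaction already very close to $\ee r$ crossing a weak shock). There is no per-interaction lower bound for this case. The paper's proof handles it with a cumulative argument: it tracks an individual rarefaction across all the shock crossings between its creation (when its strength was $< \tfrac45\ee r$, say) and the interaction at which it first exceeds $\ee r$, and observes that the total drop in $\mc P$ across that whole history is at least $\tfrac15\ee r$. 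That per-trajectory accounting, rather than per-interaction accounting, is the essential step you are missing. Your per-interaction bound does work for the other two count-increasing cases---compression collapse pays $\mu(\ee w)$, and a merge that reflects a splittable rarefaction pays roughly $\tfrac12\ee r$---but the shock-crossing case must be treated cumulatively.

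Your opening step also contains an error: the claim that every simple wave present has strength at least $c_0(\e)=\min\{\kappa\ee r,\ee w\}$ is not true. When a rarefaction merges with an opposite-sign wave of the same family (a shock), the strengths combine linearly by Lemma~\ref{lem:gdints}, so the surviving rarefaction (or the reflected simple wave in the other family) can be arbitrarily weak; nothing in the scheme re-collapses an arbitrarily weak rarefaction. This means you cannot bound the number of simple waves by dividing $G_0$ by a strength floor. The paper avoids this entirely by not attempting a strength floor at all: it bounds the number of count-\emph{increasing} events directly, multiplies by the maximum number of new waves a single split can produce, and adds the initial count. This is cleaner and sidesteps the weak-wave problem for both shocks and rarefactions. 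You correctly anticipated the weak-shock difficulty but not the weak-rarefaction one; the paper's direct event-counting resolves both at once.
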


\begin{proof}
  Since $\e>0$, there are only finitely many waves in the initial wave
  sequence $\Gamma^{0+}$.  Without loss of generality, we assume each
  interaction in the mFT is either the collapse of a compression or a
  pairwise interaction.  That is, there are at most two incident waves
  in any interaction.

  Next, there can be more than two waves leaving an interaction only
  if one of those is a rarefaction which has been split after the
  interaction.  Recall that a rarefaction is split if its strength
  after interaction satisfies $\z > \ee r$.  If both waves leaving the
  interaction are rarefactions, then by Lemma \ref{lem:gdints}, the
  incident waves are also crossing rarefactions, and so their
  strengths are unchanged, and in particular they will not be split
  after the interaction.

  It follows that the number of waves in the mFT approximation can 
  increase only if the emerging waves consist of a shock of one family
  and a rarefaction of the other.  There are three such interactions
  at which the number of waves increases:
  \begin{enumerate}
  \item[(a)] collapse of a compression: 1 wave in, 2 or more out;
  \item[(b)] shock and compressive wave merge: 2 waves in, 3 or more out;
  \item[(c)] rarefaction crosses a (strong) shock: 2 waves in, 3 or more out.
  \end{enumerate}
  We show that each of these can occur only a finite number of times.
  
  In case (a), a compression of strength $\z$ collapses: according to
  Lemma \ref{lem:gdints}, a shock of strength $\z$ is transmitted and
  a rarefaction, say of strength $\mu(\z)>0$, is reflected.  By
  construction, all compressions have (absolute) strength at least
  $\ee w$, and so by monotonicity,
  \[
    \Delta\mc V = \mu(\z) \ge \mu\big(\ee w\big), \qcom{so also}
    \Delta\mc P \le -\mu\big(\ee w\big).
  \]
  Since $\mc P(\Gamma^0)$ is finite and $\mc P$ is decreasing, there
  can be no more than
  \[
    N_{(a)} := \frac{\mc P(\Gamma^0)}{\mu\big(\ee w\big)}
  \]
  such interactions.

  For case (b), because two waves merge to produce a reflected
  rarefaction of strength $\ee r$, at least one incident wave is a
  shock and the other is compressive, either a shock or simple wave.
  We \emph{overcount} by considering the number of such interactions
  in which a rarefaction of strength at least $\ee r/2$ is reflected.
  Again by Lemma \ref{lem:gdints}, the shock strengths add linearly,
  so as above we get
  \[
    \Delta\mc V \ge \frac12\,\ee r, \qquad
    \Delta\mc P \le -\frac12\,\ee r, \qcom{and}
    N_{(b)} := \frac{2\,\mc P(\Gamma^0)}{\ee r},
  \]
  where $N_{(b)}$ counts the number of these interactions.

  In case (c), rather than counting only the interactions across which
  the number of rarefactions increases by a single rarefaction
  attaining strength $|\zeta|>\ee r$ across a single shock, we count
  the number of rarefactions that increase their strength from less than
  $\frac45\,\ee r$, say, to more than $\ee r$ after crossing several
  shocks, and increasing its strength each time.  Since shocks have no
  change in wavestrength after crossing a rarefaction, the change in
  variation across these cumulative interactions (excluding
  interactions of other ``far away'' waves) is
  \[
    \Delta\mc V \ge \frac15\,\ee r, \qcom{and}
    \Delta\mc P \le -\frac15\,\ee r,
  \]
  so that
  \[
    N_{(c)} := \frac{5\,\mc P(\Gamma^0)}{\ee r}.
  \]
  Note that any \emph{new} rarefaction whose strength is greater than
  $\frac45\,\ee r$, so is not included in this count, has previously
  been counted in the waves of step (b) above.

  In each case above, the emerging rarefaction is split into a number
  of smaller rarefactions.  According to our construction, each such
  split rarefaction has strength \emph{at least} $\kappa\,\ee r$, for
  $\kappa>0$.  On the other hand, because the total variation is
  bounded, the total increase of strength is uniformly bounded, and
  the total number of new waves generated in each such interaction is
  bounded by some constant $M$, which may depend on $\e$ and $\kappa$.
  Since the only interactions in which the number of waves can
  increase are enumerated above, it follows that the the total number
  of new waves generated throughout the scheme is at most
  \[
    \big(N_{(a)} + N_{(b)} + N_{(c)}\big)\,M < \infty.
  \]
  It follows that the total number of waves in the mFT approximation
  is bounded, and the proof is complete.
\end{proof}

\begin{cor}
  \label{cor:2in2out}
  For fixed $\e>0$, let $\{t_k\}$ denote the sequence of interaction
  times in the mFT approximation for the $p$-system, as in
  \eqref{times}.  If Assumption \ref{ass:pot} holds, then there is
  some
  \[
    K = K\big(\mc P(\Gamma^0),\e\big)
  \]
  such that all interactions beyond time $t_K$ consist of two waves in
  and two waves out.  In particular, the two waves emerging from any
  interaction are opposite; that is, one backward and one
  forward  wave emerge from each interaction for $k>K$.
\end{cor}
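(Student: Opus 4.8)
The plan is to derive Corollary~\ref{cor:2in2out} as a direct consequence of Lemma~\ref{lem:numwvs}, using the fact that the number of waves in the mFT approximation can only increase at the finitely many interactions of types (a), (b), (c) enumerated in the proof of that lemma. Since the total number of waves is bounded by some $N = N(\mc P(\Gamma^0),\e)$ and is monotone nondecreasing except possibly at interactions that \emph{reduce} the count, the number of \emph{increasing} interactions is finite; I would combine this with the counts $N_{(a)},N_{(b)},N_{(c)}$ already obtained to produce a bound $K = K(\mc P(\Gamma^0),\e)$ on the index of the last such interaction.

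First I would recall from Lemma~\ref{lem:gdints} that, for the $p$-system, every pairwise interaction and every compression collapse produces at least one emergent simple wave, and that the only way an interaction can yield more than two outgoing waves is if an emergent rarefaction has strength exceeding $\ee r$ and is therefore split. I would then argue, exactly as in the proof of Lemma~\ref{lem:numwvs}, that such a split-producing interaction must be one of types (a)--(c), each of which forces $\Delta\mc P \le -c\,\ee r$ or $\Delta\mc P \le -\mu(\ee w)$ for an explicit constant, so by Assumption~\ref{ass:pot} and finiteness of $\mc P(\Gamma^0)$ the total number of such interactions across the whole scheme is at most $N_{(a)} + N_{(b)} + N_{(c)} < \infty$. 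Setting $K$ to be the index of the last interaction at or before which any type (a), (b) or (c) interaction occurs — which is finite because interactions are linearly ordered and there are only finitely many of these special ones — gives the claimed $K$.

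Next I would check the remaining assertion: for $k > K$, the two emerging waves are \emph{opposite}. By Lemma~\ref{lem:gdints}, when two waves of the same family merge at least one incident wave is a shock, and the reflected wave is in the opposite family; the transmitted wave is in the original (merging) family, so again one forward and one backward wave emerge. When two waves of different families cross, the strengths are unchanged, so no splitting occurs, and the two emergent waves are again one of each family. A compression collapse is a type (a) interaction, hence excluded for $k>K$. Since for $k>K$ no interaction can split an emergent rarefaction (that would be type (b) or (c)), every interaction beyond $t_K$ has exactly two incident and two emergent waves, of opposite families, which is the statement.

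I expect the only real subtlety — and it is minor — to be the bookkeeping that turns ``there are finitely many special interactions'' into ``there is an interaction index $K$ beyond which none occur''; this is immediate once one observes that the special interactions form a finite subset of the linearly ordered sequence $\{t_k\}$, so $K := \max\{k : t_k \text{ is a type (a), (b), or (c) interaction}\}$ (or $K=0$ if there are none) is well-defined and finite, with the bound $K \le N_{(a)}+N_{(b)}+N_{(c)}$ following from the counts in Lemma~\ref{lem:numwvs}. One should also note for completeness that three-or-more-wave interactions are a measure-zero non-generic occurrence already excluded by the running conventions, so ``two waves in'' needs no separate argument beyond what Lemma~\ref{lem:numwvs} already assumed. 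No genuinely new estimate is required; the corollary is essentially a restatement of the internal structure of the proof of Lemma~\ref{lem:numwvs}.
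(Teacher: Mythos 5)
Your proposal is correct and follows essentially the same route as the paper: finiteness of the type (a)--(c) interactions established in Lemma~\ref{lem:numwvs} gives the index $K$, and beyond $t_K$ no compression collapses or rarefaction splits occur, so every interaction is two-in-two-out with the two emergent waves in opposite families. One small slip worth noting: by Lemma~\ref{lem:gdints}, a wave crossing an opposite \emph{shock} has its strength increased (only crossing an opposite \emph{simple} wave leaves strength unchanged), so your claim that strengths are unchanged upon crossing is not quite right; this does not affect the conclusion, since for $k>K$ type~(c) interactions are already excluded and hence the increase cannot trigger a split.
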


\begin{proof}
  Because there are only finitely many interactions in which the
  number of waves increases, there is some $K$ beyond which no new
  waves are generated.  In each subsequent interaction, there can be
  at most two waves out, and because no waves are split beyond $t_K$,
  the outgoing waves must be from different families.
\end{proof}

According to Theorem \ref{thm:hatU}, the mFT scheme can be defined
for all interactions times $t_k$, $k\in\B N$.  Under the further
Assumption \ref{ass:pot}, we have shown that the number of waves in
the scheme at any time is bounded.  Our final task is to show that
under this assumption, the scheme can be defined for all times.

Generally, we expect bounds for $\mc P$ to depend on a minimum lower
bound $\ul p$ for the pressure, to avoid the degeneracy at vacuum,
which is defined by $p = 0$.  However, Geng Chen has shown that for
general large amplitude data, any such lower bound on the pressure
must be time dependent: in particular, given any $t_*>0$ and
$\ul p>0$, there is piecewise constant $BV$ data for which
\[
  p(0,t) < \ul p \qcom{for some} t < t_*,
\]
so that time-independent lower bounds on the pressure are not
available~\cite{Gengex}.  Despite this example, we are able to prove
that the scheme can be continued beyond any finite time, assuming
appropriate bounds.

\begin{theorem}
  \label{thm:noacc}
  Fix $\e>0$ and for some $T>0$, suppose that Assumption \ref{ass:pot}
  holds for $t<T$.  Then the scheme can be defined beyond time
  $T$; that is, there is some $k\in\B N$ such that the $k$-th
  interaction time satisfies $t_k>T$.
\end{theorem}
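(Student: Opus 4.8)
The plan is to argue by contradiction, supposing that the interaction times accumulate at some finite time $t_\infty \le T$, and then to derive a contradiction with Assumption~\ref{ass:pot} by invoking the $\al$-ray condition machinery of Theorem~\ref{thm:noaccum}. The key reduction is already in hand: by Lemma~\ref{lem:numwvs}, Assumption~\ref{ass:pot} forces a uniform bound $N = N(\mc P(\Gamma^0),\e)$ on the number of waves at any time the scheme is defined, and by Corollary~\ref{cor:2in2out} there is a finite $K$ beyond which every interaction is strictly two-in, two-out, producing exactly one forward and one backward wave. So for $t_K < t < t_\infty$, the mFT approximation is governed entirely by crossing interactions (plus possibly finitely many trivial speed/width adjustments), with a fixed, bounded number of waves.

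First I would set up the contradiction: assume $t_\infty := \sup_i t_i \le T$. Since the $p$-system has only two families, the total variation is bounded (by $\mc V(\Gamma^t) \le \mc V(\Gamma^0) + \mc P(\Gamma^0)$, using \eqref{pot}), and all trajectories are uniformly Lipschitz in a compact set by finite speed of propagation, there is an accumulating trajectory $\{\rho_m\}$ with accumulation point $(x_\infty, t_\infty)$, as in the discussion preceding Theorem~\ref{thm:noaccum}. The strategy is then to verify the $\al$-ray condition at $(x_\infty, t_\infty)$ directly, using the $p$-system structure: along any ray with speed $\al$, the trace $V_\al(t)$ of \eqref{Va} is built from the jumps the ray crosses. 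Because all interactions past $t_K$ are crossing interactions in which Riemann invariants are simply transported (Lemma~\ref{lem:gdints}: a wave crossing an opposite simple wave is unchanged, and crossing an opposite shock strengthens monotonically), and because the wave count is uniformly bounded, the total variation of $V_\al$ over $[t_K, t_\infty)$ is controlled by the (bounded) number of waves times their (bounded) strengths. I would show this is enough to conclude that $V_\al(t)$ has a limit as $t \to t_\infty^-$, uniformly for $\al$ in the compact interval $[\ul\la, \ol\la]$ --- essentially a Cauchy argument: the contribution to $\bigl|V_\al(t) - V_\al(t')\bigr|$ from the interval $(t',t_\infty)$ is bounded by the sum of $|[v]_j|$ over jumps in that window, which goes to zero because only finitely many waves exist and each trajectory contributes a bounded total. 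With the $\al$-ray condition verified, Theorem~\ref{thm:noaccum} gives $t_\infty = \infty$, contradicting $t_\infty \le T$, and hence the scheme is defined beyond $T$.

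I expect the main obstacle to be making the uniform-in-$\al$ convergence of $V_\al(t)$ genuinely rigorous, since the set of jump times seen by a ray depends on $\al$ and can be infinite near $t_\infty$. The delicate point is that even though the number of waves is bounded at each fixed time, infinitely many interactions occur in $(t_K, t_\infty)$, so a ray can cross infinitely many jumps; one must show the accumulated jump sizes along any fixed ray form a summable (indeed, a Cauchy tail) contribution, and that the bound is uniform as $\al$ varies. The resolution should come from the two-in/two-out structure: each crossing interaction merely relocates or transports existing wave strengths without creating new variation (rarefaction strengthening across shocks is monotone and absorbed into the potential decrease), so the total variation swept by any ray over a short final time window is bounded by a fixed multiple of $\mc V(\Gamma^{t_K+})$ times the fraction of waves in the relevant space-time triangle --- and since the number of waves is finite, this fraction can be made small. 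A clean way to package this is to note that $D_\Delta$ in \eqref{Ddel} contains only finitely many interactions once we are past $t_K$ (by the same counting argument as in Theorem~\ref{thm:noaccum}, since all waves in $D_\Delta$ either run into the focus or are lonely weak waves --- here, with only two families, ``lonely weak wave'' reduces to just an ordinary weak wave, and crossing interactions cannot increase the wave count), which immediately yields both the $\al$-ray limit and the contradiction. In fact, this observation shows the appeal to Theorem~\ref{thm:noaccum} can be made essentially self-contained for the $p$-system, so the proof may be phrased either as a direct application of that theorem after checking its hypothesis, or as a streamlined repetition of its argument adapted to two families.
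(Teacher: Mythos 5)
Your proposal converges in its final paragraph to roughly the right idea, but the route you take to get there has two genuine gaps, and the paper's actual argument for the $p$-system is both different and cleaner than what you propose.

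\textbf{Gap 1: the $\al$-ray verification is circular.} You want to show $V_\al(t)$ has a limit as $t\to t_\infty^-$, uniformly in $\al$, by arguing that the contribution from jumps in $(t',t_\infty)$ ``goes to zero because only finitely many waves exist and each trajectory contributes a bounded total.'' But the second claim is what you are trying to prove. Bounded wave count at each fixed time does not prevent a ray from crossing the same trajectory infinitely many times as that trajectory zig-zags near $(x_\infty,t_\infty)$; a priori the total jump encountered along a ray could be infinite, and nothing in the uniform $\mc V$ bound rules this out on its own. The variation bound on the wave sequence controls a spatial slice, not a timelike slice.

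\textbf{Gap 2: Theorem~\ref{thm:noaccum} does not transfer to the $p$-system as you invoke it.} Its proof hinges on the composite-wave machinery: inside $D_\Delta$ every wave that is not on a ray to the focus is weaker than $\ee p$, hence by construction a lonely weak wave, and lonely-weak-wave interactions cannot increase the wave count. The paper defines the $p$-system mFT \emph{without} composite waves, so there is no passenger threshold $\ee p$ and no notion of a lonely weak wave to fall back on. Your parenthetical that ``with only two families, `lonely weak wave' reduces to just an ordinary weak wave'' papers over this: the classification of $D_\Delta$-waves by a strength threshold simply has no analogue, and the counting step that depends on it has nothing to attach to.

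What the paper actually does is entirely structural and makes no use of the $\al$-ray condition at all. Past $t_K$, there are exactly $N$ waves and every interaction is two-in/two-out (Corollary~\ref{cor:2in2out}), so one can define $N$ spatially ordered trajectories. Fixing the accumulation point $(x_\infty,t_\infty)$, let $m_-$ and $m_+$ be the boundary trajectories (the largest index ending strictly left of $x_\infty$ and the smallest ending strictly right), so that every trajectory with $m_-<\ell<m_+$ ends exactly at $x_\infty$. Once $t>t_M$ is large enough that the boundary trajectories have exited $D_\Delta$, the trajectories between them must stay inside $D_\Delta$ (otherwise they could not end at the focus). But any pairwise interaction inside $D_\Delta$ emits both a forward and a backward wave, and one of these would have to leave $D_\Delta$, again contradicting that it ends at $x_\infty$. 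Hence the trajectories inside $D_\Delta$ past $t_M$ are non-interacting rays, and there can be no accumulation---a contradiction with no strength thresholds, no passengers, and no need for the $\al$-ray machinery. If you want to prove this theorem, run that argument directly; trying to route through Theorem~\ref{thm:noaccum} either fails (because of the missing composite-wave structure) or, if corrected, silently becomes the direct argument anyway.
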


Here we note that in principle, the approximate solution that extends
beyond $t=T$ may include the vacuum; however, our assumption that a
potential exists implicitly implies that there is no vacuum up until
time $T$.  Moreover, if a vacuum or vacuums arise at time $t=T+$,
finite speed of propagation and finiteness of the number of waves
means that the approximation is nevertheless defined on some open time
interval containing $T$.  We expect that in the limit $\e\to0^+$, any
vacuum that forms will do so either at $t=0+$ or at $t=\infty$,
consistent with \cite{YpRP2}.

\begin{proof}
  Denote the set of all interaction times in the mFT scheme by
  $\{t_k\}$, as in \eqref{times}.  According to Lemma~\ref{lem:numwvs}
  and Corollary~\ref{cor:2in2out}, there are integers $K$ and $N$ so
  that for all $k>K$, $t_k>t_K$, there are exactly $N$ waves in the
  approximation.

  For $t > t_K$, we define $N$ non-overlapping, linearly ordered
  trajectories, as follows.  As in \eqref{traj}, after each
  interaction time $t_j+$, set
  \[
    \rho_\ell = \g_\ell \in \Gamma^{t_j+},
  \]
  so we are just counting the waves in spatial order; now extend the
  trajectory up to $t_{j+1}-$ and then to the interaction times $t_j$
  by continuity.  Thus the $\ell$-th trajectory is simply the path
  traced out by the $\ell$-th wave; this clearly accounts for all $N$
  waves and their interactions.  Moreover, the trajectories are
  non-overlapping, and intersect only at pairwise wave interactions.

  Now assume the scheme cannot be defined beyond $T$; this means that
  the interaction times $t_k$ accumulate at the finite time $T$, that
  is
  \[
    t_k \to t_\infty := T \qcom{as} k\to\infty.
  \]
  By compactness, there must be some accumulation point
  $(x_\infty,t_\infty)$ of nontrivial interaction points $(x_k,t_k)$.
  Also, since each trajectory is Lipschitz, it extends up to
  $t=t_\infty$ by continuity.  Referring to the $\ell$-th trajectory
  as in Section \ref{sec:traj}, define the indices
  \[
    m_- = \max\big\{\ell\;:\: x_\ell^p(t_\infty) < x_\infty\big\},
    \qcom{and}
    m_+ = \min\big\{\ell\;:\: x_\ell^p(t_\infty) > x_\infty\big\}.
  \]
  Clearly, we must have $m_-< m_+$, and
  \[
    x_\ell^p(t_\infty) = x_\infty \qcom{for each}
    m_-<\ell<m_+.
  \]
  
  Now choose $\ol\la>\sup c(p)$ and set $\ul\la := -\ol\la$, and
  consider the space-time triangle $D_\Delta$ defined in \eqref{Ddel}.
  There must be some large $M$ such that for $j\le m_-$ or $j\ge m_+$,
  and for $t>t_M$, all trajectories $\rho_j$ are outside $D_\Delta$,

  Now, if $m_+ = m_- + 1$, then there are no waves in $D_\Delta$
  beyond $t=t_M$, so no trajectories meet $(x_\infty,t_\infty)$, and
  it cannot be an accumulation point.  On the other hand, for $t>t_M$,
  and any $\ell$ such that $m_-<\ell<m_+$, the trajectory $\rho_\ell$
  must remain inside $D_\Delta$; if not, then it eventually leaves
  $D_\Delta$, so ends away from $(x_\infty,t_\infty)$, contradicting
  maximality or minimality of $m_\mp$.

  Any two trajectories inside $D_\Delta$ and beyond $t_M$ cannot
  interact, for if they did, there would be both forward and backward
  waves emerging from the interaction, and one of these would have to
  leave $D_\Delta$, again a contradiction.

  Thus each trajectory in $D_\Delta$ lies on a ray running into
  $(x_\infty,t_\infty)$, and these cannot interact before
  $t=t_\infty$, contradicting the fact that $(x_\infty,t_\infty)$ is
  an accumulation of nontrivial interaction points.  This
  contradiction means that there can be no such finite $t_\infty$, so
  there must be some interaction time $t_k>T$, completing the proof.
\end{proof}

As a final corollary, we note that this theorem together with
Lemma~\ref{lem:RG} and Theorem~\ref{thm:conv} yield a \emph{rate of
  convergence} of the mFT scheme for the $p$-system.

\begin{cor}
  \label{cor:rate}
  If Assumption \ref{ass:pot} holds, then the mFT approximation
  converges to a solution of the $p$-system with convergence rate
  $\e^{2e_s}$ as $\e\to 0^+$, uniformly for $t\le T$.
\end{cor}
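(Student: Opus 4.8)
The plan is to assemble three results already established --- the no-accumulation Theorem~\ref{thm:noacc}, the variation bound implicit in Assumption~\ref{ass:pot}, and the residual estimate of Lemma~\ref{lem:RG} --- and then feed them into the convergence argument of Theorem~\ref{thm:conv}. First I would record the simplification afforded by the $p$-system: since we use \emph{no} composite waves, there are no passengers, so the set $B_H$ of carriers with heavy passengers is empty, $W_H(\Gamma,\e)\equiv0$, and the parameter $Q(\e)$ is irrelevant and may be taken as $1/|\log\e|$ so that \eqref{convAss} holds trivially. Fixing $\e>0$, Theorem~\ref{thm:noacc} (using Assumption~\ref{ass:pot}) tells us the scheme is defined for all $t\le T$. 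Since $\mc P\ge0$ and $\mc V+\mc P$ is nonincreasing across every interaction, we obtain the bound
\[
  \mc V(\Gamma^t)\le \mc V(\Gamma^0)+\mc P(\Gamma^0)=:V,\qquad 0\le t\le T,
\]
which we take to be uniform in $\e$ (the $\e$-uniformity of $\mc V(\Gamma^0)+\mc P(\Gamma^0)$ is precisely the input furnished by~\cite{BY2}). With this uniform variation bound, Theorem~\ref{thm:conv} applies and a subsequence of $U^\e$ converges as $\e\to0^+$ to a weak* solution $U$ of \eqref{psys}.

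For the rate, I would then specialize Lemma~\ref{lem:RG}. With $B_H=\emptyset$ and no passengers the estimate \eqref{residG} collapses to
\[
  \big\|\mc R(\Gamma^t)\big\|_M\le K_s\,\mc V(\Gamma^t)\,\e^{2e_s}\le K_s\,V\,\e^{2e_s},
\]
uniformly in $t\le T$ and in $\e$. Since the residual of the piecewise constant approximation $U^\e$ coincides with that of its wave sequence, $\mc R(U^\e)(t)=\mc R(\Gamma^t)$, this says exactly that each mFT approximation solves \eqref{psys} in $M_{loc}$ up to an error of size $O(\e^{2e_s})$, uniformly on $[0,T]$ --- the asserted convergence rate. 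Following the proof of Lemma~\ref{lem:RG}, one checks that the multi-rarefaction contributions are already absorbed here: any rarefaction produced by an interaction has strength at most $C\,\ee s$ in the bounded-variation regime, so it is split into at most $O(\ee s/\ee r)$ pieces and $\ol m^2\,\e^{2e_r}=O(\e^{2e_s})$ by \eqref{mrRes}, keeping the per-unit-variation residual at $O(\e^{2e_s})$.

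The single delicate point is the $\e$-uniformity of $V$. Assumption~\ref{ass:pot} as stated holds for a \emph{fixed} discretization parameter, so on its own it yields only a finite $V(\e)$ for each $\e$; extracting a genuine $\e$-independent rate requires that the initialization of Section~\ref{sec:ir} not inflate the Glimm potential as $\e\to0^+$, i.e.\ that $\mc V(\Gamma^0)+\mc P(\Gamma^0)$ stay bounded. This is the step I expect to be the real obstacle, and it is exactly what the companion paper~\cite{BY2} supplies; here the corollary is to be read under the same $\e$-uniform variation hypothesis that underlies Theorem~\ref{thm:conv}, in which case the argument above goes through directly.
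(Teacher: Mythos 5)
Your argument follows the paper's proof step for step: Theorem~\ref{thm:noacc} gives definition past~$T$, Theorem~\ref{thm:conv} gives convergence, and Lemma~\ref{lem:RG} with the composite-wave terms gone reduces to $\|\mc R(\Gamma^t)\|_M\le K_s\,\mc V(\Gamma^t)\,\e^{2e_s}$, which is the asserted rate. The only small inelegance is the initial choice $Q(\e)=1/|\log\e|$ --- if that were fed literally into \eqref{residG} it would degrade the rate to $O(1/|\log\e|)$; the paper instead writes $Q(\e)=W_H(\Gamma,\e)=0$ precisely because with no composite waves there are no passengers and the $K_p\,Q$ and $K_{hp}\,W_H$ terms never arise, which is in fact the reasoning you invoke later when you say the estimate ``collapses.''
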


\begin{proof}
  According to Theorem~\ref{thm:noacc}, for $\e>0$, each approximation
  can be defined beyond $T$, with a uniform bound on the total
  variation for $t\le T$.  By Theorem~\ref{thm:conv}, the limit is a
  weak* solution, and because we have not used composite waves, we
  have $Q(\e)=W_H(\Gamma,\e) = 0$.  Thus \eqref{residG} reduces to
  \[
    \big\|\mc R(\Gamma)\big\|_M \le \mc V(\Gamma^t)\,K_s\,\e^{2e_s}
    \le \big(\mc V(\Gamma^0)+\mc P(\Gamma^0)\big)\,K_s\,\e^{2e_s},
  \]
  and the proof is complete.
\end{proof}

\subsection{Breakdown of Periodic Wave Patterns}

As an application of the mFT scheme, we briefly analyze the periodic
wave pattern introduced in \cite{BCZ18}.  This example was introduced
to demonstrate the difficulty of obtaining $BV$ bounds for the
$p$-system and provides an enlightening example of a piecewise
approximation in which the total variation can in principle become
unbounded in finite time.  The authors first find a closed cyclic set
of states which are connected by a series of simple and shock waves.
They then describe a pattern of interactions under which this cycle of
states is visited periodically, provided the widths and speeds of the
various waves are carefully controlled.  By then perturbing this
picture, they are able to produce an apparent piecewise constant
approximation in which the total variation blows up.

We develop the periodic cycle in state space, and show that such a
periodic cycle requires a certain minimum strength of simple waves.
Given such a cycle, we then consider the mFT approximation to data
which in principle generates the periodic pattern.  In particular, we
show that the periodicity of the characteristic pattern cannot be
sustained as the discretization parameter $\e\to0$.  The periodic wave
pattern breaks down because our requirement that the residual vanish
in the limit means that the large simple waves must be finely
discretized, which implies that interactions including simple waves
are spread out over time and space.  This in turn prevents the
accumulation of wave interactions, perturbation of which yields blowup
of the variation in \cite{BCZ18}.  Instead, in the mFT approximation,
the simple waves cause the pattern to spread out rapidly, so that
decay of waves due to cancellation between shocks and rarefactions of
the same family eventually becomes the dominant feature of the
solution.

\begin{figure}[thb]
\centering{  \hfill
  \begin{minipage}{0.6\linewidth}
    \includegraphics[height=2in]{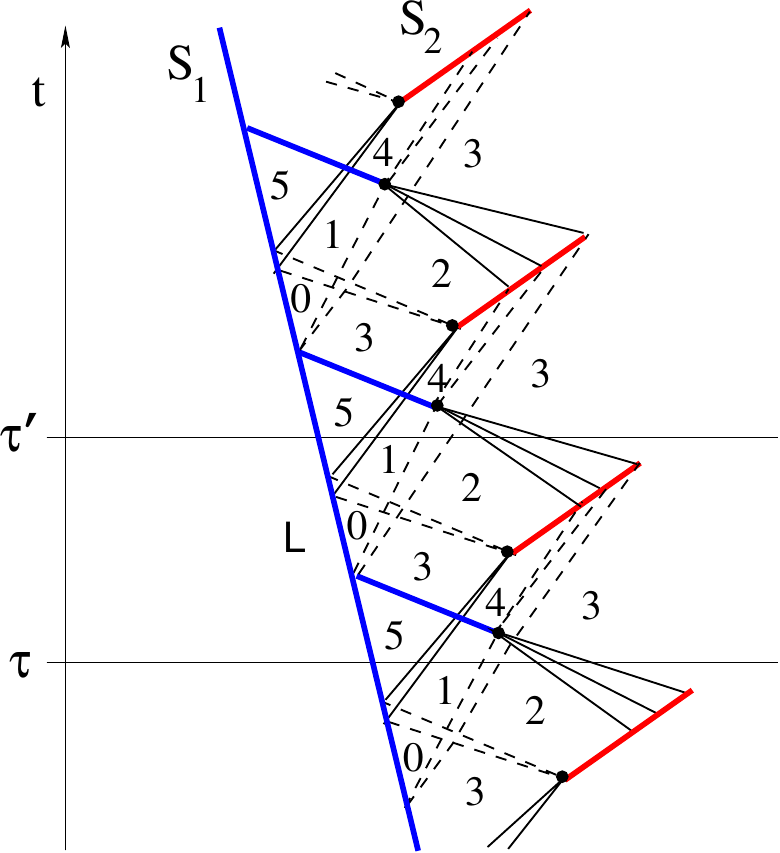}
  \end{minipage}\hspace*{-1.5in}
  \begin{minipage}{0.38\linewidth}
    \begin{tikzpicture}

\begin{axis}[
hide x axis,
hide y axis,
tick align=outside,
tick pos=left,
%x grid style={darkgrey176},
xlabel={h},
xmajorgrids,
xmin=-2.13968611247656, xmax=44.9334083620079,
%xtick style={color=black},
%y grid style={darkgrey176},
ylabel style={rotate=-90.0},
ylabel={u},
ymajorgrids,
ymin=-54.8560814317738, ymax=112.748378098054,
%ytick style={color=black},
ytick={0},
yticklabels={-}
]
\addplot [thick, fwdSimple]
table {%
5 -47.2376969076908
42.7937222495313 -9.44397465815947
};
\addplot [thick, bakShock]
table {%
0 0
};
\addplot [thick, bakSimple]
table {%
17.1748737956859 16.1748737956859
42.7937222495313 -9.44397465815947
};
\addplot [thick, contact]
table {%
0 0
};
\addplot [thick, fwdSimple]
table {%
17.1748737956859 16.1748737956859
23.3629173576053 22.3629173576053
};
\addplot [thick, fwdSimple]
table {%
0 0
};
\addplot [thick, fwdSimple]
table {%
1 0
17.1748737956859 16.1748737956859
};
\addplot [thick, fwdShock]
table {%
0 0
};
\addplot [thick, bakSimple]
table {%
1 0
26.6188484538454 -25.6188484538454
};
\addplot [thick, bakShock]
table {%
1 0
1.04040404040404 -0.0404991839643687
1.08080808080808 -0.0815418133431666
1.12121212121212 -0.123604571647166
1.16161616161616 -0.167106168327495
1.2020202020202 -0.212420063262126
1.24242424242424 -0.259884205420999
1.28282828282828 -0.309808558671397
1.32323232323232 -0.362480968322238
1.36363636363636 -0.418171770068413
1.4040404040404 -0.477137436367407
1.44444444444444 -0.539623479478134
1.48484848484848 -0.605866775823979
1.52525252525253 -0.676097436583212
1.56565656565657 -0.750540320107528
1.60606060606061 -0.829416259940769
1.64646464646465 -0.912943065786692
1.68686868686869 -1.00133634230577
1.72727272727273 -1.09481016107605
1.76767676767677 -1.19357761369153
1.80808080808081 -1.29785126825577
1.84848484848485 -1.40784354706301
1.88888888888889 -1.52376703975153
1.92929292929293 -1.64583476344421
1.96969696969697 -1.77426037919468
2.01010101010101 -1.90925837230671
2.05050505050505 -2.05104420269507
2.09090909090909 -2.19983443033111
2.13131313131313 -2.3558468199111
2.17171717171717 -2.51930042815197
2.21212121212121 -2.69041567652514
2.25252525252525 -2.86941441175504
2.29292929292929 -3.05651995601369
2.33333333333333 -3.2519571484195
2.37373737373737 -3.45595237918229
2.41414141414141 -3.66873361751792
2.45454545454545 -3.89053043427489
2.49494949494949 -4.12157402006561
2.53535353535354 -4.3620971995707
2.57575757575758 -4.61233444258087
2.61616161616162 -4.87252187225495
2.65656565656566 -5.14289727099974
2.6969696969697 -5.42370008431715
2.73737373737374 -5.71517142291281
2.77777777777778 -6.0175540633174
2.81818181818182 -6.33109244723566
2.85858585858586 -6.65603267980726
2.8989898989899 -6.99262252693765
2.93939393939394 -7.34111141183481
2.97979797979798 -7.70175041086892
3.02020202020202 -8.07479224885591
3.06060606060606 -8.46049129385205
3.1010101010101 -8.8591035515348
3.14141414141414 -9.27088665923523
3.18181818181818 -9.69609987967839
3.22222222222222 -10.1350040944806
3.26262626262626 -10.5878617974461
3.3030303030303 -11.0549370877003
3.34343434343434 -11.5364956626909
3.38383838383838 -12.0328048110854
3.42424242424242 -12.5441334055896
3.46464646464646 -13.0707518957063
3.50505050505051 -13.6129323004548
3.54545454545455 -14.1709482010658
3.58585858585859 -14.7450747336653
3.62626262626263 -15.3355885819602
3.66666666666667 -15.942767969936
3.70707070707071 -16.5668926545743
3.74747474747475 -17.2082439186001
3.78787878787879 -17.8671045632632
3.82828282828283 -18.5437589011609
3.86868686868687 -19.2384927491071
3.90909090909091 -19.9515934210506
3.94949494949495 -20.6833497210479
3.98989898989899 -21.4340519362919
4.03030303030303 -22.2039918302006
4.07070707070707 -22.993462635566
4.11111111111111 -23.8027590477675
4.15151515151515 -24.6321772180481
4.19191919191919 -25.4820147468569
4.23232323232323 -26.3525706772582
4.27272727272727 -27.2441454884062
4.31313131313131 -28.1570410890889
4.35353535353535 -29.0915608113374
4.39393939393939 -30.0480094041046
4.43434343434343 -31.0266930270103
4.47474747474747 -32.0279192441544
4.51515151515152 -33.0519970179969
4.55555555555556 -34.0992367033042
4.5959595959596 -35.1699500411629
4.63636363636364 -36.2644501530584
4.67676767676768 -37.3830515350189
4.71717171717172 -38.5260700518249
4.75757575757576 -39.6938229312813
4.7979797979798 -40.8866287585544
4.83838383838384 -42.1048074705702
4.87878787878788 -43.3486803504755
4.91919191919192 -44.6185700221598
4.95959595959596 -45.9148004448378
5 -47.2376969076908
};
\addplot [thick, fwdShock]
table {%
5 -47.2376969076908
5.12297852318875 -47.1146094494268
5.24595704637749 -46.9908883479069
5.36893556956624 -46.8659508494445
5.49191409275499 -46.7392623756642
5.61489261594373 -46.6103296266413
5.73787113913248 -46.4786947615836
5.86084966232123 -46.3439304687301
5.98382818550997 -46.2056357725849
6.10680670869872 -46.0634324553636
6.22978523188747 -45.9169619923444
6.35276375507621 -45.7658829190077
6.47574227826496 -45.6098685624119
6.59872080145371 -45.448605080989
6.72169932464245 -45.2817897664244
6.8446778478312 -45.1091295689984
6.96765637101995 -44.930339814061
7.09063489420869 -44.7451430824748
7.21361341739744 -44.5532682321146
7.33659194058619 -44.354449541033
7.45957046377493 -44.1484259558249
7.58254898696368 -43.934940431165
7.70552751015243 -43.7137393485346
7.82850603334117 -43.4845720038719
7.95148455652992 -43.247190155329
8.07446307971867 -43.0013476235423
8.19744160290741 -42.7467999378662
8.32042012609616 -42.4833040229025
8.44339864928491 -42.2106179204154
8.56637717247365 -41.9285005423678
8.6893556956624 -41.63671145137
8.81233421885115 -41.3350106653077
8.93531274203989 -41.0231584833283
9.05829126522864 -40.7009153307175
9.18126978841738 -40.368041620507
9.30424831160613 -40.0242976299181
9.42722683479488 -39.6694433899778
9.55020535798362 -39.3032385868439
9.67318388117237 -38.9254424735505
9.79616240436112 -38.5358137910383
9.91914092754986 -38.1341106974654
10.0421194507386 -37.7200907049138
10.1650979739274 -37.2935106227036
10.2880764971161 -36.8541265066229
10.4110550203049 -36.401693613453
10.5340335434936 -35.9359663602439
10.6570120666823 -35.4566982878497
10.7799905898711 -34.9636420282934
10.9029691130598 -34.4565492755712
11.0259476362486 -33.9351707595538
11.1489261594373 -33.399256222676
11.2719046826261 -32.8485543991392
11.3948832058148 -32.2828129963801
11.5178617290036 -31.7017786785858
11.6408402521923 -31.1051970520573
11.7638187753811 -30.4928126522429
11.8867972985698 -29.8643689322829
12.0097758217586 -29.219608252923
12.1327543449473 -28.5582718736649
12.255732868136 -27.88009994504
12.3787113913248 -27.1848315019015
12.5016899145135 -26.4722044576383
12.6246684377023 -25.7419555992276
12.747646960891 -24.9938205830476
12.8706254840798 -24.2275339313811
12.9936040072685 -23.4428290295469
13.1165825304573 -22.6394381236009
13.239561053646 -21.8170923185567
13.3625395768348 -20.9755215770764
13.4855181000235 -20.1144547185907
13.6084966232123 -19.2336194188089
13.731475146401 -18.3327422095827
13.8544536695898 -17.4115484790929
13.9774321927785 -16.4697624723291
14.1004107159672 -15.5071072918356
14.223389239156 -14.5233048986997
14.3463677623447 -13.5180761137599
14.4693462855335 -12.4911406190139
14.5923248087222 -11.4422169592079
14.715303331911 -10.3710225435907
14.8382818550997 -9.27727364781597
14.9612603782885 -8.16068541598111
15.0842389014772 -7.02097186278655
15.207217424666 -5.85784587580615
15.3301959478547 -4.67101921785726
15.4531744710435 -3.46020252945989
15.5761529942322 -2.22510533137675
15.699131517421 -0.965436027225415
15.8221100406097 0.319098093844929
15.9450885637984 1.62879085441907
16.0680670869872 2.96393718603008
16.1910456101759 4.32483312632859
16.3140241333647 5.71177581621166
16.4370026565534 7.12506349691726
16.5599811797422 8.56499550708779
16.6829597029309 10.0318722798076
16.8059382261197 11.5259953396186
16.9289167493084 13.0476672995156
17.0518952724972 14.5971918579276
17.1748737956859 16.174873795686
};
\addplot [thick, extra]
table {%
0 0
};
\addplot [thick, bakShock]
table {%
23.3629173576053 22.3629173576053
23.5591881140894 22.1666259957577
23.7554588705735 21.9702123529329
23.9517296270576 21.7735577659525
24.1480003835417 21.5765473273094
24.3442711400258 21.3790696864774
24.5405418965099 21.1810168629564
24.736812652994 20.9822840702669
24.9330834094781 20.7827695501632
25.1293541659622 20.5823744163943
25.3256249224463 20.3810025073856
25.5218956789304 20.1785602472639
25.7181664354145 19.9749565146897
25.9144371918986 19.7701025189982
26.1107079483827 19.5639116831875
26.3069787048668 19.3562995333243
26.5032494613509 19.1471835939682
26.699520217835 18.936483289244
26.8957909743191 18.7241198492153
27.0920617308032 18.5100162212392
27.2883324872873 18.2940969860013
27.4846032437714 18.076288277953
27.6808740002555 17.8565177098884
27.8771447567396 17.6347143014196
28.0734155132237 17.4108084111225
28.2696862697078 17.1847316721404
28.4659570261919 16.9564169310478
28.662227782676 16.7257981897879
28.8584985391601 16.4928105505112
29.0547692956442 16.2573901631518
29.2510400521283 16.0194741755884
29.4473108086124 15.7790006862492
29.6435815650965 15.535908699024
29.8398523215806 15.2901380803602
30.0361230780647 15.0416295184228
30.2323938345488 14.790324484209
30.4286645910329 14.5361651945119
30.624935347517 14.2790945766362
30.8212061040011 14.0190562347739
31.0174768604852 13.7559944179517
31.2137476169693 13.4898539894706
31.4100183734534 13.2205803977597
31.6062891299375 12.9481196485705
31.8025598864216 12.6724182784463
31.9988306429057 12.3934233293995
32.1951013993898 12.1110823247365
32.391372155874 11.825343245975
32.587642912358 11.5361545107954
32.7839136688421 11.2434649519794
32.9801844253263 10.9472237972841
33.1764551818104 10.6473806502075
33.3727259382945 10.3438854716014
33.5689966947786 10.0366885620915
33.7652674512627 9.72574054526468
33.9615382077468 9.41099235158775
34.1578089642309 9.0923952030227
34.354079720715 8.76990059830507
34.5503504771991 8.44346029885466
34.7466212336832 8.11302631528867
34.9428919901673 7.7785508945093
35.1391627466514 7.43998650733922
35.3354335031355 7.09728583667935
35.5317042596196 6.75040176616533
35.7279750161037 6.39928736929948
35.9242457725878 6.04389589903689
36.1205165290719 5.68418077780493
36.316787285556 5.32009558793653
36.5130580420401 4.95159406249892
36.7093287985242 4.57863007649992
36.9055995550083 4.20115763845494
37.1018703114924 3.81913088229888
37.2981410679765 3.43250405962757
37.4944118244606 3.04123153225435
37.6906825809447 2.64526776506781
37.8869533374288 2.24456731917784
38.0832240939129 1.8390848453373
38.279494850397 1.42877507762729
38.4757656068811 1.01359282739492
38.6720363633652 0.593492977432593
38.8683071198493 0.168430476388469
39.0645778763334 -0.261639666601724
39.2608486328175 -0.696762387070827
39.4571193893016 -1.13698257017333
39.6533901457857 -1.58234505544409
39.8496609022698 -2.0328946413706
40.0459316587539 -2.48867608978683
40.242202415238 -2.9497341300957
40.4384731717221 -3.41611346332787
40.6347439282062 -3.88785876604308
40.8310146846903 -4.36501469408061
41.0272854411744 -4.84762588616541
41.2235561976585 -5.33573696737536
41.4198269541426 -5.82939255247561
41.6160977106267 -6.32863724912501
41.8123684671108 -6.83351566096055
42.0086392235949 -7.34407239056379
42.204909980079 -7.86035204231464
42.4011807365631 -8.38239922513693
42.5974514930472 -8.91025855513981
42.7937222495313 -9.44397465815947
};
\addplot [thick, grey127]
table {%
0 0
};
\addplot [thick, bakShock]
table {%
6.91459099223217 105.129993573971
7.11362389588482 104.930720168006
7.31265679953748 104.730054734971
7.51168970319014 104.526733606287
7.7107226068428 104.319611839217
7.90975551049546 104.107643699665
8.10878841414812 103.88986662685
8.30782131780078 103.665387987601
8.50685422145344 103.433374080035
8.70588712510609 103.19304096204
8.90492002875875 102.943646768665
9.10395293241141 102.684485250962
9.30298583606407 102.414880321937
9.50201873971673 102.134181436854
9.70105164336939 101.841759667818
9.90008454702205 101.53700435847
10.0991174506747 101.219320265238
10.2981503543274 100.888125108149
10.49718325798 100.542847467525
10.6962161616327 100.182924973684
10.8952490652853 99.8078027455764
11.094281968938 99.416932041477
11.2933148725907 99.0097690907938
11.4923477762433 98.5857740809189
11.691380679896 98.144410277121
11.8904135835486 97.6851432568386
12.0894464872013 97.2074402425523
12.288479390854 96.7107695197673
12.4875122945066 96.1945999286224
12.6865451981593 95.6584004193015
12.8855781018119 95.1016396628384
13.0846110054646 94.5237857100894
13.2836439091172 93.9243056926672
13.4826768127699 93.3026655604808
13.6817097164226 92.6583298512643
13.8807426200752 91.9907614880994
14.0797755237279 91.2994216014713
14.2788084273805 90.5837693728575
14.4778413310332 89.8432618972401
14.6768742346859 89.0773540622713
14.8759071383385 88.2854984421133
15.0749400419912 87.4671452042257
15.2739729456438 86.6217420275883
15.4730058492965 85.7487340310381
15.6720387529491 84.8475637105601
15.8710716566018 83.9176708845152
16.0701045602545 82.9584926459075
16.2691374639071 81.9694633209059
16.4681703675598 80.9500144329232
16.6672032712124 79.8995746716409
16.8662361748651 78.8175698664369
17.0652690785178 77.703422963737
17.2643019821704 76.5565540078667
17.4633348858231 75.3763801250234
17.6623677894757 74.1623155100389
17.8614006931284 72.913771415631
18.0604335967811 71.630156143882
18.2594665004337 70.3108750397075
18.4584994040864 68.9553304861052
18.657532307739 67.5629219009972
18.8565652113917 66.1330457354962
19.0555981150443 64.6650954734478
19.254631018697 63.1584616321136
19.4536639223497 61.6125317638745
19.6526968260023 60.0266904588477
19.851729729655 58.4003193483191
20.0507626333076 56.7327971089057
20.2497955369603 55.0234994673686
20.448828440613 53.271799206007
20.6478613442656 51.4770661685703
20.8468942479183 49.6386672666302
21.0459271515709 47.7559664863626
21.2449600552236 45.8283248956927
21.4439929588763 43.8551006517608
21.6430258625289 41.8356490086721
21.8420587661816 39.7693223254963
22.0410916698342 37.655470074485
22.2401245734869 35.4934388494812
22.4391574771395 33.2825723744938
22.6381903807922 31.0222115124153
22.8372232844449 28.7116942738623
23.0362561880975 26.3503558261192
23.2352890917502 23.9375285021695
23.4343219954028 21.4725418097988
23.6333548990555 18.9547224407549
23.8323878027082 16.3833942799551
24.0314207063608 13.7578784147254
24.2304536100135 11.0774931440653
24.4294865136661 8.34155398792726
24.6285194173188 5.54937369650088
24.8275523209715 2.70026225949793
25.0265852246241 -0.206473084572139
25.2256181282768 -3.17152783914044
25.4246510319294 -6.19560024033662
25.6236839355821 -9.27939124777143
25.8227168392347 -12.4236045353492
26.0217497428874 -15.6289464821129
26.2207826465401 -18.8961261631266
26.4198155501927 -22.2258553403983
26.6188484538454 -25.6188484538454
};
\addplot [thick, grey127, mark=*, mark size=2, mark options={solid}, only marks]
table {%
5 -47.2376969076908
};
\addplot [thick, grey127, mark=*, mark size=2, mark options={solid}, only marks]
table {%
26.6188484538454 -25.6188484538454
};
\addplot [thick, grey127, mark=*, mark size=2, mark options={solid}, only marks]
table {%
1 0
};
\addplot [thick, grey127, mark=*, mark size=2, mark options={solid}, only marks]
table {%
17.1748737956859 16.1748737956859
};
\addplot [thick, grey127, mark=*, mark size=2, mark options={solid}, only marks]
table {%
42.7937222495313 -9.44397465815947
};
\addplot [thick, grey127, mark=*, mark size=2, mark options={solid}, only marks]
table {%
23.3629173576053 22.3629173576053
};
\addplot [thick, grey127, mark=*, mark size=2, mark options={solid}, only marks]
table {%
6.91459099223217 105.129993573971
};
\draw (axis cs:4,-45) node[
scale=0.8,
  anchor=east,
]{4};
\draw (axis cs:27,-27.4938484538454) node[
scale=0.8,
  anchor=north,
]{0};
\draw (axis cs:1,2.5) node[
 scale=0.8,
 anchor=south,
]{1};
\draw (axis cs:17.1748737956859,18.6748737956859) node[
scale=0.8,
  anchor=south,
]{2};
\draw (axis cs:42,-11.5) node[
scale=0.8,
  anchor=north,
]{3};
\draw (axis cs:20,24) node[
scale=0.8,
  anchor=south west,
]{5};
\draw (axis cs:8.78959099223217,103.254993573971) node[
scale=0.8,
  anchor=north,
 ]{L};
\end{axis}
\end{tikzpicture}
  \end{minipage} \hfill}
  \caption{Periodic cycle of states with repeating wave pattern}
  \label{fig:geng}
\end{figure}

We begin by describing the simplest, unperturbed example in
\cite{BCZ18}, as shown in Figure \ref{fig:geng}; the left panel shows
the proposed wave pattern, while the right panel shows the cycle in
state space.  The solution consists of a strong backward shock moving
into ahead state $w_L$, with a periodic wave interaction pattern
behind the shock.  For this description, we denote the wave spanning
left and right states by $(w_\ell|w_r)$.  The backward (blue) waves
behind the strong shock alternate between a rarefaction $(w_0|w_1)$
and and a shock $(w_5|w_3)$; these interact with the leftmost backward
shock, reflecting forward compressions $(w_5|w_1)$ and rarefactions
$(w_0|w_3)$, respectively.  After crossing other backward waves, these
forward compressions $(w_5|w_1)$ become $(w_3|w_4)$ and collapse into
shocks $(w_2|w_4)$, reflecting backward rarefaction $(w_3|w_2)$.  The
forward shock $(w_2|w_4)$ then interacts with forward rarefaction
$(w_4|w_3)$, reflecting backward compression $(w_2|w_3)$.  This
backward compression collapses into backward shock $(w_1|w_4)$, and
resolving the wave crossings, this pattern can be made to repeat, as
shown in Figure~\ref{fig:geng}.  Note that the characteristic pattern
on the left is approximate, but the cycle in state space, that is in
the $(z,u)$ plane, can be constructed exactly.

In order to realize this cycle in state space, we use the following
notation: states are denoted by $w_\square := (z_\square,u_\square)$,
where $z$ is the Riemann coordinate defined by \eqref{zdef}, and the
wavestrength is given by \eqref{gdstr}.  Forward and backward simple
waves and shocks are described by
\begin{equation}
  \label{gdwvs}
  w_b = W_\pm(w_a,\zeta), \qquad
  w_a = W_\pm^\T(w_b,\zeta), \qquad
  w_b = S_\pm(w_a,\zeta), \qquad
  w_a = S_\pm^\T(w_b,\zeta),
\end{equation}
respectively, where the subscripts $a$ and $b$ refer to the states
ahead of and behind the waves, $\pm$ denotes the sign of the
wavespeed, so that $+$ refers to forward waves and $-$ to backward
waves, and as usual, the strength $\z$ is positive for rarefactions
and negative for compressive waves.  In the $(z,u)$ plane, the simple
wave curves lie along lines of slope $\pm1$, and wave strength is the
change in opposite Riemann invariant,
$\zeta = \big([z] \pm [u]\big)/2$, where $[z]:=z_a-z_b$.  This
notation is shown in Figure~\ref{fig:wvcrv}: the left panel shows the
wave curves in the $(z,u)$ plane with ahead state $w_a$ fixed, and the right
with behind state $w_b$ fixed.

\begin{figure}[thb]
  \centering
% This file was created with tikzplotlib v0.10.1.post13.
\begin{tikzpicture}[scale=0.8]

\begin{groupplot}[group style={group size=2 by 1}]
\pgfplotsset{every linear axis/.append style={xtick=\empty,ytick=\empty}}
\nextgroupplot[
tick pos=left,
xmin=-0.12499895, xmax=2.62499995,
ymin=-2.70367479899657, ymax=2.74779403804746
]
\addplot [very thick, bakSimple]
table {%
0 1
1 0
};
\addplot [very thick, bakSimple]
table {%
1 0
2.5 -1.5
};
\addplot [very thick, bakShock]
table {%
1 -0
1.01515151515152 -0.0151542845904287
1.03030303030303 -0.0303215076157827
1.04545454545455 -0.0455148979213915
1.06060606060606 -0.0607464643104819
1.07575757575758 -0.0760275494158125
1.09090909090909 -0.0913688788457496
1.10606060606061 -0.106780606064652
1.12121212121212 -0.122272353434357
1.13636363636364 -0.13785324979461
1.15151515151515 -0.15353196491819
1.16666666666667 -0.169316741139617
1.18181818181818 -0.185215422424007
1.1969696969697 -0.20123548111419
1.21212121212121 -0.21738404256912
1.22727272727273 -0.233667907884505
1.24242424242424 -0.250093574866991
1.25757575757576 -0.266667257415912
1.27272727272727 -0.283394903451259
1.28787878787879 -0.300282211512819
1.3030303030303 -0.317334646143315
1.31818181818182 -0.334557452157518
1.33333333333333 -0.351955667889632
1.34848484848485 -0.369534137502604
1.36363636363636 -0.387297522435278
1.37878787878788 -0.40525031205635
1.39393939393939 -0.423396833587863
1.40909090909091 -0.441741261355383
1.42424242424242 -0.460287625416944
1.43939393939394 -0.479039819618302
1.45454545454545 -0.498001609117953
1.46969696969697 -0.517176637421646
1.48484848484848 -0.536568432962769
1.5 -0.556180415261979
1.51515151515152 -0.576015900696641
1.53030303030303 -0.596078107908188
1.54545454545455 -0.616370162873217
1.56060606060606 -0.636895103662079
1.57575757575758 -0.65765588490683
1.59090909090909 -0.678655381998702
1.60606060606061 -0.699896395033681
1.62121212121212 -0.721381652523353
1.63636363636364 -0.743113814886861
1.65151515151515 -0.765095477738636
1.66666666666667 -0.787329174985447
1.68181818181818 -0.80981738174533
1.6969696969697 -0.832562517100009
1.71212121212121 -0.85556694669161
1.72727272727273 -0.878832985173645
1.74242424242424 -0.902362898525566
1.75757575757576 -0.926158906239507
1.77272727272727 -0.950223183387236
1.78787878787879 -0.974557862574762
1.8030303030303 -0.999165035791546
1.81818181818182 -1.02404675616077
1.83333333333333 -1.04920503959671
1.84848484848485 -1.07464186637475
1.86363636363636 -1.10035918261941
1.87878787878788 -1.12635890171507
1.89393939393939 -1.15264290564423
1.90909090909091 -1.17921304625725
1.92424242424242 -1.20607114647779
1.93939393939394 -1.23321900144763
1.95454545454545 -1.26065837961425
1.96969696969697 -1.28839102376454
1.98484848484848 -1.31641865200767
2 -1.34474295870995
2.01515151515152 -1.37336561538443
2.03030303030303 -1.40228827153765
2.04545454545455 -1.43151255547601
2.06060606060606 -1.46104007507396
2.07575757575758 -1.49087241850596
2.09090909090909 -1.52101115494438
2.10606060606061 -1.55145783522498
2.12121212121212 -1.58221399248182
2.13636363636364 -1.61328114275322
2.15151515151515 -1.64466078556014
2.16666666666667 -1.67635440445869
2.18181818181818 -1.70836346756787
2.1969696969697 -1.74068942807399
2.21212121212121 -1.77333372471281
2.22727272727273 -1.80629778223078
2.24242424242424 -1.83958301182611
2.25757575757576 -1.87319081157102
2.27272727272727 -1.9071225668159
2.28787878787879 -1.94137965057632
2.3030303030303 -1.97596342390377
2.31818181818182 -2.01087523624103
2.33333333333333 -2.04611642576261
2.34848484848485 -2.08168831970141
2.36363636363636 -2.11759223466188
2.37878787878788 -2.15382947692058
2.39393939393939 -2.19040134271464
2.40909090909091 -2.2273091185187
2.42424242424242 -2.2645540813109
2.43939393939394 -2.3021374988284
2.45454545454545 -2.34006062981294
2.46969696969697 -2.3783247242469
2.48484848484848 -2.4169310235803
2.5 -2.45588076094911
};
\addplot [very thick, fwdSimple]
table {%
0 -1
1 -0
};
\addplot [very thick, fwdSimple]
table {%
  1 -0
  2.5 1.5
};
\addplot [very thick, fwdShock]
table {%
1 0
1.01515151515152 0.0151542845904287
1.03030303030303 0.0303215076157827
1.04545454545455 0.0455148979213915
1.06060606060606 0.0607464643104819
1.07575757575758 0.0760275494158125
1.09090909090909 0.0913688788457496
1.10606060606061 0.106780606064652
1.12121212121212 0.122272353434357
1.13636363636364 0.13785324979461
1.15151515151515 0.15353196491819
1.16666666666667 0.169316741139617
1.18181818181818 0.185215422424007
1.1969696969697 0.20123548111419
1.21212121212121 0.21738404256912
1.22727272727273 0.233667907884505
1.24242424242424 0.250093574866991
1.25757575757576 0.266667257415912
1.27272727272727 0.283394903451259
1.28787878787879 0.300282211512819
1.3030303030303 0.317334646143315
1.31818181818182 0.334557452157518
1.33333333333333 0.351955667889632
1.34848484848485 0.369534137502604
1.36363636363636 0.387297522435278
1.37878787878788 0.40525031205635
1.39393939393939 0.423396833587863
1.40909090909091 0.441741261355383
1.42424242424242 0.460287625416944
1.43939393939394 0.479039819618302
1.45454545454545 0.498001609117953
1.46969696969697 0.517176637421646
1.48484848484848 0.536568432962769
1.5 0.556180415261979
1.51515151515152 0.576015900696641
1.53030303030303 0.596078107908188
1.54545454545455 0.616370162873217
1.56060606060606 0.636895103662079
1.57575757575758 0.65765588490683
1.59090909090909 0.678655381998702
1.60606060606061 0.699896395033681
1.62121212121212 0.721381652523353
1.63636363636364 0.743113814886861
1.65151515151515 0.765095477738636
1.66666666666667 0.787329174985447
1.68181818181818 0.80981738174533
1.6969696969697 0.832562517100009
1.71212121212121 0.85556694669161
1.72727272727273 0.878832985173645
1.74242424242424 0.902362898525566
1.75757575757576 0.926158906239507
1.77272727272727 0.950223183387236
1.78787878787879 0.974557862574762
1.8030303030303 0.999165035791546
1.81818181818182 1.02404675616077
1.83333333333333 1.04920503959671
1.84848484848485 1.07464186637475
1.86363636363636 1.10035918261941
1.87878787878788 1.12635890171507
1.89393939393939 1.15264290564423
1.90909090909091 1.17921304625725
1.92424242424242 1.20607114647779
1.93939393939394 1.23321900144763
1.95454545454545 1.26065837961425
1.96969696969697 1.28839102376454
1.98484848484848 1.31641865200767
2 1.34474295870995
2.01515151515152 1.37336561538443
2.03030303030303 1.40228827153765
2.04545454545455 1.43151255547601
2.06060606060606 1.46104007507396
2.07575757575758 1.49087241850596
2.09090909090909 1.52101115494438
2.10606060606061 1.55145783522498
2.12121212121212 1.58221399248182
2.13636363636364 1.61328114275322
2.15151515151515 1.64466078556014
2.16666666666667 1.67635440445869
2.18181818181818 1.70836346756787
2.1969696969697 1.74068942807399
2.21212121212121 1.77333372471281
2.22727272727273 1.80629778223078
2.24242424242424 1.83958301182611
2.25757575757576 1.87319081157102
2.27272727272727 1.9071225668159
2.28787878787879 1.94137965057632
2.3030303030303 1.97596342390377
2.31818181818182 2.01087523624103
2.33333333333333 2.04611642576261
2.34848484848485 2.08168831970141
2.36363636363636 2.11759223466188
2.37878787878788 2.15382947692058
2.39393939393939 2.19040134271464
2.40909090909091 2.2273091185187
2.42424242424242 2.2645540813109
2.43939393939394 2.3021374988284
2.45454545454545 2.34006062981294
2.46969696969697 2.3783247242469
2.48484848484848 2.4169310235803
2.5 2.45588076094911
};
\addplot [thick, grey127]
table {%
1 2.5
2.5 1
};
\addplot [thick, grey127, mark=*, mark size=3, mark options={solid}, only marks]
table {%
1 0
};
\draw (axis cs:1,0) node[
  scale=1.2,
  anchor=south,
]{$w_a$};
\draw (axis cs:0.8,-0.6) node[
  scale=1.2,
  anchor=north,
]{$W_+(w_a,\z)$};
\draw (axis cs:2.2,1) node[
  scale=1.2,
  anchor=north,
]{$W_+(w_a,\z)$};
\draw (axis cs:2.2,-1) node[
  scale=1.2,
  anchor=south,
]{$W_-(w_a,\z)$};
\draw (axis cs:2,2) node[
  scale=1.2,
  anchor=south,
]{$S_+(w_a,\z)$};
\draw (axis cs:2,-2) node[
  scale=1.2,
  anchor=north,
]{$S_-(w_a,\z)$};
\draw (axis cs:0.6,1.8) node[
  scale=1.2,
  anchor=south,
]{$\z<0$};
\draw [decorate,very thick,grey127,
decoration = {brace}]
(0.1,1.1) -- (1.2,2.2);
\addplot [black,->]
table {%
0 -2.5
0 -1.2
};
\addplot [black,->]
table {%
0 -2.5
0.7 -2.5
};
\draw (axis cs:0.7,-2.5) node[
scale=1.2,
  anchor=south
]{$z$};
\draw (axis cs:0,-1.2) node[
scale=1.2,
  anchor=west
]{$u$};

\nextgroupplot[
tick pos=left,
xmin=0.07, xmax=2.93,
ymin=-3.16218528868882, ymax=3.16218528868882
]
\addplot [very thick, bakSimple]
table {%
0.2 0.8
1 0
};
\addplot [very thick, bakSimple]
table {%
1 0
2.8 -1.8
};
\addplot [very thick, fwdShock]
table {%
0.2 -2.87471389880802
0.208080808080808 -2.70741786090903
0.216161616161616 -2.55551537887976
0.224242424242424 -2.41707987982738
0.232323232323232 -2.29048360002354
0.24040404040404 -2.17434281873088
0.248484848484849 -2.06747457125086
0.256565656565657 -1.96886215179676
0.264646464646465 -1.87762740923022
0.272727272727273 -1.79300833774489
0.280808080808081 -1.71434082822735
0.288888888888889 -1.64104371374672
0.296969696969697 -1.57260644163189
0.305050505050505 -1.50857885386786
0.313131313131313 -1.44856267046615
0.321212121212121 -1.39220435657641
0.329292929292929 -1.33918912027147
0.337373737373737 -1.28923583914204
0.345454545454545 -1.24209275373069
0.353535353535354 -1.19753379711798
0.361616161616162 -1.15535545465338
0.36969696969697 -1.11537406740832
0.377777777777778 -1.07742350855501
0.385858585858586 -1.04135317440942
0.393939393939394 -1.00702624198159
0.402020202020202 -0.974318153061545
0.41010101010101 -0.943115291530309
0.418181818181818 -0.913313826029242
0.426262626262626 -0.884818694589756
0.434343434343434 -0.857542711507781
0.442424242424242 -0.831405779793935
0.450505050505051 -0.806334195060131
0.458585858585859 -0.782260028811558
0.466666666666667 -0.759120580875933
0.474747474747475 -0.736857892181023
0.482828282828283 -0.715418310336437
0.490909090909091 -0.694752101526749
0.498989898989899 -0.674813103113101
0.507070707070707 -0.655558412096285
0.515151515151515 -0.636948105238019
0.523232323232323 -0.618944987186687
0.531313131313131 -0.601514363424407
0.539393939393939 -0.58462383525603
0.547474747474747 -0.56824311440812
0.555555555555556 -0.552343855105537
0.563636363636364 -0.536899501752144
0.571717171717172 -0.521885150566483
0.57979797979798 -0.507277423717936
0.587878787878788 -0.493054354678265
0.595959595959596 -0.479195283651038
0.604040404040404 -0.465680762070383
0.612121212121212 -0.452492465273321
0.62020202020202 -0.439613112548838
0.628282828282828 -0.42702639385369
0.636363636363636 -0.41471690256137
0.644444444444444 -0.402670073677981
0.652525252525253 -0.390872127018224
0.660606060606061 -0.379310014887244
0.668686868686869 -0.367971373860636
0.676767676767677 -0.356844480296194
0.684848484848485 -0.345918209247618
0.692929292929293 -0.335181996483066
0.701010101010101 -0.324625803340433
0.709090909090909 -0.314240084177208
0.717171717171717 -0.304015756195866
0.725252525252525 -0.293944171446504
0.733333333333333 -0.284017090826938
0.741414141414142 -0.274226659917126
0.749494949494949 -0.264565386499715
0.757575757575758 -0.255026119631928
0.765656565656566 -0.245602030146083
0.773737373737374 -0.236286592466918
0.781818181818182 -0.227073567643729
0.78989898989899 -0.217956987504167
0.797979797979798 -0.20893113984459
0.806060606060606 -0.19999055457906
0.814141414141414 -0.191129990775722
0.822222222222222 -0.182344424515181
0.83030303030303 -0.173629037510958
0.838383838383838 -0.164979206436986
0.846464646464646 -0.156390492911581
0.854545454545454 -0.147858634091402
0.862626262626263 -0.139379533832601
0.870707070707071 -0.130949254379744
0.878787878787879 -0.122564008546194
0.886868686868687 -0.114220152352417
0.894949494949495 -0.105914178091283
0.903030303030303 -0.0976427077917872
0.911111111111111 -0.0894024870547509
0.919191919191919 -0.0811903792360578
0.927272727272727 -0.0730033599547934
0.935353535353535 -0.0648385119053138
0.943434343434344 -0.0566930199537963
0.951515151515151 -0.048564166501238
0.95959595959596 -0.0404493270961482
0.967676767676768 -0.0323459662813726
0.975757575757576 -0.0242516336605421
0.983838383838384 -0.0161639601704847
0.991919191919192 -0.0080806545451501
1 -0
};
\addplot [very thick, fwdSimple]
table {%
0.2 -0.8
1 -0
};
\addplot [very thick, fwdSimple]
table {%
1 -0
2.8 1.8
};
\addplot [very thick, bakShock]
table {%
0.2 2.87471389880802
0.208080808080808 2.70741786090903
0.216161616161616 2.55551537887976
0.224242424242424 2.41707987982738
0.232323232323232 2.29048360002354
0.24040404040404 2.17434281873088
0.248484848484849 2.06747457125086
0.256565656565657 1.96886215179676
0.264646464646465 1.87762740923022
0.272727272727273 1.79300833774489
0.280808080808081 1.71434082822735
0.288888888888889 1.64104371374672
0.296969696969697 1.57260644163189
0.305050505050505 1.50857885386786
0.313131313131313 1.44856267046615
0.321212121212121 1.39220435657641
0.329292929292929 1.33918912027147
0.337373737373737 1.28923583914204
0.345454545454545 1.24209275373069
0.353535353535354 1.19753379711798
0.361616161616162 1.15535545465338
0.36969696969697 1.11537406740832
0.377777777777778 1.07742350855501
0.385858585858586 1.04135317440942
0.393939393939394 1.00702624198159
0.402020202020202 0.974318153061545
0.41010101010101 0.943115291530309
0.418181818181818 0.913313826029242
0.426262626262626 0.884818694589756
0.434343434343434 0.857542711507781
0.442424242424242 0.831405779793935
0.450505050505051 0.806334195060131
0.458585858585859 0.782260028811558
0.466666666666667 0.759120580875933
0.474747474747475 0.736857892181023
0.482828282828283 0.715418310336437
0.490909090909091 0.694752101526749
0.498989898989899 0.674813103113101
0.507070707070707 0.655558412096285
0.515151515151515 0.636948105238019
0.523232323232323 0.618944987186687
0.531313131313131 0.601514363424407
0.539393939393939 0.58462383525603
0.547474747474747 0.56824311440812
0.555555555555556 0.552343855105537
0.563636363636364 0.536899501752144
0.571717171717172 0.521885150566483
0.57979797979798 0.507277423717936
0.587878787878788 0.493054354678265
0.595959595959596 0.479195283651038
0.604040404040404 0.465680762070383
0.612121212121212 0.452492465273321
0.62020202020202 0.439613112548838
0.628282828282828 0.42702639385369
0.636363636363636 0.41471690256137
0.644444444444444 0.402670073677981
0.652525252525253 0.390872127018224
0.660606060606061 0.379310014887244
0.668686868686869 0.367971373860636
0.676767676767677 0.356844480296194
0.684848484848485 0.345918209247618
0.692929292929293 0.335181996483066
0.701010101010101 0.324625803340433
0.709090909090909 0.314240084177208
0.717171717171717 0.304015756195866
0.725252525252525 0.293944171446504
0.733333333333333 0.284017090826938
0.741414141414142 0.274226659917126
0.749494949494949 0.264565386499715
0.757575757575758 0.255026119631928
0.765656565656566 0.245602030146083
0.773737373737374 0.236286592466918
0.781818181818182 0.227073567643729
0.78989898989899 0.217956987504167
0.797979797979798 0.20893113984459
0.806060606060606 0.19999055457906
0.814141414141414 0.191129990775722
0.822222222222222 0.182344424515181
0.83030303030303 0.173629037510958
0.838383838383838 0.164979206436986
0.846464646464646 0.156390492911581
0.854545454545454 0.147858634091402
0.862626262626263 0.139379533832601
0.870707070707071 0.130949254379744
0.878787878787879 0.122564008546194
0.886868686868687 0.114220152352417
0.894949494949495 0.105914178091283
0.903030303030303 0.0976427077917872
0.911111111111111 0.0894024870547509
0.919191919191919 0.0811903792360578
0.927272727272727 0.0730033599547934
0.935353535353535 0.0648385119053138
0.943434343434344 0.0566930199537963
0.951515151515151 0.048564166501238
0.95959595959596 0.0404493270961482
0.967676767676768 0.0323459662813726
0.975757575757576 0.0242516336605421
0.983838383838384 0.0161639601704847
0.991919191919192 0.0080806545451501
1 0
};
\addplot [thick, grey127]
table {%
0.2 -0.7
2 -2.5
};
\addplot [thick, grey127, mark=*, mark size=3, mark options={solid}, only marks]
table {%
1 0
};
\draw (axis cs:1,0) node[
  scale=1.2,
  anchor=south,
]{$w_b$};
\draw (axis cs:2.2,1.3) node[
  scale=1.2,
  anchor=south,
]{$W_+^\T(w_b,\z)$};
\draw (axis cs:2.2,-1.1) node[
  scale=1.2,
  anchor=south,
]{$W_-^\T(w_b,\z)$};
\draw (axis cs:0.2,-2.5) node[
  scale=1.2,
  anchor=west,
]{$S_+^\T(w_b,\z)$};
\draw (axis cs:0.2,2.5) node[
  scale=1.2,
  anchor=west,
]{$S_-^\T(w_b,\z)$};
\draw (axis cs:2.1,-1.7) node[
  scale=1.2,
  anchor=north,
]{$\z<0$};
\draw [decorate,very thick,grey127,
decoration = {brace}]
(2.3,-1.4) -- (1.7,-2.0);
\end{groupplot}

\end{tikzpicture}

  \caption{Wave curves and strengths}
  \label{fig:wvcrv}
\end{figure}

With this notation in place, and referring to Figures~\ref{fig:geng}
and~\ref{fig:cycle}, we describe the cycle of states behind the
persistent shock wave, as follows.  Fix (absolute) strength $\z>0$, to
be regarded as a parameter, and begin with state $w_0$.  Then define
the remaining states in turn by
\begin{equation}
  \label{cycle}
  \begin{aligned}
    w_1 &:= W_-\big(w_0,\z\big), \\
    w_4 &:= S_-\big(w_1,-\z\big), \\
    w_2 &:= S_+\big(w_4,-\beta\big), \\
    w_3 &:= W_-^\T\big(w_2,\z\big), \\
    w_5 &:= S_-^\T\big(w_3,-\z\big),    
  \end{aligned}
\end{equation}
where $-\beta$ is a shock strength chosen so that $w_2$, $w_1$ and
$w_5$ all lie on the same forward simple wave curve.  Because we wish
to allow perturbations of this cycle, we first give conditions under
which it can be closed.

\begin{lemma}
  \label{lem:cycle}
  Given any reference state $w_0$ and absolute strength
  $\z\in(0,z_0)$, define states $w_1, \dots, w_5$ by \eqref{cycle}.
  The condition $z_5<z_0$ is necessary and sufficient for this cycle
  to be closed to yield the approximate wave pattern of
  Figure~\ref{fig:geng}.  Moreover, for each $z_0>0$ there is a
  critical value
  \begin{equation}
    \label{z*}
    \z_* := \z_*(z_0) = O(1), \qquad 0<\z_*<z_0,
  \end{equation}
  such that whenever
  \[
    \z < \z_*(z_0), \qcom{we have} z_5 > z_0,
  \]
  so that the cycle can be completed \emph{only} if the wave strength $\z$ is
  \emph{large enough}.
\end{lemma}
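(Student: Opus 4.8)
The plan is to push the whole computation into the $(z,u)$-plane, using the Riemann coordinate $z=z(p)$ of \eqref{zdef}. There, by \eqref{gdwv}, the forward and backward simple wave curves $W_\pm$ are straight lines of slope $\pm1$ (with $u\mp z$ the constant Riemann invariant), while the shock curves $S_\pm$ and their flipped counterparts $S_\pm^\T$ are $C^2$ curves tangent to the corresponding simple wave curve at their base point, agreeing with it to second order, with the third-order deviation governed by the genuine nonlinearity $dc/dp>0$ as in Lemma~\ref{lem:gdints}. The first step is to record, once and for all, the second-order expansions of $S_\pm(w,\zeta)$ and $S_\pm^\T(w,\zeta)$ about $\zeta=0$; the crucial consequence I will need is that along a compressive forward shock the ``invariant offset'' $u-z$ drifts away from its base value by an amount that is $O(|\zeta|^3)$, monotone in $|\zeta|$, and of a \emph{definite positive sign}, fixed by the strict Jensen inequality $\langle 1/c\rangle^2<\langle 1/c^2\rangle$ already exploited for Lemma~\ref{lem:pres}; I will also need the companion expansion of the $z$-coordinate, namely $z_a-z_b=\zeta+O(\zeta^3)$ across a shock of strength $\zeta$.

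Next I will trace the cycle \eqref{cycle} one step at a time, writing each of $w_1,w_4,w_2,w_3,w_5$ as an explicit function of $w_0$, of the strength $\zeta$, and of the auxiliary shock strength $\beta$. The requirement that $w_2$, $w_1$ and $w_5$ lie on a common forward simple wave curve is the single scalar equation $u_2-z_2=u_1-z_1$; since the $(u-z)$-offset of $S_+(w_4,\cdot)$ is monotone, this equation has a unique solution $\beta=\beta(z_0,\zeta)$, after which $w_3$ and $w_5$, and hence $z_5=z_5(z_0,\zeta)$, are determined and depend continuously on $\zeta\in(0,z_0)$. The equivalence ``$z_5<z_0\iff$ the cycle is closed'' I will then obtain by inspecting the wave pattern of Figure~\ref{fig:geng}: matching incident with reflected waves around one period, the repeating pattern is realizable exactly when the new ahead state $w_5$ lies strictly behind $w_0$ in the ordering along the common forward wave curve, which in these coordinates reads precisely $z_5<z_0$. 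This part is essentially bookkeeping on the wave diagram.

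The substantive content is the small-$\zeta$ asymptotics. Tracking leading orders, a backward rarefaction of strength $\zeta$ followed by a backward shock of strength $-\zeta$ returns to $w_0$ up to the cubic shock deviation, so $w_4=w_0+O(\zeta^3)$, while $u_1-z_1=(u_0-z_0)+2\zeta$ holds exactly. Hence the collinearity equation forces the $(u-z)$-offset of the forward shock $S_+(w_4,-\beta)$ to equal $2\zeta+O(\zeta^3)$; since that offset is $\asymp\beta^3$ with a positive coefficient, this gives $\beta\sim(2\zeta/C_3)^{1/3}$ for some $C_3=C_3(z_0)>0$, so in particular $\beta>0$ and $\beta\to0$, i.e.\ the cycle \emph{does} close for every sufficiently small $\zeta$. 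Carrying $\beta$ back through the remaining steps, $z_5=z_2+O(\zeta^3)$ and $z_2=z_4+\beta+O(\beta^3)=z_0+\beta+o(\beta)$, hence
\[
  z_5-z_0=\beta(z_0,\zeta)\,\big(1+o(1)\big)\sim\Big(\tfrac{2\zeta}{C_3}\Big)^{1/3}>0
  \qquad\text{as }\zeta\to0^+.
\]
Thus $\zeta\mapsto z_5(z_0,\zeta)-z_0$ is continuous on $(0,z_0)$ and strictly positive near $\zeta=0$; setting $\zeta_*(z_0):=\inf\{\zeta\in(0,z_0):z_5(z_0,\zeta)\le z_0\}$, which is strictly below $z_0$ because, as in \cite{BCZ18}, the cycle does close for $\zeta$ close enough to $z_0$, we obtain $0<\zeta_*<z_0$ with $z_5>z_0$ whenever $\zeta<\zeta_*$, and $\zeta_*=O(1)$ since it is a root of an $O(1)$-normalized function on the $O(1)$ interval $(0,z_0)$, as asserted in \eqref{z*}.

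I expect the main obstacle to be the sign-and-order bookkeeping: verifying that the cubic coefficient $C_3$ of the forward-shock $(u-z)$-offset is genuinely positive — so that the equation for $\beta$ has a small positive root rather than none — and checking that the resulting $+\beta\sim\zeta^{1/3}$ in $z_5-z_0$ truly dominates, that is, is not cancelled by the corrections accumulated along the other shock arcs $S_-(w_1,-\zeta)$ and $S_-^\T(w_3,-\zeta)$. Both should reduce to the convexity already used in Lemma~\ref{lem:pres} (Jensen for $\langle 1/c\rangle$ versus $\langle 1/c^2\rangle$, together with monotonicity of $c(p)$), but getting the magnitudes to line up, $\beta\asymp\zeta^{1/3}\gg\zeta$, is where the estimate must be done with care.
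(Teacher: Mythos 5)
Your argument matches the paper's proof in substance: both work in Riemann coordinates, expand the shock curves to cubic order, use the collinearity of $w_1,w_2,w_5$ to obtain $\beta\sim\zeta^{1/3}$ (the paper reaches the same scaling via the cubic reflected-wave estimate $\zeta=O(|\beta|^3)$), and conclude $z_5-z_0\sim\beta\gg\zeta>0$ for small $\zeta$.

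One piece of the sketch is mischaracterized, though it does not affect the asymptotics. You describe the closure criterion as ``$w_5$ lies strictly behind $w_0$ in the ordering along the common forward wave curve.'' But $w_0$ and $w_5$ do \emph{not} lie on a common forward simple wave curve: their invariant offsets differ, since $u_5-z_5=u_1-z_1=(u_0-z_0)+2\zeta$. The correct criterion, as in the paper's proof, is that $w_0$ and $w_5$ must both lie on the backward shock curve $S_-(w_L,\cdot)$ through a single ahead state $w_L$, with the jump to $w_0$ the stronger of the two shocks; monotonicity of this curve in the shock strength then yields exactly $z_L<z_5<z_0$. Relatedly, the aside ``so in particular $\beta>0$ and $\beta\to0$, i.e.\ the cycle does close for every sufficiently small $\zeta$'' conflates two statements: at that point you have shown only that $\beta$ exists, so the chain $w_0,\dots,w_5$ of \eqref{cycle} is well defined; closure of the repeating pattern is precisely what the subsequent inequality $z_5>z_0$ rules out.
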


\begin{proof}
  According to Figure~\ref{fig:geng}, the periodic cycle can be
  completed and drawn provided we can find an ahead state $w_L$ so
  that states $w_0$ and $w_5$ can both be connected as behind states
  by a backwards shock with ahead state $w_L$, that is if both
  \begin{equation}
    \label{leq}
    w_5 = S_-\big(w_L,-\al\z\big) \qcom{and}
    w_0 = S_-\big(w_L,-(1+\al)\z\big),
  \end{equation}
  for the appropriate value of parameter $\al$.  We must have $\al>0$
  and generally we expect $\al$ to be large.  Because $w_0$ and $w_5$
  are fully determined by $\z$ and $[u]$ appears linearly in all
  descriptions of waves, we can regard \eqref{leq} as two nonlinear
  equations in two variables, namely $z_L$ and $\al$.  By
  monotonicity, \eqref{leq} will have a unique solution provided the
  targets are in the appropriate range: since $\z>0$, the wave to
  $w_0$ is stronger, and so we will be able to find state $z_L$ as
  long as $z_0>z_5>z_L$, which is the stated condition.  Notice that
  if $z_5\sim z_0$, we will get $z_L\sim 0$, which is the statement
  that the shock curves approach vertical as ahead state $w_L$ nears
  vacuum.

  The (abbreviated) state cycle is shown in Figure~\ref{fig:cycle}:
  states are labeled as in Figure~\ref{fig:geng}, but the extreme left
  state $w_L$ is not shown.  In the first panel, the cycle is shown
  for small values of $\z$, and it is clear that in this picture we
  have $z_5>z_0$.  In the middle panel, $\z$ is relatively large and
  we have $z_5<z_0$, so that $w_L$ can be found.  The third panel
  shows $z_5-z_0$ as a function of $\z$, and the bifurcation point is
  clearly seen to be far from the origin.

  \begin{figure}[hbt]
    \centering
% [inline block 1: 1 envs, 32760 chars -> data_tex | \begin{tikzpicture}[xscale=0.6,yscale=0.7] ...]

    
    \caption{Abbreviated wave cycle}
    \label{fig:cycle}
  \end{figure}

  To find $\z_*(z_0)$, we can solve the equation $z_5(\z) = z_0$ for
  $\z$, which is the bifurcation point.  Rather than doing this
  explicitly, we argue asymptotically.  That is, we show that for
  small values of $\z$, we have $z_5(\zeta)\ll z_0$, so that there is
  no ahead state $w_L$ unless $\z$ is large enough: that is, there is
  some minimum value $\z_*(z_0) = O(z_0)$ below which $z_5>z_0$.

  It thus suffices to consider small values of $\z$.  It is well known
  that for small wave strengths, the shock and simple wave curves
  agree up to errors which are cubic in wave strength, that is,
  \[
    S_\pm(w_a,\xi) = W_\pm(w_a,\xi) + O\big(|\xi|^3\big), \qquad
    S_\pm^\T(w_b,\xi) = W_\pm^\T(w_b,\xi) + O\big(|\xi|^3\big),
  \]
  for small values of $\xi<0$.  Since wave strength is change in
  opposite Riemann invariant, it follows that for small $\z$, the
  strengths $\z_{ab}$ of the reflected simple waves in Figure
  \ref{fig:cycle} satisfy
  \[
    \z_{23} = - O\big(|\z_{42}|^3\big), \qquad
    \z_{04} =  O\big(|\z_{14}|^3\big), \qcom{and}
    \z_{25} = O\big(|\z_{23}|^3\big),
  \]
  respectively, where each of these individual interactions is shown in
  Figure \ref{fig:smints}.
  
  \begin{figure}[bht]
    \centering
    % This file was created with tikzplotlib v0.10.1.post13.
\begin{tikzpicture}[scale=0.6]

\begin{groupplot}[group style={group size=3 by 1}]
\nextgroupplot[
hide x axis,
hide y axis,
tick align=outside,
tick pos=left,
%x grid style={darkgrey176},
xmin=-2.715, xmax=2.015,
%xtick style={color=black},
%y grid style={darkgrey176},
ymin=-1.1, ymax=1.1,
%ytick style={color=black}
]
\path [draw=fwdSimple, fill=fwdSimple, opacity=0.3]
(axis cs:0,0)
--(axis cs:0,0)
--(axis cs:-2.5,-1)
--(axis cs:-1.3,-1)
--(axis cs:-1.3,-1)
--(axis cs:0,0)
--cycle;

\path [draw=bakSimple, fill=bakSimple, opacity=0.3]
(axis cs:0,0)
--(axis cs:0,0)
--(axis cs:-1.8,1)
--(axis cs:-2.5,1)
--(axis cs:-2.5,1)
--(axis cs:0,0)
--cycle;

\addplot [very thick, fwdShock]
table {%
0 0
1.8 1
};
\addplot [thick, grey127, mark=*, mark size=3, mark options={solid}, only marks]
table {%
0 0
};
\draw (axis cs:0.85,-0.2) node[
  scale=1.5,
  anchor=base west,
  text=black,
  rotate=0.0
]{\fbox{$w_4$}};
\draw (axis cs:-1.5,0) node[
  scale=1.5,
  anchor=base west,
  text=black,
  rotate=0.0
]{\fbox{$w_3$}};
\draw (axis cs:-0.2,0.6) node[
  scale=1.5,
  anchor=base west,
  text=black,
  rotate=0.0
]{\fbox{$w_2$}};
\draw (axis cs:1,0.5) node[
  scale=1.5,
  anchor=base west,
  text=black,
  rotate=0.0
]{$-\beta$};
\draw (axis cs:-1.5,-0.55) node[
  scale=1.5,
  anchor=base east,
  text=black,
  rotate=0.0
]{$-\beta$};
\draw (axis cs:-1.5,0.45) node[
  scale=1.5,
  anchor=base east,
  text=black,
  rotate=0.0
]{$O(\beta^3)$};

\nextgroupplot[
hide x axis,
hide y axis,
tick align=outside,
tick pos=left,
%x grid style={darkgrey176},
xmin=-1.805, xmax=2.705,
%xtick style={color=black},
%y grid style={darkgrey176},
ymin=-1.1, ymax=1.1,
%ytick style={color=black}
]
\path [draw=bakSimple, fill=bakSimple, opacity=0.3]
(axis cs:0,0)
--(axis cs:0,0)
--(axis cs:2.5,-1)
--(axis cs:1.4,-1)
--(axis cs:0,0)
--cycle;

\path [draw=fwdSimple, fill=fwdSimple, opacity=0.3]
(axis cs:0,0)
--(axis cs:0,0)
--(axis cs:1.6,1)
--(axis cs:2.1,1)
--(axis cs:0,0)
--cycle;

\addplot [very thick, bakShock]
table {%
0 0
-1.6 1
};
\addplot [thick, grey127, mark=*, mark size=3, mark options={solid}, only marks]
table {%
0 0
};
\draw (axis cs:1.5,0) node[
  scale=1.5,
  anchor=base east,
  text=black,
  rotate=0.0
]{\fbox{$w_0$}};
\draw (axis cs:-0.7,-0.2) node[
  scale=1.5,
  anchor=base east,
  text=black,
  rotate=0.0
]{\fbox{$w_1$}};
\draw (axis cs:0,0.6) node[
  scale=1.5,
  text=black,
  rotate=0.0
]{\fbox{$w_4$}};
\draw (axis cs:-0.95,0.45) node[
  scale=1.5,
  anchor=base east,
  text=black,
  rotate=0.0
]{$-\zeta$};
\draw (axis cs:1.95,-0.5) node[
  scale=1.5,
  anchor=base east,
  text=black,
  rotate=0.0
]{$-\zeta$};
\draw (axis cs:2.25,0.35) node[
  scale=1.5,
  anchor=base east,
  text=black,
  rotate=0.0
]{$O(\zeta^3)$};
\addplot [black,->]
table {%
-1.8 -1
-1.8 -0.5
};
\addplot [black,->]
table {%
-1.8 -1
-1 -1
};
\draw (axis cs:-1.8,-0.5) node[
scale=1.2,
  anchor=west
]{$t$};
\draw (axis cs:-1,-1) node[
scale=1.2,
  anchor=south
]{$x$};

\nextgroupplot[
hide x axis,
hide y axis,
tick align=outside,
tick pos=left,
%x grid style={darkgrey176},
xmin=-2.74, xmax=2.54,
%xtick style={color=black},
%y grid style={darkgrey176},
ymin=-1.1, ymax=1.1,
%ytick style={color=black}
]
\path [draw=bakSimple, fill=bakSimple, opacity=0.3]
(axis cs:0,0)
--(axis cs:0,0)
--(axis cs:2.5,-1)
--(axis cs:1.4,-1)
--(axis cs:0,0)
--cycle;

\path [draw=fwdSimple, fill=fwdSimple, opacity=0.3]
(axis cs:0,0)
--(axis cs:0,0)
--(axis cs:-1.8,-1)
--(axis cs:-2.3,-1)
--(axis cs:0,0)
--cycle;

\addplot [very thick, bakShock]
table {%
0 0
-1.6 1
};
\addplot [thick, grey127, mark=*, mark size=3, mark options={solid}, only marks]
table {%
0 0
};
\draw (axis cs:1.5,0.4) node[
  scale=1.5,
  anchor=base east,
  text=black,
  rotate=0.0
]{\fbox{$w_3$}};
\draw (axis cs:-1,0) node[
  scale=1.5,
  anchor=base east,
  text=black,
  rotate=0.0
]{\fbox{$w_5$}};
\draw (axis cs:0.3,-0.6) node[
  scale=1.5,
  anchor=base east,
  text=black,
  rotate=0.0
]{\fbox{$w_2$}};
\draw (axis cs:-1.1,0.6) node[
  scale=1.5,
  anchor=base east,
  text=black,
  rotate=0.0
]{$-\zeta$};
\draw (axis cs:1.95,-0.45) node[
  scale=1.5,
  anchor=base east,
  text=black,
  rotate=0.0
]{$-\zeta$};
\draw (axis cs:-1.3,-0.62) node[
  scale=1.5,
  anchor=base east,
  text=black,
  rotate=0.0
]{$O(\zeta^3)$};
\end{groupplot}

\end{tikzpicture}
    \caption{Individual wave interactions}
    \label{fig:smints}
  \end{figure}

  We now equate these wave strengths according to the cycle: these in
  turn become
  \[
    \z_{23} = \z = O\big(|\beta|^3\big), \qcom{and}
    \z_{04} =  \z_{25} = O\big(|\z|^3\big).
  \]
  It follows that for weak waves, $\beta = O\big(\z^{1/3}\big)$.
  Finally, comparing Riemann invariants of the various states, we have
  \[
    \begin{aligned}
      u_5 - u_2 &= z_5 - z_2 = \z_{25} > 0,\\
      u_3 - u_2 &= z_2 - z_3 = \z_{23} < 0,\\
      u_3 - u_4 &= z_3 - z_4 = \beta > 0,\\
      u_0 - u_4 &= z_0 - z_4 = \z_{04} > 0,
    \end{aligned}
  \]
  and combining these yields the identity
  \[
    z_5 - z_0 = \z_{25} + \zeta_{23} + \beta - \z_{04} > 0,
  \]
  in which $\beta = O\big(|\z|^{1/3}\big)$ is clearly the dominant
  term while $\z$ remains small.  In fact is is clear that for the
  equality $z_5=z_0$ to hold, we require $\z\sim\beta$, which holds
  only for $\z=O(1)$.
\end{proof}

We have shown that the cycle in state space is possible only if the
simple wave strength $\z$ is sufficiently large.  We now consider the
mFT approximation of a solution with data chosen from this cycle.  As
we have noted, we regard an mFT approximation as accurate only if the
residual is sufficiently small, for discretization parameter $\e>0$.
It follows that if we attempt to model the data which includes simple
waves of strength $\z=O(1)$, we must discretize each such simple wave
into many smaller waves of strength $O(\e)$.  This in turn means that
the simple wave interactions in Figure \ref{fig:geng}, specifically
the crossing of rarefactions $\z_{30}$ and $\z_{32}$ and the merge of
shock $\z_{42}$ with rarefaction $\z_{34}$ will be spread out over
many interactions with weak simple waves.  In particular, the
``cancellation'' of the shock $\z_{42}$ with the rarefaction $\z_{34}$
will take infinitely long, and the characteristic pattern as drawn in
Figure \ref{fig:geng} which shows apparent periodicity cannot be drawn
without a very large residual.  We state this as a corollary.

\begin{cor}
  \label{cor:breakdown}
  Wave patterns which are a perturbation of that drawn in Figure
  \ref{fig:geng}, and which perturb to solutions having infinite
  variation in finite time, are inconsistent with the mFT scheme.
\end{cor}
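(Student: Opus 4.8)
The plan is to combine the uniform lower bound on the cycle's wave strengths provided by Lemma~\ref{lem:cycle} with the forced fine discretization of rarefactions built into the mFT scheme, and then to run the localization argument of Theorem~\ref{thm:noaccum}. First I would record the quantitative content of Lemma~\ref{lem:cycle}: in order to close the state cycle \eqref{cycle}, and hence to realize any small perturbation of the pattern of Figure~\ref{fig:geng}, the rarefaction-type simple waves of the cycle must have absolute strength at least $\z_*(z_0)=O(1)$, a bound independent of the discretization parameter $\e$. On the other hand, by the construction of the dgRS (Lemma~\ref{lem:dgRS}) together with the multi-rarefaction splitting, any rarefaction of strength $\z\ge\ee r$ is replaced in the mFT approximation by a fan of at least $\lceil\z/\ee r\rceil$ jumps, each of strength in $[\kappa\,\ee r,\ee r]$ and carrying strictly increasing least-squares wavespeeds. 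Thus, as soon as $\ee r<\kappa\,\z_*(z_0)$, each rarefaction of the cycle is necessarily a fan of $\Theta(\e^{-e_r})$ distinct waves rather than a single jump, which is already inconsistent with the point-interaction picture of Figure~\ref{fig:geng}.

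Next I would exploit the spreading of these fans. A fan carrying total strength $\z=O(1)$ has its left and right edges separating in $x$ at the constant rate $\dot w=\la_k(u_+)-\la_k(u_-)=\Theta(1)$, so at any time $\tau$ after its creation it occupies a spatial interval of width $\Theta(\tau)$. A wave of a different family can cross such a fan only by traversing this growing interval, and global strict hyperbolicity on the compact region $K_{\mc U}$ gives a uniform gap between the forward and backward characteristic speeds; hence the crossing of a full fan---in particular the crossing of the rarefactions $\z_{30}$ and $\z_{32}$ and the same-family merge of the persistent shock $\z_{42}$ with the rarefaction $\z_{34}$---occupies a time interval whose length is bounded below by a constant $\tau_0>0$ depending only on $z_0$ and the system, and uniformly in $\e$. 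Moreover, by Lemma~\ref{lem:gdints}, across each of the $\Theta(\e^{-e_r})$ elementary merges that make up the $\z_{42}$--$\z_{34}$ interaction the shock and the same-family rarefaction strengths combine linearly, so the shock's absolute strength strictly decreases; summing, the variation $\mc V$ decreases across each completed ``period'' of the putative periodic pattern by an amount comparable to the consumed rarefaction strength.

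With these ingredients I would conclude by contradiction. Suppose some perturbation of the pattern of Figure~\ref{fig:geng} produced an mFT approximation with $\|U^\e(\cdot,t)\|_{BV}\to\infty$ as $t\to t_\infty<\infty$. By Corollary~\ref{cor:blowup} (equivalently, since the variation is constant between interactions and, by Theorem~\ref{thm:hatU}, finite after finitely many of them) this forces an accumulation of interaction times at some point $(x_\infty,t_\infty)$. Localizing in a space-time triangle $D_\Delta$ as in \eqref{Ddel}, only finitely many waves enter $D_\Delta$; but every realization of the cycle inside $D_\Delta$ contains at least one fan crossing consuming time $\ge\tau_0$, so at most $(t_\infty-t_K)/\tau_0$ cycles can take place there. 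Since the increase of $\mc V$ per cycle is then bounded, and the same-family cancellation in fact makes $\mc V$ decrease over each cycle, the variation remains finite throughout $D_\Delta$, contradicting the blowup. This is exactly the mechanism described informally before the statement: the finely discretized simple waves spread the pattern out in space and time, preventing the self-similar focusing whose perturbation yields the blowup of~\cite{BCZ18}, while the shock--rarefaction cancellation drives decay.

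The step I expect to be the main obstacle is the uniform-in-$\e$ lower bound $\tau_0$ on the duration of a fan crossing. The perturbation may inject many weak waves from other regions into $D_\Delta$, triggering extra interactions; one must show that, although these are numerous, they are all lonely weak waves that cannot shortcut the traversal of a fan of $\Theta(1)$ total strength, and hence $\Theta(\tau)$ width. This is where finite speed of propagation, the monotone structure of the fan profile from Lemma~\ref{lem:center}, and the lonely-weak-wave bookkeeping used in Theorem~\ref{thm:noaccum} would do the work; carrying it out carefully, with explicit control of the fan-edge trajectories through the $\Theta(\e^{-e_r})$ intervening interactions, is the technical heart of the argument.
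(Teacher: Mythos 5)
Your proposal follows the same two-step strategy as the paper: Lemma~\ref{lem:cycle} forces $\Theta(1)$ simple-wave strengths in any realization of the cycle, and the mFT's residual constraint then forces these to be discretized into fans of $\Theta(\e^{-e_r})$ jumps, which spread the interactions out over space and time. That is exactly the paper's reasoning, although the paper codifies it informally (the corollary is stated as a summary of the preceding discussion with no formal proof block). Your attempt to formalize this via the $D_\Delta$ localization from Theorem~\ref{thm:noaccum} and a uniform lower bound $\tau_0>0$ on the fan-crossing time is a natural and more quantitative elaboration than the paper gives.

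The $\tau_0$ step, however, has a latent circularity that your own sketch does not dispel. You obtain the width estimate $\Theta(\tau)$ from the elapsed time $\tau$ since the fan's creation, and then infer a crossing time bounded below by $\tau_0$. But in the BCZ18-type picture the fan is recreated each cycle (by the collapse of the preceding compression), so the fan's age at the moment of crossing scales with the putative cycle period; for a self-similarly shrinking pattern the fan ages, widths, and crossing times would all shrink together. Bounding $\tau_0$ from below thus seems to presuppose that the period does not shrink --- the very fact to be established. The finite-speed/$D_\Delta$/lonely-weak-wave machinery you invoke from Theorem~\ref{thm:noaccum} controls the number of waves entering $D_\Delta$, not the time scale of a single fan crossing, so it does not obviously close this gap. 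Some additional mechanism specific to the scheme is needed here: for instance, the rigidity of the least-squares wavespeeds (precluding the tuned speeds on which the BCZ18 construction relies), or a quantitative use of the $\beta=O(|\z|^{1/3})$ scaling appearing in the proof of Lemma~\ref{lem:cycle}, which obstructs a period-preserving rescaling of the cycle. You have correctly flagged this as the technical heart, and since the paper itself is informal at this point, your proposal is a faithful elaboration; but as written the $\tau_0$ bound is asserted rather than derived.

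Two minor points. First, your use of Lemma~\ref{lem:gdints} to assert that $\mc V$ in fact decreases over each cycle is not a claim the paper makes precisely for the corollary; crossing an opposite shock strictly increases strength, so whether the same-family cancellation dominates the cross-family amplification over a full cycle requires a balance argument that is not supplied. Second, your parenthetical that blowup implies accumulation (via constancy of $\mc V$ between interactions and Theorem~\ref{thm:hatU}) is correct and is the right way to read Corollary~\ref{cor:blowup}.
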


The critical strength $\z_*$ is large relative to a fixed ahead state,
say $z_0$.  For the prototypical case of a $\g$-law gas, $\z_*$ scales
directly with $z_0$, and for a physically reasonable value of $\g$,
say $\g\approx 1.4$, we have $\z_*(z_0)\gtrapprox 4\,z_0$.  This in
turn implies that the Mach number of each shock as drawn in
Figure~\ref{fig:geng} is of the order of $M\gtrapprox 30$, so these
shocks are extremely strong.  The (combined) simple waves in the
solution must have strengths of the same order, and it is then clear
that these cannot be reasonably approximated by a small number of jump
discontinuities.

\begin{figure}[thb]
  \centering
  \includegraphics[height=2.5in]{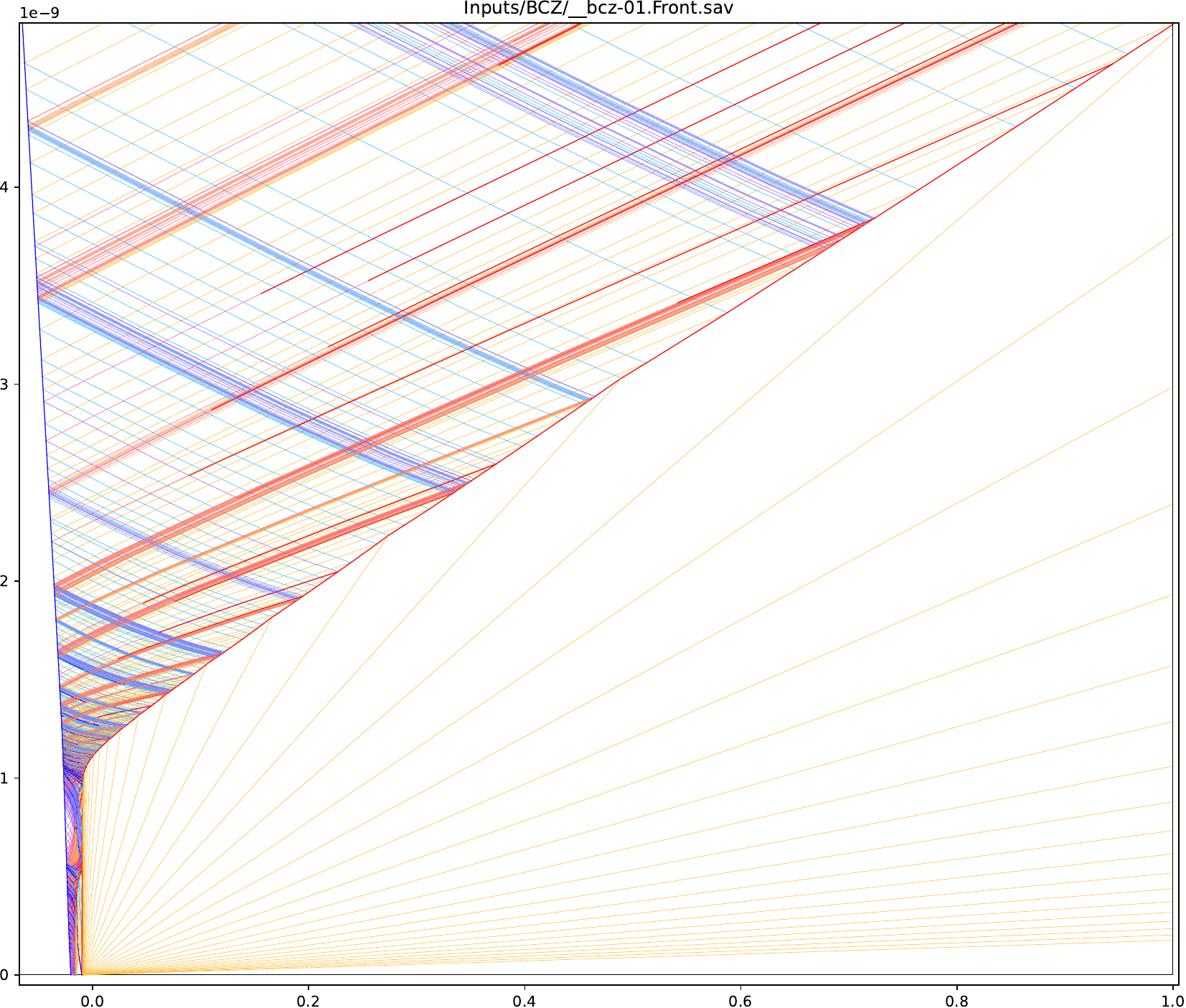}\quad
  \includegraphics[height=2.5in]{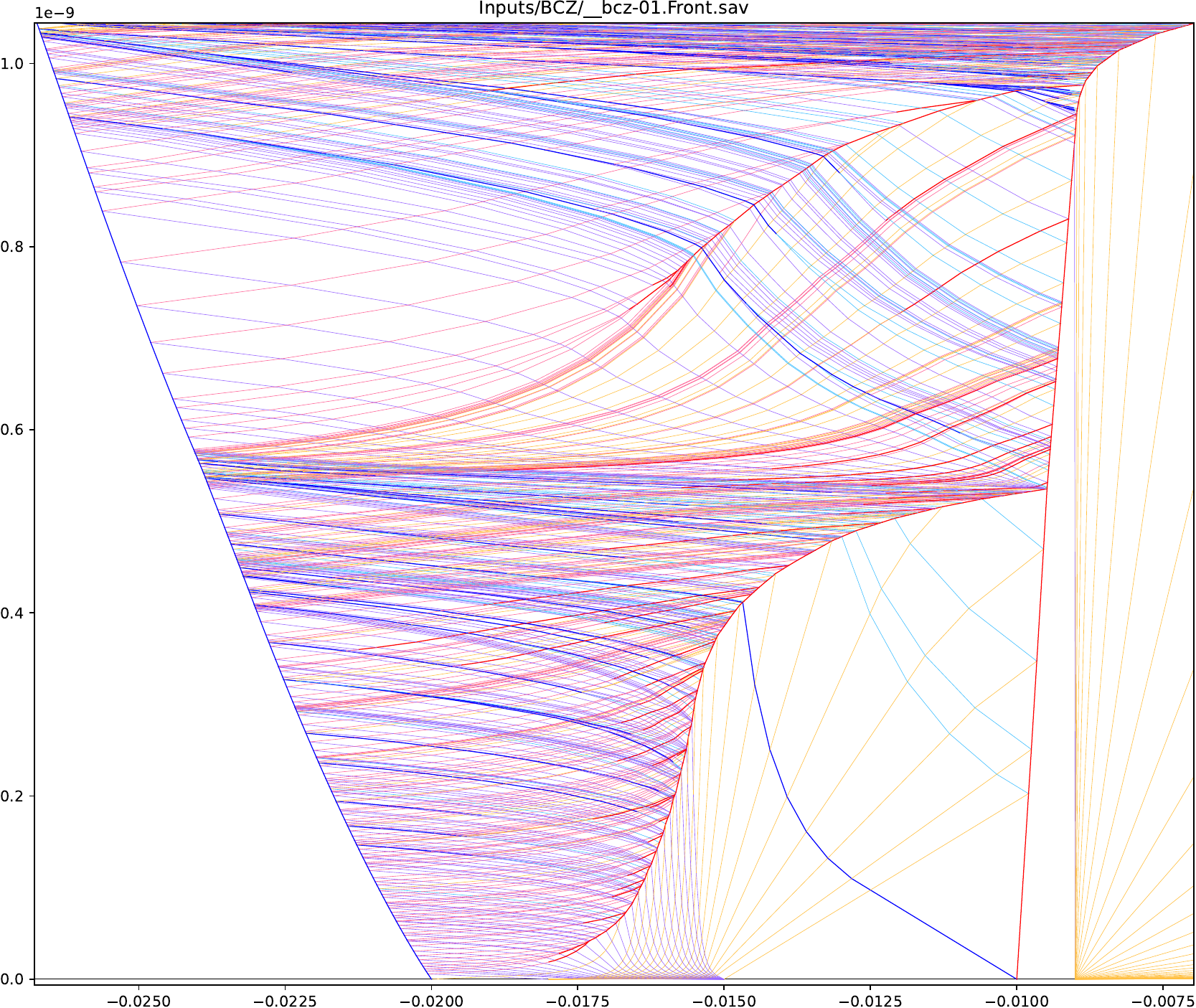}
  \caption{FT simulation of cyclic data}
  \label{fig:simul}
\end{figure}

We conclude that although the periodic cycle of states can be
generated, and initial data based on the corresponding wave pattern
will generate solutions in which the total variation grows, the actual
solution will not generate a localized wave pattern in which
interactions concentrate.  Rather, the interaction of many weak simple
waves takes up both time and space, and the corresponding solutions
will spread out rapidly, with corresponding decay due to cancellation
between shocks and rarefactions.  As an illustration, in
Figure~\ref{fig:simul}, we show a ``front plot'' of the waves
generated by our implementation of the mFT code, including
compressions, simulating the cyclic Cauchy data described above: the
right panel is a magnified piece of the left panel.  Here red and blue
waves are compressive, with the darker, thicker lines being shocks,
while the lighter thinner lines are compressions, while orange and
magenta lines are rarefactions.  The spread of the pattern begins
around $t\approx 0.002$, at which the rightmost forward shock and
rarefaction meet and interact; after this the wave pattern spreads out
rapidly in space, and we expect no accumulation of interaction times,
while there are clear indications of oscillatory behavior in both
families.  Finally we note that the (dimensionless) wave speeds in
this picture are very large: the leading forward shock has speed on
the order of $30/0.004\approx7500$, while the leading edge of the
outside rarefaction is of the order $40/0.0004\approx 10^5$; this is
consistent with the very large Mach numbers mentioned above.

\section*{Data Availability Statement}

Codes for the generation of all pictures except Figure~\ref{fig:simul}
can be accessed publicly on the web at
\url{http://gitlab.com/mbhatnagar011/weakstar3_mft/}.  The Front
Tracking and visualization code for Figure~\ref{fig:simul} will be
published as open source code when fully developed; until then, the
code is available from either author upon request.

%\bibliography{Refs.bib}
\bibliographystyle{plain}

\end{document}